\theoremstyle{plain}
\newtheorem{theorem}{Theorem}[section]
\newtheorem{proposition}{Proposition}[section]
\newtheorem{lemma}{Lemma}[section]
\newtheorem{corollary}{Corollary}[section]
\theoremstyle{remark}
\newtheorem{remark}{Remark}[section]
\newtheorem{definition}{Definition}[section]
\newtheorem{example}{Example}[section]
\newcommand\DN{\newcommand}
\DN\lref[1]{Lemma~\ref{#1}}
 \DN\tref[1]{Theorem~\ref{#1}}
 \DN\pref[1]{Proposition~\ref{#1}}
 \DN\sref[1]{Section~\ref{#1}}
 \DN\ssref[1]{Subsection~\ref{#1}}
 \DN\dref[1]{Definition~\ref{#1}}
 \DN\rref[1]{Remark~\ref{#1}} 
 \DN\coref[1]{Corollary~\ref{#1}}
 \DN\eref[1]{Example~\ref{#1}}
 \numberwithin{equation}{section} 
\newcounter{Const} \setcounter{Const}{0}
\numberwithin{Const}{section}
\DN\Ct{\refstepcounter{Const}c_{\theConst}}\DN\cref[1]{c_{\small \ref{#1}}}
\DN\As[1]{$ ($\textbf{#1}$)$}\DN\Ass[1]{$ \{ $\textbf{#1}$\}$}
\DN\AS[1]{$ ($\textbf{#1}$)$}\DN\ASS[1]{$ \{ $\textbf{#1}$\}$}
\DN\uL[1]{\underline{#1}}\DN\oL[1]{\overline{#1}}
\DN\PF{\begin{proof}} 
	\DN\PFEND{\end{proof}} %
\DN\elaw{\mathop{\sim}\limits_{\mathrm{law}}}
\DN\eac{\stackrel{\mathrm{ac}}{\sim}}\DN\esg{\stackrel{\mathrm{sg}}{\sim}}
\DN\bs{\bigskip} \DN\ms{\medskip}\DN\ssp{\smallskip}
\DN\PD[2]{\frac{\partial#1}{\partial#2}}
\DN\half{\frac{1}{2}} \DN\halfbeta{\frac{\beta}{2}}
\DN\map[3]{#1\!:\!#2\!\to\!#3}\DN\ot{ \otimes } \DN\ts{ \times }
\DN\limi[1]{\lim_{#1\to\infty}} 	\DN\limz[1]{\lim_{#1\to0}}
\DN\limsupi[1]{\limsup_{#1\to\infty}} 
\DN\limsupz[1]{\limsup_{#1\to0}}
\DN\liminfi[1]{\liminf_{#1\to\infty}} \DN\liminfz[1]{\liminf_{#1\to 0}}
\DN\sumii[1]{\sum_{#1=1}^{\infty}}\DN\sumi[1]{\sum_{#1=0}^{\infty}}
\DN\sumijN{\sum _{i < j }^{\nN }}
\DN\sumiN{ \sum_{i \in \N } }
\DN\sumjN{ \sum_{j \in \N } }
\DN\sumkN{ \sum_{k \in \N } }
\DN\supnN{\sup _{\n \in \N }}
\DN\supN{\sup_{\nN \in \N } }
\DN\supx{\sup_{ x \in \Rtwo } }
\DN\supR{\sup_{\rR \in \N } }
\DN\supQ{\sup_{\qQ \in \N } }
\DN\infN{\inf_{\nN \in \N } }
\DN\limsupN{\limsupi{\nN }} 
\DN\limsupR{\limsupi{\rR }}
\DN\liminfR{\liminfi{\rR }}
\DN\liminfN{\liminf{\nN }} 
\DN\esup{\mathrm{ess.}\sup}
\DN\R{\mathbb{R}}\DN\N{\mathbb{N}}\DN\Q{\mathbb{Q}}\DN\Z{\mathbb{Z}}
\DN\two{2}\DN\Rtwo{\R ^2}\DN\RRtwo{\mathcal{R}^2 }
\DN\Rd{\R ^d}
\DN\tT{T}
\DN\uU{U}
\DN\RyyU{\rR , \yy , \uU }
\DN\RyyT{\rR , \yy , \tT }
\DN\nN{N}\DN\pP{P}\DN\qQ{Q}\DN\rR{R}\DN\sS{ S }
\DN\Nn{\nN _ {\n }}
\DN\psiN{\psi ^{\nN }}
\DN\psiNn{\psi ^{\Nn }}
\DN\e{\epsilon}\DN\ve{\varepsilon}
\DN\sumij{\sum_{\langle i , j \rangle}}
\DN\Lvert{\lvert } \DN\Rvert{\rvert }
\DN\xione{x^i}
\DN\iGamma{i_1! \cdots i_d!}
\DN\xiGamma{ \frac{ \x ^{\mathbf{i}} }{\iGamma }}
\DN\zetat{ t \wedge \zeta }
\DN\taukXR{ t \wedge \tau _k (\mathbb{X}_{\rR }^i )}
\DN\taukX{ t \wedge \tau _k (\mathbb{X}^i )}
\DN\XtaukXR{\mathbb{X}_{\rR }^i (\taukXR )}
\DN\XtaukX{\mathbb{X}^i (\taukX )}
\DN\oLSRxk{\oLSR ^k }
\DN\oLSRxC{(\oLSRc )^{ k }} 
\DN\oLSRxCl{(\oLSRc )^{ l }} 
\DN\oLSRxCz{(\oLSRc )^{\nN - 1 }} 
\DN\oLS{\oL{\sS }}
\DN\SSaa{\sSS_{\aaa }}
\DN\SSaam{\sSS_{\aaa }^{[m]}}
\DN\Saam{\mathbf{S}_{\aaa }^{[m]}}
\DN\oLSS{\oL{\sSS }}
\DN\SSRkone{\sSS _{\rR , k } ^{[1]}} 
\DN\oLSSRkone{\oLSS _{\rR , k } ^{[1]}} 
\DN\oLSSRkNone{\oLSS _{\rR , k } ^{\None }} 
	\DN\oLSSRklNone{\oLSSRkl ^{\None }} 
	\DN\oLSSRkl{\mathsf{M}_{ \Rkl }}
	\DN\oLSSRklone{\mathsf{M}_{\Rkl }^{[1]}} 
	\DN\oLSSRyykl{\mathsf{M} _{ \Ryykl }}
	\DN\oLSSRyyklNone{\oLSSRyykl ^{\None }} 
	\DN\oLSSRyyklone{\oLSSRyykl ^{[1]}}
\DN\SRSS{ \SR \times \sSS }
\DN\SRSSR{ \SR \times \SSR } \DN\oLSRSSR{ \oLSR \times \oLSSR }
\DN\sSsS{\oLSRSSR  }
\DN\SSRek{\oLSS _{\rRe }^k}
\DN\TT{\mathsf{T}}
\DN\UU{\mathsf{U}}
\DN\VV{\mathsf{V}}
\DN\rRe{\rR , \e }
\DN\Rekl{_{\rRe } ^{k,l}}
	\DN\piR{\pi _{\SR }} 
	\DN\piRc{\pi _{\SRc }} 
	\DN\pioLR{\pi _{\oLSR }}
	\DN\pioLRc{\pi _{\oLSR ^c }}
	\DN\piA{\pi _{A}}
\DN\lz{\ell } 
\DN\lzz{\lz _{0}}
\DN\Rone{\rR , 1 }
\DN\Rk{\rR , k }
\DN\Rkl{\rR , k , l }
\DN\Rkll{\Rkl + 1 }
\DN\Ryykl{\rR , \yy , k , l }
\DN\RyyklNn{\rR , \yyNn , k , l }
\DN\RyykNn{\rR , \yyNn , k } 
\DN\Ryykll{\rR , \yy , k , l + 1 }
\DN\limil{\limi{l} }
\DN\FFn{\FF _{\mmm }}
\DN\Nd{\nN ^{ \frac{2 - d}{d }}}
\DN\aaaadmu{ \mathrm{div} \aaaa + \dmu \aaaa }
\DN\aR{\aaa , \rR }
\DN\aRs{\aR , \sss }
\DN\aRy{\aR , \yy }
\DN\aRyNn{\rR ,\yyNn }
\DN\aRNnm{_{\rR } ^{\Nn , [m]}}
\DN\RNnm{_{\rR } ^{\Nn , [m]}}
\DN\EDaRNnm{( \E \aRNnm , \oLd \aRNnm )}
\DN\EDaRNnmSTAR{( \E \aRNnm , \oLd _{\Rstar  }^{\Nn , [m]} )}
\DN\uLEDaRNnm{( \E \aRNnm , \uLd \aRNnm )}
\DN\uLEDRNnm{( \E \RNnm , \uLd \RNnm )}
\DN\uLEDRNnmSTAR{( \ER ^{\Nn , [m]}  , \uLd _{\Rstar  }^{\Nn , [m]} )}
\DN\oLEDRNnmSTAR{( \ER ^{\Nn , [m]}  , \oLd _{\Rstar  }^{\Nn , [m]} )}
\DN\uLEDastarm{(\E _{\aaa }^{[m]} , \dom _{\aaa , \mathrm{lwr} \star }^{[m]} )}
\DN\uLEDstarm{(\Estarm , \dom _{\lwrstar }^{[m]} )}
\DN\oLEDstarm{(\Estarm , \dom _{\uprstar }^{[m]} )}
\DN\uLEDRstarm{(\ERstarm  , \uL{\mathscr{D}}_{\Rstar }^{[m]} )}
\DN\oLEDRstarm{(\ERstarm  , \oL{\mathscr{D}}_{\Rstar }^{[m]} )}
\DN\uLEDaRstarm{( \EaRstarm , \uLDaRstarm )}
\DN\uLEDaRstarmSTAR{( \E _{\star , \Rstar }^{[m]} , \uLd _{\star , \Rstar }^{[m]} )}
\DN\oLEDaRstarm{( \EaRstarm , \oLDaRstarm )}
\DN\oLEDaRstarmSTAR{( \E _{\star , \Rstar }^{[m]} , \oLd _{\star , \Rstar }^{[m]} )}
\DN\EaRstarm{\E _{\aR }^{[m]} } 
\DN\uLDaRstarm{\uLd _{\aR \star }^{[m]}}
\DN\oLDaRstarm{\oLd _{\aR \star }^{[m]}}
\DN\III{\mathbb{I}(\lz ) }
\DN\IIIl{\mathbb{I}( l ) }
			\DN\nablaPhi{\nabla \Phi }
\DN\nablaPhiN{\nabla \Phi ^{\nN }}
\DN\nablaPhiNn{\nabla \Phi ^{\Nn }}
\DN\sumyiRc{\sum_{ \yiSRc }}
 \DN\sumyiRec{\sum_{\yiSRec }} 
\DN\sumyiRcN{\sumyiRc ^{\nN }}
\DN\sumyiRecN{\sumyiRec ^{\infty}}
\DN\sumyiRecNn{\sumyiRec ^{\Nn }}
\DN\SIXON{C^1(\oLSR \cap \ON )}
\DN\SIXONn{C^1(\oLSR \cap \ONn )}
\DN\SIX{C^1 (\oLSR )}
\DN\SIXSIX{C^1 (\oLSRse )}
\DN\SEVEN{C^1 ( \oLSQse )}
\DN\IQ{\infty , \oLSR }
	\DN\xixyi{\x , \yi }
\DN\bbb{\mathfrak{b}} \DN\bbbN{\bbb ^{\nN }}
\DN\bbbQ{\bbb _{\qQ }}
\DN\bbbQe{\bbb _{\qQ }^{\epsilon }}
\DN\Ry{\rR , \yy }
\DN\RRy{\rR + 1 , \yy }\DN\Rs{\rR , \sss }
\DN\RylN{\hat{r} _{\Ry }^{\lz , \n } } 
\DN\rrr{\mathscr{R} }
\DN\Ryl{\rrr _{\Ry }^{\lz } }
\DN\RRyl{\rrr _{\RRy }^{\lz } }
\DN\Ryln{\rrr _{\Ry }^{\lz , \n } }
\DN\Rylnn{\rrr _{\Ry }^{\lz , \n , \n } }
\DN\Rsl{\rrr _{\Rs }^{ \lz } }
\DN\Resl{\rrr _{\rR + \e , \sss }^{ \lz } }
\DN\vep{v } \DN\logvep{\log \vep }\DN\Xij{X_u^i-X_u^j}\DN\Xji{X_u^j-X_u^i }
\DN\Xkl{X_u^k-X_u^l}
\DN\CR{\eta } 
\DN\CRyinn{\CR _{\Ry }^{\mathbf{i} , \n , \n }}
\DN\CRyin{\CR _{\Ry }^{\mathbf{i} , \n }}
\DN\CRyiN{\CR _{\Ry }^{\mathbf{i} , \n }}
\DN\CRyii{\CR _{\Ry }^{\mathbf{i} }}
	\DN\CRi{\CR _{\Rs }^{\mathbf{i} }}
	\DN\CRei{\CR _{\rR + \e . \sss }^{\mathbf{i} }}
\DN\fFF{F}	\DN\aAA{{A}}
\DN\bBB{{B}}
\DN\kKK{{K}}
\DN\capam{\mathrm{Cap}^{[m]}}
\DN\capa{\mathrm{Cap}}
	 \DN\oOO{\mathcal{O} }
\DN\limN{\limi{\nN }} 
\DN\liminfn{\liminfi{\n }} \DN\limin{\limi{\n }} 
\DN\limiR{\limi{\rR }}
	\DN\llnn{\lim _{\mmm }}
\DN\Rl{\mathscr{Q} ^{\lz }}
\DN\Rln{\mathscr{Q} ^{\lz , \n }}
\DN\Rde{ \sS _{\rR , \e }^2 }
\DN\oLSRe{\oLS _{\rR + \epsilon }}
\DN\oLPm[1]{\oL{P}_{#1}^{[m]}}\DN\oLP[1]{\oL{P}_{#1}}
\DN\twoD{\frac{2-d}{2}}
\DN\Dtwo{\frac{d-2}{2 \cref{;12}}}
\DN\vQR{\varphi _{\qQ , \rR }}
\DN\vq{\varphi_{\rr }}%
	\DN\vh{\widetilde{\varphi }_{h}}
\DN\hp{h_{\pp }}
\DN\sumijm{\sum_{ i , j > m}}
\DN\taue{\tau _{\epsilon } }
\DN\tauRet{t\wedge \tauRe }
\DN\tauRetkl{t\wedge \tauRe }
\DN\taueRT{\tau _{\e }} %		\DN\taueRT{\tau _{\e , \rR , T }}
\DN\tauRe{\taueRT } %	\DN\tauRe{\taueRT ^{ i , j }} 
\DN\tauRz{\tau _{ 0 }} 	%	\DN\tauRz{\tau _{ 0 }^{ i , j } }
\DN\tauet{t\wedge \taue }
\DN\tauRzt{t\wedge \tauRz }
\DN\tauRn[1]{\tau_{ #1 }}
\DN\vse{\varsigma _{\epsilon } }
\DN\vset{t\wedge \vse }
	\DN\idia{i \diamond }\DN\jdia{j \diamond }\DN\iD{i \diamond }
	\DN\kdia{k \diamond }
	\DN\ASN{\As{AC}, \As{SIN}, and \As{NBJ}}
	\DN\IASN{\As{IFC}, \As{AC}, \As{SIN}, and \As{NBJ}}
	\DN\iFc{$\mathbf{IFC}$}
	\DN\nR{\mathbf{n}_{\rR }}
	\DN\anR{ \half \mathbf{n}_{\rR }\aaaa }
	\DN\anN{ \half \mathbf{n}_{\nN }\aaaa }
	\DN\SRover{\oL{\sS }_{\rR }}
	\DN\SQover{\oL{\sS }_{\qQ }}
	\DN\xSR{ \x \in \SRover }
	\DN\xSQ{ \x \in \SQover }
	\DN\xix{\xi _{\x }}
	\DN\CRd{W (\Rd )}
	\DN\CRdSS{C([0,\infty);\RdSS )}
	\DN\CRdN{\CRd ^{\mathbb{N}}}
	\DN\CiSS{C([0,\infty );\sSS )}
	\DN\CRdm{\CRd ^m } 
	\DN\WRm{ \W ^m }
	\DN\WRNz{ \W _{\mathbf{0}} ^{\N } }
	\DN\WRN{ \W ^{\mathbb{N}}} %	%KEEP
	\DN\WWdm{\WRdzm \ts \WRN }
	\DN\WRdzm{ \W _{\mathbf{0}} ^m}
	\DN\WRdm{ \W ^m } %	\DN\WRdm{ \CRd ^m} %
\DN\Lmugonep{L^{p}(\muone )}\DN\Lmugoneone{L^{1}(\muone )}	\DN\Lmugone{L^{2}(\muone )}
\DN\dgin{\dlog ^{\mu }}
\DN\XRiu{\uLxX _{\rR }^{i}(u)}
\DN\XQiu{\uLxX _{\qQ }^{i}(u)}
\DN\XRju{\uLxX _{\rR }^{j}(u)}
\DN\XQju{\uLxX _{\qQ }^{j}(u)}
\DN\XRidu{\uLXXR ^{\diai }(u)}
\DN\XQidu{\uLXXQ ^{\diai }(u)}
\DN\XRjdu{\uLXXR ^{\diaj }(u)}
\DN\XQjdu{\uLXXQ ^{\diaj }(u)}
\DN\pL{\Big(} 
\DN\XXXi{ \mathscr{X}_u^i }
\DN\XXXk{( \mathscr{X}_u^k )}
	\DN\uLXXXi{\uL{\mathscr{X}} _u^i }
	\DN\oLXXXi{\oL{\mathscr{X}} _u^i }
\DN\XXXRiu{\uL{\mathscr{X}}_{\rR }^i(u) }
\DN\XXXQiu{(\XQiu ,\XQidu )}
\DN\SQXoneu{1_{\SQ }(\XRu )}
\DN\Xiu{\uLXRi (u)} 
\DN\pR{\Big)_{i \in \N }} 
 \DN\ER{\E _R}
\DN\ERstarm{\E _{\rR }^{[m]}}
\DN\Estar{\E _{\star }}
\DN\Estarm{\E ^{[m]}}
\DN\RpiD{\rR , i \diamond }
\DN\SQXu{1_{\SQ }(X_u^{\rR , i })}
\DN\SQXuj{1_{\SQ }(X_u^{\rR , j })}
\DN\OFF{(\Omega ,\mathcal{F},\{ \mathcal{F}_t \} )}
\DN\OFP{(\Omega ,\mathcal{F}, P )}
	\DN\OFPF{(\Omega ,\mathscr{F}, P ,\{ \mathscr{F}_t \} )}
	\DN\OFQFs{(\Omega ,\mathcal{F}, \QQs , \{ \mathcal{F}_t \} )}
	\DN\s{\mathbf{s}} 
	\DN\QQla{\qQ _{\la }} \DN\QQxs{\qQ _{\mathbf{x},\sss }}
	\DN\zN{\{ 0 \} \cup \N }
	\DN\QQxsM{\QQxs ^{[m]}}
	\DN\QQlaM{\QQla ^{[m]}}
	\DN\OFQFm{(\Omega ,\mathcal{F},\{ \qQ _{\ulab (\mathbf{x}) + \sss } \}, \{ \mathcal{F}_t \} )}
	\DN\maxT{\max_{0\le t \le T} } \DN\supT{\sup_{0\le t \le T} }
	\DN\QQs{\qQ _{\sss }}
\DN\K{\Ki }
\DN\KaQk{\K _{\qQ }[\mathit{a}_{ q }]}
\DN\KQ{\K _{\qQ }}
\DN\Ki{\mathsf{K}}
\DN\Ka{\Ki [\mathbf{a}]}
\DN\chiwtI{\widetilde{\chi }_{\infty}}
\DN\SmSS{\Rdm \ts \sSS }
\DN\labi{\lab ^i} \DN\labj{\lab ^j}
\DN\dkQ{\mathit{d}_{ \qqq }^{\qQ }}
\DN\dki{\mathit{d}_{ \qqq }^{ \infty }}
\DN\chiwtN{\chiwt ^{\nN }}
\DN\chiwt{\widetilde{\chi }_{\qQ }}
\DN\anest{\mathbf{a}}
\DN\ane{a}
\DN\ak{\ane _{\qqq }}
\DN\akk{\ane _{\qqq +1}}
\DN\akkk{\ane _{\qqq - 1}}
\DN\akR{\ane _{\qqq }(\rR ) }
\DN\akRR{\ane _{\qqq } (\rR + 1 )}
\DN\ar{\ane _{\rr }}
\DN\arr{\ane _{\rr - 1}}
\DN\arR{\ane _{\rr }(\rR ) }
\DN\arRR{\ane _{\rr } (\rR + 1 )}
\DN\arm{\ar ^m }
\DN\chin{\chi _{\mmm }}
\DN\chinn{\chi _{\mmm +1}}
	\DN\pp{\mathsf{p}} \DN\qqq{\mathsf{q}}\DN\rr{\mathsf{r}}
\DN\RRprCs{\sS _{\pp ,\rr }^m (\sss )}
\DN\RRprs{\RRpr (\sss )} 
 \DN\RRpr{\SOm _{\pp ,\rr }}
\DN\Han{\Ha _{\mmm }}
\DN\Hann{\Ha _{\mmm +1}}
\DN\HanC{\Han ^{\circ }}
\DN\HmnPc{\Pi _2 (\Han )}
\DN\Ha{\hH [\anest ]}
\DN\hH{\mathfrak{H}}
\DN\pq{\pp ,\rr }
\DN\pqr{\pp , \qqq , \rr } 
\DN\nn{\mathsf{n}}
\DN\Kakk{\Ki [\mathit{a}_{\qqq }^+]}
\DN\Hb{\Ha _{\pqr }}
	\DN\NNNone{\N }
	\DN\NNNtwo{\N ^2 }
	\DN\NNNthree{\N ^3 }
	\DN\NNN{\mathbf{N}}
	\DN\nnNNN{\mmm \in \NNN }
\DN\mmm{\mathsf{k}}
\DN\UV{\w ^{[m]} }
\DN\vv{\mathsf{v}}
\DN\sigmam{\sigma ^m}
 \DN\SO{\overline{\sS }}
 \DN\SOm{\SO ^m}
 \DN\RRr{\SOm _{\rr }} 
\DN\hhhl{l}		%[[[
 \DN\QQR{\qQ + 2 \le \rR } \DN\QR{\qQ + 1 \le \rR }
\DN\lH{\lab (\mathsf{H}) }
\DN\Ft{\{ \mathscr{F}_t \}} 
\DN\Fs{F_{\mathbf{s}}} 
\DN\PBr{P _{\mathrm{Br}}^{\infty}} \DN\PBrm{P _{\mathrm{Br}}^{m}}
\DN\sumpd{\sum_{p = 1}^d }%
\DN\partialp{\partial _p }
\DN\rNy{r (\nN , \yy )}
	\DN\daaa{ d-2 } \DN\daa{ d-1 } \DN\da{ d }
\DN\vertwo{\rvert ^{2}} \DN\vertd{\rvert ^{d}} \DN\vertda{\rvert ^{ \da }}
\DN\sR{\mathbf{s}}\DN\tR{\mathbf{t}} 
	\DN\Ni{\nN , i } \DN\Nj{\nN , j } 
\DN\Nm{\nN , [m] } 
\DN\Nk{\nN , k }
\DN\Nni{\Nn , i } \DN\Nnj{\Nn , j } 
\DN\Nnm{\Nn , [m] } 
\DN\Rt{\rR , t } \DN\Ru{\rR , u }
\DN\Rdn{(\Rd )^{\nN }} \DN\Rdm{(\Rd )^m} 
\DN\RdN{(\Rd )^{\N }}
\DN\xx{\mathsf{x}}\DN\yy{\mathsf{y}} \DN\zz{\mathsf{z}}
\DN\yyR{\yy _{\rR }}
\DN\IRT{I_{\rR , T }}
	\DN\xji{\x _{\mathbf{j}}^{\mathbf{i}} }
	\DN\xjX{ \x \in \mathscr{S}_{\rR }^{\lz } }
	\DN\ulab{\mathfrak{u}}\DN\lab{\mathfrak{l}}
	\DN\ulabm{\ulab ^{[m]}}
	\DN\upath{\ulab _{\mathrm{path}}}
	\DN\upathm{\upath ^{[m]}}
	\DN\upathz{\upath ^{[0]}}
	\DN\ulabmz{\ulab ^{[m,0]}}	\DN\ulabzone{\ulab ^{[1,0]}}	\DN\ulabmn{\ulab ^{[n,m]}}
	\DN\upathmn{\upath ^{[n,m]}} 	\DN\upathmz{\upath ^{[m,0]}}
\DN\labm{\lab ^{[m]}}
	\DN\labmm{\lab ^{[m-1]}} %[[[
	\DN\labN{\lab _{\nN }}\DN\labNn{\lab _{\Nn }}
	\DN\lpath{\mathfrak{l}_{\mathrm{path}}}
\DN\f{\mathsf{f}}\DN\g{\mathsf{g}} \DN\fg{\f \ot \g } \DN\h{\mathsf{h}}
\DN\fQ{\f _{\qQ }} 
\DN\fQN{\f _{\qQ , \nN }}
\DN\fQn{\f _{\qQ }}
\DN\bb{\mathsf{b}}
\DN\aaa{\mathsf{a}} 
\DN\aaaa{\mathfrak{a}} 
\DN\m{\mathfrak{m}}
 \DN\n{\mathfrak{n}} 
\DN\q{\mathfrak{q}} 
\DN\x{{x}}\DN\y{{y}}\DN\z{{z}}
\DN\xone{\x ^1} \DN\xN{\mathbf{\x }^{\nN }} \DN\xI{\x ^i}\DN\xd{\x ^d}
\DN\xxN{\x ^1 ,\ldots, \x ^{\nN }}
\DN\xj{\x ^j}\DN\xk{\x ^k}\DN\xm{\x ^m}
\DN\xs{ \x , \sss }\DN\ys{ \y , \sss }\DN\xy{\x - \y } \DN\xz{(\x - \z )}
\DN\xst{\xs , \mathsf{t}}
\DN\xsR{\xs _{\rR }}
\DN\xpiRs{\x , \piR (\sss ) }
\DN\si{s^i }\DN\sii{s^{i+1} }\DN\sj{s^j} \DN\yi{\y ^i}\DN\yj{\y ^j}
\DN\yiSRc{\yi \not\in \oL{\sS }_{\rR } }
\DN\yiSRec{\yi \notin \oLSRe }
\DN\yiONn{ \yi \in \ONn }
\DN\yiSRcO{\yi \not\in \oL{\sS }_{\rR } ,\, \yiONn }
\DN\siSRc{\si \not\in \oL{\sS }_{\rR } }
\DN\ON{O_{\nN }}
\DN\ONn{O_{\Nn }}
\DN\ONtwo{\ON ^2}
\DN\siSR{\si \in \SR }
\DN\siSSR{\si \in \oL{S}_{\rR }}
\DN\sioLSR{\si \in \oL{S}_{\rR }}
	\DN\xsi{ \x - \si }\DN\xsj{ \x - \sj }
\DN\xyi{ \x - \yi }\DN\xyj{ \x - \yj }
\DN\zs{ 0 , \sss }
\DN\nablax{\nabla _{\x }}
\DN\nablai{\nabla ^i }
\DN\nablaxi{\nabla _{\x ^i }}
\DN\nablasi{\nabla _{\si }}
\DN\nablaxh{\nablax \h}
\DN\nablaPsi{\nabla \Psi }
\DN\xX{X} 
\DN\oLxXti{\oLxX _t^i }
\DN\oLxXui{\oLxX _u^i }
\DN\oLxXuj{\oLxX _u^j }
\DN\oLxXzi{\oLxX _0^i }
\DN\oLxXtj{\oLxX _t^j }
\DN\uLxX{\uL{\xX }}\DN\oLxX{\oL{\xX }}
\DN\uLxXR{\uL{\xX }_{\rR }}
\DN\oLxXR{\oLxX _{\rR }}
\DN\oLxXRNi{\oLxXR ^{\Ni }}
\DN\oLxXRNk{\oLxXR ^{\Nk }}
\DN\uLXi{\uLxX ^i }
\DN\XRi{\xX _{\rR }^i} \DN\XRj{\xX _{\rR }^j}
\DN\uLXRi{\uLxX _{\rR }^i} \DN\uLXRj{\uLxX _{\rR }^j}
\DN\oLXRi{\oLxX _{\rR }^i} \DN\oLXRj{\oLxX _{\rR }^j}
\DN\XRNi{ \xX _{\rR }^{\Ni }}\DN\XRNj{ \xX _{\rR }^{\Nj }}
\DN\uLXRNi{ \uLxXR ^{\Ni }}	\DN\uLXRNj{ \uLxXR ^{\Nj }}
\DN\oLXRNi{ \oLxX _{\rR }^{\Ni }}	\DN\oLXRNj{ \oLxX _{\rR }^{\Nj }}
\DN\X{\mathbf{X}}
	\DN\XN{\X ^{\nN }}
	\DN\XNn{\X ^{\Nn }}
\DN\XR{\X _{\rR }} 
	\DN\uLX{\uL{\X }}\DN\oLX{\oL{\X }}
\DN\oLXRm{\oLXR ^{[m]}}
\DN\XRN{\XR ^{\nN }}
\DN\XRNn{\XR ^{\Nn }}
\DN\XRNni{ X _{\rR }^{\Nni }}
\DN\XRNnj{ X _{\rR }^{\Nnj }}
\DN\uLXN{\uLX ^{\nN }}
\DN\uLXR{\uLX _{\rR }} 
\DN\uLXaR{\uLX _{\aR }} 
\DN\uLxXRi{\uLxXR ^i} \DN\uLxXRj{\uLxXR ^j}
\DN\uLxXQi{\uL{\xX }_{\qQ }^i} \DN\uLxXQj{\uL{\xX }_{\qQ }^j}
\DN\uLXRN{\uLXR ^{\nN }}
\DN\uLxXRNi{ \uLxXR ^{\Ni }}\DN\uLxXRNj{ \uLxXR ^{\Nj }}
\DN\uLXRNn{\uLXR ^{\Nn }}
\DN\uLxXRNni{ \uL{\xX } _{\rR }^{\Nni }}
\DN\uLxXRNnj{ \uLxXR ^{\Nnj }}
\DN\oLXN{\oLX ^{\nN }}
\DN\oLXR{\oLX _{\rR }} 
\DN\oLxXRi{\oLxX _{\rR }^i} \DN\oLxXRj{\oLxX _{\rR }^j}
\DN\oLxXRk{\oLxX _{\rR }^k} %	\DN\oLxXRl{\oLxX _{\rR }^l}
\DN\oLXRN{\oLXR ^{\nN }}
\DN\oLXRNm{\oLXR ^{\nN ,m }}
\DN\oLXRNmm{\oLXR ^{\nN , [m] }}
\DN\oLxXRNj{ \oLxX _{\rR }^{\Nj }}
\DN\oLXRNn{\oLXR ^{\Nn }}
\DN\oLxXRNni{ \oLxX _{\rR }^{\Nni }}
\DN\oLxXRNnj{ \oLxX _{\rR }^{\Nnj }}
\DN\Xm{\X ^{[m]}}
\DN\uLXm{\uLX ^{[m]}}
\DN\oLXm{\oLX ^{[m]}}
\DN\uLXRm{\uLXR ^{[m]}}
\DN\uLXaRm{\uLXaR ^{[m]}}
\DN\XX{\mathsf{X}}
\DN\XXN{\XX ^{\nN } }
\DN\XXR{\XX _{\rR }}
\DN\XXRN{\XXR ^{\nN } }
\DN\XXRtN{\XX _{\Rt }^{\nN } }
\DN\XXRustar{ \pioLRc (\XX _u )}
\DN\XXQustar{\XX_{\qQ , u }^{*}}
\DN\uLXX{\uL{\XX }}
\DN\uLXXN{\uLXX ^{\nN } }
\DN\uLXXR{\uLXX _{\rR }}
\DN\uLXXQ{\uLXX _{\qQ }}
\DN\uLXXRN{\uLXXR ^{\nN } }
\DN\uLXXRtN{\uLXX _{\Rt }^{\nN } }
\DN\oLXX{\oL{\XX }}
\DN\oLXXN{\oLXX ^{\nN } }
\DN\oLXXR{\oLXX _{\rR }}
\DN\oLXXRN{\oLXXR ^{\nN } }
\DN\oLXXRtN{\oLXX _{\Rt }^{\nN } }
\DN\Y{\mathbf{Y}}\DN\YY{\mathsf{Y}}
\DN\EN{\mathbf{E}^{\nN } }
	\DN\B{\mathbf{B}}
	\DN\PhiN{\Phi ^{\nN }}
	\DN\PhiNn{\Phi ^{\Nn }}
	\DN\PsiN{\Psi ^{\nN }}
	\DN\PsiNn{\Psi ^{\Nn }}
\DN\Czi{C_0^{\infty} }
	\DN\CziRd{\Czi (\Rd )}
	\DN\CziRdm{\Czi ((\Rd )^m)}
	\DN\CziSR{\Czi (\SR )}
	\DN\CziSQ{\Czi (\SQ )}
	\DN\CziON{\Czi (\ON )}
\DN\lwrstar{\mathrm{lwr}\star }
\DN\uprstar{\mathrm{upr}\star }
\DN\Rstar{\rR \star }
\DN\Rb{\rR \bullet }
\DN\Rc{\rR \circ }
\DN\dblwr{\dom _{\bullet } ^{\mathrm{lwr}}}
\DN\dclwr{\dc ^{\mathrm{lwr}}}
\DN\dcupr{\dc ^{\mathrm{upr}}}
	\DN\Ps{P_{\mathbf{s}}}
	\DN\xxx{\mathbf{x}}
	\DN\sss{\mathsf{s}}
	\DN\SSsde{\sSS _{\mathrm{sde}}}
	\DN\PPPm{\Pt _{\mathbf{s}}^m }
	\DN\Pt{\widetilde{P}} 
	\DN\lBlhatm{\mathbf{B}^m,\X ^{m*}}
		\DN\XB{(\X ,\mathbf{B})}
		\DN\SSSsde{\mathbf{S}_{\mathrm{sde}}}
		\DN\Bt{\mathscr{B}_t }
		\DN\zti{ 0 \le t < \infty }\DN\zzti{ 0 < t < \infty }
		\DN\WSN{W ^{\mathbb{N}}} 			%[] \DN\WSN{C([0,\infty);\SN )}
		\DN\FtB{$\{ \mathscr{F}_t \}$-Brownian motion }
		\DN\sigmaXms{\sigma _{\XX }^m }
	\DN\SSSsdemtw{\mathbf{S}_{\mathrm{sde}}^m(t,\ww )}
	\DN\SSSsdemt{\mathbf{S}_{\mathrm{sde}}^m(t,\XX )}
	\DN\SSSsdeone{\SSsde ^{[1]}}
	\DN\bbbXms{\mathit{b} _{\XX }^m }
	\DN\Btm{\mathscr{B}_t^m }
	\DN\uPs{under $ \Ps $}
	\DN\Ehatm{\mathscr{C} ^{m}}
	\DN\Ehatmt{\Ehatm _t}
	\DN\OFpsF{(\Omega ,\mathscr{F}, \Ps , \{ \mathscr{F}_t \} )}
	\DN\Fms{F_{\mathbf{s}}^m} 
	\DN\la{\nu }
	\DN\FF{\mathsf{F}}
	\DN\FR{\mathscr{F}_{\rR }} 
	\DN\FRone{\FR ^{[1]} }
	\DN\GRone{C_0^{\infty}(\SR ) \ot \dcb }
	\DN\dc{\dom _{\circ } }%
	\DN\dom{\mathscr{D}}
\DN\uLd{\uL{\dom }} 
\DN\oLd{\oL{\dom }}
\DN\oLdc{\oLd _{\circ }}
\DN\oLdcm{\oLd _{\circ }^{[m]}}
\DN\dbm{\CziRdm \ot \db }
\DN\dcm{\CziRdm \ot \dc }
\DN\dcb{\mathscr{D} _{\circ \mathrm{b}}}
\DN\dcbR{\mathscr{D} _{\Rc \mathrm{b}}}
\DN\dcbm{\mathscr{D} _{\circ \mathrm{b}}^{[m]}}
\DN\dcbone{\dcb ^{[1]}}
\DN\dbbone{\dbb ^{[1]}}
\DN\dbbm{\dbb ^{[m]}}
\DN\dcone{\dc ^{[1]}}
		\DN\db{\mathscr{D}_{\bullet } }\DN\dbb{\mathscr{D}_{\bullet \mathrm{b}} }
\DN\dRb{\dom _{ \Rb }}
\DN\dRbmu{\dRb ^{\mu }}
\DN\dRcmu{\dom _{ \rR \circ }^{\mu }}
\DN\dRbmum{\dRb ^{[m]}}
\DN\dRcmum{\dom _{ \rR \circ }^{[m]}}
\DN\dbmu{\db ^{\mu }}
\DN\dcmu{\dc ^{\mu }}
\DN\dcnu{\dc ^{\nu }}
\DN\dbmum{\db ^{[m]}} \DN\dcmum{\dc ^{[m]}}
\DN\dcnum{\dc ^{[m]}}
\DN\dcmua{\dc ^{\mua }}
\DN\dRbmua{\dom _{\aaa , \Rb }^{\mua }}
\DN\dbmua{\db ^{\mua }}
\DN\dbmuma{\dom _{\aaa \bullet }^{[m]}} 
\DN\dmua{\dlog ^{\mua }}
\DN\dmuyy{\dlog ^{\muyy }}
	\DN\dmubu{\mathfrak{d}_{\yy }}
	\DN\dmubz{\mathfrak{d}_{\zz }}
\DN\oLdmuam{\dom _{\aaa , \mathrm{upr}}^{[m]}}
\DN\oLdcmuma{\oL{\dom  _{\aaa \circ }^{[m]}} }
\DN\dcmuma{\dom _{\aaa \circ }^{[m]}} 
	\DN\dRbmuma{\dom _{\aR \bullet }^{[m]}}
	\DN\dRcmuma{\dom _{ \aR \circ }^{[m]}}
\DN\oLdm{\dom _{\mathrm{upr}}^{[m]}}
	\DN\uLdR{\uLd _{\rR }}
	\DN\oLdR{\oLd _{\rR }}
	\DN\uLdRmum{\uLdR ^{[m]}}
	\DN\uLdRm{\uLdR ^{[m]}}
	\DN\oLdRm{\oLdR ^{[m]}}
	\DN\uLdm{\dom _{\mathrm{lwr}} ^{[m]}}
	\DN\uLdmuam{\dom _{\aaa , \mathrm{lwr}}^{[m]}} 
	\DN\uLdmuamb{\oL{\dom _{\aaa \bullet }^{[m]}} }
	\DN\EuLdmuam{( \Emuam , \uLdmuam )}
\DN\dRmu{\oLd _{\rR }^{\mu }}
	\DN\dRRmum{\uLd _{\rR +1}^{[m]}}
\DN\dOmu{\oLd ^{\mu }}
\DN\dOmum{\dom _{\mathrm{upr}} ^{[m]}}
\DN\dOmumstar{\dom _{\star , \mathrm{upr} } ^{[m]}}
\DN\dOmumloc{\dom _{\mathrm{upr}, \mathrm{loc}} ^{[m]}}
	\DN\dOmuma{\dom _{\aaa , \mathrm{upr}}^{[m]}} 
\DN\SLSm{_{\star , \mathrm{lwr} \star }^{[m]}}
	\DN\SLm{_{\mathrm{lwr}  \star }^{[m]}}
	\DN\USm{\dom _{\mathrm{upr}  \star }^{[m]}}
\DN\dOmua{\oLd ^{\mua }}
\DN\dOR{\oLd _{\rR }}
\DN\dORmu{\oLd _{\rR }^{\mu }}
\DN\dORmua{\oLd _{\rR }^{\mua }}
	\DN\dORmum{\oLd _{\rR }^{[m]}}
	\DN\dORmuma{\oLd _{\aR }^{[m]}} 
	\DN\dORmumSTAR{\oLd _{\Rstar  }^{[m]}} 
	\DN\dORRmum{\oLd _{\rR +1 }^{[m]}}
	 \DN\SSRz{\SSR ^0 }
	\DN\SSz{\sSS _{ 0 }} 
	\DN\SQSS{\SQ \ts \sSS }
	\DN\SQmSS{\SQ ^m \ts \sSS }
	\DN\SQQSS{\SQQ \ts \sSS }
\DN\Ok{\Omega_k}
\DN\Okk{\Omega_{k+1}}
\DN\diaone{1 \diamond }
\DN\diai{i \diamond }
\DN\diaj{j \diamond }
\DN\lambdaeR{\lambda _{\epsilon , R }}
\DN\lambdae{\lambda _{\epsilon , 1 }}
\DN\lambdaR{\lambda _{1, R }}
\DN\LambdaR{\Lambda _{\rR }}
\DN\LambdaRone{\LambdaR ^{[1]}}
\DN\Lambdaone{\Lambda ^{[1]}}
\DN\muxN{\muN _x } 
\DN\muRxN{\mu_{ \rR , \x }^{ \nN }}
\DN\muyy{\mu _{\yy }}
 \DN\muone{\mu ^{[1]}}
 \DN\nuone{\nu ^{[1]}}
 \DN\muaone{\mua ^{[1]}}
 \DN\muyyone{\muyy ^{[1]}}
\DN\mum{\mu ^{[m]}}
\DN\num{\nu ^{[m]}}
	\DN\muRyym{\mu _{\rR , \yy }^{[m]}}
	\DN\muRyymk{\mu _{\rR , \yy , k }^{[m]}}
\DN\mumm{\mu ^{[m+1]}}
\DN\mummm{\mu ^{[m-1]}}
\DN\muRm{\muR ^{[m]}} 
\DN\muRone{\muR ^{[1]}} 
\DN\muRNone{\muR ^{\None }}
\DN\muRNnone{\muR ^{\Nnone }}
\DN\mua{\mu _{\aaa }}\DN\mub{\mu _{\bb }} 
\DN\muam{\mua ^{[m]}}
\DN\muaNnm{\mua ^{\Nn , [m]} }
 \DN\muRyyone{\muRyy ^{[1]}}
\DN\muRcyy{\mu _{\rR , \mathrm{c}, \yy }}
\DN\muRcN{\mu _{\rR , \mathrm{c}}^N}
 \DN\muQRNone{\mu _{ \qQ , \rR }^{ \None }}
\DN\muRyyN{\muRyy ^{\nN }}\DN\muxRyyN{\muxRyy ^{N}}
\DN\muRyyNN{\mu _{\rR , \yyN } ^{\nN }}
\DN\muRyyNn{\mu _{\rR , \yyNn } ^{\Nn }}
\DN\muRyyNnr{\mu _{\rR , \yyNn , \rr }^{ \Nn }}
	\DN\muRyyNr{\mu _{\rR , \yyN , \qqq }^{ \nN }}
\DN\muRyyNnrr{\mu _{\rR , \yyNn , \rr +1 }^{ \Nn }}
\DN\muRyyNnoner{\mu _{\rR , \yyNn , \rr }^{ \Nnone }}
	\DN\muRyyNoner{\mu _{\rR , \yyN , \qqq }^{ \None }}
\DN\muxRyNN{\mu _{\rR , \yyN , \x } ^{N}}
\DN\muN{\mu^{\nN }}
\DN\muNm{\mu^{\nN , [m]}}
\DN\muNL{\widetilde{\mu }^{\nN }}
\DN\muNone{\mu ^{\None }}
\DN\mR{\m _{\rR }}
\DN\mRone{\mR ^{[1]}} 
\DN\mRN{\mR ^{\nN }} 
\DN\mRNone{\mR ^{\None }} 
\DN\mRNnone{\mR ^{\Nn ,[1]}} 
\DN\mRk{\m _{\Rk }}
\DN\mRkone{\m _{\Rk }^{[1]}} 
		\DN\mRkNone{\m _{\rR , k }^{\None }}
	\DN\mRklone{\m _{\Rkl }^{[1]}}
	\DN\mRkllone{\m _{\Rkll }^{[1]}}
	\DN\mRklNone{\m _{\Rkl }^{\None }}
	\DN\mRyyklNone{\mRyykl ^{\None }}
	\DN\mRklNnone{\m _{\Rkl }^{\Nnone }}
	\DN\mRyyklNnone{\m _{\RyyklNn }^{\Nnone }}
	\DN\mRyyNnone{\m _{\RyyNn }^{\Nnone }}
	\DN\mRyyklone{\m _{\Ryykl }^{[1]}}
	\DN\mRyykllone{\m _{\Ryykll }^{[1]}}
\DN\mRyy{\m _{\Ryy }}
\DN\mRyykl{\m _{\Ryykl }}
\DN\mRss{\m _{\rR ,\sss }}
\DN\None{\nN , [1] } \DN\Nnone{\Nn , [1] }
\DN\mRyyN{\mRyy ^{\nN }} 
\DN\mRyyNone{\mRyy ^{\None }} 
\DN\mRyyone{\mRyy ^{[1]}} 
\DN\mRyykNone{\m _{\Ryy , k }^{\None }}
\DN\mRyyRNkNnone{\m _{\rR ,\yyRcNn , k }^{\Nnone }}
\DN\mRyyRNklNnone{\m _{\rR ,\yyRcNn , k , l }^{\Nnone }}
\DN\mRyykone{\m _{\rR , \yy , k}^{[1]}}
\DN\mRe{\m _{\rR . \e }}
\DN\maRy{\m _{\aRy }}
\DN\muaRy{\mu _{\aRy }}
 \DN\muaRym{\mu _{\aRy }^{[m]}}
\DN\muaR{\mu _{\aR }}
\DN\muNn{\mu ^{\Nn }}
\DN\muNnm{\mu ^{\Nn ,[m] }}
\DN\muNnL{\widetilde{\mu }^{\Nn }}
\DN\muNnone{\mu ^{\Nnone }}
			\DN\wm{\m }
\DN\mN{\wm ^{\nN }}
\DN\mNRNone{\wm _{\rR }^{\None }}
\DN\mNRNoneG{\wm _{\GR }^{\None }}
\DN\muRyyNone{\muRyy ^{\None }}
\DN\muRyyNnone{\mu _{\rR , \yyNn }^{\Nnone }}
\DN\muRyyNNone{\mu _{\rR , \yyN } ^{\None }}
\DN\muRyyRNone{\mu _{\rR , \yyN } ^{\None }}
\DN\muRyyRN{\mu _{\rR , \yyN } ^{\nN }}
\DN\RyyN{\rR , \yyN }
\DN\yyRcN{\yyN }
\DN\yyRcNn{\yyNn }
\DN\yyN{\yy^{\nN }} \DN\yyNn{\yy^{\Nn }}
\DN\muRN{\muR ^{\nN }}
\DN\muRNm{\muR ^{\nN ,[m]}} 
\DN\muRNT{\widetilde{\muR }^{\nN }}
\DN\muR{\mu _{\rR }}
\DN\muRe{\mu _{\rRe }}
\DN\muRek{\mu _{\rRe }^k}
\DN\muRekl{\mu \Rekl }
\DN\muRex{\mu _{\rRe , x }}
	\DN\muRexkl{\mu _{\rRe , x }^{k , l }}
\DN\muRyy{\mu _{\rR , \yy }}
\DN\muRyyT{\widetilde{\mu }_{\rR , \yy }}
\DN\muxRyy{\mu _{\rR , \yy , \x }}
\DN\muRss{\mu _{\rR , \sss }}
\DN\muxRyyT{\widetilde{\mu }_{\rR , \yy , \x }}
\DN\rhoNone{\rho ^{N,1}}\DN\rhoNm{\rho ^{N,m}}
\DN\rhoone{\rho ^1 }\DN\rhom{\rho ^m }
\DN\SSR{\sSS _{\rR }}
\DN\SSRm{\SSR ^m }
\DN\SSRn{\SSR ^n }
\DN\oLSSR{\oLSS _{\rR }}
\DN\CnfRk{\mathrm{Cnf}_{\Rk }^{[m]}}
\DN\CnfRy{\mathrm{Cnf}_{\Ry }^{[m]}}
\DN\CnfR{\mathrm{Cnf}_{\rR }^{[m]}}
\DN\SSRc{\SSR ^c}
\DN\SSsi{\sSS _{\mathrm{s,i}}}
\DN\SSs{\sSS _{\mathrm{s}}}
\DN\WSsi{W (\SSsi )}
\DN\WSsiNE{W_{\mathrm{NE}} (\SSsi )}
\DN\ww{\mathsf{w}}
\DN\w{\mathbf{w}} \DN\wwm{\w ^{[m]}}\DN\wwn{\w ^{[n]}}
\DN\sSS{\mathsf{S}}
\DN\SN{(\Rd )^{\mathbb{N}}}
\DN\SQ{\sS _{\qQ }}
\DN\SQQ{\sS _{\qQ +1}}
\DN\SRm{\SR ^m }
\DN\SRmm{\SR ^{m-1} }
\DN\SRk{\SR ^k}
\DN\SSone{\sSS ^{[1]}}
\DN\SSoneyy{\SSone [\yy ]}
\DN\SSneoneyy{\sSS _{\ne }^{[1]} [\yy ]}
\DN\SSneoneaa{\sSS _{\ne }^{[1]} [\aaa ]}
\DN\oLSQse{\oL{\sS }_{\qQ , \e }(\sss ) }
\DN\oLSRse{\oL{\sS }_{\rR , \sss , \e }}
 	\DN\oLSRk{\oLSR ^k}
\DN\oLSSm{\oL{\sSS }_{ m }} \DN\oLSSn{\oL{\sSS }_{ n }}
\DN\oLSSRm{\oLSSR ^m } \DN\oLSSRn{\oLSSR ^{ n }}
\DN\oLSSRk{\oLSSR ^k } 
\DN\oLSSRe{\oLSS _{\rRe }^{[1],k} }
\DN\SRkk{\SR ^{k+1}}
\DN\Rn{ \rR }
\DN\Rh{\rR _{\mathfrak{h} }}
\DN\SR{\sS _{\rR }} 
	\DN\oLSR{\oL{\sS }_{\rR }} 
	\DN\oLSQ{\oL{\sS }_{\qQ }} 
\DN\oLSRc{\oL{\sS }_{\rR }^c } 
\DN\SRc{\sS _{\rR }^{c}} 
	\DN\SSSRxk{\mathbb{S}_{\rR }^k }
\DN\SRR{\sS _{\rR +1}}
\DN\RdSS{\Rd \ts \sSS }
\DN\RdmSS{\Rdm \ts \sSS }
\DN\RdnSS{(\Rd )^n \ts \sSS }
\DN\W{W}
\DN\varphiN{\varphi ^{ \nN }}
\DN\dlog{\mathsf{d}}
\DN\dlogyy{\dlog ^{\muyy }}
\DN\dlogRkl{\dlog _{\Rkl }}
\DN\WdlogRkl{\widetilde{\dlog } _{\Rkl }}
\DN\dlogRkll{\dlog _{\Rkll }}
\DN\dii{\dlog _{\infty , \yy }} % 	\DN\dii{\dlog _{\infty}^{\infty}}
\DN\diQi{\dlog _{\infty ,\yy }^{\qQ }}
\DN\diQQi{\dlog _{\infty ,\yy }^{\qQ +1 }}
\DN\dmu{\dlog ^{\mu }}
\DN\dmuone{\dlog ^{\mu ^1}}
\DN\dnu{\dlog ^{\nu }}
\DN\dnum{\dlog ^{\num }}
\DN\dnuN{\dlog ^{N}}
\DN\dnuone{\dlog ^{\nu ^1}}
\DN\dnun{\dnu _{\n }} 
\DN\dnub{\dlog ^{\nu }}\DN\dnubN{\dlog ^{\nN}}\DN\dnubone{\dlog ^{\nu ^1}}
		\DN\dnuc{\dlog ^{\nu }}
\DN\dnucN{\dlog ^{\nN}}
\DN\Ryy{\rR , \yy }
\DN\RyyNn{\rR , \yyNn }
\DN\RRyy{\tT , \yy }
	 \DN\dN{\dlog ^{\nN }}
	 \DN\dNn{\dlog ^{\Nn }}
	\DN\dpN{\dlog _p ^{\nN }} 
	\DN\dRyyRN {\dlog _{\rR , \RyyN }}
	\DN\dRyyRNN {\dlog _{\RyyN }^{\nN }}
	\DN\dRyyRNNn {\dlog _{\RyyNn }^{\Nn }}
\DN\dRyyNn{\dlog _{\rR , \yyRcNn }^{\Nn }}
\DN\dRRyyNn{\dlog _{\tT , \yyRcNn }^{\Nn }}
\DN\dRyyN{\dlog _{\Ryy }^{\nN }}
\DN\dbRR{\dlog _{\rR + 1 , \yy }} 
\DN\dbR{\dlog _{\Ryy }} 
\DN\dRN{\dR ^{\nN }} 
\DN\dRNn{\dR ^{\Nn }} 
\DN\dRklN{\dlog _{ \Rkl }^{\nN }} 
\DN\dRkllN{\dlog _{ \Rkll }^{\nN }} 
\DN\dRklNn{\dlog _{ \Rkl }^{\Nn }} 
\DN\dRyyklN{\dlog _{ \Ryykl }^{\nN }} 
\DN\dRyykllN{\dlog _{ \Ryykll }^{\nN }} 
\DN\dRyyklNn{\dlog _{ \RyyklNn }^{\Nn }} 
\DN\dRkN{\dlog _{ \rR , k }^{\nN }} 
\DN\dRkNn{\dlog _{ \rR , k }^{\Nn }} 
\DN\dRyy{\dlog _{\Ryy }}
\DN\dRRyy{\dlog _{\RRyy }}
\DN\dR{\dlog _{\rR }}\DN\dRR{\dlog _{\rR + 1}}
\DN\muz{\mu _{0}}
\DN\mux{\mu _{x}}
\DN\dxmu{dx \ts \mu }
\DN\muk{\mu ^{{k}}}
\DN\mukg{\mu ^{{k}}}
\DN\nuz{\nu _{0}}
\DN\nux{\nu _{x}}
\DN\dxnu{dx \ts \nu }
\DN\nuk{\nu ^{{k}}}
\DN\nukg{\nu ^{{k}}}
\DN\Lmz{L^{2}(\muz )}
\DN\Lmg{L^{2}(\mu )}
\DN\Lmgz{L^{2}(\\muz )}
\DN\Lma{L^2 (\mua )}
\DN\Lm{L^{2}(\mu )}
\DN\Lnu{L^{2}(\nu )}
\DN\Lmm{L^2(\mum )}
\DN\Lnum{L^2(\num )}
\DN\Lmuam{L^2(\muam )}
\DN\Lmone{L^2(\muone )}
\DN\muRsone{\muRss ^{[1]}}
\DN\Lmgone{L^{2}(\muone )}
\DN\Llocmgone{L^{2}_{\mathrm{loc}}(\muone )}
\DN\Lmuk{L^{2}(\muk )}
\DN\Lmug{L^{2}(\mu )}
\DN\Llocmone{\Lloctwo (\muone )}
\DN\Llocp{L_{\mathrm{loc}}^{p}}
\DN\Llocq{L_{\mathrm{loc}}^{q}}
\DN\Lloctwo{L_{\mathrm{loc}}^{2}}
\DN\Llocone{L_{\mathrm{loc}}^{1}}
\DN\Lone{L^{1}}
\DN\RdT{\Rd \ts \sSS }
\DN\E{\mathscr{E}}
\DN\Emu{\E ^{\mu }}
\DN\Enu{\E ^{\nu }}
\DN\Enum{\E ^{[m]}} 
\DN\ERmu{\Emu _{\rR }}
\DN\ERRmu{\Emu _{\rR +1}}
\DN\Emum{\E ^{[m]}} \DN\Emumm{\E ^{[m+1]}}
\DN\ERmum{\Emum _{\rR }}
\DN\ERRmum{\Emum _{\rR +1}}
\DN\ERmumm{\Emumm _{\rR }}
\DN\ERRmumm{\Emumm _{\rR +1}}
\DN\Emuam{\E _{\aaa }^{[m]}}
\DN\ERmuam{\Emuam _{\rR }}
	\DN\EuLdRmuam{( \EaRm , \uLd _{\aR }^{[m]} ) } 
	\DN\EuLdRmuamSTAR{( \E _{\star , \rR }^{[m]} , \uLd _{\star , \rR }^{[m]} ) } 
	\DN\EoLdRmuam{( \EaRm , \oLd _{\aR }^{[m]} ) } 
	\DN\EoLdRmuamSTAR{( \E _{\rR }^{[m]} , \oLd _{\star , \rR }^{[m]} ) } 
	\DN\EuLdRmum{( \ERmum , \uLdR ^{[m]})}
	\DN\EoLdRmum{( \ERmum , \oLdR ^{[m]})}
	\DN\ERyym{\E _{\Ryy }^{[m]}}
	\DN\ERyymk{\E _{\Ryy , k }^{[m]}}
		\DN\EaRyym{\E _{\aRy }^{[m] }}
		\DN\EaRm{\E _{\aR }^{[m] }}
\DN\dRyybm{\dom _{\Ryy \bullet }^{[m]}}
\DN\daRyybm{\dom _{\aRy \bullet }^{[m]}}
\DN\daRbm{\dom _{\aR \bullet }^{[m]}}
\DN\uLdaRyym{\uLd _{\aRy }^{[m]}}
\DN\uLdaRm{\uLd _{\aR }^{[m]}}
	\DN\uLdRyym{\uLd _{\Ryy }^{[m]}}
	\DN\oLdRyym{\oLd _{\Ryy }^{[m]}}
\DN\Emuz{\E ^{\muz }}
\DN\Emug{\E ^{\mu }}
\DN\DDDa{\DDD _{\aaaa }}
\DN\DDDaR{\DDD _{\aaaa , \rR }}
\DN\kappaq{\kappa _q}
\DN\DDD{\mathbb{D}}
\DN\DDDD{\mathbf{D}}
\DN\DDDone{\DDD ^{[1]}}
\DN\DDDR{\DDD _{\rR }}
\DN\DDDRone{\DDDR ^{[1]}}
\DN\DDDm{\DDD ^{[m]}}
\DN\DDDam{\DDDa ^{[m]}}
\DN\DDDRm{\DDD _{\rR } ^{[m]}}
\DN\DDDaRm{\DDDaR ^{[m]}}
\DN\DDDaRk{\DDD _{\aaaa , \rR , k }}
\DN\DDDaRmk{ \DDDaRk ^{[m]}}
	\DN\RtwoN{(\Rtwo )^{\mathbb{N}}}
	\DN\mm{m}
	\DN\EDm{( \Emum , \oL{\dcmum } ) }
	\DN\EDnum{( \Enum , \oL{\dcmum } ) }
	\DN\ED{( \Emu , \oLd _{\circ }^{\mu } ) }
	\DN\EDnu{( \Enu , \oLd _{\circ }^{\nu } ) }
\title{Infinite-dimensional stochastic differential equations  for Coulomb random point fields}
\author{Hirofumi Osada \and Shota Osada}
\begin{document}

\begin{abstract}%G[----

We study the infinite-dimensional stochastic differential equations (ISDEs) 
of infinite-particle systems associated with Coulomb random point fields. 
The stochastic dynamics described by these ISDEs are referred to as 
Coulomb interacting Brownian motions. 
In all spatial dimensions $ d \ge 2 $ and for all inverse temperatures 
$ \beta > 0 $, we construct the Coulomb interacting Brownian motions.

We prove that the ISDEs admit strong solutions and that pathwise uniqueness holds. 
The resulting labeled dynamics form an $ \RdN $-valued diffusion, possibly without 
an invariant measure, while the corresponding unlabeled process is a reversible 
diffusion with respect to the underlying Coulomb random point field.

Moreover, we identify the infinite-particle stochastic dynamics as 
the limit in path space of finite-particle systems driven by stochastic differential equations.
This identification is achieved through two approximation schemes: finite-domain
systems with reflecting boundary conditions and $ N $-particle systems.
Although the $ N $-particle approximation is more fundamental,
its justification relies crucially on the finite-domain approximation
together with the uniqueness of solutions to the ISDEs. 

Previously, only the case $ d = 2 $ and $ \beta = 2 $, known as the Ginibre interacting Brownian motion, 
was understood through random matrix theory and determinantal random point fields. 
Extending this result beyond the determinantal setting has remained a major difficulty. 

We introduce a new, conceptually clear method based on stochastic analysis of infinite-particle systems 
with long-range interactions that yields a rigorous construction of Coulomb interacting Brownian motions. 
A key ingredient is an explicit computation of the logarithmic derivatives 
of Coulomb random point fields.

%G]
%\keywords{infinite-dimensional stochastic differential equations; interacting Brownian motions; Coulomb random point fields; stochastic dynamics of infinite-particle systems; Ginibre random point fields; logarithmic derivative; Dirichlet forms
%\subclass{60K50, 82C22, 60B20,60J60}
 \end{abstract}

\maketitle

\textbf{keywords}: {Coulomb random point fields; stochastic dynamics of infinite-particle systems;
logarithmic derivatives of random point fields; infinite-dimensional stochastic differential equations
}

\bs\noindent 
\begin{flushright}
$ {}^*$ Chubu University, \texttt{osada "AT"fsc.chubu.ac.jp}\\
$ {}^\dagger$ Kagoshima University, \texttt{s-osada"AT"edu.kagoshima-u.ac.jp}
\end{flushright}

\maketitle

	\tableofcontents 
\bigskip

\bs\noindent 
\begin{flushright}
H.\,Osada,  Chubu University, \texttt{osada@fsc.chubu.ac.jp}\\
S.\,Osada, Kagoshima University, \texttt{s-osada@edu.kagoshima-u.ac.jp}
\end{flushright}

%-- Osa

%\begin{document}

	\tableofcontents 

\section{Introduction}\label{s:1}

The aim of this paper is to develop a theory of infinite-dimensional stochastic
differential equations (ISDEs) arising from long-range interacting particle systems.
We focus on the construction and characterization of strong solutions,
their pathwise uniqueness, and the relationship between labeled and unlabeled
dynamics within the framework of stochastic analysis on infinite-particle systems.
Moreover, we derive the limiting stochastic dynamics as the limit of
$ N $-particle systems described by solutions of finite-dimensional SDEs.

The Coulomb potential occupies a central position in mathematics and the natural sciences.
For $ d \geq 2 $, define $ \map{\Psi }{\Rd }{\R \cup \{ \infty \} }$ by 
\begin{align} \label{:10a} &
\Psi ( \x ) =
\begin{cases}
\dfrac{1}{ \daaa }\dfrac{1}{\vert \x \vert ^{ \daaa }} & d \ge 3 
,\\[6pt] 
 - \log \lvert \x \rvert &d = 2 
.\end{cases}
\end{align}
Then the gradient of $ \Psi $ has the simple form
\begin{align} & \notag 
 \nablaPsi ( \x ) = - \frac{ \x }{\vert \x \vert ^{ \da }}
.\end{align}
We call $ \Psi $ the $ d $-dimensional Coulomb potential.
The purpose of this paper is to construct the stochastic dynamics of
infinite-particle systems in $ \mathbb{R}^d $ interacting through $ \Psi $
at inverse temperature $ \beta > 0 $.

These dynamics are expected to satisfy an ISDE, whose prototypical form is
\begin{align}\label{:10c}&
X_t^{ i } - X_0^i = B_t^i + \frac{\beta }{2} \int_0^t 
\limiR \sum_{
\lvert X_u^{ i } - X_u^{ j } \rvert < \rR ,\ j \ne i 
} \frac{X_u^{ i } - X_u^{ j } }{\lvert X_u^{ i } - X_u^{ j } \vert ^d } 
du 
,\quad i\in\N 
,\end{align}
which we call the \textbf{Coulomb interacting Brownian motion}. 

%G[--- ---

Equilibrium states of the unlabeled dynamics of Coulomb interacting Brownian motion
are given by Coulomb random point fields (RPFs).

Coulomb RPFs are probability measures describing
unlabeled particle systems with interaction potential $ \Psi $.
They are obtained as limits of finite-particle systems whose labeled
distributions $ \widetilde{\mu }^{\nN } $ are given by the density
\begin{align}\label{:10e} 
\mN ( \mathbf{x} )
=
\frac{1}{\mathscr{Z} }
\exp \Big\{
- \beta \Big(
\sum_{i=1}^{\nN } \PhiN ( \x ^i )
+ \sumijN \PsiN ( \x ^i , \x ^j )
\Big)
\Big\}
,
\end{align}
where $ 0 < \beta < \infty $, $ \PhiN $ denotes a confining potential,
and $ \PsiN $ an interaction potential at the $ N $-particle level,
which may coincide with $ \Psi $.

%G]

%G[
We assume that these potentials converge to limiting potentials: 
\begin{align}\label{:10f}
\limi{\nN } \nablaPhiN (\x ) = \nablaPhi (\x ) , \qquad
\limi{\nN } \PsiN (\x , \y ) = \Psi (\xy )
.\end{align}
We refer to \As{A1} for the precise meaning of \eqref{:10f}. 
%G]
%G[ -- --

The $ \nN $-particle dynamics
$ \XN = ( X^{\nN , i} )_{i=1}^{\nN } $ 
are gradient dynamics associated with the Dirichlet form
$ \widetilde{\E}^N $ on $ L^2 (\widetilde{\mu }^{\nN } ) $,
which is defined by %\eqref{:10g}. 
\begin{align} &\notag %\label{:10g}&
\widetilde{\E}^N (f,g)
 =
\int_{(\Rd )^{\nN } }
\frac{1}{2} \sum_{i=1}^{\nN }
\big( \nablaxi f , \nablaxi g \big)_{\Rd }
\, \widetilde{\mu }^{\nN } ( d\mathbf{x} )
.\end{align}
Here we denote by $ ( \cdot , \cdot )_{\Rd } $ the Euclidean inner product on $ \Rd $. 

By integration by parts, we have
\begin{align}\notag
\widetilde{\E}^N (f,g)
 & =
-
\int_{(\Rd )^{\nN } }
 \frac{1}{2} \sum_{i=1}^{\nN }
\Big\{
 \Delta _{\x ^i } f
+ 
\big(
( \nablaxi \log \mN ) , \nablaxi f 
\big)_{\Rd }
\Big\} 
g \, \widetilde{\mu }^{\nN } ( d\mathbf{x} )
.\end{align}
See \ssref{s:ext} for the definition of Dirichlet forms used here.
%G]
%G[
By \eqref{:10e}, 
\begin{align*}&
 \nablaxi \log \mN (\bm{x} ) = 
 - \beta \Big( \nabla \PhiN (\x ^i ) + 
\sum_{ \substack{ j \ne i \\ 1 \le j \le \nN } }
\nablax \PsiN (\x ^i ,\xj ) \Big) 
.\end{align*} 
Consequently, $ \XN $ satisfies the following SDE in differential form:
\begin{align} \label{:10h}
dX_t^{\nN , i}
& = dB_t^{ i }
+ \frac{\beta }{2} 
\Big( - \nabla \PhiN ( X_t^{\nN , i} ) 
- 
\sum_{ \substack{ j \ne i \\ 1 \le j \le \nN } }
\nablax \PsiN ( X_t^{\nN , i} , X_t^{\nN , j} ) \Big) 
\, dt 
.\end{align}
Taking the limit $ \nN \to \infty $, we formally obtain the limiting ISDE:
\begin{align} \label{:ISDE}%
dX_t^{ i }
& = dB_t^{ i }
+ \frac{\beta }{2} 
\Big( - 
\nablaPhi ( X_t^{ i } ) 
+ 
\limi{\rR }
\sum_{\substack{ j \ne i \\ \lvert X_t^{ j } \rvert < \rR }}
\frac{ X_t^{ i } - X_t^{ j } }
{ \lvert X_t^{ i } - X_t^{ j } \rvert ^d }
\Big) 
\, dt ,
\quad  i \in \N 
.\end{align}

If, in addition, $ \mu $ is translation invariant, this equation reduces to the ISDE \eqref{:10c}.
Such multiple representations of the stochastic dynamics arise from the
long-range nature of the Coulomb interaction.

%G]

A striking feature of this problem is the exact correspondence between the ambient spatial dimension $ d $ and the exponent in the Coulomb potential. This coincidence is not accidental: it is the source of both the depth and the difficulty of the problem. On the one hand, it ensures that Coulomb random point fields (RPFs) are natural equilibrium distributions in any dimension $ d \geq 2 $; on the other hand, it makes the analysis of the associated ISDE \eqref{:ISDE} extremely delicate, due to the long-range nature and singularity of the Coulomb force.

To date, the only case where \eqref{:ISDE} has been solved is $ d=\beta=2 $, corresponding to the Ginibre RPF. In this special situation, the determinantal structure of the equilibrium measure provides powerful tools, including sharp hyperuniform estimates for particle number fluctuations. Beyond $ d=\beta=2 $, however, such determinantal methods break down, and no general construction of the dynamics has been available.

From the equilibrium viewpoint, the classical Gibbsian framework excludes Coulomb interactions because the potential is neither superstable nor integrable at infinity. The construction of Coulomb RPFs remained open for decades, until very recently it was resolved in general by Armstrong--Serfaty and Thoma \cite{a-s,thoma.24}. These works establish the existence of Coulomb RPFs for all $ d \geq 2 $ and $ \beta > 0 $. The present paper addresses the complementary problem: the construction of the corresponding stochastic dynamics, i.e., reversible diffusions with respect to these measures.

%G[ ------

Our approach is new, conceptually simple, and robust.

For the first time, it constructs a stochastic dynamics
for infinite-particle Coulomb systems in all spatial dimensions $ d \ge 2 $
and for all inverse temperatures $ \beta > 0 $ (\tref{l:12}).

Moreover, this dynamics is realized as a
\textbf{pathwise unique strong solution}
of the Coulomb ISDE \eqref{:ISDE}
with explicit coefficients
(\tref{l:13}).
This follows from the explicit representation of the logarithmic derivative of $ \mu $ 
(\tref{l:11}).

%G]

%G]

The central innovation of the method is the introduction of a new family of
analytic estimates for the logarithmic derivatives of Coulomb RPFs.
These estimates replace the determinantal and random matrix arguments that were
previously available only in the Ginibre case.

The solution $ \X $ is constructed as an $ \RdN $-valued diffusion process 
without relying on an invariant or reference measure,
while the associated unlabeled process is shown to be a reversible diffusion
with respect to the Coulomb RPF (\tref{l:1Y}).

%G]
%G[----2 

We derive the ISDE under two different thermodynamic limits.

One approach is based on finite-domain approximations, in which
finite-particle systems are connected through a two-step thermodynamic
limit (\tref{l:14}).

For each radius $ \rR $, we freeze the exterior configuration on
$ \oLSRc = \{ \lvert s \rvert > \rR \} $ and consider the $ \nN $-particle
gradient dynamics inside $ \SR = \{ \lvert s \rvert < \rR \} $ with normal reflection on $ \partial \SR $.
Denoting these finite-volume systems by $ \XRN $, we first take the limit
$ \nN \to \infty $ with $ \rR $ fixed to obtain an infinite-particle
dynamics $ \XR $ in $ \SR $, which is reversible with respect to the
corresponding conditional measure. We then let $ \rR \to \infty $ and prove
that $ \XR $ converges in law to $ \X $, where
$ \X $ is the unique strong solution to the Coulomb ISDE \eqref{:ISDE}.

%GTP[
The other approach is based on $ N $-particle approximations. 
In this approach, solutions $ \XN $ of the finite-dimensional SDE 
\eqref{:10h} converge in the path space to the solution $ \X $ of the ISDE (\tref{l:16}).

More precisely, we regard $ \XN $ as an $ \RdN $-valued process
by introducing frozen dummy particles for each $ i > \nN $,
which do not affect the dynamics.

In \tref{l:16}, we establish that
\begin{align*}
\X = \limi{\nN } \XN 
\quad \text{in law in } C ( [ 0 , \infty ) ; \Rd )^{ \N }
.\end{align*}

%G]

Although the second derivation is the most fundamental, it relies
on the first derivation,
on the convergence of a related family of Dirichlet forms,
and on a sandwich-type convergence theorem built upon these results.
The uniqueness of solutions to the ISDE is indispensable
for implementing the sandwich-type convergence.

These results justify the ISDE and provide a simulation scheme for the limiting stochastic dynamics.

%G]
%G[---
\smallskip

In \cite{o.Gin}, the vanishing of self-diffusion was proved for the Ginibre interacting Brownian motion.
The decisive mechanism behind this phenomenon is the number rigidity of the stationary RPF.
In contrast, in \cite{o.p}, it was proved that the self-diffusion coefficient is strictly positive for any potential in Ruelle's class with convex hardcore in dimensions $ d \ge 2 $.

Taken together, these results highlight a structural distinction between Coulomb interactions and Ruelle-type interactions.
With the present results in place, it is natural to formulate the following conjecture:

%G]

\smallskip 
\noindent \textbf{Conjecture.} 
For translation-invariant Coulomb interacting Brownian motions in two dimensions, the self-diffusion coefficient vanishes at every inverse temperature $\beta > 0$. 
In contrast, for every $d \ge 3$, we conjecture that Coulomb interacting Brownian motions are always diffusive, i.e., their self-diffusion coefficient is strictly positive for all $\beta > 0$. 

\smallskip 
%G[---

In \cite{thoma.23}, Thoma proved number rigidity for Coulomb RPFs in the case
$ d = 2 $, and established the absence of number rigidity for Coulomb RPFs
in all dimensions $ d \ge 3 $.
This dichotomy strongly suggests that the self-diffusion coefficient
is strictly positive in dimensions $ d \ge 3 $.

If confirmed, this phenomenon would vividly highlight the profound role
played by the long-range nature of the two-dimensional Coulomb potential,
and would firmly place Coulomb systems within the universality paradigm
of singular interacting diffusions.

%G]

\medskip
\noindent 
\textbf{Organization of the paper.} 
%\noindent 
The remainder of the paper is organized as follows.
In \sref{s:(}, we state our main results and discuss related work and positioning.
\sref{s:)} presents examples.
Sections \ref{s:2} and \ref{s:3} 
establish the existence of the logarithmic derivative of the equilibrium measure and provide its explicit representation. 
\sref{s:4} proves non-collision estimates for solutions of ISDEs.
Sections \ref{s:5}--\ref{s:7} introduce Dirichlet form techniques and construct weak solutions under both lower and upper schemes.
\sref{s:8} establishes the existence of unique strong solutions, while \sref{s:9} demonstrates the uniqueness of Dirichlet forms and completes the proof of our diffusion theorem. 
\sref{s:X} shows the convergence of finite-particle dynamics to the infinite-particle ISDE dynamics.
Finally, the Appendix (Section~\ref{s:A}) collects auxiliary results on configuration spaces, Dirichlet forms, and the general theory of ISDEs.

%G]--------------------------------------------------- 

\section{Setup and main theorems }\label{s:(}

We now formulate the problem in detail and state our main results (Theorems \ref{l:11}--\ref{l:16}). 

Throughout the paper, we occasionally attach labels directly to constants
to facilitate later references.
These labels are independent of equation numbering and are used consistently
whenever such constants reappear.

Moreover, when an unlabeled configuration
$ \sss $ is written together with points $ s^i $,
we implicitly use a labeled representation $ \sss = \sum_i \delta_{s^i} $.

%G[---
\subsection{Set up and logarithmic derivatives}
Let $ \SR = \{ s \in \Rd \, ;\, \vert s \vert < \rR \} $. 
Define the configuration space on $ \Rd $ by 
\begin{align} &\notag 
\sSS = \Bigl\{ \sss \, ;\, \sss = \sum_{i} \delta_{\si } , \sss (\SR ) < \infty \quad \text{ for all } \rR \in \mathbb{N} \Bigr\} 
.\end{align}
We equip $ \sSS $ with the vague topology, under which it becomes a Polish space, i.e., homeomorphic to a complete and separable metric space.

%GTP[
A function $ f:\sSS \to \mathbb{R} $ is called \emph{smooth} if $ f_{\Rs }^m $ is smooth on $ \SRm $ for all $ \rR,m \in \mathbb{N} $ and $ \sss $, 
where $ f_{\Rs }^m $ is the $ \SRm $-representation of $ f $ defined in \dref{d:fun}. 
It is called \emph{local} if $ f $ is $ \sigma[\piR] $-measurable for some $ \rR \in \mathbb{N} $, where 
$ \piA (\sss ) = \sss (\cdot \cap A ) $ for $ A \subset \Rd $. 
%GTP]
% 
We write
\begin{align}\notag
\db &= \{ f: \sSS \to \mathbb{R} \,;\, f \ \text{is $\mathscr{B}(\sSS )$-measurable and smooth}\}
,\\ \notag 
\dc &= \{ f \in \db \,;\, f \ \text{is local}\}
,\\ \notag 
\dbb &= \{ f \in \db \,;\, f \ \text{is bounded}\}, \quad 
\dcb = \{ f \in \dc \,;\, f \ \text{is bounded}\}.
\end{align}

A probability measure $\nu$ on $(\sSS,\mathscr{B}(\sSS ))$ is called an \emph{RPF} on $\Rd$. 
For an RPF $ \nu $ with the one-point correlation function $ \rho _{\nu }^1 $, we define 
the one-reduced Campbell measure $ \nuone $ of $ \nu $ by 
\begin{align} & \label{:10v}
\nuone (dx d\sss ) = \rho _{\nu }^1 (x) \nux (d\sss )dx 
,\end{align}
where $ \nux = \nu ( \cdot + \delta_x \vert \sss (\{ \x \}) \ge 1 ) $ is the reduced Palm measure of $ \nu $ conditioned at $ \x \in \Rd $. 
We set $ \Llocone (\nuone ) = \bigcap_{\rR =1}^{\infty} L^1 ( \nuone (\cdot \cap \SRSS ))$. We refer to \dref{d:corfun} for the definition of correlation functions. 

Let $ \dcbone := \CziRd \ot \dcb $ and $ \dbbone := \CziRd \ot \dbb $. 

\smallskip 
We now quote the concept of the logarithmic derivative $ \dnub $ of $ \nu $ \cite{o.isde}. 
\begin{definition}\label{d:11} \thetag{i}
The logarithmic derivative $ \dnuc $ of $\nu $ is an $ \Rd $-valued function such that 
$ \dnuc \in \Llocone (\nuone ) $ and that, for all $ f \in \dcbone $, 
\begin{align} \label{:10w} 
\int _{\RdSS } f (x,\sss ) \dnuc (x,\sss ) \nuone (dx d\sss ) = 
 - \int _{\RdSS } \nablax f (x,\sss ) \nuone (dx d\sss ) 
.\end{align}
\thetag{ii} 
 The $ \bullet $-logarithmic derivative $ \dnub $ of $\nu $ is the $ \Rd $-valued function defined similarly to the logarithmic derivative, but with $ \dcbone $ replaced by $ \dbbone $. 
\end{definition} 

 A $ \bullet $-logarithmic derivative is a logarithmic derivative because $ \dcbone \subset \dbbone$. Both logarithmic derivatives coincide, which follows from \lref{l:39}. 

Intuitively, the logarithmic derivative $ \dnu (\xs )$ represents the force received by the tagged particle at $ x $ from (infinitely many) other particles $ \sss = \sum_i \delta_{\si }$. 
Informally, $ \dnu $ is defined as the differential of the logarithm of the Hamiltonian if the potential gives the equilibrium state $ \nu $. 
One can easily calculate the logarithmic derivatives of canonical Gibbs measures with Ruelle's class potentials using the DLR equation \cite{o.isde}. 
Even if the DLR equation does not hold, $ \dnu $ can be well defined---this is the case for the Coulomb potentials. 

%G[
The logarithmic derivative $ \dnu $ of $ \nu $ plays a vital role because, when it exists, it allows us to derive the stochastic dynamics of an infinite-particle system with the reversible equilibrium distribution $ \nu $. This connection leads to the formulation of solutions $ \X = (\xX ^i )_{i\in \N }$ through ISDEs, underscoring the power of this concept in understanding complex systems \cite{o.isde,o.rm,os.ld}: 
\begin{align}\label{:10y}&
X_t^i - X_0^i = B_t^i + \half \int_0^t \dnu ( X_u^i , \sum_{j\ne i }^{\infty} \delta_{X_u^j} ) du 
,\quad i\in\N 
,\end{align}
where $ \{ B^i \} $ are independent standard Brownian motions. 

%G]

%G[

\subsection{Assumptions}
We begin by introducing the assumptions used throughout the paper. 

For $\epsilon > 0$, we define $ \Rde = \{ (\x , \y )\in \SR ^2 ; \vert \xy \vert > \epsilon \}$. 

\smallskip 

\noindent 
\As{A1} 
Let $\{ \ON \}$ be an increasing sequence of open sets with $\bigcup_N \ON = \Rd $. 

\noindent \thetag{i} 
Suppose $ \Phi \in C^2 (\Rd )$. 
For each $ \nN , \rR \in \N $, let $\Phi^N \in C_b^2(\ON)$ satisfy 
\begin{align}\label{:11e} &&
\Phi ( \x ) &= \limi{N} \PhiN ( \x ) 
,&\text{in $ C_b^2(\SR ) $}
,\\\label{:11E} &&
\Phi^N(\x) & = \infty 
, &\x \notin \ON 
,\\ \label{:11g} &&
\Delta \PhiN (\x ) & \ge 0 
,&\x \in \ON 
.\end{align}

\noindent \thetag{ii} 
Let $\Psi^N : (\Rd)^2 \to \R \cup \{ \infty \}$ satisfy, for each $ \rR \in \N ,\, \e > 0 $, 
\begin{align} \label{:11f}&&
 \Psi ( \xy ) &
= \limi{N} \PsiN ( \x , \y ) 
, %\quad 
&\text{in $ C_b^{2} (\Rde ) $}
,\\ \label{:11h} &&
\PsiN ( \x , \y ) & = 0 
,%\quad 
&( \x , \y ) \notin \ONtwo 
,\\ \label{:11i} &&
 \Delta_x \PsiN ( \x , \y ) &
= - \cref{;1a}^{\nN } \delta_{\y } ( \x ) + \psiN (\x , \y )
,%quad 
& ( \x , \y ) \in \ONtwo 
,\end{align}
where $ \Ct ^{\nN }\label{;1a}$ is a positive constant, $ \delta_{\y } $ is the delta measure at $ \y $, and $\psi^N \in C_b(\ONtwo ) $ satisfies, for some constant $\Ct \label{;32!}$, 
\begin{align} &\label{:11*} 
0 \le \psiN \le \cref{;32!} / \nN 
.\end{align}

%GTP[
\smallskip 

\noindent \As{A2} \thetag{i} 
The RPF $ \muN $ admits the labeled density given in \eqref{:10e},
where $ \beta > 0 $ is arbitrary and fixed throughout this paper.

\noindent \thetag{ii} 
There exist an $ \lzz > 0 $ and a constant $ \Ct \label{;A2} $ such that, 
\begin{align}\label{:11+}
\limsupN
\int_{ \sSS } \sss ( \SR ) \, \muN ( d \sss )  \le
\cref{;A2} \rR ^{ \lzz } ,\quad \rR \in \N 
.\end{align}

\noindent \thetag{iii} 
There exists a weakly convergent subsequence of $ \{ \mu^N \} $, denoted by $ \{ \mu^{\Nn} \} $, with limit $ \mu $ such that 
\begin{align}
 \label{:11n}
& \mu \bigl( \{ \sss (\Rd ) = \infty \} \bigr) = 1
.\end{align}

%GTP]]
%GTP[[

\begin{remark}\label{r:A2}
\noindent \thetag{i}
From \eqref{:11+}, the family $\{\muN \}_{\nN \in\N }$ is tight, and 
\begin{align} \label{:11)} & 
\int_{\sSS} \sss(\SR)\, \mu(d\sss ) \le 
\cref{;A2} \rR ^{ \lzz } ,\quad \rR \in \N 
.\end{align}
This estimate is used in the construction of the logarithmic derivative of $\mu$.

\noindent \thetag{ii} 
Assumption \eqref{:11n} merely excludes the trivial finite-particle case. 

\noindent \thetag{iii}
We do not assume \emph{a priori} that $\mu$ admits a density or correlation functions;
these properties follow from the above assumptions (see \pref{l:2'}).
\end{remark}

%GTP]]

\medskip

For a function $ f ( x ) $, we set $ f^{(\mathbf{i})} = \partial ^k f / \partial \x ^{\mathbf{i}}$ for $ \mathbf{i} \in \mathbf{I}(k)$, where 
\begin{align}& \label{:11l} 
\mathbf{I} (k) = \{ \mathbf{i} = (i_1,\ldots,i_d) \in ( \zN )^d \, ;\, i_1+\cdots+i_d= k \} 
\end{align}
and $ \x ^{\mathbf{i}} = x_1^{i_1}\cdots x_d^{i_d}$ for $ x = (x_1\ldots,x_d) \in \Rd $. 
Let $ \IIIl = \sqcup_{k = 0 }^{ l } \mathbf{I}( k ) $ and 
\begin{align}\label{:11L}&
 \mathscr{S}_{\rR }^l = 
\{ \x \in \SR ; \det [ \, \xji \, ]_{\mathbf{i},\mathbf{j} \in \IIIl } \not= 0 \}
, \ \xji = (\x _1^{ j_1}) ^{i_1} \cdots (\x _d ^{ j_d}) ^{i_d} 
.\end{align}

We introduce the following assumption. 

When $ \PsiN = \Psi $, \As{A3} below holds automatically. 
Indeed, \eqref{:11q}, \eqref{:11o} and \eqref{:11p} are obvious, and 
\eqref{:11j} follows from \lref{l:3@}. 
\smallskip

\noindent \As{A3} 
%G[---
For the subsequence $ \Nn $ in \As{A2} \thetag{iii}, 
there exists an $ \lz \in \N $ satisfying $ \lz > \lzz - d $ 
such that the following hold:
\smallskip

%G]
\smallskip

\noindent \thetag{i} 
For each $ \mathbf{i} \in \mathbf{I}( \lz + 1 ) $, $ \rR \in \N $, and $ \epsilon > 0 $, 
\begin{align}\label{:11q}&
\Psi ( \xy ) = \limin \PsiNn ( \x , y ) \quad \text{ in $ C_b^{\lz + 2} ( \Rde ) $}
,\\ &\label{:11j} 
	 \int_{\sSS } \Big(
	\sum _{\si \not\in \sS _{\rR + \e }}^{\infty} 
	 \sup_{\n \in \N } 
\big\lVert (-\nablax \PsiNn )^{(\mathbf{i})} ( \x , \si ) \big\rVert _{\SIXONn }
\Big)
 \mu (d \sss ) < \infty 
.\end{align}
\noindent \thetag{ii} 
For each $ \mathbf{i} \in \mathbf{I}( \lz + 1 ) $, $ \rR \in \N $, and $ \mu $-a.s.\,$ \sss $, 
\begin{align} \label{:11o}&
\limin 		
\sum _{\si \not\in \oLSR }^{ \Nn } 
 \Big\lVert 
( \nablaPsi )^{(\mathbf{i})} ( \x - \si ) - 
(\nablax \PsiNn )^{(\mathbf{i})} ( \x , \si ) \Big\rVert_{\6 } = 0 
,\\\label{:11p} & 
\limin 
\sum _{\si \not\in \oLSR }^{ \Nn } 
\Big( 
 \nablaPsi (\x - \si ) - 
\nablax \PsiNn (\x , \si ) \Big) = 0 \ \text{ for some } 
 \xjX 
.\end{align}

\smallskip

These three assumptions are sufficient to obtain an explicit formula for the coefficient of the ISDE arising from $ \mu $ and to construct the stochastic dynamics as the unique, strong solution of the ISDE. 
To obtain the ISDE in translation-invariant form, we further assume the following: 

\smallskip 
\noindent \As{A4} 
The following convergence holds in $ L_{\mathrm{loc}}^2(\muone )$, for $ \muone $-a.e.\,$ ( \xs ) $, or in 
$ C^1 (\oLSRse ) $ for $ \mu $-a.s.\,$ \sss $ and $ \e > 0 $: 
\begin{align}\label{:11s}&
\limiR \sum_{i} \Big( 1_{\SR } (\si ) - 1_{\SR } (\xsi ) \Big) \nablaPsi (\xsi )
= \nablaPhi (x) 
.\end{align}
Here $ \oLSRse = \{ \x \in \oLSR \,;\, \lvert \xsi \rvert \ge \e \text{ for all } i \} $. 

\begin{remark}\label{r:A3}
The assumptions imposed on the limit RPF are mild. 
We believe that they allow the construction of many examples of RPFs with Coulomb interactions to which our theorems apply.
\end{remark}

%G]

\smallskip
\noindent
\textbf{Unlabeled and labeling maps. }
Let
$ \map{\ulab }{\big( \sqcup_{k=0}^{\infty} (\Rd )^k \big) \sqcup \RdN }{\mathscr{M}(\Rd ) }$
be the unlabeled map defined by
\begin{align} \label{:02v}
\ulab \big( (x^i)_i \big) = \sum_i \delta_{x^i}
,\end{align}
where $ \mathscr{M}(\Rd ) $ is the set of all measures on
$ (\Rd , \mathscr{B}(\Rd ) ) $.
If $ k = 0 $, we set $ \ulab (x) = \mathsf{0} $ for $ x \in (\Rd )^0 $ by convention,
where $ \mathsf{0} $ denotes the zero measure.

%G]%G[

A label $ \map{\lab }{\sSS \backslash \{ \mathsf{0} \} }{(\sqcup_{k=1}^{\infty} ( \Rd )^k) \sqcup \RdN } $ is a measurable map such that
\begin{align} \notag & 
\ulab \circ \lab = \mathrm{id} 
.\end{align}

We write $ \lab ( \sss ) = ( \labi ( \sss ) )_i $. 
Throughout this paper, we fix a label such that the absolute values of particles are non-decreasing:
\begin{align}\label{:02x}&
\text{$ \vert \labi ( \sss ) \vert \le \vert \lab ^{i+1} ( \sss ) \vert $ for all $ i $ and 
$ \sss \in \sSS \backslash \{ \mathsf{0} \} $}
.\end{align}
By \eqref{:11n}, we have $ \lab ( \sss ) \in \RdN $ for $ \mu $-a.s.\,$ \sss $. 
For $ \sss \in \sSS $, we often write $ \lab ( \sss ) = ( s^i )_i $. 
For $ \yy \in \sSS $, we define $ \yyN = \mathsf{0} $ if $ \yy = \mathsf{0} $ and, for $ \yy \ne \mathsf{0} $, 
\begin{align}\label{:02y}& 
\yyN = \sum_{i=1 }^{ \yy ( \Rd ) \wedge \nN } \delta_{\labi ( \yy )} 
.\end{align}
%G]
%GTP[

\noindent 
{\bf Conditional RPFs and couplings. } 
Let $ \oLSRc = \{ \lvert s \rvert > \rR \} $ as before. 
Let $ \muRyyNN $ be the regular conditional probability of $ \muN $ given by 
\begin{align}\label{:02z}&
\muRyyNN = \muN ( \cdot \mid \pioLRc ( \sss ) = \pioLRc ( \yyN ) ) 
.\end{align}

From \As{A1} and \As{A2}, the labeled density $ \mN $ is continuous on $ \ON ^N $. 
Hence, $ \muRyyRN $ is well defined for all $ \yy \in \sSS $. 
Moreover, $ \muRyyNN $ admits the $ k $-density functions $ \mRyy ^{\nN , k} $. 
For $ \mu $-a.s.\,$ \yy $, the sequence $ \{ \yyN \}_{ \nN \in \N } $ defines a coupling for the family of RPFs $ \{ \muRyyRN \}_{ \nN \in \N } $. 
%GTP]

\subsection{Explicit formulas for logarithmic derivatives: \tref{l:11}}
In this subsection, we present explicit formulas for the logarithmic derivatives.
This result is crucial for proving the uniqueness of solutions
and the existence of strong solutions to the ISDE.

Let $  \mathbf{I}(k) $ as in \eqref{:11l}. Let $ \lz $ be as in \As{A3}. For $ \mathbf{i}\in \mathbf{I}(\lz + 1) $, let 
\begin{align} \label{:11d}
A_{\mathbf{i}, \sss } (\x )
&=
\sum_{\si \in \oLSRc }^{\infty} 
\int_0^1 
(-\nabla \Psi )^{(\mathbf{i})}( t \x - \si )
(\ell + 1 )(1-t)^{\ell }\, dt
.\end{align}
Here we use the integral form of the Taylor expansion of $ -\nabla \Psi (\xsi ) $ around $ \x = 0 $. 
By \eqref{:11j}, the infinite sum in \eqref{:11d} converges absolutely in $ \6 $ for $ \mu $-a.s.\,$ \sss $ 
(see \pref{l:35}). 
We define the function $ \Rsl \in \6 $ by
\begin{align}\notag 
\Rsl (\x )
&=
\sum_{\mathbf{i}\in \mathbf{I}(\lz + 1)}
\xiGamma A_{\mathbf{i}, \sss } (\x ) 
.\end{align}
Let $ \CRi $ be the constant in \eqref{:36a}. 
Let $  \III = \sqcup_{k=0}^{\lz } \mathbf{I}(k)$. Let 
\begin{align*}&
\oLSQse = \{ \x \in \oLSQ \,;\, \lvert \xsi \rvert \ge \e \text{ for all } i \} , \quad 
\mathbb{R}^d_{\neq}(\sss ) = \cap_i \{ x \in \mathbb{R}^d \,;\, x \ne \si \}
\end{align*}
Let $ \muone $ be the one-reduced Campbell measure of $ \mu $. 
\begin{theorem}[Explicit formulas for $ \dmu $]\label{l:11}
%\noindent 
\thetag{i}
Assume \As{A1}--\As{A3}. Then $ \mu $ admits a $ \bullet $-logarithmic
derivative $ \dmu \in \Lloctwo ( \muone ) $, which is given by, for each
$ \rR \in \mathbb{N} $ and for $ \muone $-a.e.\ $ (\xs ) \in \SR \times \sSS $,
\begin{align}\label{:11b}&
\dmu ( \xs ) =
\beta \Big(
- \nablaPhi ( \x )
+ \sum_{\si \in \oLSR } \frac{ \xsi }{ \lvert \xsi \rvert^{\da } }
+ \1 \CRi \x ^{\mathbf{i} }
+ \Rsl ( \x )
\Big)
.\end{align}

\noindent
\thetag{ii}
Assume \As{A1}--\As{A3}. Then, for $ \mu $-a.s.\,$ \sss $, each 
$ \qQ \in \N $, $ \mathbf{i}\in \III $, and $ \e > 0 $, 
\begin{align}&\label{:11!} 
\limiR \CRi = 0 , \quad % \mathbf{i}\in \III 
\limiR \big\lVert \Rsl \big\rVert_{\7 } = 0
,\\ \label{:11"} &
\dmu (\xs ) =
\beta \Big(
- \nablaPhi (\x )
+ \limiR \sum_{\si \in \oLSR } \frac{\xsi }{\lvert \xsi \rvert^{\da }}
\Big)
\quad \text{in $ \SEVEN $}
\end{align}
and $ \dmu (\xs ) $ admits a $ \muone $-version that is locally Lipschitz in $ x $ on $ \Rd _{\neq }(\sss ) $. 

\noindent
\thetag{iii}
Assume \As{A1}--\As{A4}.
Then $ \dmu $ can be expressed as
\begin{align}\label{:11c}
\dmu (\xs )
=
\limiR \beta \sum_{ \lvert \xsi \rvert \le \rR } \frac{\xsi }{\lvert \xsi \rvert^{\da }}
.\end{align}
The convergence in \eqref{:11c} is understood in the sense specified by \As{A4}.
\end{theorem}
%G]

%G]
 \begin{remark}\label{r:11} 
By construction, $ \CRi $ and $ \Rsl $ are independent of $ \pioLR (\sss ) $. 
\end{remark}

The existence of the logarithmic derivative $ \dmu $ yields a natural $ \mu $-reversible diffusion 
\cite{os.ld}. 
Using $ \dmu $, the ISDE for the labeled dynamics $ \X = (\xX ^i )_{i\in\N }$ associated 
with the $ \sSS $-valued diffusion $ \XX _t = \sum_i \delta_{\xX _t^i}$ can be written as in \cite{o.isde}: 
\begin{align}\label{:11x}& 
X_t^i - X_0^i
=
B_t^i
+ \half \int_0^t 
\dmu \Big( X_u^i , \sum_{j\ne i}^{\infty} \delta_{X_u^j} \Big)\, du
,\quad i \in \mathbb{N}
.\end{align}

From \eqref{:11"}, for each $ i \in \N $, the ISDE \eqref{:11x} can be rewritten as 
\begin{align}& \notag
X_t^i - X_0^i =
B_t^i
-\halfbeta \int_0^t \nablaPhi (X_u^i )\, du
+ \halfbeta \int_0^t 
\limiR \sum_{\substack{ \lvert X_u^j \rvert \le \rR \\ j\ne i }}
\frac{\Xij }{\lvert \Xij \rvert^{\da }}\, du
,\end{align}
and if \As{A4} holds, then \eqref{:11c} yields 
\begin{align}& \notag
X_t^i - X_0^i
=
B_t^i
+ \halfbeta \int_0^t 
\limiR \sum_{\substack{ \lvert \Xij \rvert \le \rR \\ j\ne i }}
\frac{\Xij }{\lvert \Xij \rvert^{\da }}\, du
,\quad i \in \N 
.\end{align}

Note that the drift coefficient $ \half \dmu (\xs ) $ is not defined on all of $ \RdSS $. 
A key point is therefore to ensure that the dynamics $ \X $ a.s.\,avoid the singular set, 
and to control the long-range divergence coming from the non-integrability of $ \nablaPsi $ at infinity. 
These issues are central in making the above ISDE formulations rigorous.

\subsection{Existence of weak solutions: \tref{l:12}}\label{s:25}
In this subsection, we state a generalized ISDE and present the corresponding weak solutions.

Let $ \map{\aaaa }{\Rd }{(\R )^{d^2}}$, where $ \aaaa = ( \aaaa _{kl})_{k,l=1}^d $, be a uniformly elliptic, bounded, symmetric matrix-valued function such that $ \aaaa \in C_b^2 (\Rd )$. 
Assume that there exists a constant $ \Ct \label{;12} \ge 1 $ satisfying
\begin{align}\label{:12o}
\cref{;12} ^{-1}\lvert \xi \rvert ^2 
\le \sum_{k,l=1}^d \aaaa _{kl} \xi_k \xi_l 
\le \cref{;12} \lvert \xi \rvert ^2 
, \quad \xi \in \Rd 
.\end{align}
Let $ \sigma = (\sigma _{k,l}) _{k,l=1}^d \in C_b^1(\Rd )$ be a matrix such that $ \sigma^t \sigma = \aaaa $. Define
\begin{align}\label{:12p}
 \mathrm{div\,} \aaaa = \Big( \sum_{k=1}^{d} \PD{\aaaa _{kl}}{x_k} \Big)_{l=1}^d ,\quad 
\bbb = \half \{ \aaaadmu \} 
.\end{align}

%G[---

We consider the following elliptic-type ISDE, which generalizes \eqref{:11x}:
\begin{align}\label{:12q}
X_t^i - X_0^i
=
\int_0^t \sigma (X_u^i)\, dB_u^i
+ \int_0^t \bbb \Big( X_u^i , \sum_{j \ne i}^{\infty} \delta_{X_u^j} \Big)\, du
,\quad i \in \N 
.\end{align}
Here, $ B^i = (B_1^{i},\ldots,B_d^{i}) $ is a $ d $-dimensional Brownian motion, and
\begin{align}\notag
\int_0^t \sigma (X_u^i)\, dB_u^i
=
\Big( \sum_{l=1}^d \int_0^t \sigma _{k,l} (X_u^i)\, dB_l^{i} (u) \Big)_{k=1}^d
.\end{align}

%G]
\smallskip

Loosely speaking, a pair $(\X ,\B )$ of an $ \RdN $-valued continuous process
$ \X =(X^i)_{i\in\N }$ and an $ \RdN $-valued $ \{\mathscr{F}_t\}$-Brownian motion
$ \B =(B^i)_{i\in\N }$, defined on a filtered probability space
$(\Omega,\mathscr{F},\{\mathscr{F}_t\},P)$, is called a weak solution of \eqref{:12q}
if \eqref{:12q} holds $P$-a.s. 
A weak solution $(\X ,\B )$ is called a strong solution if, in addition, $ \X $ is a measurable function of $ \B $. 
See \ssref{s:IFC} for precise definitions.

For $ \X = (X^i)_{i\in\N }$, the associated $ m $-labeled process $ \Xm $ is
\begin{align}\label{:12u}
 \Xm _t=(X_t^1,\ldots,X_t^m,\sum_{j>m}\delta_{X_t^j})
.\end{align}
We set $ \X _t^{[0]} = \sum_{i\in\N } \delta_{X_t^i} $ and $ \mu ^{[0]} = \mu $. 
We also write $ \XX _t = \sum_{i\in\N } \delta_{X_t^i} $. 

%G[---
Let $ \mum $ be the $ m $-reduced Campbell measure of $ \mu $. 
If the $ m $-point correlation function $ \rho^m $ of $ \mu $ exists, then
\begin{align}\label{:12y}
\mum = \rho^m (\mathbf{x} )\, \mu_{\mathbf{x}} (d\sss )\, d\mathbf{x}
,\end{align}
where $ \mu _{\mathbf{x}} = \mu (\cdot + \sum_{i=1}^m \delta_{x^i}\vert \sss (\{ x^i \}) \ge 1 ,\, 1 \le i \le m )$ is the $ m $-reduced Palm measure conditioned at $ \mathbf{x} = (x^1,\ldots,x^m) \in (\Rd )^m $. 
Even if $ \mu $ does not admit an $ m $-point correlation function, the $ m $-reduced Campbell measure can be defined by localization (see \eqref{:Camp} and \cite{Kal}). 

\begin{theorem}[Existence of weak solutions] \label{l:12}
Assume \As{A1}--\As{A3}. 
Then ISDE \eqref{:12q} has a weak solution $ \X = (X^i)_{i\in\N }$ starting at $ \mathbf{s} = \lab (\sss ) $ for $ \mu $-a.s.\,$ \sss $. 
Moreover, for each $ m \in \zN $, the associated $ m $-labeled process $ \Xm $ is a $ \mum $-symmetric and conservative diffusion.
\end{theorem}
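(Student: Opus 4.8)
Following the Dirichlet form approach of \cite{o.isde,os.ld}, the plan is to realize the unlabeled dynamics as a $\mu$-symmetric diffusion, and then lift it to a labeled trajectory solving \eqref{:12q}. The only structural input beyond \As{A1}--\As{A3} is the explicit $\bullet$-logarithmic derivative $\dmub\in\Lloctwo(\muone)$ of \tref{l:11}; by \cite{os.ld} the mere existence of $\dmub$ already suffices to construct such a $\mu$-reversible diffusion and a weak solution of \eqref{:12f}, so here we only carry out the (essentially parallel) extension to the elliptic coefficient $\aaaa$.

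First I would introduce, on $L^2(\sSS,\mu)$, the bilinear form
\begin{align*}
\Emu (f,g) = \half \int_{\sSS} \sum_i \aaaa(s^i)\, \nabla_i f \cdot \nabla_i g \, d\mu , \qquad f,g \in \dcb ,
\end{align*}
where $\nabla_i$ differentiates in the $i$-th particle variable. By \tref{l:2'} the measure $\mu$ admits local density and correlation functions, so the reduced Campbell measures $\mum$ of \eqref{:12y}--\eqref{:12z} are well defined and the integration-by-parts relation \eqref{:10w} for $\dmub$ propagates to them; this yields closability of $(\Emu,\dcb)$, since the $\mu$-adjoint of $\nabla_i$ on local smooth functions is $-\nabla_i-\aaaa\,\dmub(s^i,\cdot)-(\nabla\aaaa)(s^i)$, leaving the closed form with trivial kernel. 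Write $(\Emu,\dom)$ for the closure. Using the lower and upper density bounds for $\mu$ derived from \As{A1}--\As{A3} (which encode the quasi-Gibbs property of the Coulomb field), together with the increasing exhaustion $\{\Ki [\ar ]\}_{\rr\in\N}$ of \eqref{:12"} and the cutoffs $\piR$, one verifies that $(\Emu,\dom)$ is a quasi-regular local Dirichlet form; conservativeness follows from the polynomial growth bound \eqref{:11)} by approximating $1$ with local functions $\chi_{\rR}$ and controlling $\Emu(\chi_{\rR},\chi_{\rR})$. By the general theory of quasi-regular Dirichlet forms (cf.\,\cite{o.isde,os.ld}) there is then a conservative $\mu$-symmetric diffusion $\XX=(\XX_t,\{P_\sss\})$ properly associated with $(\Emu,\dom)$; this $\sSS$-valued process is the case $m=0$, since $\X_t^{[0]}=\XX_t$ and $\mu^{[0]}=\mu$.

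Next I would lift $\XX$ to a labeled process. The configurations with a multiple point form an $\Emu$-exceptional set (non-collision of the coordinates, via the estimates of \sref{s:4}), and so does the set of paths leaving every $\SR$ in finite time (non-explosion, via the capacity bounds from the lower density bound combined with the decay \eqref{:11!} of $\CRi$ and $\Rsl$). Hence, for $\mu$-a.s.\,$\sss$, the path $t\mapsto\XX_t$ started at $\sss$ stays simple and locally finite, so the fixed label $\lab$ of \eqref{:12x} yields a continuous $\RdN$-valued process $\X=(X^i)$ with $\X_0=\lab(\sss)$. Applying the Fukushima decomposition to the coordinate functions (regularized on each $\SR$ and patched across $\rR$) gives, for each $i$, a continuous martingale additive functional representable, using uniform ellipticity \eqref{:12o}, as $\int_0^t\sigma(X^i_u)\,dB^i_u$ for an $\{\mathscr{F}_t\}$-Brownian motion $B^i$ (the family $(B^i)_i$ being jointly Brownian because distinct tagged particles contribute independently), plus a bounded-variation additive functional whose density is identified through \eqref{:10w} and the explicit formula \eqref{:11b} as the drift $\bbb(X^i_u,\sum_{j\ne i}^{\infty}\delta_{X^j_u})$ of \eqref{:12q}; this establishes \eqref{:12q}. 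For general $m$, $\mum$ is by \eqref{:12y}--\eqref{:12z} the disintegration of $\mu$ at $m$ tagged points, and the same energy form with $m$ extra tagged coordinates is symmetric on $L^2(\RdmSS,\mum)$ with generator consistent with $\XX$; its associated diffusion is $\Xm$, which is therefore $\mum$-symmetric and, by the conservativeness above, conservative.

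The hardest part will be the lifting step, namely proving that the collision set and the explosion set are genuinely $\Emu$-exceptional. Collisions are handled by \sref{s:4}, but since $\nabla\Psi$ is not integrable at infinity the standard capacity--exhaustion argument for non-explosion of a tagged particle does not apply verbatim: it must be combined carefully with the explicit decay \eqref{:11!}, because a priori infinitely many far-away particles could conspire to push a tagged particle to infinity in finite time. Both the conservativeness estimate and the control of the drift near infinity thus rest on the explicit formulae of \tref{l:11} rather than on abstract Dirichlet form theory alone.
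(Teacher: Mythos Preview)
Your overall strategy---build a quasi-regular Dirichlet form, run the associated diffusion, and identify the drift via Fukushima decomposition---matches the paper's. The paper, however, organizes the construction differently and avoids two of the difficulties you flag.

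First, the paper does not start from the unlabeled form and lift. It works directly with the $m$-labeled upper Dirichlet forms $(\Emum,\dOmum)$ on $L^2(\mum)$ (\lref{l:57}), which are already quasi-regular by \tref{l:QR3}. That quasi-regularity needs only closability (from the logarithmic derivative, \lref{l:55}) and the first-moment bound \eqref{:11)}; no quasi-Gibbs density bounds are invoked, so your appeal to ``lower and upper density bounds \ldots\ which encode the quasi-Gibbs property'' is unnecessary and not what \As{A1}--\As{A3} provide. The $m$-labeled diffusion $\oLXm$ is then obtained directly (\tref{l:6B}), is automatically $\mum$-symmetric, and the fully labeled $\oLX$ is assembled from the consistency $\oLPm{\labm(\sss)}=\oL{P}^{\mu}_{\sss}\circ(\lpath^{[m]})^{-1}$ (\lref{l:77}) via Kolmogorov extension (\tref{l:7B}).

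Second, your ``hardest part''---non-explosion of a tagged particle in the presence of the non-integrable Coulomb drift---is handled in the paper without any use of the decay \eqref{:11!} or a capacity--exhaustion argument. The Lyons--Zheng decomposition \eqref{:6Be} writes $\oLxX^i(t)-\oLxX^i(0)$ as a sum of two martingale integrals with bounded integrand $\sigma$; since $\sigma$ is bounded (\eqref{:12o}), this forces finiteness at any finite time, hence $\zeta_{\mathrm{tp}}=\infty$. Conservativeness of the unlabeled component is then read off from the quasi-regular nest (\eqref{:6Bh}--\eqref{:6Bj}). Non-collision of labeled particles is exactly \tref{l:44}, the direct It\^o-formula estimate of \sref{s:4}, not a capacity statement. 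So the explicit formula \eqref{:11b} enters only to identify the drift in the Fukushima decomposition (your step is correct there), not to control explosion.
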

% 

%G]

%G[

\subsection{Unique strong solutions: \tref{l:13}}

We next turn to the construction of unique strong solutions to the ISDE \eqref{:12q}. 
In \tref{l:13}, we establish the existence of strong solutions and prove their pathwise uniqueness under suitable conditions. 
To this end, we introduce the notion of the tail decomposition of the underlying RPF. 

%G[

Let $ \mathscr{T} ( \sSS ) $ be the tail $ \sigma $-field defined by
\begin{align}\notag %\label{:13h}
\mathscr{T} ( \sSS )
=
\bigcap_{ \rR = 1 }^{ \infty }
\sigma [ \piRc ]
.\end{align}
Let $ \mua = \mu ( \, \cdot \mid \mathscr{T} ( \sSS ) ) ( \mathsf{a} ) $
denote the regular conditional probability of $ \mu $
given $ \mathscr{T} ( \sSS ) $.
Then 
\begin{align} \notag %\label{:13i}
\mu
=
\int_{ \sSS } \mua \, \mu ( d \mathsf{a} )
.\end{align}
%G]

\begin{definition}[Tail decomposition]\label{d:15}
An RPF $ \mu $ is said to admit a tail decomposition if there exists a set 
$ \SSz \in \mathscr{B}(\sSS ) $ such that $ \mu (\SSz ) = 1 $ and, for all $ \mathsf{a} \in \SSz $, the following hold:
\begin{align}
\label{:13p}&
 \mua (\mathsf{A} ) \in \{ 0,1 \} 
 \quad \text{for all } \mathsf{A} \in \mathscr{T}(\sSS ),
\\
\label{:13q}&
 \mua ([\aaa ] ) = 1 ,
 \quad \text{where } 
 [\aaa ] = \{ \bb \in \SSz \,;\, \mua = \mub \},
\\
\label{:13r}&
 \mua \perp \mub \ \text{ on } \mathscr{T}(\sSS ) 
 \quad \text{whenever } \mua \ne \mub .
\end{align}
\end{definition}

This decomposition provides a natural framework for the construction of strong solutions and the proof of pathwise uniqueness.

\begin{remark}\label{r:13}
\noindent \thetag{i}
Let $ \SSz $ be as in Definition~\ref{d:15} and $ \aaa \in \SSz $. 
If $ \mathsf{A} \in \mathscr{T}(\sSS ) $ satisfies $ \aaa \in \mathsf{A} $, then $ \mua (\mathsf{A}) = 1 $ by \eqref{:13q}.

\noindent \thetag{ii}
As in \cite[Lem.\,14.2]{o-t.tail}, one can prove that $ \mu $ admits a tail decomposition.
\end{remark}

\medskip

Let $ \CRd = C([0,\infty); \Rd ) $. 
We equip $ \CRd $ with the topology of uniform convergence on compact time intervals and endow $ \CRdN $ with the corresponding product topology. 
With this topology, $ \CRdN $ is a Polish space.
Let
\begin{align}\notag
\SSsi
&=
\{ \sss \in \sSS \,;\, \sss (\Rd ) = \infty ,\ 
\sss (\{ s \}) \in \{ 0 , 1 \} \text{ for all } s \in \Rd \},
\\
\notag
\WSsi 
&=
\{ \ww \,;\, \ww \text{ is an } \SSsi\text{-valued continuous path on } [0,\infty) \}.
\end{align}
%G[
For $ \w = ( w^i )_{ i = 1 }^{ \infty } \in \CRdN $,
define the unlabeled path $ \ww := \upath (\w )$ by 
$ \ww _t = \upath (\w )_t 
=
\sum_{ i = 1 }^{ \infty } \delta_{ w_t^i }$. 
We set
\begin{align} \label{:13v}&
\WSsiNE = \WSsi \cap \upath (\CRdN ) 
,\\
\label{:13u}&
\IRT (\w )
=
\sup \bigl\{
i \in \mathbb{N}
\,;\,
\min_{t\in[0,T]} \lvert w^i(t) \rvert \le \rR
\bigr\}.
\end{align}
%G]
%G[

For an $ \RdN $-valued process $ \X = (X^i)_{i\in\N }$, let $ \XX = \upath (\X )$ denote the associated unlabeled process. If $ \XX \in \WSsiNE $, then $ \IRT (\X ) $ is well defined.
%G]

We impose the following conditions, from basic state-space regularity
to absolute continuity with respect to a reference measure $\nu$. 
\smallskip

\noindent 
\As{SIN} $ P ( \XX \in \WSsiNE ) = 1$. \\
\As{NBJ} $ P (\IRT (\X ) < \infty ) = 1 $ for each $ R , T \in \mathbb{N}$. \\
\As{AC}$_{\nu }$ $ P \circ \XX _t^{-1}\ll \nu $ for all $ 0 < t < \infty $.

\smallskip

Condition \As{SIN} ensures that the dynamics remain in the admissible configuration space.
Condition \As{NBJ} guarantees that, within any finite time interval, only finitely many particles enter a given bounded region.
Condition \As{AC}$_\nu$ ensures absolute continuity of the time marginals with respect to the reference measure $\nu$.

%G[
In addition, we impose two further conditions.
The condition \As{MF} is a mild measurability assumption,
whereas \As{IFC} is a substantive condition whose formulation
requires the finite-dimensional SDE scheme introduced later.
They are introduced in \ssref{s:US} and \ssref{s:IFC}, respectively. 
Precise definitions of strong solutions, unique strong solutions,
and pathwise uniqueness are given in Definitions~\ref{d:US4}--\ref{d:US6}.
%G]

The following \tref{l:13} establishes the existence and uniqueness of a
strong solution of \eqref{:12q} in the sense of \dref{d:US6}.
\begin{theorem}[Unique strong solutions]\label{l:13}
Assume \As{A1}--\As{A3}. Then the following hold. 

\noindent \thetag{i} 
%G[
For $ \mu $-a.s.\ $ \mathsf{a} $ and for $ \mua $-a.s.\ $ \sss $, 
the ISDE \eqref{:12q} admits a unique strong solution $ \X $ starting at $ \mathbf{s} = \lab ( \sss ) $ 
under the constraints \As{SIN}, \As{NBJ}, \As{AC}$_{\mua }$, \As{MF}, and \As{IFC}.
%G]

\noindent \thetag{ii}
For each $ m \in \zN $, the associated $ m $-labeled process $ \Xm $ is a $ \muam $-symmetric and conservative diffusion.
\end{theorem}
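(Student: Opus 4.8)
The plan is to take the weak solution produced by \tref{l:12}, verify on each tail-trivial component of $\mu$ that it satisfies all five constraints \As{SIN}, \As{NBJ}, \As{AC}$_{\mua}$, \As{MF}, and \As{IFC}, and then to invoke the general theory of ISDEs under these constraints (recalled in \ssref{s:IFC} and collected in \sref{s:A}; see also \cite{o-t.tail,k-o-t.ifc}) to upgrade it to a pathwise-unique strong solution. The one genuinely new ingredient is \tref{l:11}: the explicit representation of $\dmub$ and, above all, the existence of a $\muone$-version of $\dmu(\xs)$ that is locally Lipschitz continuous in $x$ on $\mathbb{R}^d_{\neq}(\sss)$.

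First I would fix the tail decomposition $\mu=\int\mua\,\mu(d\mathsf{a})$ of \dref{d:15} (which exists by \rref{r:13}\thetag{ii}) and record the compatible decomposition $\mum=\int\muam\,\mu(d\mathsf{a})$ of the reduced Campbell measures. By \tref{l:12} there is a weak solution $(\X,\B)$ of \eqref{:12q} starting from $\mathbf{s}=\lab(\sss)$ for $\mu$-a.s.\ $\sss$ whose $m$-labeled processes $\Xm$ are $\mum$-symmetric conservative diffusions; decomposing the underlying Dirichlet space over the tail-trivial components yields, for $\mu$-a.s.\ $\mathsf{a}$, a $\mua$-started weak solution whose $m$-labeled process is $\muam$-symmetric and conservative. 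For this process \As{AC}$_{\mua}$ is immediate from reversibility of the unlabeled dynamics: with $\{P_t\}$ the unlabeled semigroup, $\mua P_t=\mua$, hence $P_t(\sss,\cdot)\ll\mua$ for $\mua$-a.e.\ $\sss$ and every $t>0$. Condition \As{SIN} follows from the non-collision estimates of \sref{s:4} together with quasi-regularity of the Dirichlet form, which keep the configuration in $\WSsiNE$ for all time; and \As{NBJ} --- that $\IRT(\X)<\infty$, i.e.\ only finitely many labeled particles meet $\oLSR$ on $[0,T]$ --- follows from the polynomial bound \eqref{:11)} on $\int\sss(\SR)\,\mu(d\sss)$ via a moment estimate and a Borel--Cantelli argument, as in \cite{o-t.tail}, using that the label \eqref{:12x} orders particles by modulus.

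The core of the proof is the verification of \As{MF} and \As{IFC}, and this is where \tref{l:11} is indispensable. From the explicit formulae \eqref{:11b}, \eqref{:11"}, \eqref{:11c}, on each $\SR$ the drift splits into the singular local sum $\sum_{\sioLSR}\frac{\xsi}{\lvert\xsi\rvert^{d}}$ over nearby particles, the polynomial term $\sum_{\mathbf{i}\in\III}\CRi\x^{\mathbf{i}}$, and the remainder $\Rsl$, the last two controlled in $C^1(\oLSR)$ by the integrability hypotheses of \As{A3} (notably \eqref{:11j}) and by the limits \eqref{:11!}; \As{MF}, which asks for suitable finiteness of the coefficient functionals along the solution, then reduces to these bounds together with \eqref{:11)}. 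Condition \As{IFC} --- that for each $m$ the first $m$ coordinates, with the remaining ones frozen, satisfy a closed finite-dimensional SDE whose coefficient is a genuine function of the truncated configuration and which enjoys pathwise uniqueness --- follows from the local Lipschitz continuity of $\dmu$ in $x$ on $\mathbb{R}^d_{\neq}(\sss)$ from \tref{l:11}\thetag{i}, together with the \As{NBJ}-control on the number of particles near the tagged ones. I expect this step to be the main obstacle: since $\nablaPsi$ is not absolutely summable at infinity, the frozen-tail finite SDE cannot be obtained by a naive truncation, and the measurable, locally Lipschitz dependence of its coefficient on the configuration must be extracted from the cut-off sums, the constants $\CRi$, and the remainder $\Rsl$ of \tref{l:11} --- the long-range, non-integrable nature of the Coulomb force emphasized after \tref{l:11} is precisely what makes this delicate.

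Finally, with \As{SIN}, \As{NBJ}, \As{AC}$_{\mua}$, \As{MF}, and \As{IFC} in force on the $\mua$-component, I would apply the general existence-and-uniqueness theorem for ISDEs under these constraints (\ssref{s:IFC}, \sref{s:A}, \cite{o-t.tail,k-o-t.ifc}): any weak solution of \eqref{:12q} satisfying the five constraints is measurable with respect to $\B$ --- hence a strong solution --- and pathwise uniqueness holds within the class of such solutions. This yields, for $\mu$-a.s.\ $\mathsf{a}$, the unique strong solution $\X$ of \eqref{:12q} starting from $\mathbf{s}=\lab(\sss)$ for $\mua$-a.s.\ $\sss$, and the $\muam$-symmetry and conservativity of $\Xm$ for each $m\in\zN$ are inherited from \tref{l:12} through the compatible decomposition $\mum=\int\muam\,\mu(d\mathsf{a})$.
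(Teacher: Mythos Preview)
Your overall strategy matches the paper's: pass to tail-trivial components $\mua$, produce a weak solution there satisfying \As{SIN}, \As{NBJ}, \As{AC}$_{\mua}$, and \As{IFC}, then invoke \tref{l:US1} to get the unique strong solution. Your identification of the crucial step --- using the explicit, locally Lipschitz representation of $\dmu$ from \tref{l:11} to verify \As{IFC} --- is exactly right; the paper packages this as checking conditions \Ass{B1} and \Ass{B2} (\lref{l:8X}, \lref{l:8Y}) and then applying \tref{l:IFC1}.

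Two points deserve correction. First, you misidentify \As{MF}: it is not a finiteness condition on the coefficients but a \emph{measurability} condition on the strong-solution map $\mathbf{s}\mapsto P(F_{\mathbf{s}}(\mathbf{B})\in A)$ (see \ssref{s:US}); it is not something one verifies for the weak solution --- in \tref{l:US1} the hypotheses on the weak solution are only \As{SIN}, \As{NBJ}, \As{AC}$_{\mua}$, \As{IFC}, and \As{MF} appears only as part of the uniqueness class in the conclusion. Second, rather than decomposing the $\mu$-Dirichlet form of \tref{l:12} over tail components as you propose, the paper rebuilds the entire Dirichlet-form apparatus for each $\mua$ from scratch (\sref{s:81}: \lref{l:82}--\lref{l:83}, \tref{l:8A}--\tref{l:86}), obtaining weak solutions $\uLX_{\aaa}$, $\oLX_{\aaa}$ directly on $L^2(\muam)$. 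This route gives immediate access to quasi-regularity and the Lyons--Zheng decomposition on each component, and \As{NBJ} is obtained from the latter (as in \eqref{:61c}) via martingale inequalities, not the moment/Borel--Cantelli argument you sketch. Your decomposition approach is plausible in principle (tail-equivalence classes are invariant sets), but establishing quasi-regularity of the restricted forms would require essentially the same work the paper does directly.
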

By \tref{l:US2}, we obtain the following corollary of \tref{l:13}. 
Let $ \xi _{\aaa } $ be an RPF such that $ \xi _{\aaa } \ll \mua $. 

\begin{corollary}\label{l:13A}
Assume \As{A1}--\As{A3}. 
Then, for $ \mu $-a.s.\ $ \aaa $, any weak solution of \eqref{:12q} with initial distribution 
$ \xi _{\aaa } \circ \lab ^{-1} $ is unique in law and pathwise unique 
under \As{SIN}, \As{NBJ}, \As{AC}$_{\mua }$, and \As{IFC}. 
\end{corollary}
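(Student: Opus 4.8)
The plan is to derive this corollary from \tref{l:13} by feeding the strong solution it produces into the general infinite-dimensional uniqueness theorem \tref{l:US2}. Recall that the proof of \tref{l:13} in \sref{s:8} does more than assert the existence of a unique strong solution: for $\mu$-a.s.\ $\mathsf{a}$ it actually \emph{constructs} a strong solution $\X$ of \eqref{:12q} starting at $\mathbf{s}=\lab(\sss)$ for $\mua$-a.s.\ $\sss$, and verifies along the way that $\X$ satisfies \As{SIN}, \As{NBJ}, \As{AC}$_{\mua}$, and \As{IFC}. In particular, for $\mu$-a.s.\ $\mathsf{a}$, the ISDE \eqref{:12q} admits a strong solution subject to the four constraints \As{SIN}, \As{NBJ}, \As{AC}$_{\mua}$, \As{IFC}.

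Next I would invoke \tref{l:US2}. This is the abstract statement --- developed in the framework of \ssref{s:IFC}, and going back to \cite{o-t.tail,k-o-t.ifc} --- that for an ISDE of the form \eqref{:12q} the existence of a strong solution together with \As{IFC} forces every weak solution satisfying \As{SIN}, \As{NBJ}, and \As{AC}$_{\nu}$ to coincide with that strong solution pathwise; equivalently, pathwise uniqueness holds within the class of such weak solutions. Applying this with $\nu=\mua$ and combining with the previous paragraph yields pathwise uniqueness of weak solutions of \eqref{:12q} under \As{SIN}, \As{NBJ}, \As{AC}$_{\mua}$, and \As{IFC}. The key point is that the additional hypothesis \As{MF} of \tref{l:13} enters only through the construction of the strong solution --- it is one of the ingredients that make the finite-dimensional approximation scheme of \ssref{s:US} run --- and is not among the hypotheses of \tref{l:US2}; hence it drops out of the statement of the present corollary.

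Finally, uniqueness in law follows from pathwise uniqueness together with the existence of a weak solution, which is available from \tref{l:12} (or, again, from the strong solution just constructed), via the infinite-dimensional Yamada--Watanabe principle (\cite{IW}; see also \cite{o-t.tail,k-o-t.ifc} and \dref{d:US1}, \dref{d:US2}). The only genuine work in the whole argument is bookkeeping, and I expect this to be the sole --- mild --- obstacle: one must verify that the strong solution produced in the proof of \tref{l:13} is exactly of the type to which \tref{l:US2} applies (defined for $\mua$-a.s.\ starting configuration, with the same reading of \As{IFC}), and confirm that \As{MF} is genuinely used only for existence and never for uniqueness. No new analytic input --- in particular, no further estimate on the logarithmic derivative \eqref{:11b} or on the Coulomb random point field $\mu$ --- is needed; all of that has already been spent in \tref{l:11}, \tref{l:12}, and \tref{l:13}.
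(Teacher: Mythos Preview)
Your approach is correct and essentially the same as the paper's: the corollary is obtained by applying \tref{l:US2} with $\mua$ in place of $\mu$, once the hypotheses of \tref{l:US1} (equivalently, of \tref{l:US2}) have been verified in the proof of \tref{l:13} (more precisely, in \tref{l:8Z}).

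Two minor points of bookkeeping. First, your description of the hypothesis of \tref{l:US2} is slightly off: it does not require a \emph{strong} solution, but rather tail triviality of the reference measure together with the existence of a \emph{weak} solution satisfying \As{SIN}, \As{NBJ}, \As{AC}$_{\mua}$, and \As{IFC}; tail triviality of $\mua$ holds by construction of the tail decomposition and should be mentioned. Second, \tref{l:US2} gives \emph{both} uniqueness in law and pathwise uniqueness directly, so your separate appeal to Yamada--Watanabe for uniqueness in law is unnecessary (though not wrong).
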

%G]
From Theorems \ref{l:11}--\ref{l:13}, we obtain the following corollary.
\begin{corollary}\label{cor:12}
Assume \As{A1}--\As{A3}. 
Then the conclusions of Theorem \ref{l:12} and \tref{l:13} apply  to the ISDE \eqref{:12a}:
\begin{align}\notag 
& X_t^i - X_0^i
=
\int_0^t \sigma (X_u^i)\, dB_u^i
+\frac{\beta }{2} \int_0^t \mathrm{div\,} \aaaa (X_u^i)\, du 
\\  & 
+\frac{\beta }{2}\int_0^t 
\Big(
- \nablaPhi (X_u^i) + \limiR \sum_{\substack{\Lvert X_u^j \rvert \le \rR , \\ j \ne i }} 
\frac{X_u^i - X_u^j}{\lvert X_u^i - X_u^j \rvert ^d } 
\Big) 
\aaaa (X_u^i)\, du 
,\  i \in \N 
\label{:12a}
.\end{align}
\end{corollary}
%G]

\subsection{Diffusion processes in $\RdN $: \tref{l:1Y}} 
The analysis of fully labeled dynamics on the full label space $\RdN $ 
plays a crucial role in understanding infinite-particle systems. 
While the $ m $-labeled dynamics $\Xm $ provide finite-dimensional projections 
that admit invariant measures, they capture only partial aspects of
the underlying infinite system.
%%% ---

While the $ m $-labeled dynamics $\Xm $ provide projections to the tame infinite-dimensional spaces 
that admit invariant measures, they capture only partial aspects of 
the underlying infinite system.

%%%%
To obtain a complete description, it is necessary to go beyond finite labels
and establish the existence of a genuine diffusion on $\RdN $.
This step is indispensable, as it ensures that the dynamics are well defined
for the entire infinite system rather than merely for restricted subsystems.

Historically, the construction of fully labeled dynamics has been regarded as
a highly challenging problem.
Even for determinantal processes such as the Dyson model, establishing such
dynamics required delicate arguments based on random matrix theory.
For general Coulomb systems, the situation is more severe:
the long-range nature of the interaction and the absence of a classical
Gibbsian structure prevent a direct application of Dirichlet form methods
or invariant-measure techniques.
Against this backdrop, proving the existence of fully labeled conservative
diffusions represents a substantial advance.

In \tref{l:1Y}, we prove that the fully labeled process $\X $ 
constructed in \tref{l:13} is an $\RdN $-valued conservative diffusion.
Unlike the $ m $-labeled processes $\Xm $, the diffusion $\X $ 
is obtained without relying on the existence of an invariant measure, 
reflecting a fundamental difference between finite-label and full-label dynamics.
Nevertheless, the result guarantees that the fully labeled dynamics form a
legitimate stochastic process with infinite lifetime, thereby providing a
rigorous foundation for the dynamical theory of Coulomb systems.

Let $\mathbf{S}_{\star\star}\subset \RdN $ be the subset defined in \eqref{:9Bi}. 
 Let $\X $ be the solution of the ISDE \eqref{:12q} constructed in \tref{l:13}. 
Let $ \ulab $ denote the unlabeled map as in \eqref{:02v}. 
\begin{theorem}[Diffusion processes in $\RdN $] \label{l:1Y}
Assume \As{A1}--\As{A3}.
Then $\X $ is a conservative diffusion with state space
$\mathbf{S}_{\star\star}$ and 
$ \mu (\ulab ( \mathbf{S}_{\star\star} )) = 1$. 
\end{theorem}

Here, saying that $\X $ is a conservative diffusion with state space
$\mathbf{S}_{\star\star}$ means that the regular conditional law
\[
P_{\mathbf{x}}
:= P\bigl( \X \in \cdot \,\big|\, \X _0 = \mathbf{x} \bigr)
\]
admits a version such that the family
$\{ P_{\mathbf{x}} \}_{\mathbf{x}\in\mathbf{S}_{\star\star}}$
consists of probability measures on
$C([0,\infty);\mathbf{S}_{\star\star})$
satisfying the strong Markov property and having infinite lifetime
\cite{c-f}.
In particular, the diffusion law $P_{\mathbf{x}}$ is well defined for every
$\mathbf{x}\in\mathbf{S}_{\star\star}$.

The state space can, in principle, be extended to the whole $\RdN $
by freezing the dynamics outside $\mathbf{S}_{\star\star}$.

%G]---
\subsection{$ N $-particle approximation: \tref{l:16}}\label{s:28} 
We study finite-volume approximations of the infinite-particle dynamics. 
These approximations are realized in two complementary ways: 
by restricting the system to a bounded domain with reflecting boundary conditions (\pref{l:14}), 
and by truncating the system to $ N $ particles (\tref{l:16}). 
These theorems rigorously establish the convergence of these finite-volume approximations 
to the infinite-dimensional dynamics.

%G]%G[--

Let $ \dN $ be the logarithmic derivative of $ \muN $. 
By \eqref{:10e} and \As{A1}, 
\begin{align}\notag
\dN (\x , \sss ) 
= - \beta \Big( \nabla \PhiN ( \x ) + \sum_{i=1}^{\nN -1} \nablax \PsiN ( \x , \si ) \Big),
\quad \x \in \ON 
.\end{align}
Set $ \bbbN = \half (\mathrm{div\,} \aaaa + \dN \aaaa ) $. 
Then the SDE describing the $ \nN $-particle dynamics $ \XN = (X^{\Ni })_{i=1}^{\nN } $ is given by, 
for $ 1 \le i \le \nN $,
\begin{align}\notag 
 X _t^{\Ni } - X _0^{\Ni } 
 & = \int_0^t \sigma (X _u^{\Ni }) \, d B_u^i 
 + \int_0^t \bbbN \Big( X _u^{\Ni } , \sum_{j\ne i}^{\nN } \delta_{ X _u^{\Nj } } \Big) du 
\\ \notag 
&\quad \quad \quad + \int_0^t \anN ( X _u ^{\Ni } ) \, dL_{\nN }^{\Ni } (u), 
\\ \label{:14g}
L_{\nN }^{\Ni } (t) &= \int_0^t 1_{\partial \ON } ( X _u ^{\Ni } ) \, d L_{\nN }^{\Ni } (u) 
.\end{align}
Here $ \anN $ denotes the inward unit normal vector on $ \partial \ON $, 
and $ L_{\nN }^{\Ni } $ is a nonnegative, nondecreasing continuous process, 
called the local time on the boundary $ \partial \ON $ 
(cf.\ \cite[pp.~251--256]{fot.2}). 
If $ \ON = \Rd $, no local time term appears in \eqref{:14g}. The SDE then becomes 
\begin{align*}&
 X _t^{\Ni } - X _0^{\Ni } 
 = \int_0^t \sigma (X _u^{\Ni }) \, d B_u^i 
 + \int_0^t \bbbN \Big( X _u^{\Ni } , \sum_{j\ne i}^{\nN } \delta_{ X _u^{\Nj } } \Big) du 
.\end{align*}

We impose
\begin{align}\label{:14x}& 
\XN _0  = \labN (\sss ), \qquad \X _0 = \lab (\sss )
.\end{align}
Here $ \lab = ( \labi )_{ i } $ denotes the labeling map introduced in
\eqref{:02x}, and $ \labN = ( \lab ^i )_{ i = 1 }^{ N } $. 
The initial conditions of these solutions are coupled through the labeling map. 

%GTP]

Let $ \Ct \label{;14} $ be a positive constant. Let 
\begin{align}\label{:14y}&
\varphi \in C_b(\sSS ), \quad 
0 \le \varphi \le \cref{;14}, \quad 
\int_{\sSS } \varphi d\mu = 1 
.\end{align}
Define the RPF $ \varphi d\mu $ by $ (\varphi d\mu )(A ) = \int_{ A } \varphi d\mu $ for 
$ A \in \mathscr{B}(\sSS ) $. 
Let $ \XN $ and $ \X $ be the (unique strong) solutions of SDEs \eqref{:14g} and \eqref{:12q}, respectively. 
We assume:
\begin{align}\label{:14z}&
\XN _0 \elaw (\varphi d\mu ) \circ (\labN )^{-1} 
,\quad 
 \X _0 \elaw (\varphi d\mu ) \circ \lab ^{-1} 
.\end{align}
By setting $ \xX _t^{\nN , i } = \lab ^i ( \sss ) $ for all $ t \ge 0 $
and $ i > \nN $, we regard $ \XN = (\xX ^{\nN , i })$ as a $ \CRdN $-valued random variable. 
\begin{theorem}[$ N $-particle approximation]	\label{l:16}
Assume \As{A1}--\As{A3} and \eqref{:14y}--\eqref{:14z}. Then 
\begin{align} \notag  
\limin \XNn &= \X \quad \text{ in law in } \CRdN 
.\end{align}
\end{theorem}

\subsection{Related works and positioning}
We begin by recalling earlier developments on logarithmic derivatives,
which constitute a foundational concept in the theory of infinite-particle dynamics.

\smallskip
\noindent \textbf{Logarithmic derivatives.}
The concept of the logarithmic derivative for RPFs
was introduced in \cite{o.isde}.

In \cite{o.isde}, it was proved that if $ \nu $ admits a logarithmic derivative
$ \dnu $ and satisfies the quasi-Gibbs property, then \eqref{:10y}
admits a weak solution, and the associated unlabeled dynamics
$ \XX _t = \sum_i \delta_{ X_t^i } $
is a $ \nu $-reversible diffusion.

In \cite{os.ld}, it was proved that, for general random point fields,
the mere existence of a logarithmic derivative suffices to construct
such dynamics.

However, the mere existence of a logarithmic derivative
and the construction of a weak solution to the ISDE
do not guarantee uniqueness of solutions
or the existence of strong solutions.
Moreover, they do not allow a detailed analysis
of the qualitative behavior of the solutions.
Addressing these issues is therefore essential.

%G]

%%%%%%%%%%%%%%%%

%G[ ---
\smallskip
\noindent \textbf{Hyperuniformity.} 
The logarithmic derivatives for the sine$_{\beta }$, Airy$_{\beta }$ $( \beta = 1, 2, 4 )$, Bessel$_2$, and Ginibre RPFs have been computed \cite{o.isde,o.rm,o-t.airy,h-o.bes}. 
The former three are one-dimensional systems 
with two-dimensional Coulomb interaction potentials, and hence are not Coulomb RPFs. 
%G]

These computations require strong hyperuniformity of particle numbers within $ \SR $ for each $ \rR \in \mathbb{N}$, a property that must be established separately for each RPF.
% 
%G[
For example, in the case of the Ginibre RPF,
we use the following estimate
\cite{shirai,o-s.v}:
\begin{align}\label{:1Za}
\sup_{ \nN \in \N }
\mathrm{Var}^{ \muN } \bigl[ \sss ( \SR ) \bigr]
=
O ( \rR )
\quad \text{as } \rR \to \infty
.\end{align}
Such an estimate is called \emph{hyperuniform} because,
if $ \muN $ is replaced by $ \Lambda ^{ \nN } $,
then the right-hand side becomes
$ O ( \rR ^2 ) $ \cite{leb.hyp}.
Here $ \Lambda ^{ \nN } $ is the conditioned Poisson RPF defined by
\begin{align*}
\Lambda ^{ \nN }
=
\Lambda \Bigl(
\, \cdot \,\Big\vert\,
\sss ( \sS _{ \rR ( \nN ) } ) = \nN ,
\ \sss ( \Rd \backslash \sS _{ \rR ( \nN ) } ) = 0
\Bigr)
.\end{align*}
Here $ \Lambda $ is the Poisson RPF on $ \Rd $
with Lebesgue intensity $ dx $,
and $ \rR ( \nN ) $ is the positive number such that
the volume of $ \sS _{ \rR ( \nN ) } $
coincides with $ \nN $.
%G]

While Lebl\'{e} \cite{leb.hyp} established a hyperuniform estimate
for general $ \beta > 0 $, it is weaker than \eqref{:1Za} and is insufficient for our purposes.

%G[---
The proofs of the strong hyperuniform estimates for the above-mentioned examples 
rely on techniques specific to determinantal RPFs, 
and, except in the Ginibre case, do not extend to general Coulomb RPFs, 
which necessitates a new approach. 
%G]

\smallskip 

\noindent \noindent \textbf{Logarithmic derivatives of Coulomb RPFs.}
Until now, the logarithmic derivatives of Coulomb RPFs have been computed only in the case of the two-dimensional Coulomb RPF at inverse temperature $ \beta = 2 $, namely the Ginibre RPF \cite{o.isde}.

In contrast, we present here a new, remarkably simple and transparent method 
that yields uniform estimates for the logarithmic derivative of $ \muN $ 
\eqref{:1Zf} and for its conditional analogue \eqref{:1Zg}, 
valid simultaneously for all spatial dimensions $ d \ge 2 $ and all inverse temperatures $ \beta > 0 $.

The key insight is to exploit a certain harmonicity property of the interaction potential, which permits effective control over the infinite-dimensional interactions. Our approach is robust, conceptually clear, and provides a versatile tool for analyzing the stochastic dynamics of $ d $-dimensional Coulomb RPFs, thereby overcoming the limitations of all previously known methods.
%G[ -------------------------------------------------------------

\smallskip
\noindent
\textbf{Annealed and quenched estimates for the logarithmic derivative. }
We introduce a robust method for estimating the logarithmic derivative of $ \muN $
and refine it to conditional probabilities $ \muRyyN $,
exploiting a form of harmonicity of the interaction potential.

As a first step, we obtain the following uniform $ L^2 $-bound:
\begin{align}\label{:1Zf}
\sup_{\nN \in \N } \int_{\SQSS } \bigl| \dN ( \xs ) \bigr|^2 \, \muNone ( d\x d\sss ) < \infty
.\end{align}
This bound provides the control needed to define logarithmic derivatives
rigorously in the Coulomb setting (\lref{l:22}).

The proof of \eqref{:1Zf} is remarkably simple:
it relies only on \eqref{:11i}, \eqref{:11*}, and the basic definition of the logarithmic derivative. 

Moreover, the estimate \eqref{:1Zf} may be viewed as a form of hyperuniformity,
since the quantity in \eqref{:1Zf} diverges when $ \muNone $ is replaced by
the one-Campbell measure $ \Lambda ^{\nN , [1]} $ of the conditioned Poisson point process
$ \Lambda ^{\nN } $, even after excluding a neighborhood of $ \x $. 
This hyperuniformity is much weaker than that used in the previous works
\cite{k-o.du,k-o-t.udf,k-o-t.ifc,o.rm,o.isde,o-t.tail,o-t.airy,o-s.v},
which motivates the development in this paper of a new framework for the
analysis of stochastic dynamics associated with Coulomb random point fields. 

We next establish the \textbf{quenched analogue} of \eqref{:1Zf}:
\begin{align}\label{:1Zg}
\sup_{ \rR \ge \qQ + 1 }
\sup_{ \nN \in \N }
\int_{ \SQSS }
%	\bigl\lvert \dRyyRNN ( \xs ) \bigr\rvert ^2
\bigl\lvert \dN ( \xs ) \bigr\rvert ^2
\, \mu _{ \RyyN }^{ \nN , [1] } ( d\x d\sss )
< \infty
\quad \text{for $ \mu $-a.s.\,$ \yy $}
,\end{align}
where $ \yyN $ is defined in \eqref{:02y} (\lref{l:32}).

Inequality \eqref{:1Zg} plays a decisive role in the proof of \tref{l:11}.
Indeed, it fulfills the role previously played by strong hyperuniform estimates,
but with a crucial difference:
its proof is \textbf{significantly simpler}.

Earlier results relied on bounding averaged fluctuations
of the exterior configuration,
whereas here we directly control the external condition
in a quenched form.
This shift in perspective---replacing averaged estimates
by a quenched analysis---is one of the key conceptual innovations
of the present paper.

%G]

\smallskip 

\noindent \noindent \textbf{Interacting Brownian motions.}
Lang initiated the systematic study of interacting Brownian motions, which was subsequently developed by Fritz, Tanemura, and others in the case of finite-range interaction potentials \cite{lang.1,lang.2,fritz,tane.1}.
When the interaction becomes long-ranged, however, the problem of constructing solutions to ISDEs poses significant difficulties, even for potentials within Ruelle's class---namely, those that are superstable and integrable at infinity.
Later, the first author introduced the Dirichlet form approach, which made it possible to establish the existence of weak solutions to ISDEs for a broad family of interaction potentials \cite{o.tp,o.isde,o.rm,o.rm2}. This framework encompasses, in particular, the Ginibre interacting Brownian motion.
Nonetheless, the stronger questions of constructing pathwise unique strong solutions and establishing their uniqueness remained open.

In \cite{tsai.14}, Tsai succeeded in constructing pathwise unique strong solutions of ISDEs for the $ \sin_{\beta}$ RPFs with all $\beta \geq 1$. These processes correspond to infinite-particle systems in one dimension with logarithmic interactions, that is, with the two-dimensional Coulomb potential. In this sense, the ISDE in \cite{tsai.14} can be regarded as involving a Riesz potential. Tsai's proof relies on a delicate coupling mechanism that exploits special properties of the logarithmic interaction in one dimension. Such an approach, however, has no counterpart in higher dimensions with genuine Coulomb potentials.

Subsequently, in \cite{o-t.tail,k-o-t.ifc}, a general method for constructing pathwise unique strong solutions of ISDEs was developed. The central idea is to represent the ISDE as an infinite hierarchy of finite-dimensional SDEs evolving in a random environment. The well-posedness of this hierarchy is then reduced to a problem of tail triviality for the equilibrium measures of the associated unlabeled dynamics.

%G[------
To apply this framework, one requires both an equilibrium state (RPF)
and suitable regularity properties of the corresponding logarithmic derivative.

The construction of Coulomb RPFs was established in \cite{a-s,thoma.24}.
By exploiting the quenched bound \eqref{:1Zg},
we apply a Taylor expansion method adapted to the Coulomb interaction potential
and prove the following explicit representation of the logarithmic derivative
(\tref{l:11}):
\begin{align*}&
\dmu (\xs ) =
\beta \Big(
- \nablaPhi (\x )
+ \sum_{\si \in \oLSR } \frac{\xsi }{\lvert \xsi \rvert^{\da }}
+ \1 \CRi \x ^{\mathbf{i}}
+ \Rsl (\x )
\Big)
.\end{align*}
Using this representation, we further show that the associated ISDE satisfies the key assumption
known as the IFC condition, which is required in the above theory (\tref{l:13}).
%G]
%G[

\smallskip
\noindent
\textbf{Further related works.}
Katori and Tanemura proposed an alternative approach
to constructing stochastic dynamics with logarithmic interaction. 
Their method, based on space--time correlation functions,
is restricted to the special case of inverse temperature $ \beta = 2 $ in one-dimensional systems.
For further details, see \cite{KT07b} and the references therein. 

Moreover, \cite{o-t.sm} investigated in detail
the relationship between the logarithmic derivative
and the space--time correlation approach.
It was rigorously established that the stochastic dynamics
constructed by these two seemingly distinct methods
are, in fact, identical.
This coincidence highlights the fundamental role
of the logarithmic derivative
in unifying different perspectives
on infinite-particle dynamics with logarithmic interactions.
%G]

%G[[
In \cite{a-m}, Assiotis and Mirsajjadi constructed a non-equilibrium dynamics whose equilibrium distribution is the inverse Bessel RPF, and they identified the ISDE governing this dynamics.
Their method is algebraic in nature, relying on intertwining theory and the use of derivatives of the so-called characteristic polynomial. 
This construction bears a striking resemblance to the framework of logarithmic derivatives developed in \cite{o.isde}.

In \cite{b-kawa}, Bufetov and Kawamoto studied intertwining relations for Laguerre processes with parameter $ \alpha $ across different particle numbers. 
They computed the logarithmic derivatives of the corresponding finite-particle systems and pointed out that the construction of the infinite-particle limit process remains an open problem, to be addressed in subsequent work.

%G[

In \cite{kawa.22}, Kawamoto constructed a unique strong solution to the ISDE associated with the generalized sine random point field at inverse temperature $ \beta = 2 $, combining random matrix theory with the general ISDE theory \cite{o-t.tail}.

%G]

In \cite{e-t}, Esaki and Tanemura constructed a unique strong solution to a jump-type ISDE associated with a jump-type infinite-particle system with long-range interaction, and constructed the interacting Cauchy processes. 

In \cite{suzu.erg}, Suzuki established the ergodicity of the unlabeled dynamics of the Ginibre interacting Brownian motion, thereby providing a concrete confirmation of the general ergodicity theory.

\smallskip
\noindent
%G[----
\textbf{Key difficulties and techniques.} 
A major obstacle underlying this program is the pronounced and extremely strong long-range interactions 
characteristic of Coulomb systems. 
Previous methods relied on delicate hyperuniform estimates,
which are available only in the unique determinantal case $ d = 2 $ and $ \beta = 2 $. 

Our approach replaces this reliance
with new analytic estimates for the logarithmic derivative,
together with a quenched formulation that yields robust $ L^2 $ bounds.
This combination provides a conceptually simple yet powerful framework,
which enables us to establish strong well-posedness 
of Coulomb interacting Brownian motions 
in all spatial dimensions $ d \ge 2 $ and for all inverse temperatures $ \beta > 0 $.

%G]

%G[
\smallskip
\noindent
\textbf{Structure of the proofs.}
The proofs of the main theorems are distributed as follows: 
\tref{l:11} is proved in Section~\ref{s:3},
\tref{l:12} in Section~\ref{s:7},
\tref{l:13} in Section~\ref{s:8},
 \tref{l:1Y} in Section~\ref{s:9}, and 
 \tref{l:16} in \sref{s:X}. 

%G]
%G[

\smallskip
\noindent
\textbf{Sketch of the proof strategy.} 
The main theorems are proved in Sections \ref{s:2}--\ref{s:X},
and the overall strategy is as follows.

In \sref{s:2}, we establish the existence of the logarithmic derivative.
A key ingredient is a robust uniform $ L^2 $-estimate for the logarithmic derivative,
which exploits the harmonicity of the Coulomb potential.

Section~\ref{s:3} strengthens this result by proving a quenched estimate
and deriving an explicit representation of the logarithmic derivative.

In Section~\ref{s:4}, we establish non-collision properties of ISDE solutions
by direct SDE techniques.
These arguments do not rely on Dirichlet form theory;
nevertheless, the resulting non-collision property is later used
to prove uniqueness of Dirichlet forms.

%G]
%G[

Sections~\ref{s:5}--\ref{s:7} form the technical core of the paper:
we introduce Dirichlet form schemes for $ m $-labeled dynamics
and construct infinite-volume processes under both lower and upper Dirichlet forms.

In \sref{s:8}, using the explicit representation of the logarithmic derivatives,
we verify the IFC condition ensuring consistency of the dynamics 
and upgrade the constructions to pathwise unique strong solutions of the ISDEs.

In \sref{s:9}, using the uniqueness of solutions of the ISDE,
we establish uniqueness of extensions of the Dirichlet forms
and prove the existence of the fully labeled diffusion process in $ \RdN $.

In \sref{s:X}, we prove the convergence of finite-particle systems with reflecting boundary conditions 
using the convergence of the approximating lower Dirichlet forms. 
Finally, combining the uniqueness of extensions of Dirichlet forms with a sandwich argument, 
we prove that the $ \nN $-particle dynamics converge 
to the infinite-particle ISDE dynamics. 

%G]
%[GTP
\section{Concrete examples}\label{s:)}

We present concrete examples of RPFs that satisfy assumptions \As{A1}--\As{A4}.
Throughout this section, we assume $ d \ge 2 $ and $ \beta > 0 $.
%GTP] 

\begin{example}[\cite{thoma.24}]\label{d:Gauss} 
Let $\Phi $ be such that $ 0 \le \Delta \Phi $ is bounded on $ \Rd $, and set 
\begin{align*}&
\PhiN(x) = \nN^{2/d}\,\Phi\!\left(\frac{ \x + a_N }{\nN^{1/d}}\right) 
.\end{align*}
Here $ a_N $ is a convergent sequence in $ \Rd $ with limit $ a $,
and $ a $ belongs to the droplet, 
that is, the support of the equilibrium measure associated with the potentials $ \Phi $ and $ \Psi $.
Take $\PsiN = \Psi$ and $\ON = \mathbb{R}^d$. 
Then \As{A1} \thetag{ii} and \As{A3} are automatically satisfied. 
Moreover, \As{A1} \thetag{i} is satisfied in a wide class of examples. 

\As{A2} \thetag{i} is obvious,
and \thetag{ii}--\thetag{iii} follow from \cite{thoma.24}.
Indeed, in \cite{thoma.24},
a convergent subsequence was constructed,
yielding the limiting Coulomb RPF $ \mu $,
which possesses bounded correlation functions of all orders,
together with other regularity properties.
Moreover, the assumptions in \cite{thoma.24}
are weaker than those stated above;
see \cite[\thetag{1.25}, Th.\,6]{thoma.24}.

If $ d = 2 $ and $ \Phi (\x ) = \half \lvert x \rvert ^2 $, then the resulting ISDE is 
\begin{align}& \notag
X_t^i - X_0^i =
B_t^i
-\halfbeta \int_0^t X_u^i \, du - \frac{\beta }{2} a t 
+ \halfbeta \int_0^t 
\limiR \sum_{\substack{ \lvert X_u^j \rvert \le \rR \\ j\ne i }}
\frac{\Xij }{\lvert \Xij \rvert^{2}}\, du
.\end{align}
In this way, the term $ -  \frac{\beta }{2} a t $ appears, canceling the effect of the macroscopic position $ a $ in the droplet. 
\end{example}

We now present two further specific examples of RPFs that satisfy assumptions \As{A1}--\As{A4}. 
These examples exhibit a form of ``invariance'' under certain transformations, which follows from the invariance of the corresponding finite-particle systems. 
As a consequence, the one-point correlation function is constant for each $N$-particle system.

\begin{example}[Periodic approximation]\label{d:per} 
Let $ \mathbb{T} = (-\tfrac{1}{2}, \tfrac{1}{2}]^d $ denote the unit torus. 
Set $\PhiN = \Phi = 0$, and let $\Psi_{\mathbb{T}}(x,y)$ denote the Green's function of 
$-\cref{;per}\,\Delta$ on $\mathbb{T}$ subject to periodic boundary conditions, 
where $\Ct\label{;per}$ is the positive constant determined according to 
\eqref{:11f}. 
Set $ \ON = (-\tfrac{\nN ^{1/d}}{2},\,\tfrac{\nN ^{1/d}}{2})^d $ and 
$ \PsiN(\x,\y) = \Nd \,\Psi_{\mathbb{T}}\!\left(\frac{\x}{\nN ^{1/d}},\,\frac{\y}{\nN ^{1/d}}\right)$. 
By construction, the measure $\mu$ is invariant under translations on $\Rd $. 
\end{example}

\begin{example}[Sphere approximation]\label{d:sp} 
Let 
$ \mathbb{S}=\{\hat{x}\in\mathbb{R}^{d+1}:\lvert \hat{x}\rvert=1\}$ 
denote the unit sphere, and let $\mathbf{e}_{d+1}=(0,\dots,0,1)$ denote the north pole. 
Define the stereographic projection 
$ 
\varpi:\mathbb{S}\setminus\{\mathbf{e}_{d+1}\}\to\mathbb{R}^d
$ 
by
\[
\varpi(\hat{x})=\frac{\hat{x}'}{\,1-\hat{x}_{d+1}\,}, 
\qquad \hat{x}=(\hat{x}',\hat{x}_{d+1})\in\mathbb{R}^d\times\mathbb{R},
\]
so that $\varpi(\hat{x})$ is the intersection of the line through $\mathbf{e}_{d+1}$ 
and $\hat{x}$ with the hyperplane $\mathbb{R}^d\times\{0\}$. 
The inverse map $\varpi^{-1}:\mathbb{R}^d\to\mathbb{S}\setminus\{\mathbf{e}_{d+1}\}$ is
\[
\varpi^{-1}(x )=\frac{(2x_1,\ldots,2x_d,\,|x|^2-1)}{1+|x|^2}, \qquad x\in\mathbb{R}^d,
\]
with Jacobian $ J(x )={2^d}/ {(1+|x|^2)^d} $. Define 
$ \Psi_{\mathbb{S}} (\hat{\x } , \hat{\y } ) $ as the Green's function of $ - \cref{;sp}\,\Delta_{\mathbb{S}} $  on $ \mathbb{S}$, 
where $\Delta_{\mathbb{S}}$ is the Laplace--Beltrami operator and $\Ct\label{;sp}$ is determined according to \eqref{:11f} and \eqref{:11i}. 

Define $ \ON = \mathbb{R}^d $ and 
\begin{align*}&
\PhiN(\x) = - \Nd \, \log J(\varpi^{-1} (\x/\nN ^{\frac{1}{d}}) )
, \\&
\PsiN(\x,\y)= 
\Nd \,\Psi_{\mathbb{S}} \big(
\varpi^{-1} ( {\x}/{\nN ^{\frac{1}{d}}}) ,\,
\varpi^{-1} ( {\y}/{\nN ^{\frac{1}{d}}} ) \big)
.\end{align*}Note that, by construction, the measure $\mu$ is invariant under the full Euclidean group $E(d)$, i.e., under all translations, rotations, and reflections. 
The resulting diffusion $ \X $ is invariant under the full Euclidean group $ E(d) $.

\end{example}
\begin{remark}
The construction of the Coulomb RPFs can be extended to homogeneous spaces and,
more generally, to compact or weighted Riemannian manifolds,
including models beyond the Gaussian, periodic, and spherical cases.
These generalizations will be developed in future work.
\end{remark}

\section{Construction of the logarithmic derivative $\dmu $ of $\mu $}\label{s:2}

In this section we establish the existence of the logarithmic derivative $\dmu $ of $\mu $.
The section is organized into four parts.

In \ssref{s:2A}, we prove a uniform $L^2 $-bound for the logarithmic derivatives $\dN $ of the $N $-particle systems.
In \ssref{s:2B}, we derive a uniform $L^2 $-bound for the logarithmic derivatives of $\muRN $.
In \ssref{s:2C}, we prove the strong convergence of the density functions of $\muRN $ as $\nN \to \infty $ for each fixed $\rR \in \N $.
Finally, in \ssref{s:2D}, we construct the logarithmic derivative $\dmu $ of $\mu $.

All results in \sref{s:2} are proved under Assumptions \As{A1}--\As{A3}.

%GTP]

\subsection{$L^2$-uniform bound of the logarithmic derivative $\dN$ of $\muN$} \label{s:2A}
Let $ \muN $ be as in \As{A1}--\As{A3}. 
Let $\muNone $ denote the one-reduced Campbell measure associated with $\muN $, i.e., the specialization of the definition \eqref{:10v} (given for a general RPF $\nu $) to the case $\nu =\muN $.

Let $ \mN $ be the labeled density function of $ \muN $ given by \eqref{:10e}. 
Let 
\begin{align}\label{:21w}&
\dN (\xs ) 
= 
\begin{cases}
\nablax \log \mN (\x , s^1,\ldots , s^{\nN - 1}) , &\x \in \ON , \ \sss (\Rd ) = \nN - 1 
,\\
0, & 
\text{otherwise}
,\end{cases}
\end{align}
where $ \sss = \sum_{i=1}^{\nN - 1 } \delta_{\si } $. 
By \eqref{:10e} and \eqref{:21w}, % we have 
\begin{align}\label{:21x}&
\dN (\xs ) = - \beta \Big( \nabla \PhiN ( \x ) + \sum_{i=1}^{\nN -1} \nablax \PsiN ( \x , \si )
\Big) 
,\quad \x \in \ON 
.\end{align}

%GTP[
\begin{lemma} \label{l:21} 
For any $ \h \in \CziON \ot \dbb $, 
\begin{align}\notag
- \int _{\RdSS } \h \, ( \mathrm{div}_x \dN ) \, d\muNone =
\beta \int _{\RdSS } \h \Big( \Delta \PhiN 
+ \sum_{i=1}^{\nN -1 } \psiN ( \x , \si ) \Big) d\muNone .
\end{align}
\end{lemma}
\begin{proof}
By \As{A1}, we have $ \PhiN (\x ) = \infty $ for $ \x \notin \oL{\ON } $, and
\begin{align}\label{:21f}&
\Delta _{\x } \PsiN ( \x , \y )
= - \cref{;1a}^{\nN } \delta_{\y } ( \x ) + \psiN (\x , \y )
\quad \text{on } \ON \times \ON 
.
\end{align}Moreover, by \As{A1} and \As{A2}, the density $ \mN $ vanishes on collision configurations,
that is, $ \mN (\mathbf{x}) = 0 $ whenever $ x^i = x^j $ for some $ i \ne j $. 
Hence the singular delta terms in \eqref{:21f} do not contribute, and we obtain
\[
 \int _{\RdSS } \h \sum_{i=1}^{\nN -1 } \Delta_{\x} \PsiN ( \x , \si ) \, d\muNone
=
 \int _{\RdSS } \h \sum_{i=1}^{\nN -1 } \psiN ( \x , \si ) \, d\muNone .
\]
Combining this with \eqref{:21x}, the assertion follows.
\PFEND

%GTP]

\smallskip 

For each $ \qQ \in \N $, let $ \map{\fQ }{\Rd }{\R }$ be a cut-off function such that 
\begin{align}\notag &% \label{:22z} &
\fQ \in \CziRd , \quad 
\fQ ( \x ) = 
\begin{cases}
1 & \x \in \SQ 
\\
0 & \x \notin \SQQ 
\end{cases}
, \ 
 0 < \fQ (\x ) < 1 , \ \x \in \SQQ \backslash \oLSQ %,\ \quad 
,\\ \label{:22z}& 
 \lvert \nabla \log \fQ \rvert \in \CziRd , \quad 
0 < \lvert \nabla \log \fQ (\x ) \rvert , \ \x \in \SQQ \backslash \oLSQ 
.\end{align}
In equation \eqref{:22z}, we adopt the convention such that 
\begin{align*}&
\text{$ \lvert \nabla \log \fQ \rvert = 0 $ on $ \x \notin \SQQ $}, \quad 
\lim_{\x \to \partial \SQQ }\lvert \nabla \log \fQ (\x ) \rvert = 0 
.\end{align*}

Let $ \Ct \label{;22b} $, $ \Ct \label{;22c} $, and $ \Ct \label{;22z} $ be the quantities defined by 
\begin{align} \notag & 
 \cref{;22b} (\nN ) = 
\int_{\RdSS } \fQ \lvert \nabla \log \fQ \rvert^2 \muNone (d\x d\sss )
,\\ & \notag 
 \cref{;22c} (\nN ) = 
\int_{\ON \ts \sSS }\fQ \Big( \Delta \PhiN + \sum_{i=1}^{\nN -1} \psiN (\x , \si ) \Big) \muNone (d\x d\sss )
,\\ \label{:22s}&
\cref{;22z} = 
\limsupi{\nN } \frac{1}{4} \big\{ \sqrt{\cref{;22b} (\nN ) } + \sqrt{\cref{;22b} (\nN ) + 4 \beta \cref{;22c} (\nN ) } \big\}^2 
.\end{align}

\begin{lemma} \label{l:2!} 
For each $ \qQ $, the quantity $ \cref{;22z} $ is finite. 
\end{lemma}

\begin{proof} 
By \eqref{:22z}, $ \cref{;22b} (\nN ) $ are nonnegative and bounded. 

By \eqref{:11g} and \eqref{:11*}, $ \cref{;22c} (\nN ) $ are non-negative. 
By \eqref{:11e} and \eqref{:22z}, 
\begin{align}\label{:11a}&
\limsupi{\nN } \sup_{\x \in \ON } \fQ (\x ) \Delta \PhiN (\x ) < \infty 
.\end{align}
By \eqref{:11*}, for $ \muNone $-a.e.\,$ (\xs )$, 
\begin{align}\label{:11A}&
\Big\lvert \sum_{i=1}^{\nN -1} \psiN (\x , \si ) \Big\rvert \le \frac{\nN - 1 }{\nN } \cref{;32!} \le \cref{;32!}
\end{align}
By \eqref{:11a} and \eqref{:11A}, $ \cref{;22c} (\nN ) $ are bounded. 
 Hence by \eqref{:22s}, $ \cref{;22z} $ is finite. 
\PFEND

\begin{lemma} \label{l:22} 
For each $ \qQ \in \N $, 
\begin{align}&\label{:22a}
\limsupi{\nN } \int_{\RdSS } \fQ |\dN |^2 \muNone (d\x d\sss )\le \cref{;22z}
.\end{align}
\end{lemma}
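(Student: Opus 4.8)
The plan is to derive a self-improving $L^2$-inequality: one tests the integration-by-parts characterization of the logarithmic derivative $\dN$ of $\muN$ against $\dN$ itself, localized by the cut-off $\fQ$. Formally, one takes the $\Rd$-valued test function $\h=\fQ\dN$ in the integration-by-parts identity for $\dN$ stated just before \lref{l:21} (more precisely, applies that identity componentwise with $\fQ\dpN$ in place of $\h$ and sums over $p=1,\dots,d$). Using $\nabla\fQ=\fQ\nabla\log\fQ$ together with \lref{l:21} applied to $\h=\fQ$, this produces
\begin{align}\notag&
\int_{\RdSS}\fQ|\dN|^2\,d\muNone
=-\int_{\RdSS}\fQ\,\bigl(\nabla\log\fQ\cdot\dN\bigr)\,d\muNone
+\beta\int_{\RdSS}\fQ\Bigl(\Delta\PhiN+\sum_{i=1}^{\nN-1}\psiN(\x,\si)\Bigr)d\muNone.
\end{align}
By definition the last term equals $\beta\cref{;22c}(\nN)$, while by the Schwarz inequality with weight $\fQ$ the first term on the right is bounded in absolute value by $\sqrt{\cref{;22b}(\nN)}\,\bigl(\int_{\RdSS}\fQ|\dN|^2\,d\muNone\bigr)^{1/2}$. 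Writing $\aAA_{\nN}=\int_{\RdSS}\fQ|\dN|^2\,d\muNone$, this closes into the inequality $\aAA_{\nN}\le\sqrt{\cref{;22b}(\nN)}\,\aAA_{\nN}^{1/2}+\beta\cref{;22c}(\nN)$.

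The second step is purely algebraic. Regarding the previous line as a quadratic inequality in $t=\aAA_{\nN}^{1/2}\ge0$, namely $t^2-\sqrt{\cref{;22b}(\nN)}\,t-\beta\cref{;22c}(\nN)\le0$, and using that $\cref{;22b}(\nN)+4\beta\cref{;22c}(\nN)\ge0$ (established in \lref{l:2!}), the larger root is nonnegative, so $\aAA_{\nN}^{1/2}\le\tfrac12\{\sqrt{\cref{;22b}(\nN)}+\sqrt{\cref{;22b}(\nN)+4\beta\cref{;22c}(\nN)}\}$, hence $\aAA_{\nN}\le\tfrac14\{\sqrt{\cref{;22b}(\nN)}+\sqrt{\cref{;22b}(\nN)+4\beta\cref{;22c}(\nN)}\}^2$. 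Taking $\limsupi{\n}$ along $\nN=\Nn$ and comparing with the definition \eqref{:22s} of $\cref{;22z}$ yields exactly \eqref{:22a}.

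The main obstacle is justifying the first step: $\fQ\dN$ is not an admissible test function, because by \eqref{:11i} the kernel $\PsiN$ has a Green-function-type singularity on $\bigcup_i\{\x=\si\}$, so $\dN(\xs)=-\beta(\nabla\PhiN(\x)+\sum_i\nablax\PsiN(\x,\si))$ is unbounded there and does not lie in $\CziRd\ot\dbb$. The plan is to approximate: multiply by a smooth cut-off $\phi_\delta$ that vanishes on the $\delta$-neighbourhood of $\bigcup_i\{\x=\si\}$ and equals $1$ outside its $2\delta$-neighbourhood, so that $\fQ\phi_\delta\dN$ is smooth, bounded and compactly supported (on a configuration with finitely many points, $\dN$ is bounded away from the diagonal), apply the integration-by-parts identity and \lref{l:21} to this admissible function, and let $\delta\downarrow0$. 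The only extra terms are those carrying $\nabla\phi_\delta$, supported where $|\x-\si|\asymp\delta$; there $|\dN|\lesssim\delta^{-(d-1)}$ and $|\nabla\phi_\delta|\lesssim\delta^{-1}$, while the density of $\muNone$ vanishes near $\{\x=\si\}$ like $|\x-\si|^{\beta}$ when $d=2$ and faster than any power when $d\ge3$, so these terms are $O(\delta^{\beta})$ (resp.\ negligible) and drop out in the limit; this is precisely where $d\ge2$ and $\beta>0$ enter, and the same estimate shows a priori that $\aAA_{\nN}<\infty$, so the self-improving inequality is not vacuous. With the first step secured, the remaining steps are immediate.
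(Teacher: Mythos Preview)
Your proof is correct and follows essentially the same approach as the paper: test the integration-by-parts identity against $\fQ\dpN$ componentwise, invoke \lref{l:21}, apply Cauchy--Schwarz with weight $\fQ$ to close a quadratic inequality in $\aAA_{\nN}^{1/2}$, and solve. The only addition is your third paragraph, which supplies a cut-off approximation near the diagonal $\{\x=\si\}$ to justify using the singular test function $\fQ\dN$; the paper carries out the same computation formally without this justification.
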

\PF 
Let $ \SQ \subset \ON $. 
Applying \eqref{:10w} to $ \muN $ yields 
\begin{align}&\notag %\label{:}&
\int_{\RdSS }\fQ |\dN |^2 \muNone (d\x d\sss )= 
\int_{\RdSS } \big( \fQ \dN , \dN \big)_{\Rd } \muNone (d\x d\sss )
\\ \label{:21y}& = 
 -\int_{\RdSS } \mathrm{div}_x\, (\fQ \dN ) \muNone (d\x d\sss ) 
%\quad \text{ by \eqref{:21y}}
\end{align}
Using \lref{l:21}, we rewrite the last term as 
\begin{align}\notag 
&= - 
 \int_{\RdSS } \big( \nablax \fQ , \dN \big)_{\Rd } \muNone (d\x d\sss )
 - 
\int_{\RdSS } \fQ ( \mathrm{div}_x \dN ) \muNone (d\x d\sss )
\\ \notag& = 
- \int_{\RdSS } \fQ 
 \big( \nablax \log \fQ , \dN \big)_{\Rd }
% \sumpd ( \partial_p \log \fQ ) \dpN 
\muNone (d\x d\sss )
 \\\label{:22i}& + 
\beta \int_{\RdSS }\fQ \big( \Delta \PhiN +\sum_{i=1}^{\nN -1} \psiN (\x , \si ) \big) 
\muNone (d\x d\sss )
%(d\x d\sss )
% \ \text{ by \lref{l:21}} 
.\end{align}

Let $ x_{\nN } = \{ \int_{\RdSS }\fQ |\dN |^2 \muNone (d\x d\sss )\}^{1/2} $. By \eqref{:22s}, \eqref{:21y}, \eqref{:22i}, $ \fQ \ge 0 $, and the Cauchy--Schwarz inequality, we obtain 
\begin{align}\label{:22k}&
x_{\nN } ^2 \le \cref{;22b} (\nN ) x_{\nN } + \beta \cref{;22c} (\nN ) 
.\end{align}
Solving \eqref{:22k} yields 
\begin{align*}&
 \x _{\nN } \le \sqrt{\cref{;22b} (\nN ) } + \sqrt{\cref{;22b} (\nN ) + 4 \beta \cref{;22c} (\nN ) } 
.\end{align*}
Hence, we obtain \eqref{:22a} from \eqref{:22s}. 
\PFEND

\subsection{$ L^2$-uniform bound of logarithmic derivative of $ \muRN $}\label{s:2B}

We continue to work under Assumptions \As{A1}--\As{A3}.

 Let $ \muRN = \muN \circ \pioLR ^{-1}$. 
We regard $ \muRN $ as a probability measure on $ \oLSSR $, which is the configuration space over $ \oLSR $. 

Let $ \dRN $ be the logarithmic derivative of $ \muRN $, that is, for each $ \FRone $-measurable $ \h \in \GRone $, 
$ \FRone = \mathscr{B}(\oLSR ) \ts \FR $ and $ \FR = \sigma [\pioLR ]$
\begin{align}\label{:23r}&
 \int _{\sSsS } \h \dRN \muRNone (d\x d\sss ) = - \int _{\sSsS } \nablaxh \muRNone (d\x d\sss ) 
.\end{align}
Here $ \muRNone $ is the one-reduced Campbell measure of $ \muRN $. 

%G[---

We regard $ \dRN $ as an $ \FRone $-measurable function on $ \Rd \ts \sSS $
satisfying
$ \dRN (\xs ) = 0 $ for $ \x \notin \oLSR $ and
$ \dRN (\xs ) := \dRN (\x , \pioLR (\sss ) ) $.
We regard $ \muRNone $ as a measure on
$ ( \Rd \ts \sSS , \mathscr{B} ( \Rd ) \ts \FR ) $
with $ \muRNone ( \SRc \ts \sSS ) = 0 $.
Then \eqref{:23r} can be written as follows: for each
$ \FRone $-measurable $ \h \in \GRone $,
\begin{align}\label{:23s}&
\int_{\RdSS } \h \dRN \muNone (d\x d\sss )
=
- \int_{\RdSS } \nablax \h \muNone (d\x d\sss )
.\end{align}
We often use the latter interpretation of $ \dRN $,
as it allows us to regard $ \dRN (\xs ) $ as a function defined on
$ \Rd \ts \sSS $ independently of $ \rR $.
%G]
\begin{remark}\label{r:21} 
If we replace $ \GRone $ by $ C^{\infty} (\oLSR ) \ot \dcb $ as the space of test functions in \eqref{:23r}, then a local-time-type singular drift appears in the representation of $ \dRN $, indicating the reflection of particles on $\partial \SR $. 
\end{remark}

 Let $ \mN $ be as in \eqref{:10e}. 
Then $ \muRN $ has the labeled density $ \mRN $ defined on 
 $ \sqcup_{k=0}^{\nN } \oLSRk $ such that, for $ 0 \le k \le \nN $ and $ k + l = \nN $, 
\begin{align} \notag %\label{:23o}
\mRN ( \sR ) &= 
\frac{1}{\mathscr{Z}_{\rR }^{\nN }} \int_{ \oLSRxCl } \mN ( \sR , \tR ) d\tR 
,\quad \sR \in \oLSRk 
%	,\quad \text{ for } \sR \in \oLSRk 
.\end{align}

\noindent 
Here $ \mathscr{Z}_{\rR }^{\nN } $ is the normalization. 
Because $ \mN ( \mathbf{\x } ) $ is a symmetric function of $ \mathbf{\x } = (\x ^i)_{i=1}^N $, $ \mRN $ is well defined. 

For $ (\xs ) \in \SR \ts \oLSS $ and $ \sss (\oLSR ) \le \nN - 1 $, $ \dRN $ satifies 
\begin{align}\label{:23q}&
\dRN (\xs ) = \nablax \log \mRN (\x , \sR ) 
%,\quad \sss (\oLSR ) \le \nN - 1 
,\end{align}
and by $ \dRN (\xs ) = 0 $ for $ \sss (\oLSR ) \ge \nN $, where $ \sss = \ulab (\sR ) $. 

\begin{definition}\label{d:2M}
Let $ 0 \le k \le \nN - 1 $ and $ k + l = \nN - 1 $. 
Let $ \sss = \ulab (\sR )$ and $ \mathsf{t} = \ulab (\tR ) $. 
We set, for $ \sss \in \oLSSR $ satisfying $ \sss (\oLSR ) = k $, 
\begin{align}\label{:23t}& 
 \langle f \rangle _{\rR }^{\nN } (\xs ) = 
 \frac{1}{ \int _{\oLSRxCl } \mN ( \x , \sR , \tR ) d\tR }
 \int _{\oLSRxCl } ( \check{f} \mN ) ( \x , \sR , \tR ) d\tR 
.\end{align}
Here $ \map{f}{\oLSR \ts \sSS ^{\nN -1 } } {\R }$, $ \sSS ^{\nN - 1 } = \{ \mathsf{u} \in \sSS ; \mathsf{u} (\Rd ) = \nN - 1 \} $, and 
$ \map{\check{f}}{\oLSR \ts (\Rd )^{\nN - 1 }}{\R }$ is the function satisfying that 
$ \check{f} (\x , \sR , \tR ) = f ( \xst ) $ and that $ \check{f} (\x , \sR , \tR ) $ is symmetric in $ ( \sR , \tR ) $. 
\end{definition}

The following identity plays an important role in our analysis. 
\begin{lemma} \label{l:23}
Let $ \dRN $ and $ \dN $ be as in \eqref{:23q} and \eqref{:21w}, respectively. Then 
\begin{align}\label{:23a}&
 \dRN (\xs ) = \langle \dN \rangle _{\rR }^{\nN } (\xs ) 
.\end{align}
\end{lemma}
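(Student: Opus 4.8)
The plan is a short direct computation resting on the identity $ \nablax \mN = \dN \, \mN $, where $ \dN $ is understood to be evaluated at the configuration formed by joining $ \sR $ and $ \tR $; this holds because $ \dN = \nablax \log \mN $ by \eqref{:21w}, with the convention that both sides vanish on $ \{ \mN = 0 \} $. Fix a configuration $ \sss $ with $ k := \sss (\oLSR )$ points, all contained in $ \oLSR $, and set $ l = \nN - 1 - k $. Inserting $ \nablax \mN = \dN \, \mN $ into the definition \eqref{:23t} of $ \langle \, \cdot \, \rangle _{\rR }^{\nN }$ and interchanging $ \nablax $ with the integration over the exterior block $ \oLSRxCl $ gives
\[
\langle \dN \rangle _{\rR }^{\nN } (\xs )
= \frac{ \int _{\oLSRxCl } \nablax \mN (\x , \sR , \tR )\, d\tR }{ \int _{\oLSRxCl } \mN (\x , \sR , \tR )\, d\tR }
= \nablax \log \Big( \int _{\oLSRxCl } \mN (\x , \sR , \tR )\, d\tR \Big).
\]
Since $ \mNRNone $ is built from $ \mN $ by integrating out the exterior coordinates, the function $ \x \mapsto \int _{\oLSRxCl } \mN (\x , \sR , \tR )\, d\tR $ is, on each $ k $-sector, a fixed multiple of $ \mNRNone (\x , \sR )$ and, up to an $ \x $-independent combinatorial constant, the $ \x $-density of the one-reduced Campbell measure $ \muRNone $; hence its logarithmic $ \x $-gradient equals $ \dRN (\xs )$ by \eqref{:23q}, which is \eqref{:23a}.

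To make the identification of this normalising factor airtight---bypassing any tracking of the constants $ \mathscr{Z}_{\rR }^{\nN ,k}(\x )$---I would instead verify that $ g := \langle \dN \rangle _{\rR }^{\nN }$ satisfies the integration-by-parts relation \eqref{:23s} that uniquely characterises $ \dRN $ among $ \FRone $-measurable, locally $ \muNone $-integrable functions, namely $ \int _{\RdSS } \h\, g \, d\muNone = - \int _{\RdSS } \nablaxh \, d\muNone $ for all $ \FRone $-measurable $ \h \in \GRone $. Expanding $ \muNone $ through the labelled density \eqref{:11m}, partitioning the $ \nN - 1 $ remaining variables into an interior block $ \sR \in \oLSRk $ and an exterior block $ \tR \in \oLSRxCl $, and using that $ \h $ and $ g $ depend on $ \tR $ only through the unlabelled configuration (by $ \FRone $-measurability), one integrates out $ \tR $ first; the denominator of $ g $ then cancels against $ \int _{\oLSRxCl } \mN \, d\tR $, leaving in each $ k $-sector the term $ \int _{\oLSR } d\x \int _{\oLSRk } d\sR \; \h (\x , \sR ) \int _{\oLSRxCl } \nablax \mN (\x , \sR , \tR )\, d\tR $. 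Pulling $ \nablax $ outside the $ \tR $-integral and integrating by parts in $ \x $---with no boundary contribution because each $ \h (\, \cdot \, , \sR )$ has compact support in $ \SR $---produces $ - \int _{\oLSR } d\x \int _{\oLSRk } d\sR \; \nablax \h (\x , \sR ) \int _{\oLSRxCl } \mN (\x , \sR , \tR )\, d\tR $; reassembling the two blocks over all $ k $ recovers $ - \int _{\RdSS } \nablaxh \, d\muNone $. By uniqueness of the logarithmic derivative (two representing functions differ by a $ \muNone $-null $ \FRone $-measurable function, which vanishes by density of the admissible $ \h $), this yields $ \dRN = \langle \dN \rangle _{\rR }^{\nN }$, $ \muNone $-a.e.

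The only point that is not purely formal---and the one I expect to require a careful word rather than a new idea---is the legitimacy of the interchange $ \int _{\oLSRxCl } \nablax \mN \, d\tR = \nablax \int _{\oLSRxCl } \mN \, d\tR $, together with the finiteness of $ \langle \dN \rangle _{\rR }^{\nN }$. Although $ \dN (\xs ) = - \beta\big( \nabla \PhiN (\x ) + \sum_i \nablax \PsiN (\x , \si )\big)$ is singular where $ \x $ collides with another particle, the product $ \nablax \mN = \dN\, \mN $ is locally integrable in all variables: the Boltzmann factor $ e^{-\beta \PsiN }$ in $ \mN $ neutralises the $ \nabla \PsiN $-singularity, the residual singularity being of the integrable $ \lvert \x - \tR \rvert ^{-(\daa )} $-type in the exterior variable $ \tR $, while $ \PhiN \equiv \infty $ off $ \overline{\ON }$ confines the support to a bounded set. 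Under \As{A1}--\As{A2} this supplies a local $ L^1 $-dominating function uniformly for $ \x $ in a neighbourhood of any point distinct from the $ \si $, so the differentiation under the integral sign and the Fubini steps above are justified. Beyond this, I anticipate no obstacle: \eqref{:23a} is simply the finite-$ \nN $ incarnation of the general fact that conditioning on a sub-configuration commutes with taking the logarithmic derivative.
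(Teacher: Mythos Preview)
Your first paragraph is exactly the paper's proof, run in the reverse direction: the paper starts from \eqref{:23q}, writes $\dRN(\xs)$ as $\nablax \log \int_{\oLSRxCl}\mN(\x,\sR,\tR)\,d\tR$, pushes $\nablax$ under the $\tR$-integral, and recognizes the result as $\langle \dN\rangle_{\rR}^{\nN}$ via \eqref{:21w} and \eqref{:23t}; it does so in four displayed lines without pausing to justify the interchange or to track the normalizing factor. Your second paragraph (verifying \eqref{:23s} directly) and your discussion of the dominated-convergence justification are extra care the paper does not supply, but they are not a different route.
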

\PF 
Let $ k + l = \nN - 1 $. Let $ \sss $, $ \sR $, and $ \tR $ be as in \eqref{:23t}. 
Then 
\begin{align} \notag 
& \dRN (\xs ) =
\frac{1}{ \int _{\oLSRxCl } \mN ( \x , \sR , \tR ) d\tR } \nablax \int _{\oLSRxCl } 
 \mN ( \x , \sR , \tR ) d\tR 
\ \text{ by \eqref{:23q}}
\\ \notag & = 
\frac{1}{ \int _{\oLSRxCl } \mN ( \x , \sR , \tR ) d\tR }
\int _{\oLSRxCl } \nablax \mN ( \x , \sR , \tR ) d\tR 
\\ \notag 
& = 
\frac{1}{ \int _{\oLSRxCl } \mN ( \x , \sR , \tR ) d\tR }
 \int _{\oLSRxCl } \{ \nablax \log \mN ( \x , \sR , \tR ) \} \mN ( \x , \sR , \tR ) d\tR 
\\ \notag & = 
\langle \dN \rangle _{\rR }^{\nN } (\xs ) 
\ \text{ by \eqref{:21w}, \eqref{:23t}}
.\end{align}
Thus, we obtain \eqref{:23a}. 
\PFEND

We obtain a uniform $ L^2$-bound of the logarithmic derivatives $ \dRN $ of $ \muRN $. 
As we explained around \eqref{:23s}, we consider $ \dRN $ as a function on $ \RdSS $. 
\begin{lemma} \label{l:24} 
Let $ \fQ $ be as in \eqref{:22z}. For $ \rR \ge \qQ + 1 $, 
\begin{align}&\label{:24a}
\limsupi{\nN } \int_{\RdSS } \fQ |\dRN |^2 \muNone (d\x d\sss ) \le \cref{;22z} 
.\end{align}
\end{lemma}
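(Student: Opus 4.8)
The plan is to deduce \eqref{:24a} from \lref{l:22} by recognizing the averaging operator $\langle\,\cdot\,\rangle_{\rR}^{\nN}$ of \eqref{:23t} as a conditional expectation under $\muNone$ and applying Jensen's inequality: conditioning cannot increase the $L^2$-norm, so the estimate for $\dRN$ should follow from the one for $\dN$ already obtained in \lref{l:22}. Concretely, I would first fix $\nN$ and record the disintegration of the one-reduced Campbell measure: on the set of configurations with $\nN-1$ points, $\muNone$ is, up to the factor $1/(\nN-1)!$, the image of $\mN(\x,y^1,\ldots,y^{\nN-1})\,d\x\,dy^1\cdots dy^{\nN-1}$ under $(\x,\mathbf{y})\mapsto(\x,\sum_i\delta_{y^i})$, with $\mN$ the labeled density \eqref{:11m}. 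Comparing this with \eqref{:23t}, one sees that $f\mapsto\langle f\rangle_{\rR}^{\nN}$ is exactly the conditional expectation under $\muNone$ relative to the $\sigma$-field generated by $\x$ and $\pioLR$, since \eqref{:23t} integrates out precisely the exterior coordinates $\tR\in\oLSRxCl$ against the corresponding conditional density. Applying the conditional Jensen inequality componentwise to $\dN=(\dpN)_{p=1}^d$ and then \lref{l:23}, I would obtain on $\oLSR\ts\oLSSR$ the pointwise bound $|\dRN|^2=|\langle\dN\rangle_{\rR}^{\nN}|^2\le\langle|\dN|^2\rangle_{\rR}^{\nN}$.

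Next I would integrate this pointwise bound against $\fQ\,d\muNone$. The hypothesis $\rR\ge\qQ+1$ enters here: it ensures that $\mathrm{supp}\,\fQ\subset\SQQ\subset\oLSR$, so the bound holds on the whole support of $\fQ$, and, since $\fQ=\fQ\ot1$ is measurable with respect to the conditioning $\sigma$-field, the tower property gives $\int_{\RdSS}\fQ\langle|\dN|^2\rangle_{\rR}^{\nN}\,d\muNone=\int_{\RdSS}\fQ|\dN|^2\,d\muNone$. Because $\fQ\ge0$, combining these facts yields $\int_{\RdSS}\fQ|\dRN|^2\,d\muNone\le\int_{\RdSS}\fQ|\dN|^2\,d\muNone$; specializing to $\nN=\Nn$, passing to $\limsupi{\n}$, and invoking \lref{l:22} gives \eqref{:24a}.

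The only slightly delicate point is the identification of $\langle\,\cdot\,\rangle_{\rR}^{\nN}$ with a conditional expectation under $\muNone$ and the bookkeeping behind the equality $\int_{\RdSS}\fQ\langle|\dN|^2\rangle_{\rR}^{\nN}\,d\muNone=\int_{\RdSS}\fQ|\dN|^2\,d\muNone$; everything else (Cauchy--Schwarz / Jensen, Tonelli's theorem, the support condition $\rR\ge\qQ+1$) is routine. If one prefers to avoid the abstract conditional-expectation language, the same conclusion follows directly from \eqref{:23t}: multiply the defining formula with $f=|\dN|^2$ by its denominator, integrate over the exterior coordinates, sum over configurations with $k$ interior and $\nN-1-k$ exterior particles, and compare with the analogous expansion of $\int_{\RdSS}\fQ|\dN|^2\,d\muNone$, using the symmetry of $\mN$.
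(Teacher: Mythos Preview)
Your proposal is correct and follows essentially the same route as the paper: use \lref{l:23} to write $\dRN=\langle\dN\rangle_{\rR}^{\nN}$, apply Jensen's inequality to get $|\dRN|^2\le\langle|\dN|^2\rangle_{\rR}^{\nN}$, observe that integrating $\fQ\langle|\dN|^2\rangle_{\rR}^{\nN}$ against $\muNone$ reproduces $\int\fQ|\dN|^2\,d\muNone$ (the paper invokes \eqref{:22z} and \eqref{:23t} directly, you phrase it as the tower property), and conclude via \lref{l:22}. Your additional commentary on the conditional-expectation interpretation and the role of $\rR\ge\qQ+1$ is accurate but more detailed than the paper's terse three-line display.
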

\PF 
Recall that $ \muNone ( d\x d\sss ) = \rhoNone (\x ) \muxN (d\sss ) d\x $. Then 
\begin{align}\notag 
\int_{\RdSS }
\fQ |\dRN |^2 & \muNone (d\x d\sss )= \int_{\RdSS }
\fQ 
\big| 
\langle \dN \rangle _{\rR }^{\nN } \big|^2 \muNone (d\x d\sss )
\quad \text{ by \lref{l:23}}
\\\notag & \le 
\int_{\RdSS } \fQ \langle |\dN |^2 \rangle _{\rR }^{\nN } \muNone (d\x d\sss ) 
\quad 	
\text{ by Jensen's inequality}
\\\notag & = 
\int_{\RdSS } \fQ |\dN |^2 
\muNone (d\x d\sss )
\quad %&& 
\text{ by \eqref{:22z}, \eqref{:23t}}		% 0\le \fQ \le 1 
.\end{align}
Hence, we obtain \eqref{:24a} from \lref{l:22}. 
\PFEND

\subsection{Strong convergence of density functions of $ \muRN $} \label{s:2C}

We continue to work under Assumptions \As{A1}--\As{A3}.

Let $ \LambdaR $ be the Poisson RPF with intensity measure $ 1_{\SR }d\x $. 
Let $ \LambdaRone $ be the one-reduced Campbell measure of $ \LambdaR $. 

We denote by $ \0 $ the Radon--Nikodym density of $ \muRNone $ with respect to $ \LambdaRone $. 
The existence of $ \0 $ is obvious from \eqref{:10e}. 
Let 
\begin{align}\notag 
& \oLSSRkone = \{ (\xs ) \in \sSsS ;\, \sss (\oLSR ) = k \} 
,\\\label{:25z}
& \mRkNone = \0 1_{\oLSSRkone } , \quad 
 \mRklNone = \mRkNone \wedge \hhhl 
.\end{align}
We regard $ \oLSSRkone $ as a subset of $ \oLSRSSR $, where 
$ \oLSSR $ is the configuration space over $ \oLSR $. 
Let $ \muRone $ be the one-reduced Campbell measure of $ \muR = \mu \circ \pioLR ^{-1} $. 
\begin{lemma} \label{l:25} 
Let $ \rR \in \N $ such that $ \rR \ge \qQ + 1 $. 
Let $ k , l \in \{ 0 \} \cup \N $. 

\noindent \thetag{i}
$ \{ \fQ \mRklNone \}_{\n \in \N } $ is relatively compact in $ L^2 (\LambdaRone ) $. 

\noindent 
\thetag{ii} 
Let $ \Nn $ be as in \As{A2}. 
There exist functions $ \mRklone $ satisfying 
\begin{align} \label{:25a}
\fQ \mRklone &= 
\limin \fQ \mRklNnone \text{ in } L^2 (\LambdaRone ) 
,\\\label{:25b}
\mRklone &= \mRkllone \wedge \hhhl 
.\end{align}

\noindent 
\thetag{iii} 
$ \muRone $ has the Radon--Nikodym density 
$ \mRone = ({d\muRone }/{d \LambdaRone }) $, which is defined by 
$ \mRone \wedge l = \mRklone $ on $ \oLSSRkone $. 
\end{lemma}

\begin{proof} 
Note that $ \mRklNone (\xs ) $ is symmetric in $ (x, \sR )$ when regarded as a function on $ \oLSR ^{k+1} $ by $ \sss = \ulab (\sR )$. Let 
\begin{align} &\notag 
 \DDDRone [ \h ] (\xs )= \half \Big( \sum_{\x \in \SR }\vert \nablax \h (\xs ) \vert^2 + 
 \sum_{\siSR }\vert \nablasi \h (\xs ) \vert^2 \Big) 
.\end{align}
Using the permutation invariance of $ \mRklNone $ in $ (x, \sR )$ and noting that $ k + 1 $ particles exist in $ \oLSR $ for $ (\xs ) \in \oLSSRkone $, we obtain 
\begin{align*}
& \int_{\sSsS }\DDDRone [ \fQ \mRklNone ] \,  d\LambdaRone
\\& \le 
\int_{\sSsS } 
2\Big(\DDDRone [ \fQ ] \big( \mRklNone \big) ^2 + 
\fQ ^2 \DDDRone [ \mRklNone ] 
\Big) \,  d\LambdaRone
\\& = 
\int_{\sSsS } \lvert \nabla \fQ \rvert ^2 \big( \mRklNone \big) ^2 
 \,  d\LambdaRone+ 2 \int_{\sSsS } 
\fQ ^2 \DDDRone [ \log \mRklNone ] \big( \mRklNone \big) ^2 \,  d\LambdaRone
\\&
 = 
\int_{\sSsS } \lvert \nabla \fQ \rvert ^2 \big( \mRklNone \big) ^2 
d\LambdaRone + \int_{\sSsS } 
 \fQ ^2 ( k + 1 ) |\dN |^2 \big( \mRklNone \big) ^2 \,  d\LambdaRone
\\ \notag & \le 
\hhhl ^2 \int_{\sSsS } \lvert \nabla \fQ \rvert ^2 \,  d\LambdaRone + 
 ( k + 1 ) \hhhl \int_{\sSsS }\fQ \lvert \dN \rvert ^2 \mRklNone \,  d\LambdaRone
\quad \text{by \eqref{:22z}, \eqref{:25z}}
.\end{align*}
Hence, combining this with \eqref{:22z} and \lref{l:22} yields the following: 
\begin{align}\label{:25i}&
\limsupi{\nN } \int_{\sSsS }\DDDRone [ \fQ \mRklNone ] \,  d\LambdaRone
< \infty 
.\end{align} 

As before, we regard $ \fQ \mRklNone (\xs ) $ as a function on $ \SRkk $. 
Note that $ \fQ \mRklNone = 0 $ on $ ( \partial \SR ) \ts \SRk $ from \eqref{:22z} and $ \rR \ge \qQ + 1 $. 
Hence, from the Rellich embedding theorem and \eqref{:25i}, there exists $ q >1 $ such that 
$ \{\fQ \mRklNone \}_{\nN \in \N } $ is relatively compact in $ L^q (\oLSR ^{k+1} , d \mathbf{x} )$, 
 where we take $ 1< q < n / (n-1) $ for $ n = d ( k+1 ) $. 

According to this and \eqref{:22z}, $ \{\fQ \mRklNone \}_{\n \in \N } $ is relatively compact in $ L^q ( \LambdaRone ) $. 
From \eqref{:25z}, $ \{ \fQ \mRklNone \}_{\nN \in \N } $ is bounded in $ L^{\infty} (\LambdaRone ) $. 
Hence, $ \{\fQ \mRklNone \}_{\nN \in \N } $ is relatively compact in $ L^2 ( \LambdaRone ) $. 
This proves \thetag{i}. 

%G[

From \thetag{i} and a diagonal argument, there exists a subsequence of $ \{ \Nn \} $,
denoted by the same symbol, and a limit $ \{ \widetilde{\m }_{ \qQ , \Rkl }^{ [ 1 ] } \} $ such that
\begin{align}\label{:25j}
\limi{\n } \fQ \mRklNnone =
\widetilde{\m }_{ \qQ , \Rkl }^{ [ 1 ] } 
\quad \text{in } L^2 ( \LambdaRone ) 
\end{align}
for all $ \rR \ge \qQ + 1 $ and $ k , l \in \{ 0 \} \cup \N $. 

From the weak convergence
$ \limi{\n } \muNn = \mu $
given in \As{A2},
we obtain the uniqueness of limit points of
$ \{ \fQ \mRklNnone \}_{ \n \in \N } $.
Hence the full sequence converges to
$ \widetilde{\m }_{ \qQ , \Rkl }^{ [ 1 ] } $
in \eqref{:25j}.

In view of \eqref{:22z}, we define, for $  \qQ + 1 \le \rR $, 
\begin{align}\label{:25k}
\mRklone =
\begin{cases}
\fQ^{-1} \widetilde{\m }_{\qQ , \Rkl }^{[1]} & \text{on } \SQQ \ts \oLSSR ,\\
0 & \text{on } (\SR \backslash \SQQ ) \ts \oLSSR .
\end{cases}
\end{align}

By \eqref{:25j} and \eqref{:25k}, we obtain \eqref{:25a}.
By \eqref{:25z} and \eqref{:25a}, we obtain \eqref{:25b}.
This proves \thetag{ii}.

%GTP]%GTP[---

Using the consistency \eqref{:25b}, we define the function $\mRkone $ by
\begin{align*}
\mRkone (\xs ) \wedge \hhhl = \mRklone (\xs ) .
\end{align*}
By the decomposition $\oLSR \ts \oLSSR = \sqcup_{k=0}^{\infty} \oLSSRkone $, 
we define the function $\mRone $ by $ \mRone (\xs ) = \mRkone (\xs ) $,  $(\xs ) \in \oLSSRkone $.
We define $\muRone = \mRone \,  d\LambdaRone$.
Then $\muRone $ is the one-reduced Campbell measure of $\muR = \mu \circ \pioLR^{-1} $.
This proves \thetag{iii}.
%GTP]
\PFEND

\begin{proposition} \label{l:2'} 
\noindent \thetag{i}
$ \mu $ has a $ k $-density function $ \mRk $ on $ \oLSRk $ for each $ \rR , k \in \N $. 

\noindent \thetag{ii} 
$ \mu $ has a one-point correlation function $ \rho ^1 $. 
\end{proposition}
\begin{proof} 
Let $ \mRone (\xs ) $ be as in \lref{l:25} \thetag{iii}. 
Let 
$$ \mRone (\x , \sR ) := \mRone (\x , \ulab (\sR ) )
.$$
%-GTP[---
The restriction of $ \mRone ( \x , \sR ) $ to $ \oLSRk $ is symmetric for each $ k \in \N $.
Hence, we construct the $ k $-density function
$ \mRk $ of $ \mu $ on $ \oLSRk $ from $ \mRone $ by the formula
\begin{align}\notag &%
\mRk ( \x ^1 , \x ^2 , \ldots , \x ^k )
=
\cref{;2'f}
\, \mRone ( \x ^1 , \x ^2 , \ldots , \x ^k )
.\end{align}
The constant $ \Ct \label{;2'f} $
is determined by the normalization.
This proves \thetag{i}.

%GTP]

% 

The one-point correlation function $ \rho ^1 $ is given by the formula
\begin{align} & \notag %\label{:2(f}&
 \rho ^1 (\x ) = \wm _{\Rone } (\x ) + \sum_{j = 2 }^{\infty}
\frac{1 }{(j-1)!} \int_{\oLSR ^{j-1}}\wm _{\rR , j } (\x , \x ^2, \ldots,\x ^{j}) d\x ^{2}\cdots d\x ^{j}
.\end{align}
By \eqref{:11)}, $ \int_{\sSS } \sss (\SR ) \mu (d\sss ) < \infty $. 
Hence, the right-hand side is finite. 
Hence, $ \rho ^1 $ is the one-point correlation function of $ \mu $. 
This proves \thetag{ii}. 
\PFEND

\subsection{The logarithmic derivative $ \dmu $ of $ \mu $} \label{s:2D} 

We continue to work under Assumptions \As{A1}--\As{A3}.

This subsection constructs the logarithmic derivative $\dmu $ of $\mu $. 
The construction proceeds in two steps.
First, we define the logarithmic derivative $\dR $ of $\muR $ as the limit of the logarithmic derivatives of $\muR^{\Nn } $ as $\Nn \to \infty $. 
Next, we define $\dmu $ as the limit of $\dR $ as $R \to \infty $.
%GTP[---

Let $ \oLSSRkone $ and $ \mRNone $ be as in \eqref{:25z}. 
Let $ \oLSSRklNone = \{ (\xs ) \in \oLSSRkone \, ; \, 0 < \mRNone ( \xs ) < \hhhl \} $. 
We define
\begin{align}\label{:26h}
\dRklN ( \xs )
&=
\begin{cases}
\nablax \log \mRNone ( \xs ) & (\xs ) \in \oLSSRklNone ,
\\
0 & \text{otherwise}.
\end{cases}
\end{align}
We define the values of $ \dRklN ( \xs ) $ on the boundary $ \partial \SR \ts \oLSSR $ 
as limits of the values from the interior $ \SRSSR $. 

Let $ \mRone $ be as in \lref{l:25}. 
Replacing $ \mRNone $ by $ \mRone $, we define $ \oLSSRklone $ and $ \dlogRkl $ similarly.
%GTP]
Let $ \{ \Nn \} $ be as in \As{A2}. 
\begin{lemma} \label{l:2"}
For each $ k , l \in \zN $ and $ \rR \in \N $, 
\begin{align}\label{:26k}
\dlogRkl ( \fQ \mRklone )^{1/2} 
&=\limin \dRklNn ( \fQ \mRklNnone )^{1/2} 
\ \text{ weakly in $ L^2 (\LambdaRone )$} 
,\\ \label{:26l}
 \dlogRkl (\xs )& = \dlogRkll (\xs ) \quad \text{ for } (\xs ) \in \oLSSRklone 
.\end{align}
\end{lemma}
\begin{proof} 
From \lref{l:24}, \eqref{:25z}, and \eqref{:26h}, we easily deduce that 
\begin{align}\notag %\label{:}&
\{ \dRklNn ( \fQ \mRklNnone )^{1/2} \} _{\n }
\end{align}
is bounded in $ L^2 (\LambdaRone )$. 
Hence, we have a subsequence of $ \{ \Nn \} $, denoted by the same symbol, and a limit $ \widehat{\dlog}_{\Rkl } $ such that 
\begin{align}\label{:26m}
\widehat{\dlog}_{\Rkl } = \limin \dRklNn ( \fQ \mRklNnone )^{1/2} & \quad \text{ weakly in $ L^2 (\LambdaRone )$} 
.\end{align}
From \lref{l:25}, we deduce 
\begin{align} \label{:26n}
( \fQ \mRklone )^{1/2} = \limin ( \fQ \mRklNnone )^{1/2}&
\quad \text{ in } L^4 (\LambdaRone ) 
.\end{align}
Hence, from \eqref{:26m} and \eqref{:26n}, we obtain for any $ \h \in \CziSR \ot \dcb $
\begin{align} \notag &
 \int_{\sSsS }\widehat{\dlog}_{\Rkl } ( \fQ \mRklone )^{1/2} \h \,  d\LambdaRone
\\ \notag = &
 \limin \int_{\sSsS }
\Big( 
\dRklNn ( \fQ \mRklNnone )^{1/2}
\Big) ( \fQ \mRklNnone )^{1/2} \h \,  d\LambdaRone
\\ \notag
=&
\limi{\n }
\int_{ \sSsS } 
\dRklNn \,
\big( \h \fQ \big) \mRklNnone \, d \LambdaRone
\\ \notag =&
\limi{\n }
\Big(
-
\int_{ \sSsS }
\nablax ( \h \fQ )
\, \mRklNnone
\, d \LambdaRone
\Big)
\quad \text{by \eqref{:26h}}
\\ \notag
=&
-
\int_{ \sSsS }
\nablax ( \h \fQ )
\, \mRklone
\, d \LambdaRone 
\quad \text{by \eqref{:22z}, \eqref{:25a}}
\\ \label{:26u}
=&
\int_{ \sSsS }
\dlogRkl \, \h \, \fQ \, \mRklone 
\, d \LambdaRone
.\end{align}
Hence, $ \widehat{\dlog}_{\Rkl } = \dlogRkl ( \fQ \mRklone )^{1/2} $. 
Together with \eqref{:26m}, this yields \eqref{:26k}.
%G]

%GTP[--

From \eqref{:26h}, $ \dRklN $ satisfies $ \dRklN ( \xs ) = \dRkllN ( \xs ) $
for $ ( \xs ) \in \oLSSRklNone $ and for all $ l \in \N $.
This property is inherited by $ \dlogRkl $
from \eqref{:26m}--\eqref{:26u}.
Hence, we obtain \eqref{:26l}.
\PFEND

%GTP]

%G[---
Let $ \mathsf{M}_{ \rR }^{ [ 1 ] } = \{ (\xs ) \in \oLSRSSR \,;\, 0 < \mRone ( \xs ) < \infty \}$. 
We define 
\begin{align} \label{:2)x} &
\dR ( \xs ) = 
\begin{cases}
\nablax \log \mRone ( \xs )  & (\xs ) \in  \mathsf{M}_{ \rR }^{ [ 1 ] }
,\\
0 &\text{otherwise}. 
\end{cases}
\end{align}
We define the values of $ \dR ( \xs ) $ on the boundary $ \partial \SR \ts \oLSSR $ 
as limits of the values from the interior $ \SRSSR $. 
\begin{lemma} \label{l:2)} 
For all $ \rR \ge \qQ + 1 $, 
\begin{align} \label{:2)z} 
\limsupi{\nN } \int_{\sSsS }& \lvert \fQ \dRN \rvert ^2 \mRNone \,  d\LambdaRone\le \cref{;22z} 
,\\ \label{:2)a}
 \dR (\fQ \mRone )^{1/2} &= \limin \dRNn (\fQ \mRNnone )^{1/2} 
\quad \text{ weakly in $ L^2 ( \LambdaRone ) $}
,\\\label{:2)b} 
(\fQ \mRone )^{1/2} &= \limin (\fQ \mRNnone )^{1/2} 
\quad \text{ in $ L^2 ( \LambdaRone ) $}
.\end{align}
\end{lemma}
\begin{proof}
Recall that $ d \muRNone = \mRNone d \LambdaRone $. By \lref{l:24}, we obtain \eqref{:2)z}. 

From \eqref{:2)z}, $ \{ \dRN (\fQ \mRNone )^{1/2} \}_{\nN \in \N }$ is relatively compact under the weak convergence in $ L^2 ( \LambdaRone ) $. 
We easily see 
\begin{align}\label{:26o}
\dR ( \xs )
=
\limil
\sum_{ k = 0 }^{ \infty }
\dlogRkl ( \xs )
,\quad
\sum_{ k = 0 }^{ \infty }
\lvert \dlogRkl ( \xs ) \rvert 
\le 
\lvert \dR ( \xs ) \rvert
.\end{align}
From \lref{l:2"} and \eqref{:26o}, the limit points are unique and coincide with $ \dR ( \fQ \mRone )^{1/2} $. This proves \eqref{:2)a}. 

By \As{A2} \thetag{iii}, we have $ \mu =\limin \muNn $ weakly. Hence by \eqref{:11+}, 
\begin{align*}&
\int_{\sSsS } \fQ \mRone d \LambdaRone = 
\limin \int_{\sSsS } \fQ \mRNnone d \LambdaRone < \infty 
.\end{align*}
Hence, from \lref{l:25},
\begin{align}\notag &
\limin \int_{\sSsS } 
\big\lvert ( \fQ \mRone )^{1/2} - ( \fQ \mRNnone )^{1/2} \big\rvert ^2 d \LambdaRone 
\\ =\notag & 
\limin \int_{\sSsS } \Big( \fQ \mRone + \fQ \mRNnone 
-2 ( \fQ \mRNnone )^{1/2} ( \fQ \mRone )^{1/2} \Big) d \LambdaRone = 0 
.\end{align}
This proves \eqref{:2)b}. 
\PFEND
\begin{lemma} \label{l:26} 
For all $ \rR \ge \qQ + 1 $, 
\begin{align} \label{:26a} &
\int_{\sSsS } | \fQ \dR |^2 d\muRone \le \cref{;22z} 
,\\\label{:26'} &
 \int_{\sSsS } \h \dR d \muRone = \limin \int _{\sSsS } \h \dRNn d \muRNnone 
,\\\label{:26c} &
 \int_{\sSsS } \h \dR d \muRone = - \int _{\sSsS } \nablaxh d \muRone 
\end{align}
for all $ \FRone $-measurable $ \h = (\fQ \hat{\f })\ot \g \in \dcbone $, where $ \FRone = \mathscr{B}(\oLSR ) \ts \sigma(\pioLR ) $. 
\end{lemma}
\PF 
From \lref{l:25}, $ d \muRone = \mRone d \LambdaRone $. Then by \lref{l:2)}, 
\begin{align} \notag 
 \int _{\sSsS } | \fQ \dR |^2 d \muRone & 
= \int _{\sSsS } | \fQ \dR |^2 \mRone d \LambdaRone 
\\ \notag &
\le \liminfn \int _{\sSsS } 
\lvert \fQ \dRNn \rvert ^2 \mRNnone \,  d\LambdaRone\quad \text{ by \eqref{:2)a}} 
\\ & \notag 
\le \cref{;22z} \quad \text{ by \eqref{:2)z}}
.\end{align}
This implies \eqref{:26a}. By \lref{l:2)}, 
\begin{align} \notag &
\int _{\sSsS } \h \dR d \muRone 
= \int _{\sSsS } \h \dR \mRone \,  d\LambdaRone
\\ \notag = & 
\int _{\sSsS } (\hat{\f }\ot \g ) \fQ ^{1/2}
 \dR (\mRone )^{1/2} (\fQ \mRone )^{1/2} \,  d\LambdaRone
\\\notag =&
\limin \int _{\sSsS } (\hat{\f }\ot \g ) \fQ ^{1/2} \dRNn (\mRNnone )^{1/2} (\fQ \mRNnone )^{1/2}d\LambdaRone 
\ \text{ by \eqref{:2)a}, \eqref{:2)b}}
\\\notag = &
\limin \int _{\sSsS } \h \dRNn \mRNnone \,  d\LambdaRone
\quad \text{ by } \h = (\fQ \hat{\f })\ot \g 
\\ \label{:26b} =& 
 \limin \int _{\sSsS } \h \dRNn d \muRNnone 
.\end{align}
This yields \eqref{:26'}. From \eqref{:23s} and \As{A2}, we obtain 
 \begin{align} \notag 
\limin \int _{\sSsS } \h \dRNn d \muRNnone 
 = & - \limin \int _{\sSsS } \nablaxh d \muRNnone 
\quad \text{ by \eqref{:23s}}
\\ \label{:26t} = & 
 - \int _{\sSsS } \nablaxh d \muRone 
\quad \text{ by \As{A2} \thetag{ii}}
.\end{align}
Combining \eqref{:26b} and \eqref{:26t}, we obtain \eqref{:26c}. 
\PFEND

A function satisfying \eqref{:26c} for all $ \h $ in $ \dcbone $ is called a logarithmic derivative of $ \muR $. 
We readily see that $ \dR $ is such a function. 
We now construct the logarithmic derivative $ \dmu $ of $ \mu $ using the consistency of $ \dR $. 

In the following theorem, we regard $ \dR $ as a function on $ \RdSS $, denoted by the same symbol, and define
$ \dR (\xs ) = 0 $ for $ \x \notin \SR $, and $ \dR (\xs ) := \dR ( \x , \pi _{\oLSR } ( \sss ) ) $.
\begin{proposition} \label{l:27} 
There exists a limit of $ \dR $ for $ \muone $-a.e.\,and in $ \Llocmone $, which we denote by $ \dmu $. 
The limit $ \dmu $ is the $ \bullet $-logarithmic derivative of $ \mu $. 
\end{proposition} 
%G[	--
\PF 
By \eqref{:26c} and $ \muR = \mu \circ \pioLR ^{-1} $, 
we deduce that, for each $ \FRone $-measurable $ \h = \f \ot \g $ in $ \dcbone $ with $ \f $ in $ \CziSR $, 
\begin{align} \label{:27f} 
\int_{\RdSS } \h \dR \, d \muone 
= 
- \int_{\RdSS } \nablax \h \, d \muone 
= 
\int_{\RdSS } \h \dRR \, d \muone 
. \end{align}
For a non-negative $ \f $ satisfying $ \int_{\Rd } \f \, d \x = 1 $, 
consider the probability measure $ \f \ot 1 \, \muone $. 
By \eqref{:27f}, $ \dR \f $ is an $ \{ \FRone \} $-martingale under $ \f \ot 1 \, \muone $. 
Hence the first assertion follows from the martingale convergence theorems and \eqref{:26a}. 
 
By the first assertion and \eqref{:27f}, for each $ \h $ in $ \dcb $, 
\begin{align} \label{:27b} 
\int_{\RdSS } \h \dmu \, d \muone 
&= \limi{\rR } \int_{\RdSS } \h \dR \, d \muone = 
- \int_{\RdSS } \nablax \h \, d \muone 
. \end{align} 
By \lref{l:39}, \eqref{:27b} holds for all $ \h $ in $ \dbb $. 
Hence $ \dmu $ is the $ \bullet $-logarithmic derivative of $ \mu $. 
\PFEND 
%G]

\section{Explicit Formulas for $\dmu $: Proof of \tref{l:11}}\label{s:3}
%G[---
In this section, we present explicit formulas for the logarithmic derivative 
$ \dmu $ and complete the proof of \tref{l:11}. 
Although \pref{l:27} established the existence of a logarithmic derivative 
$ \dmu $, from which the existence of a weak solution to the ISDE can be deduced, 
it does not address the uniqueness of the solution 
nor the existence of a strong solution. 
To resolve these issues, we need an explicit representation of $ \dmu $. 
%G]

Unless stated otherwise, all results in \sref{s:3} are proved under Assumptions \As{A1}--\As{A3}.

\subsection{Construction of the logarithmic derivative $\dRyy $ of $\muRyy$} \label{s:31}

The goal of this subsection is to construct the logarithmic derivative of 
$\muRyy = \mu(\cdot \mid \pioLRc (\sss ) = \pioLRc (\yy)) $. 
Let $ \muN $ be the distribution of the $ N $-particle system described in \sref{s:1}. 
For $ \muN $-a.s.\,$ \yy \in \sSS $, let 
\begin{align} &\notag %
\muRyyN = \muN (\cdot \vert \pioLRc (\sss ) = \pioLRc (\yy ))
.\end{align}
Let $ \rho _{ \Ryy }^{N,1}$ be the one-point correlation function of $ \muRyyN $ (on $ \oLSR $). 
Let $ \muxRyyN $ be the reduced Palm measure of $ \muRyyN $ conditioned at $ \xx $. 
Let 
\begin{align} \label{:32p}&
\muRyyNone (d\x d\sss ) = \rho _{ \Ryy }^{N,1} (x) \muxRyyN (d\sss ) d \x 
\end{align}
 be the one-reduced Campbell measure of $ \muRyyN $. 
From \As{A1} and \As{A2}, $ \rho _{ \Ryy }^{N,1} (\x )$ is continuous in $ \x $ on $ \oLSR \cap \ON $. 

Let $\anest = \{ \ak \}_{\qqq \in \mathbb{N}}$ be a family of increasing sequences
$\ak = \{ \akR \}_{\rR \in \mathbb{N}}$ of natural numbers satisfying \eqref{:ak}. 
Let $ \Ki [\ak ] $ be as in \eqref{:CUTw}. 

By \As{A2}, $ \{ \muN \}_{\nN \in \N } $ is tight. 
Hence, for each $ \qqq \in \N $, there exists an $ \ak = \{ \ak (\rR ) \}_{\rR \in \N } $ satisfying 
\begin{align}\label{:32u}&
\inf_{\nN \in \N } \muN (\Ki [\ak ] ) > 1- \frac{1}{\qqq }
.\end{align}
Then $ \{ \Ki [\ak ] \}_{\qqq \in \N } $ becomes an increasing sequence of compact sets in $ \sSS $.

From \eqref{:32u}, $ \muN (\cup_{\qqq \in \N } \Ki [\ak ] ) = 1 $. 
Let 
\begin{align}&\label{:32v}
\muRyyNr = \mu _{\RyyN }^{N } (\cdot \cap \Ki [\ak ] ) 
.\end{align}
%	GTP[---
Let $\muRyyNoner $ be the one-reduced Campbell measure of $\muRyyNr $.
Let $ \fQ \in \CziRd $ be as in \eqref{:22z}, and let $\qQ < \rR $.
Fix $\yy \in \cup_{\qqq \in \N } \Ki[\ak ] $, and choose $\qqq \in \N $ such that
$\yy \in \Ki[\ak ] \backslash \Ki[\akkk ] $, where $ \Ki[a_0] = \emptyset $ by convention. 

Note that for each $\yy \in \cup_{\qqq \in \N } \Ki[\ak ] $, $ \qqq $ is uniquely determined. 
We introduce the quantities 
$ \Ct \label{;42a} $, $ \Ct \label{;42b} $, and $ \Ct \label{;42c} $ as follows:
%GTP]
\begin{align}&\notag 
\cref{;42a} (\qQ , \rR ,\nN , \qqq ) = \int_{\RdSS } \fQ \lvert \nabla \log \fQ \rvert^2 
\muRyyNoner (d\x d\sss ) 
,\\ & \notag  
\cref{;42b} (\qQ , \rR , \nN , \qqq ) = 
\int_{ \ON \ts \sSS } \fQ \Big( \Delta \PhiN 
+ \sum_{i=1}^{N -1} \psiN (\x , \si ) \Big) 
\muRyyNoner (d\x d\sss ) 
,\\ \label{:32c}& 
\cref{;42c} (\qQ , \qqq ) = 
 \frac{1}{4} \sup_{ \rR > \qQ }
\sup_{\nN \in \N } 
\Big\{ \sqrt{\cref{;42a} }+ \sqrt{\cref{;42a} + 4 \beta \cref{;42b} } \Big\}^2 
.\end{align}
%GTP[---
By \eqref{:11*}, we have $\psiN \ge 0 $.
Hence $\cref{;42a} + 4 \beta \cref{;42b} \ge 0 $.
%GTP]
\begin{lemma} \label{l:3<}
For each $ \qQ \in \N $ and all $ \yy \in \cup_{\qqq \in \N } \Ki [\ak ] $, $ \cref{;42c} (\qQ , \qqq ) $ is finite.
\end{lemma}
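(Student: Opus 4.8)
The plan is to read \lref{l:3<} as the quenched, localized analogue of \lref{l:2!}. By the definition \eqref{:32c} it suffices to prove the two uniform bounds $\sup_{\rR>\qQ}\sup_{\n\in\N}\cref{;42a}(\qQ,\yy,\rR,\n)<\infty$ and $\sup_{\rR>\qQ}\sup_{\n\in\N}\cref{;42b}(\qQ,\yy,\rR,\n)<\infty$, together with the nonnegativity of $\cref{;42a}+4\beta\cref{;42b}$ already recorded just above \lref{l:3<} (it follows from $\psiNn\ge0$ in \eqref{:11*}, exactly as in the proof of \lref{l:2!}); granting these, \eqref{:32c} exhibits $\cref{;42c}(\qQ,\yy)$ as a finite real number. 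So fix $\yy\in\bigcup_{\rr}\Ki[\ar]$ and let $\rr=\rr(\yy)$ be the index with $\yy\in\Ki[\ar]\setminus\Ki[\arr]$, so that by \eqref{:32v} the measure $\muRyyNnr$ is a sub-probability measure concentrated on the compact set $\Ki[\ar]$. The decisive point -- and the reason these conditional measures were truncated to $\Ki[\ar]$ in the first place -- is that by \eqref{:12!} every $\sss\in\Ki[\ar]$ obeys $\sss(\oL{\SQQ})\le\sss(\SQQQ)\le\ar(\qQ+2)$, a bound depending on $\qQ$ and $\yy$ only, and in particular independent of both $\rR$ and $\n$. Hence, for the one-reduced Campbell measure $\muRyyNnoner$ and any Borel set $B\subseteq\oL{\SQQ}$ we get $\int_{B\times\sSS}\muRyyNnoner(d\x\,d\sss)=\int_{\sSS}\sss(B)\,\muRyyNnr(d\sss)\le\ar(\qQ+2)$.

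Now estimate $\cref{;42a}$. By \eqref{:22z}, the integrand $\fQ\lvert\nabla\log\fQ\rvert^2$ is a bounded continuous function supported in the closed ball $\oL{\SQQ}$, which lies in $\oLSR$ since $\rR>\qQ$; denoting its supremum by $C_{\qQ}$, we obtain $\cref{;42a}(\qQ,\yy,\rR,\n)\le C_{\qQ}\,\ar(\qQ+2)$, uniformly in $\rR>\qQ$ and $\n\in\N$. Next estimate $\cref{;42b}$, splitting the integrand as $\fQ\,\Delta\PhiNn+\fQ\sum_{i=1}^{\Nn-1}\psiNn(\x,\si)$. The second summand is dominated by $\cref{;32!}$, since \eqref{:11*} gives $0\le\sum_{i=1}^{\Nn-1}\psiNn(\x,\si)\le(\Nn-1)\cref{;32!}/\Nn\le\cref{;32!}$ and $0\le\fQ\le1$. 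For the first summand, note that the one-point correlation function of $\muRyyNnr$ is supported in $\ONn$, so $\Delta\PhiNn$ is tested only on $\oL{\SQQ}\cap\ONn$; there $\Delta\PhiNn$ is bounded uniformly in $\n$ -- for all large $\n$ by the convergence $\Delta\PhiNn\to\Delta\Phi$ in $C_b^2(\SQQ)$ from \eqref{:11e}, and for the finitely many remaining $\n$ by $\PhiNn\in C_b^2(\ONn)$. Absorbing the resulting constant into $C_{\qQ}$, we obtain $\lvert\cref{;42b}(\qQ,\yy,\rR,\n)\rvert\le C_{\qQ}\,\ar(\qQ+2)$, again uniformly in $\rR>\qQ$ and $\n$. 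Substituting both bounds into \eqref{:32c} gives $\cref{;42c}(\qQ,\yy)<\infty$.

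The one point requiring care, rather than a genuine obstacle, is the double supremum in \eqref{:32c}. The supremum over $\n$ is harmless: for each fixed $\n$, both $\cref{;42a}$ and $\cref{;42b}$ are already finite, being integrals of bounded, compactly supported functions against a Campbell measure of finite total mass over a bounded set, so the supremum only probes the limit $\n\to\infty$, which is controlled by \eqref{:11e} and \eqref{:11*}. The supremum over the freezing radius $\rR$ is exactly what the truncation to the compacts $\Ki[\ar]$ handles, through the $\rR$-free particle-count bound above; without this truncation one would instead have to control a conditional first moment uniformly in $\rR$, which is precisely the awkward step the present localization avoids. Finally, I would note that this uniform estimate is exactly the finite-$N$ input which, after passing to the limit $\n\to\infty$ and then to the density functions as in \ssref{s:2C}--\ssref{s:2D}, produces the quenched $L^2$-bound of logarithmic derivatives established in \lref{l:32}.
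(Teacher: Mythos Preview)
Your proof is correct and follows essentially the same route as the paper's: bound $\cref{;42a}$ and $\cref{;42b}$ uniformly in $\rR>\qQ$ and $\n$ by exploiting that $\muRyyNnr$ is concentrated on the compact $\Ki[\ar]$, which caps the local particle count independently of $\rR$ and $\n$, and then invoke \eqref{:11e} and \eqref{:11*} to control the $\PhiNn$- and $\psiNn$-terms. The only cosmetic difference is that the paper uses $\sss(\SQQ)\le\ar(\qQ+1)$ directly from \eqref{:12!}, whereas you pass through the closed ball and arrive at $\ar(\qQ+2)$; either bound suffices.
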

%[GTP---
\begin{proof}
From \eqref{:CUTw} and \eqref{:32v}, we have, for all $ \qQ < \rR $, $ \nN \in \N $, and $ \yy \in \sSS $,
\begin{align}\notag %\label{:3<f}
\int_{ \sSS } \sss ( \SQQ ) \, \muRyyNr ( d\sss )
\le \ak ( \qQ + 1 )
.\end{align}
Combining this with \eqref{:32c}, \eqref{:11h}, and \eqref{:11*}, we obtain, for
$ \yy \in \Ki [ \ak ] \backslash \Ki [ \akk ] $,
\begin{align}\notag 
\cref{;42a} &
\le
\Big\{
\sup_{ \x \in \Rd }
\lvert \fQ ( \x ) \rvert
\lvert \nabla \log \fQ ( \x ) \rvert ^2
\Big\}
\ak ( \qQ + 1 )
,\\\notag 
\cref{;42b} &
\le
\Big\{
\sup_{ \x \in \Rd }
\lvert \fQ ( \x ) \rvert
\lvert \Delta \PhiN ( \x ) \rvert
+
\sup_{ ( \x , \y ) \in \ON ^2 }
\lvert \psiN ( \x , \y ) \rvert
(\nN - 1 )
\Big\}
\ak ( \qQ + 1 )
\\&\notag 
\le
\Big\{
\sup_{ \x \in \Rd }
\lvert \fQ ( \x ) \rvert
\lvert \Delta \PhiN ( \x ) \rvert 
+
\cref{;32!}
\Big\}
\ak ( \qQ + 1 )
.\end{align}
This, together with \eqref{:22z} and \eqref{:32c}, proves \lref{l:3<}.
\PFEND

%GTP]

We define, for $ \muN $-a.s.\,$ \yy $ and $ \x \in \SR $, 
\begin{align} \label{:2!y} 
\dRyyN (\xs ) 
= \dN ( \x , \4 ) 
.\end{align}
Here $ \piA (\sss ) = \sum_{\si \in A } \delta_{\si } $ and $ \oLSR = \{ | s | \le \rR \} $. 

Let $ \yyN $ be as in \eqref{:02y}.  
By \eqref{:21x} and \eqref{:2!y}, we obtain, for $ \x \in \ON $,  
\begin{align}\label{:2!z}
\dRyyRNN 
= - \beta \Big\{ \nabla \PhiN ( \x ) 
+ \sum_{\si \in \oLSR } \nablax \PsiN ( \x , \si ) 
+ \sum_{ \yi \not\in \oLSR }^{ \nN } \nablax \PsiN ( \x , \yi ) 
\Big\}
.\end{align}
The following result is a local, quench version of Lemmas \ref{l:22} and \ref{l:24}. 
\begin{lemma} \label{l:32} 
 Let $ \rR \ge \qQ + 1 $. Then for $ \yy \in \cup_{\qqq \in \N } \Ki [\ak ]$, 
\begin{align}\notag &% \label{:32d}&
\limsupi{\nN } 
\int_{\RdSS } \fQ \big\lvert \dRyyRNN \big\rvert ^2 \mu _{\RyyN }^{\nN , [1]} (d \x d\sss ) \le 
\cref{;42c} (\qQ , \qqq ) 
.\end{align}
\end{lemma}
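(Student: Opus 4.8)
The plan is to mimic the proof of \lref{l:22} verbatim, but working with the conditional measure $\muRyyNn$ restricted to the compact set $\Ki[\ar]$, and to keep the exterior configuration $\pioLRc(\yy)$ frozen throughout. First I would fix $\yy \in \cup_{\rr} \Ki[\ar]$, choose $\rr$ with $\yy \in \Ki[\ar] \setminus \Ki[\arr]$, and recall from \eqref{:32t} that $\dRyyRNNn$ is precisely the logarithmic derivative $\dNn$ of the $\Nn$-particle density evaluated at the configuration obtained by adjoining the frozen exterior points $\pioLRc(\yy)$. The key point is that the integration-by-parts identity underlying \lref{l:21} is purely local on $\ON$ and uses only \eqref{:11i} and \eqref{:11*}; hence it applies to the conditional measure $\muRyyNnoner$ as well, yielding the analogue of the identity in the proof of \lref{l:22}:
\begin{align}\notag
\int_{\RdSS} \fQ |\dRyyRNNn|^2 \, d\muRyyNnoner
&= -\int_{\RdSS} \fQ \sumpd (\partial_p \log \fQ)\, (\dRyyRNNn)_p \, d\muRyyNnoner
\\\notag
&\quad + \beta \int_{\RdSS} \fQ\Big(\Delta\PhiNn + \sum_{i} \psiNn(\x,\si)\Big) d\muRyyNnoner .
\end{align}
Here the sum over $i$ runs only over the particles inside $\oLSR$ together with the frozen ones, but all contributions are nonnegative by \eqref{:11g} and $\psiNn \ge 0$, and the whole term is bounded by $\cref{;42b}(\qQ,\yy,\rR,\n)$.

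Next I would introduce $x_{\n} = \{\int_{\RdSS} \fQ |\dRyyRNNn|^2 \, d\muRyyNnoner\}^{1/2}$ and apply the Cauchy--Schwarz inequality to the first term on the right, exactly as in \eqref{:22k}, to obtain the quadratic inequality
\begin{align}\notag
x_{\n}^2 \le \sqrt{\cref{;42a}}\, x_{\n} + \beta\, \cref{;42b},
\end{align}
where $\cref{;42a}$ and $\cref{;42b}$ are the quenched analogues defined in \eqref{:32c}. Solving this quadratic gives $x_{\n} \le \sqrt{\cref{;42a}} + \sqrt{\cref{;42a} + 4\beta \cref{;42b}}$, and taking $\limsup_{\n\to\infty}$ together with the definition \eqref{:32c} of $\cref{;42c}(\qQ,\yy)$ yields the claimed bound. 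Finiteness of the right-hand side is exactly \lref{l:3<}, which has already been established via the localization estimate \eqref{:32e}.

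The main obstacle, and the one point requiring genuine care rather than transcription, is justifying the integration by parts for the \emph{conditional} Campbell measure $\muRyyNnoner$ on the bounded domain: one must check that the frozen exterior particles $\pioLRc(\yy)$ do not create boundary terms on $\partial\SR$ and that the cutoff $\fQ$ (supported strictly inside $\SR$ since $\rR \ge \qQ+1$) kills any contribution from $\partial\SR$, while the collision set $\{\x = \si\}$ is harmless because $\mN$ vanishes there and $d \ge 2$. Once this localization is in place, the restriction to $\Ki[\ar]$ via \eqref{:32v} only truncates the number of interior particles and hence the sum $\sum_i \psiNn$, which keeps every quantity finite uniformly in $\n$; the rest is the same quadratic-inequality argument as in \lref{l:22} and \lref{l:24}.
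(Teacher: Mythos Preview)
Your proposal is correct and follows exactly the paper's approach: the paper's own proof merely states that \eqref{:32c} plays the role of \eqref{:22s} and that the remainder is similar to \lref{l:22}, so your expanded version---integration by parts on the conditional Campbell measure, the Cauchy--Schwarz step, and the quadratic inequality $x_{\n}^2 \le \sqrt{\cref{;42a}}\,x_{\n} + \beta\,\cref{;42b}$---is precisely what the paper omits. Your identification of the one point needing care (that the cutoff $\fQ$ with $\rR \ge \qQ+1$ kills boundary terms on $\partial\SR$, and that the collision set is harmless) is apt and matches the implicit justification used for \lref{l:21} and \lref{l:22}.
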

\PF 
 \eqref{:32c} corresponds to \eqref{:22s} used in the proof of \lref{l:22}. 
 The remainder of the proof of \lref{l:32} is similar to that of \lref{l:22}, and is therefore omitted. 
\PFEND

Let $ \mRyyNone $ be the Radon--Nikodym density of $ \muRyyNone $ with respect to $ \LambdaRone $. 
Such densities $ \mRyyNone $ exist by \eqref{:10e}. 
Define 
\begin{align} &\notag %
 \mRyyklNone = \big( \mRyyNone 1_{\oLSSRkone } \big) \wedge \hhhl 
,\end{align}
where $ \oLSSRkone = \{ (\xs ) \in \oLSRSSR ;\, \sss (\oLSR ) = k \} $. 

By definition, $ \muRyy = \mu (\cdot \vert \pioLRc (\sss ) = \pioLRc (\yy ) ) $ is an RPF on $ \Rd $ concentrated at $ \pioLRc (\yy )$ on $\oLSRc $. 
We regard $ \muRyy $ as an RPF on $ \oLSR $ as well as that on $ \Rd $. 
Let $ \muRyyone $ be the one-reduced Campbell measure of $ \muRyy $. 
\begin{lemma} \label{l:33} 
Let $ \rR \ge \qQ + 1 $, $ k , l \in \{ 0 \} \cup \N $. 
For $ \mu $-a.s.\,$ \yy $, the following hold:  

\noindent 
 \thetag{i}
$ \{ \fQ \mRyyklNone \}_{\nN \in \N } $ is relatively compact in $ L^2 (\LambdaRone ) $. 

%GTP[---

\noindent 
\thetag{ii} There exist functions $ \mRyyklone $ satisfying 
\begin{align} \notag 
\limin \fQ \mRyyklNnone & = \fQ \mRyyklone \text{ in } L^2 (\LambdaRone ) 
,\\ \notag 
\mRyyklone &= \mRyykllone \wedge \hhhl 
.\end{align}

\noindent 
\thetag{iii} 
$ \muRyyone $ has the Radon-Nikodym density $ \mRyyone = ({d\muRyyone }/{d \LambdaRone })$, 
which is defined by 
$ \mRyyone \wedge \hhhl = \mRyyklone $ on $ \oLSSRkone $. 
\end{lemma}

\begin{proof}
\lref{l:33} follows from \lref{l:32} using similar arguments as for the proof of \lref{l:25}. 
Hence, the proof is omitted. 
\PFEND

We proceed in the same manner for the conditional measures. 
Replacing $ \mRNone $ by $ \mRyyNone $ (resp.\,$ \mRyyone $) in \eqref{:26h},
we define $ \oLSSRyyklNone $ and $ \dRyyklN $ (resp.\,$ \oLSSRyyklone $ and $ { \dlog }_{ \Ryykl } $) similarly. 
\begin{lemma} \label{l:3"}
For $ \rR \ge \qQ + 1 $, $ k , l \in \zN $, and $ \mu $-a.s.\,$ \yy $, 
\begin{align} \notag %	\label{:3"a}
\limin \dRyyklNn ( \fQ \mRyyklNnone )^{1/2} &= {\dlog}_{\Ryykl } ( \fQ \mRyyklone )^{1/2}
\quad \text{ weakly in $ L^2 (\LambdaRone )$} 
,\\ \notag %\label{:3"b} 
 {\dlog}_{\Ryykl } (\xs ) & = {\dlog}_{\Ryykll }(\xs ) \quad \text{ for } (\xs ) \in \oLSSRyyklone 
.\end{align}
\end{lemma}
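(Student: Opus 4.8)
The plan is to transcribe the proof of \lref{l:2"}, replacing its two ingredients by their quenched counterparts: \lref{l:32} in place of the uniform $L^2$-bound \lref{l:24}, and \lref{l:33} in place of the density convergence \lref{l:25}. Throughout, $\yy$ is fixed in the $\mu$-full-measure set on which both \lref{l:32} and \lref{l:33} apply (recall $\mu(\cup_{\rr\in\N}\Ki[\ar])=1$); on this set $\cref{;42c}(\qQ,\yy)$ is finite by \lref{l:3<}.

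First I would check $L^2$-boundedness. From \lref{l:32}, the definition \eqref{:33z} of $\mRyyklNnone$, and the construction of $\dRyyklN$ as the restriction of $\nablax\log\mRyyNone$ to the set where $\mRyyNone$ lies strictly between $0$ and $\hhhl$, the sequence $\{\dRyyklNn(\fQ\mRyyklNnone)^{1/2}\}_{\n}$ is bounded in $L^2(\LambdaRone)$. Passing to a subsequence (kept under the same symbol) it converges weakly to some $\widehat{\dlog}_{\Ryykl}\in L^2(\LambdaRone)$. By \lref{l:33}, $(\fQ\mRyyklNnone)^{1/2}\to(\fQ\mRyyklone)^{1/2}$ in $L^4(\LambdaRone)$, so combining the weak and strong convergences gives, for every $\h\in\CziSR\ot\dcb$,
\[
\int_{\SRSS}\widehat{\dlog}_{\Ryykl}\,(\fQ\mRyyklone)^{1/2}\h\,d\LambdaRone
=\limin\int_{\SRSS}\dRyyklNn\,(\fQ\mRyyklNnone)^{1/2}(\fQ\mRyyklNnone)^{1/2}\h\,d\LambdaRone.
\]

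Next I would set $\widetilde{\dlog}_{\Ryykl}=\widehat{\dlog}_{\Ryykl}(\fQ\mRyyklone)^{-1/2}$ where $\fQ\mRyyklone>0$ and $\widetilde{\dlog}_{\Ryykl}=0$ otherwise, exactly as in the proof of \lref{l:2"}. Running the same chain of identities --- using the integration-by-parts identity \eqref{:23r} with $\muRNone$ replaced by $\muRyyNone$, the $L^2$-convergence $\fQ\mRyyklNnone\to\fQ\mRyyklone$ from \lref{l:33}, and the definition \eqref{:3"j} of ${\dlog}_{\Ryykl}$ --- I would obtain $\widetilde{\dlog}_{\Ryykl}={\dlog}_{\Ryykl}$, hence $\widehat{\dlog}_{\Ryykl}={\dlog}_{\Ryykl}(\fQ\mRyyklone)^{1/2}$. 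Since the weak limit is thereby identified independently of the chosen subsequence, the full sequence converges, which is the first assertion. The consistency ${\dlog}_{\Ryykl}={\dlog}_{\Ryykll}$ on $\oLSSRyyklone$ is then inherited from the relation $\dRyyklN=\dRyykllN$ on $\oLSSRyyklNone$ --- immediate from the truncation in their definitions --- via the weak convergence, just as \eqref{:26'} was obtained in \lref{l:2"}.

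The only point where care is needed, and the sole difference from \lref{l:2"}, is the identification of the weak limit: in \lref{l:2"} the uniqueness of subsequential limits rested on the weak convergence $\muNn\to\mu$, whereas here it rests on the $L^2$-strong convergence $\fQ\mRyyklNnone\to\fQ\mRyyklone$ of the conditional densities furnished by \lref{l:33}, which is available only for $\mu$-a.s.\,$\yy$. Once \lref{l:33} is granted, the rest is a faithful copy of the argument for \lref{l:2"}, which is why the proof may be stated in abbreviated form.
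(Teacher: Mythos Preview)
Your proposal is correct and matches the paper's approach exactly: the paper states that \lref{l:3"} follows from \lref{l:32} and \lref{l:33} by the same argument as \lref{l:2"}, and omits the details you have spelled out. Your minor remark at the end about where the uniqueness of subsequential limits comes from is slightly imprecise (in both \lref{l:2"} and here, uniqueness is obtained by identifying the limit via the integration-by-parts chain together with the $L^2$ density convergence), but this does not affect the correctness of the argument.
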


\begin{proof}
The proof of \lref{l:3"} is identical to that of \lref{l:2"}, using Lemmas \ref{l:32} and \ref{l:33}. 
We omit the details.
\PFEND

Replacing $ \mRone $ by $ \mRyyone $ in \eqref{:2)x}, we define $ \dRyy $ analogously to $ \dR $.
\begin{lemma} \label{l:3)} 
For all $ \rR \ge \qQ + 1 $ and $ \mu $-a.s.\,$\yy \in \Ki[\ak ] \backslash \Ki[\akkk ] $, 
\begin{align} \label{:3)x} 
\limsupi{\nN } \int_{\RdSS } \lvert & \fQ \dRyyN \rvert ^2 \mRyyNone \,  d\LambdaRone\le 
\cref{;42c} (\qQ , \qqq ) 
,\\\label{:3)a}
\dRyy (\fQ \mRyyone )^{1/2} &= \limin \dRyyNn (\fQ \mRyyNnone )^{1/2} 
\quad \text{ weakly in $ L^2 ( \LambdaRone ) $}
,\\ \label{:3)b} 
(\fQ \mRyyone )^{1/2} &= 
\limin (\fQ \mRyyNnone )^{1/2} 
\quad \text{ in $ L^2 ( \LambdaRone ) $}
.\end{align}
\end{lemma}

\begin{proof} 
\lref{l:3)} corresponds to \lref{l:2)} and follows from Lemmas \ref{l:32}--\ref{l:3"} using the same argument as for the derivation of \lref{l:2)} from Lemmas \ref{l:24}--\ref{l:2"}. Hence, the proof is omitted here. 
\PFEND

\begin{lemma} \label{l:34} Let $ \rR \ge \qQ + 1 $ and $ \qqq \in \N $. 
For $ \mu $-a.s.\,$\yy \in \Ki[\ak ] \backslash \Ki[\akkk ] $, the following hold: 
\begin{align} \label{:34!} &
\int_{\RdSS } \lvert \fQ \dRyy \rvert ^2 d\muRyyone \le \cref{;42c} (\qQ , \qqq ) 
 \\ \label{:34'} &
\int_{\RdSS } \h \dRyy d \muRyyone = 
\limin \int_{\RdSS } \h \dRyyNn d\muRyyNnone 
,\\\label{:34"} &
 \int_{\RdSS } \h \dRyy d \muRyyone = - \int _{\RdSS } \nablaxh d \muRyyone 
\end{align}
for all $ \FRone $-measurable $ \h = (\fQ \hat{\f })\ot \g \in \dcbone $. 
\end{lemma}

\begin{proof}
Note that Lemmas \ref{l:32}--\ref{l:34} correspond to Lemmas \ref{l:24}--\ref{l:26}, respectively. 
Hence, \lref{l:34} follows from Lemmas \ref{l:32}--\ref{l:3)} using similar arguments as for the proof of \lref{l:26}. Hence, the proof is omitted here. 
\PFEND

\subsection{Explicit formulae of the logarithmic derivative $ \dbR $ of $ \muRyy $} 
We continue to work under Assumptions \As{A1}--\As{A3}.

Let $ \mathbf{I} (k) $ be as in \eqref{:11l}. 
Let $ f^{(\mathbf{i})} = \partial ^k f / \partial \x ^{\mathbf{i}}$, 
where $\partial \x ^{\mathbf{i}}= (\partial x_1)^{i_1}\cdots (\partial x_{d})^{i_d}$. 
Let $ \III = \sqcup_{k = 0 }^{\lz } \mathbf{I}( k ) $ as in \tref{l:11}. 

From \eqref{:11h}, $ -\nablax \PsiN ( \x , \y ) = 0 $ for $ ( \x , \y ) \notin \oL{\ONtwo } $. 
%G---
Using Taylor expansions of
$ - \nablax \PsiNn ( \x , \y ) $
and
$ - \nablaPsi ( \x - \y ) $
in $ \x $
at the origin $ \x = 0 $
for each $ \lvert \y \rvert > \rR $ and $ ( \x , \y ) \in \ONtwo $,
we obtain, for $ \lvert \x \rvert \le \rR $,
\begin{align}\notag
- \nablax \PsiNn ( \x , \y )
&=
\1
\xiGamma
( - \nablax \PsiNn )^{ ( \mathbf{i} ) } ( 0 , \y )
+
\Rln ( \x , \y )
,\\\label{:35p}
- \nablaPsi ( \x - \y )
&=
\1
\xiGamma 
( - \nablaPsi )^{ ( \mathbf{i} ) } ( - \y )
+
\Rl ( \x , \y )
.\end{align}
Here $ \Rln $ and $ \Rl $ are defined by
\begin{align}\notag
&\Rln ( \x , \y )
=
\sum_{ \mathbf{i} \in \mathbf{I} ( \lz + 1 ) }
\xiGamma 
\int_0^1 
(-\nabla \PsiNn )^{(\mathbf{i})}( t \x , \y )
(\ell + 1 )(1-t)^{\ell }\, dt 
,\\\label{:35r} &
\Rl ( \x , \y ) =
\sum_{ \mathbf{i} \in \mathbf{I} ( \lz + 1 ) }
\xiGamma 
\int_0^1 
(-\nabla \Psi )^{(\mathbf{i})}( t \x - \y )
(\ell + 1 )(1-t)^{\ell }\, dt 
.\end{align}

 Let $ \Rylnn $, $ \Ryln $, and $ \mathscr{R} _{\rR + \e ,\yy }^{\lz }$ be such that, for $ \mu $-a.s.\,$ \yy = \sum_i \delta_{\yi }$, 
\begin{align} \notag &
\Rylnn (\x ) = \3 \Rln (\xixyi ) , \quad \Ryln (\x ) = \3 \Rl (\xixyi ) ,
\\\label{:35z} & 
\mathscr{R} _{\rR + \e ,\yy }^{\lz } (\x ) 
 = 
\sumyiRecN 
 \Rl (\xixyi ) 
,\quad 
\rR \in \N ,\ \epsilon \ge 0 
.\end{align}
\begin{proposition} \label{l:35} % \thetag{i}
For each $ \rR \in \N $ and $ \mu $-a.s.\,$ \yy $, we obtain 
\begin{align}& \label{:35e}
\int_{\sSS } \lVert \mathscr{R} _{\rR + \e ,\yy }^{\lz } \rVert _{\6 } 
d \mu (\yy ) < \infty 
\quad \text{ for each $ \e > 0 $}
,\\&\label{:35d} 
\limi{\rR } \lVert \Ryl \rVert _{\7 } = 0 \quad \text{ for each $ \qQ \in \N $}
,\\ 
&\label{:35c} 
\Ryl = \limin \Ryln = \limin \Rylnn 
 \quad \text{ in }\6 
,\\\label{:35!}&
\Ryl = \rrr _{\rR , \pioLRc (\yy ) }^{\lz } 
.\end{align}
\end{proposition}
\begin{proof} 
From \eqref{:35z}, \eqref{:35r}, and \eqref{:11j}, we obtain, for each $ \e > 0 $, 
 \begin{align} \notag &
\int_{\sSS } \lVert \mathscr{R} _{\rR + \e ,\yy }^{\lz } \rVert _{\6 } d \mu (\yy ) \le 
\int_{\sSS } \sumyiRecN 
\lVert \Rl (\xixyi ) \rVert _{\6 } d \mu (\yy ) 
& \ \text{ by \eqref{:35z}}
&\\ 
\notag \le &
 \sum_{\mathbf{i} \in \mathbf{I}(\lz + 1 ) } 
\frac{\rR ^{\lz + 1 }}{\iGamma }
 \int_{\sSS } \sumyiRecN 
\big\lVert (- \nablaPsi )^{(\mathbf{i})} ( \xyi ) \big\rVert _{\6 } 
 d \mu (\yy ) &  \ \text{ by \eqref{:35r}}
&
.\end{align}
The last term is finite by \eqref{:11j}. This yields \eqref{:35e}. 

From the above inequality, we obtain, for $ \mu $-a.s.\,$ \yy = \sum_i \delta_{\yi }$, 
\begin{align} & \notag 
\sumyiRecN 
 \lVert \Rl (\xixyi ) \rVert _{\6 } < \infty 
.\end{align}
Since $ \yy (\oLSRe ) < \infty $, this yields, for $ \mu $-a.s.\,$ \yy = \sum_i \delta_{\yi }$, 
\begin{align} & \label{:35g} %
\9 
 \lVert \Rl (\xixyi ) \rVert _{\6 } < \infty 
.\end{align}
This together with \eqref{:35z} yields \eqref{:35d}. 

%G[---

From \eqref{:35r} and \eqref{:11j}, for each $ \e > 0 $, we obtain
\begin{align}\notag &
\int_{\sSS }
\sumyiRecN
\supnN \lVert \Rln (\xixyi ) \rVert _{\SIXONn }
d \mu (\yy )
\\ \notag \le &
\sum_{\mathbf{i} \in \mathbf{I}(\lz + 1 ) }
\frac{\rR ^{\lz + 1 }}{\iGamma }
\int_{\sSS }
\sumyiRecN
\supnN
\big\lVert (-\nablax \PsiNn )^{(\mathbf{i})} ( \x , \yi )
\big\rVert _{\SIXONn }
d \mu (\yy )
\\ \notag < &
\infty
.\end{align}
As in \eqref{:35g}, for $ \mu $-a.s.\,$ \yy = \sum_i \delta_{\yi }$,
\begin{align}\label{:35h}&
\9
\supnN \lVert \Rln (\xixyi ) \rVert _{\SIXONn } < \infty
.\end{align}
%G]
By \eqref{:35r}, \eqref{:35z}, and \eqref{:35h}, the sequence $ \{ \Rylnn \} $ converges in $ \SIX $. 
Using \eqref{:11p} in addition, we have
\begin{align}\notag
& 
\limi{\n } \lVert \Ryln - \Rylnn \rVert _{\6 }
\\\notag & \le 
\limi{\n } \3 \lVert\Rl (\xixyi ) - \Rln (\xixyi ) \rVert _{\6 } \quad \text{by \eqref{:35z}}
\\\notag & \le 
\limi{\n } \sum_{\mathbf{i} \in \mathbf{I}(\lz + 1 ) }
\frac{\rR ^{\lz + 1 }}{\iGamma }
\3 \lVert 
(- \nablaPsi )^{(\mathbf{i})} ( \x - \yi ) - 
(- \nabla \PsiNn )^{(\mathbf{i})} ( \x , \yi ) 
 \rVert _{\6 }
\\\notag & = 0 \quad \text{by \eqref{:11o}}
.\end{align}
Collecting these, we obtain the second equality in \eqref{:35c}. 
By \eqref{:35z}, $ \Ryl = \limin \Ryln $ if the limit exists. 
Thus, we obtain \eqref{:35c}. 

By \eqref{:35z}, $ \Ryln $ is independent of $ \pioLR (\yy )$. 
Hence, \eqref{:35c} yields \eqref{:35!}. 
\PFEND

Let $ \III = \sqcup_{k = 0 }^{\lz } \mathbf{I}( k ) $ as in \tref{l:11}. 
For $ \mathbf{i} = (i_1,\ldots,i_d) \in \III $, define 
\begin{align}\notag %&\label{:35t}
\CRyinn &= \Big( \frac{1}{\iGamma } \Big) \3 (-\nablax \PsiNn )^{(\mathbf{i})} ( 0 , \yi ) 
,\\ 
\label{:35u}
\CRyiN &= 
\Big( \frac{1}{\iGamma } \Big) 
\3 (- \nablaPsi )^{(\mathbf{i})} (0- \yi ) 
.\end{align}
From \eqref{:35p}--\eqref{:35z} and \eqref{:35u}, 
\begin{align} \label{:35v}
\3 - \nablax \PsiNn (\x , \yi ) = & \1 \CRyinn \x ^{\mathbf{i}} + \Rylnn (\x ) 
,\\\label{:35y} 
 \3 - \nablaPsi (\xyi ) = & \1 \CRyiN \x ^{\mathbf{i}} + \Ryln (\x ) 
.\end{align}

%-----------------
%GTP[

\begin{proposition} \label{l:36} 
Let $ \mathbf{i} \in \III $ and $ \rR \in \N $. 
For $ \mu $-a.s.\,$ \yy $ such that $ \yy (\oLSR ) \ge 1 $, 
there exists a finite constant $ \CRyii $ satisfying 
\begin{align} \label{:36a}
\CRyii &= \limin \CRyinn = \limin \CRyiN 
,\\ \label{:36y}
\CRyii &= \CR _{\rR , \pioLRc (\yy ) }^{\mathbf{i}} 
,\\ \label{:36z}
\limiR \CRyii &= 0 
,\\ \label{:3'b}
\1 \CRyii \x ^{\mathbf{i}} + \Ryl (\x ) 
&=\limin \3 - \nablax \PsiNn ( \x , \yi ) \quad \text{in $ \SIX $}
.\end{align}
\end{proposition}

%GTP[ 
\begin{proof} 
From \eqref{:35v}, \eqref{:2!z}, \eqref{:35c}, \eqref{:11e}, and \eqref{:11f}, 
\begin{align} \notag &\limin
\Big(\beta \Big( \1 \CRyinn \x ^{\mathbf{i}}\Big) - \dRyyNn (\xs ) \Big) 
\\\notag =& \limin \Big( 
\beta \Big( - \Rylnn (\x ) + \3 - \nablax \PsiNn (\x , \yi ) \Big) - \dRyyNn (\xs ) \Big) \quad \text{ by \eqref{:35v} }
\\ \notag = &\limin 
 \beta \Big( - \Rylnn (\x ) + \nabla \PhiNn (\x ) + \sum_{\si \in \oLSR } \nablax \PsiNn ( \x , \si ) \Big) 
 \quad \text{ by \eqref{:2!z} }
\\ \notag %\label{:36q} 
=&
 \beta \Big( - \Ryl (\x ) + \nablaPhi (\x ) + \sum_{\si \in \oLSR } \nablax \Psi ( \x - \si ) \Big) 
\quad \text{by \eqref{:35c}, \eqref{:11e}, \eqref{:11f}}
.\end{align}
The convergence above holds in $ C^1 (\oL{\sS }_{\rRe } (\sss ) ) $ for each $ \e > 0 $ and all $ \sss $ such that $ \sss = \sum_{\si \in \oLSR } \delta _{\si } $, where $ \oL{\sS }_{\rRe } (\sss ) = \{ \x \in \oLSR ; \lvert \xsi \rvert \ge \e \text{ for all }i \} $. 

By \lref{l:3)} and the above equation, we obtain, weakly in $ L^{1}(\LambdaRone )$, 
\begin{align}\notag &
\limin \Big( \1 \CRyinn \x ^{\mathbf{i}}\Big) \fQ \mRyyone = 
\limin \Big( \1 \CRyinn \x ^{\mathbf{i}}\Big) \fQ \mRyyNnone 
%\quad \text{ by \eqref{:3)b}}
\\ &\notag = 
\limin \Big( \Big( \1 \CRyinn \x ^{\mathbf{i}}\Big) - \dRyyNn (\xs ) + \dRyyNn (\xs ) \Big) \fQ \mRyyNnone 
\\& \notag %\label{:36+} 
= 
\Big(
 \Big( - \Ryl (\x ) + \nablaPhi (\x ) + \sum_{\si \in \oLSR } \nablax \Psi ( \x - \si ) \Big) 
+ \dRyy (\xs ) \Big)
\fQ \mRyyone 
.\end{align}

From $ \muRyyone (\oLSR \ts \oLSSR ) > 0 $, we have $ \int _{\oLSR \ts \oLSSR } \mRyyone d \LambdaRone > 0 $. Hence, the family 
$ \{ \x ^{\mathbf{i}} \fQ (\xs ) \mRyyone (\xs ) ; \mathbf{i} \in \III \}$ 
is linearly independent in $ L^{1}(\LambdaRone )$.

From this and the display equation right above, each numerical sequence $ \{\CRyinn \}_{\n \in \N }$ has a finite limit $ \CRyii $ satisfying 
\begin{align} \notag & 
\limin \1 \CRyinn \big( \x ^{\mathbf{i}} \fQ \mRyyone \big) = \1 \CRyii \big( \x ^{\mathbf{i}} \fQ \mRyyone \big) 
,\end{align}
which implies the first equality in \eqref{:36a}. 
%GTP]

Let $ \xji $ for $ \xjX $ be as in \eqref{:11L}. 
For each $ \mathbf{j} \in \III $, 
\begin{align} \notag & 
\limin \1 ( \CRyinn - \CRyiN ) \xji 
\\ = \notag 
& \limin \Big(\1 \CRyinn \xji + \Rylnn (\xj ) \Big) 
- \Big( \1 \CRyiN \xji + \Ryln (\x ) \Big) \quad \text{ by \eqref{:35c}}
\\\notag = & 
\limin \3 \Big( - \nablax \PsiNn (\x , \yi ) + \nablaPsi (\x - \yi ) \Big) 
\quad \text{ by \eqref{:35v}, \eqref{:35y}}
\\ \notag %\label{:36f}
=\, & 0 \quad \text{by \eqref{:11o}}
.\end{align}

Let $ \mathbf{x}_{\mathbf{j}}= ( \xji ) _{\mathbf{i} \in \III } $ for $ \mathbf{j} \in \III $. 
Then the vectors $ \{ \mathbf{x}_{\mathbf{j} } \}_{\mathbf{j} \in \III } $ are linearly independent in $ \R ^{\III }$ 
for each $ \xjX $ by \eqref{:11L}. 
Hence, using the display equation right above, we find that, for each $ \mathbf{i} \in \III $, the numerical sequence $ \CRyiN - \CRyinn $ converges to zero as $ \n \to \infty $ for $ \mu $-a.s.\,$ \yy $. From this and $ \limin \CRyinn = \CRyii < \infty $ for each $ \mathbf{i} \in \III $, we obtain \eqref{:36a}. 

Clearly, $ \CRyiN $ and $ \CRyinn $ are independent of $ \pioLR (\yy )$. 
Hence, \eqref{:36a} yields \eqref{:36y}. 
From \eqref{:35u} and \eqref{:36a}, we have 
\begin{align} \notag &% \label{:36!}&
\CRyii = \limin \CRyiN =
\Big( \frac{1}{\iGamma } \Big) 
 \limin \3 
 (- \nablaPsi )^{(\mathbf{i})} (0- \yi ) 
.\end{align}
Because the sum in the display equation right above converges, we deduce 
\begin{align} & \notag %\label{:36"}&
\limiR \CRyii = 
\Big( \frac{1}{\iGamma } \Big) 
\limiR\Big( \limin \3 (- \nablaPsi )^{(\mathbf{i})} (0- \yi ) \Big) = 0 
.\end{align}
This implies \eqref{:36z}. 
From \eqref{:35v}, \eqref{:36a}, and \eqref{:35c}, we have \eqref{:3'b}. 
\PFEND

Let $ \oLSSR $ be the configuration space over $ \oLSR $. Let 
\begin{align}\label{:21z}&
 \oLSSRe = \{ ( \xs ) \in \oLSRSSR \,;\, \lvert \x - \si \rvert \ge \e ,\ \sss ( \oLSR ) = k \} 
.\end{align}
We regard $ f ( \xs ) \in \8 $ as a function of the variables 
$ ( \x , s^i ) _{ i = 1 }^{ k } $, 
symmetric in $ ( s^i ) _{ i = 1 }^{ k } \in \oLSRk $, and equip $ \8 $ with the $ C^1 $-norm. 
Here $ \8 = C^1(\SR )$ for $  k = 0 $ by convention. 
%G]
%G[
\begin{theorem}\label{l:3!}
Let $ \dRyy $ be defined in \lref{l:3)}. 

\noindent \thetag{i} For each $ \rR \in \N $ and for $ \mu $-a.s.\,$ \yy $, the identity
\begin{align}\label{:36c}&
\dRyy
=
\beta \Big\{
- \nablaPhi ( \x )
- \sum_{\si \in \oLSR } \nabla \Psi ( \x - \si )
+ \1 \CRyii \x ^{\mathbf{i} }
+ \Ryl ( \x )
\Big\}
\end{align}
holds for all $ ( \xs ) \in \SR \ts \sSS $ such that $ \x \ne \si $ for all $ i $. 

\smallskip
\noindent \thetag{ii}
For $ \mu $-a.s.\,$ \yy $ and each $ \rR , k \in \N $ and each $ \e > 0 $, we have
\begin{align}\label{:1!c}&
\dRyy ( \xs ) = \limi{\n } \dRyyRNNn ( \xs ) \quad \text{in } \8
.\end{align}
Here $ \Nn $ and $ \dRyyRNN $ are as in \As{A2}\,\thetag{iii} and \eqref{:2!z}, respectively. 
\end{theorem}
%GTP]

%GTP[ 
\begin{proof} 
Let $\SSRm = \{ \sss \in \sSS \,;\, \sss (\SR ) = m \} $.
Let $\h = \f \ot \g \in \GRone $ be an $\FRone $-measurable function such that
$\g (\sss ) = 0 $ on $\bigsqcup_{m=M}^{\infty} \SSRm $ for some $M \in \N $, and
$\h (\xs ) = 0 $ whenever $\lvert \xsi \rvert \le \epsilon $ for some $i $ and some $\epsilon > 0 $. Then
\begin{align}\label{:36h}
\Big( %\sum_{\sioLSR } - 2
\sum_{\si \in \oLSR } \nabla \Psi ( \x - \si )
 \Big) \h (\xs ) \in \GRone .
\end{align}

Let $\muRyyNnone $ and $\yyNn $ be as in \eqref{:32p} and \eqref{:02y} with $\nN = \Nn $, respectively.
By \eqref{:11e}, \eqref{:11f}, \eqref{:3)b}, and \eqref{:36h}, we obtain
\begin{align} 	\notag &
 \int_{\RdSS } 
\Big( \nablaPhi ( \x ) + \sum_{\sioLSR } \2 \Big) \h (\xs ) 
d\muRyyone 
\\& \label{:36k} = \limin \int_{\RdSS } 
\Big( \nablaPhiNn ( \x ) + \sum_{\sioLSR } \nablax \PsiNn ( \x , \si )	 \Big) 	
 \h (\xs ) d\muRyyNnone 
.\end{align}
Hence, we obtain 
\begin{align} & \notag 	
\int_{\RdSS }
\Big( 
 \dRyy (\xs ) + \beta \Big\{ \nablaPhi ( \x ) + 
\sum_{\sioLSR } \2 
\Big\} \Big) 
\h (\xs ) d\muRyyone 
\\\notag 
= & \limin \int_{\RdSS } 
\Big( 
 \dRyyNn (\xs ) + 
\\&\notag 
\beta \Big\{ \nablaPhiNn ( \x ) + 
\sum_{\sioLSR } \nablax \PsiNn ( \x , \si )		
\Big\} \Big) 
 \h (\xs ) d\muRyyNnone 
\quad \text{ by \eqref{:34'}, \eqref{:36k}}
\\ \notag = &
\limin \int_{\RdSS } 
\beta \Big( - \3 \nablax \PsiNn ( \x , \yi ) \Big) \h (\xs ) d\muRyyNnone 
\quad \text{ by \eqref{:2!z}} 
\\ \notag = & 
\Big\{ \int_{\RdSS } \beta \Big( \1 \CRyii \x ^{\mathbf{i}} + \Ryl (\x ) \Big) \h (\xs ) d\muRyyone \Big\} 
\quad \text{by \eqref{:3)b}, \eqref{:3'b}}
.\end{align}
This yields \eqref{:36c} for $ \muRyyone $-a.s.\,$ (\xs )$. % This yields \thetag{i}. 
The right-hand side of \eqref{:36c} is continuous on 
$ \{(\xs ) \in \oLSR \ts \oLSSR ; \x \ne \si \} $. 
Hence, there exists a $ \muRyyone $-version of $ \dRyy $ such that 
\eqref{:36c} holds for all $ \{(\xs ) \in \oLSR \ts \oLSSR ; \x \ne \si \} $. 
This yields \thetag{i}.

For $ \mu $-a.s.\,$ \yy $, by \eqref{:2!z} and \eqref{:35v}, we have, $ \x \in \ONn $, 
\begin{align}\notag
\dRyyNn (\xs )
&=
\beta \Big\{
- \nablaPhiNn (\x )
- \sum_{\si \in \oLSR } \nablax \PsiNn (\x , \si )
+ \1 \CRyinn \x ^{\mathbf{i}}
+ \Rylnn (\x )
\Big\}
.\end{align}
Hence, by \eqref{:11e} and \eqref{:11f},
together with Propositions \ref{l:35} and \ref{l:36},
we obtain
\begin{align}\notag
\limi{\n }
\dRyyNn (\xs )
&=
\beta \Big\{
- \nablaPhi (\x )
- \sum_{\si \in \oLSR } \2
+ \1 \CRyii \x ^{\mathbf{i}}
+ \Ryl (\x )
\Big\}
.\end{align}
in $ \8 $ for each $ \e > 0 $. From this and \thetag{i}, we obtain \thetag{ii}. 
\PFEND

\subsection{Explicit formulas for $ \dmuyy $ and $ \dmu $: Proof of \tref{l:11}}\label{s:3C}
We continue to work under Assumptions \As{A1}--\As{A3}.
% G[

Let $ \dRyy $ be as in \lref{l:3)}. The consistency of $ \dRyy $ in the following lemma is crucial for demonstrating the explicit expression of the logarithmic derivatives of $ \mu $ in \tref{l:11}. 

%G]

\begin{lemma} \label{l:37}
For $ \mu $-a.s.\,$ \yy $ and each $ \rR \in \N $, we have the following: 
\begin{align}\label{:37b}&
\dRyy (\xs ) = \dlog _{\rR + 1 , \yy } ( \xs + \pi _{\oL{\sS }_{\rR +1} \backslash \oLSR } (\yy ))
\end{align} 
for all $ (\xs ) \in \oLSRSSR $ such that $ \x \ne \si $ for each $ i $. 
\end{lemma}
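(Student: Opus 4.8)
The plan is to exploit the explicit representation \eqref{:36c} of the logarithmic derivative $\dRyy$ together with the corresponding formula at radius $\rR+1$, and to verify that substituting the extra particles $\pi_{\oL{\sS}_{\rR+1}\backslash\oLSR}(\yy)$ into the level-$(\rR+1)$ expression precisely reproduces the level-$\rR$ expression. Concretely, both sides of \eqref{:37b} have versions that are continuous in $x$ off the diagonal $\{x=\si\}$ (by \tref{l:3!}\thetag{i}), so it suffices to check the identity pointwise there. Writing $\yy = \sum_i \delta_{\yi}$ with the fixed label, and letting $\sss = \sum_{\si\in\oLSR}\delta_{\si}$ and $\sss' = \sss + \pi_{\oL{\sS}_{\rR+1}\backslash\oLSR}(\yy)$, I would expand $\dlog_{\rR+1,\yy}(\x,\sss')$ via \eqref{:36c} at radius $\rR+1$: it equals $\beta\{-\nabla\Phi(\x) - \sum_{s\in\oL{\sS}_{\rR+1},\, s\in\sss'} \nablaPsi(\x-s) + \sum_{\mathbf{i}\in\III}\CR_{\rR+1,\yy}^{\mathbf{i}}\x^{\mathbf{i}} + \rrr_{\rR+1,\yy}^{\lz}(\x)\}$. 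The sum over $s$ splits into the particles of $\sss$ inside $\oLSR$ plus the particles of $\yy$ lying in the annulus $\oL{\sS}_{\rR+1}\backslash\oLSR$.

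The heart of the argument is then the reconciliation of the "corrector" terms. I would show that
\[
\sum_{\mathbf{i}\in\III}\CR_{\rR+1,\yy}^{\mathbf{i}}\x^{\mathbf{i}} + \rrr_{\rR+1,\yy}^{\lz}(\x)
- \sum_{\yi\in\oL{\sS}_{\rR+1}\backslash\oLSR} \nablaPsi(\x-\yi)
= \sum_{\mathbf{i}\in\III}\CRyii\x^{\mathbf{i}} + \Ryl(\x)
\]
as functions in $\SIX$. This is exactly the statement that the Taylor data at level $\rR$ is obtained from that at level $\rR+1$ by "un-Taylor-expanding" the annulus particles: by \eqref{:35u} and the definition \eqref{:35z} of $\Ryl$ (resp.\ $\rrr_{\rR+1,\yy}^{\lz}$) as sums of Taylor residuals over $\{\yi\notin\oLSR\}$ (resp.\ $\{\yi\notin\oL{\sS}_{\rR+1}\}$), the difference of the two residual sums is precisely the sum over the annulus of $(-\nablaPsi(\x-\yi))$ minus its degree-$\le\lz$ Taylor polynomial at $\x=0$; and the difference of the coefficient vectors $\CR_{\rR+1,\yy}^{\mathbf{i}} - \CRyii$ supplies exactly those Taylor polynomials by \eqref{:35u}. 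Here I must be careful that the choice of $\xix$ in the residual is consistent between the two levels; since for $\lvert\yi\rvert > \rR+1$ the summand is identical on both sides, the only genuine bookkeeping concerns the annulus, which is a finite sum (for $\mu$-a.s.\ $\yy$, using \eqref{:11n} and local finiteness), so convergence issues do not arise there.

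An alternative and perhaps cleaner route, which I would use to organize the write-up, is to invoke the consistency \eqref{:27c}-type identity from \tref{l:27} at the conditional level: namely that $\dRyy$ and $\dlog_{\rR+1,\yy}$ are the logarithmic derivatives of $\muRyy$ and $\mu_{\rR+1,\yy}$ respectively, and that $\muRyy = \mu_{\rR+1,\yy}(\,\cdot\mid \pi_{\oLSR^c}(\sss) = \pi_{\oLSR^c}(\yy))$ after identifying the annulus particles. Conditioning the level-$(\rR+1)$ Campbell measure on the annulus configuration and using the defining integration-by-parts relation \eqref{:10w} / \eqref{:23r} for both derivatives, together with the fact (from \tref{l:34}) that these derivatives are genuinely $\FRone$- and $\mathscr{F}_{\rR+1}^{[1]}$-measurable, yields \eqref{:37b} as an almost-everywhere identity, which then upgrades to everywhere off the diagonal by the continuity in \tref{l:3!}\thetag{i}.

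The main obstacle I anticipate is the corrector-term bookkeeping in the first approach: matching the polynomial pieces $\sum_{\mathbf{i}}\CRyii\x^{\mathbf{i}}$ across the two radii requires tracking how the Taylor expansion point $\xix$ (with $\lvert\xix\rvert<\lvert\x\rvert$) enters the residuals $\Rln$, $\Rl$ and ensuring the same point can be used at both levels for the annulus particles; the linear-independence argument from the proof of \pref{l:36} (via $\mathscr{X}_\rR(\lz)$ and \eqref{:11L}) should resolve this, since it pins down the coefficients uniquely. The conditional-measure route sidesteps this but requires care in verifying that the regular conditional probability commutes with the Campbell-measure construction and that the measurability claims hold; I would lean on \rref{r:13} and the arguments behind \tref{l:27} for this.
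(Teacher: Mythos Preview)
Your first approach is correct and would work: the Taylor bookkeeping you describe does close, and the $\xix$ concern you flag is not a genuine obstacle since for each annulus particle the identity $-\nablaPsi(\x-\yi) = \sum_{\mathbf{i}\in\III}\frac{\x^{\mathbf{i}}}{\iGamma}(-\nablaPsi)^{(\mathbf{i})}(-\yi) + \Rl(\x,\yi)$ is just Taylor's theorem \eqref{:35p}, while for the common particles outside $\oL{\sS}_{\rR+1}$ the summands in $\Ryl$ and $\rrr_{\rR+1,\yy}^{\lz}$ are literally identical (same $\xix$, since $\lvert\xix\rvert<\lvert\x\rvert\le\rR$). The annulus is finite for $\mu$-a.s.\ $\yy$, so no convergence issue arises.

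However, the paper's proof is considerably shorter and avoids the corrector algebra altogether. It observes that the consistency relation is \emph{trivially} true at finite $N$: from the explicit expression \eqref{:32t} for $\dRyyRNNn$, one reads off immediately that
\[
\dRyyNn(\xs) = \dlog_{\rR+1,\yyNn}^{\Nn}\bigl(\x,\sss + \pi_{\oL{\sS}_{\rR+1}\backslash\oLSR}(\yyNn)\bigr),
\]
because both sides equal $-\beta\{\nabla\PhiNn(\x) + \sum_{\si\in\oLSR}\nablax\PsiNn(\x,\si) + \sum_{\yi\notin\oLSR}^{\Nn}\nablax\PsiNn(\x,\yi)\}$. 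Then one passes to the limit on both sides using the $C^1$-convergence \eqref{:36d} of \tref{l:3!}\thetag{ii}, applied at radii $\rR$ and $\rR+1$ respectively. What your approach buys is a direct verification at the limit level, independent of the approximation scheme; what the paper's approach buys is that all the Taylor-expansion cancellations have already been absorbed into the proof of \pref{l:36} and \tref{l:3!}, so they need not be replayed. Your second route via conditional Campbell measures would also work but is farther from both.
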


\begin{proof}
From \eqref{:2!z}, we have 
\begin{align}\label{:37f}&
\dRyyNn (\xs ) = \dlog _{\rR + 1 , \yyNn }^{\Nn } ( \xs + \pi _{\oL{\sS }_{\rR +1} \backslash \oLSR } (\yyNn ))
.\end{align}
Hence, applying \eqref{:1!c} to \eqref{:37f}, we obtain \eqref{:37b}. 
\PFEND

Let $  \SSz $ be as in \dref{d:15}. 
We introduce the equivalence relation in $ \SSz $ such that 
$ \aaa \sim_{\mathscr{T}} \bb $ if and only if \eqref{:13q} holds. 

Let $ [\yy ] = \{ \zz \in \SSz ; \yy \sim_{\mathscr{T}} \zz \} $. 
Let  $ \SSoneyy  = \{ (\xs ) ; \delta_x + \sss \in [\yy ]\}$ and 
\begin{align}\label{:38u}&
 \SSneoneyy  = \{ (\xs ) \in \SSoneyy  ; x \ne \si \text{ for all }  i ,\ \sss \in \SSs  \}
, \quad  \sss = \sum_i\delta_{\si }
,\end{align}
where $ \SSs = \{ \sss \in \sSS \, ;\, \sss (\{ \x \} ) \in \{ 0,1 \} \text{ for all } \x \in \Rd \} $. 

For $ \mu $-a.s.\,$ \yy $, we define the function $ \dmubu (\xs ) $ on $  \SSneoneyy  $ by 
\begin{align}\label{:38z}
\dmubu (\xs ) & = \dlog _{\rR , \sss } (\x , \pioLR (\sss ))
.\end{align}
According to \eqref{:37b}, $ \dmubu (\xs ) $ is well defined. Indeed, from \eqref{:37b}, we have 
\begin{align*}&
\dlog _{\rR , \sss } (\x , \pioLR (\sss )) = 
\dlog _{\rR + 1 , \sss } ( \x , \pioLR (\sss ) + \pi _{\oL{\sS }_{\rR +1} \backslash \oLSR } (\sss ))= 
\dlog _{\rR + 1 , \sss } ( \x , \pi _{\oL{\sS }_{\rR +1}} (\sss ))
.\end{align*}

We take a version of $ \dmubu (\xs ) $ such that, for any $ \yy \in \SSz $, 
\begin{align}\notag &%\label{:38a}&
 \dmubu (\xs ) = \dmubz (\xs ) \quad \text{ for all }  (\xs ) \in \SSneoneyy  ,\, \zz \in [\yy ] 
.\end{align}
We also set $ \dmubu (\xs ) = 0 $ for $ \yy \notin \SSz $. 
Because the equivalence relation $ \sim_{\mathscr{T}} $ gives the partition of $ \SSz $ and $ \mu (\SSz ) = 1 $, such a version of $ \dmubu$ is well defined for all $ (\xs ) \in \Rd \ts \sSS $. 
Thus, for any $ (\xs ) \in \RdSS $, we define $\mathfrak{d} $ by
\begin{align}\label{:38x}
\mathfrak{d} (\xs ) = \dmubu (\xs ) \quad \text{for }  (\xs ) \in  \SSneoneyy  ,\,  \yy \in \sSS 
.\end{align}

%GTP]

Let $ \mu _{\yy } = \mu (\, \cdot \, | \mathscr{T}(\sSS ) )(\yy )$ be the tail decomposition of $ \mu $ as in \dref{d:15}. 
Let $ \muone _{\yy } $ be the one-reduced Campbell measure of $ \mu _{\yy } $. 
\begin{proposition} \label{l:38}
 \thetag{i} For $ \mu $-a.s.\,$ \yy $, $ \mathfrak{d}$ is 
the $ \bullet $-logarithmic derivative $ \dmuyy $ of $ \muyy $. 

\noindent 
\thetag{ii} 
$ \mathfrak{d} $ is the $ \bullet $-logarithmic derivative $ \dmu $ of $ \mu $ and satisfies \eqref{:11b}. 
\end{proposition}

%GTP[

\PF
From \eqref{:34"} and \eqref{:38z}, we obtain that, for all $ \h \in \dcbone $,
\begin{align}\label{:38g}
\int_{\RdSS} \h \, \dmubu \, d\muyyone = - \int_{\RdSS} \nablax h \, d\muyyone .
\end{align}
By \lref{l:39}, \eqref{:38g} holds for all $ \h \in \dbbone $, and hence
$ \dmubu $ is the $ \bullet $-logarithmic derivative of $ \muyy $. 
Combining this with \eqref{:38x}, we obtain \thetag{i}.

Using $ \muone = \int_{\sSS} \muyyone \, \mu(d\yy) $ and \eqref{:38g}, 
we further obtain, for all $ \h \in \dbbone $, 
\begin{align} \notag %
\int_{\RdSS} \h \, \mathfrak{d} \, d\muone
= - \int_{\RdSS} \nablax h \, d\muone 
.\end{align}
Hence, $ \mathfrak{d} $ is the $ \bullet $-logarithmic derivative of $ \mu $. 

Finally, from this, \eqref{:38z}, and \eqref{:38x}, for $ \mu $-a.s.\,$ \yy $ and
$ \muone_{\yy} $-a.e.\,$ (\xs ) $,
\begin{align} \notag &%
\dmu(\xs)
= \dlog_{\rR , \sss}\bigl(\x , \pioLR (\sss )\bigr).
\end{align}
Together with \eqref{:36c}, this yields \eqref{:11b} and completes the proof of \thetag{ii}.
\PFEND

%GTP]
\noindent {\bf Proof of \tref{l:11}. } 
\noindent 
We obtain the $ \bullet $-logarithmic derivative $ \dmu \in \Lloctwo (\muone ) $ denoted by \eqref{:11b} from Propositions \ref{l:27} and \ref{l:38}. 
We will prove the remaining claims of \thetag{i} later.

From \eqref{:36z}, $ \limiR \CRi = 0 $. From \eqref{:35d}, $ \limi{\rR } \Rsl = 0 $ in $ \7 $. 
Hence, we deduce \eqref{:11!}. 
From \eqref{:11b} and \eqref{:11!}, we have, for $ \mu $-a.s.\,$ \sss $,
\begin{align}\notag %\label{:38k}
\dmu &(\xs ) = \beta \Big( - \nablaPhi ( \x ) 
- 
\sum_{\sioLSR } \2 + \1 
 \CRi \x ^{\mathbf{i}}+ \Rsl (\x ) \Big) 
%		\ \text{ for all $ \rR \in \N $}
\\&\notag =
\beta \Big( - \nablaPhi ( \x ) + \limiR \Big(
- \sum_{\sioLSR } \2 + \1 
 \CRi \x ^{\mathbf{i}}+ \Rsl (\x ) \Big) \Big) 
\\&\notag 
=\beta \Big( - \nablaPhi ( \x ) + \limiR \Big( - \sum_{\sioLSR } \2 \Big) \Big) 
\quad \text{ in $ \SEVEN $ by \eqref{:11!}}
\end{align}
for all $ \qQ , \rR \in \N $ and each $ \epsilon > 0 $. 
Hence, we obtain \eqref{:11"}. Thus, we have obtained \thetag{ii}. 

Because \eqref{:11"} holds for all $ \e > 0 $, $ \dmu (\xs ) $ has a $ \muone $-version such that $ \dmu (\xs ) $ is locally Lipschitz continuous in $ \x $ on $ \Rd _{\ne }(\sss ) := \{ \x \in \Rd ; \x \ne \si \}$. 
Thus, we obtain \thetag{i}. 

%GTP[

In addition to \As{A1}--\As{A3}, we assume \As{A4} in the proof of \thetag{iii}. 
From \eqref{:11"} and \eqref{:11s}, it follows that, for $ \muone $-a.e.\ $x$, 
\begin{align} 
\notag 
\dmu (\xs ) = & -\beta \nablaPhi (\x ) - \beta \limiR \sum_{i} 1_{\SR } (\si ) \2 
\quad \text{by \eqref{:11"}} 
\\ \notag 
= & -\beta \nablaPhi (\x ) - \beta 
\limiR \sum_{i} \Big( 1_{\SR } (\si ) - 1_{\SR } (\x - \si ) \Big) \2 
\\ \notag &
- \beta \limiR \sum_{i} 1_{\SR } (\x - \si ) \2 
\\ \notag 
= & - \beta \limiR \sum_{i} 1_{\SR } (\x - \si ) \2 
\quad \text{by \eqref{:11s}}
.
\end{align}
This proves \thetag{iii}.
\qed
%GTP]

As mentioned before \As{A3}, \As{A3} is automatically satisfied when $ \PsiN = \Psi $. 
The following lemma verifies the only nontrivial condition in \As{A3}.
\begin{lemma} \label{l:3@}
Let $ \lzz $ be as in \As{A2}. 
Let $ d + \lz > \lzz $. 
Then \eqref{:11j} holds. 
\end{lemma}

\begin{proof}
By \eqref{:11)},  $ \int_{\sSS} \sss(\SR)\, \mu(d\sss ) \le 
\cref{;A2} \rR ^{ \lzz } $, $\rR \in \N $. 
Hence, 
\begin{align} \notag &
\int_{ \sSS }
\sum _{\si \notin \oLSRe } ^{\infty} 
\Big( \sup_{ \x \in \SR }
\frac{1}{ \lvert \xsi \rvert^{ d + \lz } } 
\Big) 
\, d \mu ( \sss )
\\  \le &\notag 
\int_{\sSS } \Big( \frac{\sss (\SRR \backslash \oLSRe )}{\e ^{ d + \lz }} + 
\sum_{r=1}^{\infty} \sum_{\rR + r \le \vert \si \vert < \rR + r + 1 } 
\frac{1}{ \lvert | \si | - \rR \rvert^{ d + \lz } } 
\Big)\, d \mu ( \sss )
\\\le & \notag 
\frac{\cref{;A2} (\rR + 1) ^{\lzz }}{\e ^{ d + \lz }} + 
\int_{\sSS } \sss (\sS _{\rR + 2 } \slash \SRR ) d \mu (\sss )+ \\\notag &
\int_{\sSS }
\sum_{r=1}^{\infty} \sss (\sS _{\rR + r + 1}\backslash \sS _{\rR + r })
\Big(\frac{1}{r^{ d + \lz }} - \frac{1}{(r+1)^{ d + \lz }} \Big) d \mu (\sss )
,\end{align}
which is finite by \eqref{:11)}.  
This together with $ \PsiN = \Psi $ and \eqref{:10a} proves \eqref{:11j}.
\PFEND

\section{Non-collision in ISDEs with infinitely many particles} \label{s:4}
%G[ ---
In this section, we prove the non-collision property of the stochastic dynamics
for infinite-particle systems described by solutions of the ISDE \eqref{:12q}.
Importantly, we do not rely on properties of the associated quasi-regular
Dirichlet forms, such as capacity.

The main result of this section, \pref{l:41}, applies to weak solutions of the ISDE \eqref{:12q}.
These solutions are independent of Dirichlet forms.
We apply \pref{l:41} to the solutions $ \uLX $ and $ \uLX_{\aaa} $ constructed in \ssref{s:6B} and \ssref{s:81} via lower Dirichlet forms.

All results in \sref{s:4} are proved under \As{A1}--\As{A3}.

\begin{proposition} \label{l:41}
Let $ \X = (\xX ^i )_{i\in\N }$ be a weak solution of \eqref{:12q}. 
We define 
$ \XXXi = \big( X_u^i , \sum_{ j \ne i }^{ \infty } \delta_{ X_u^j } \big) $. 
Assume that 
\begin{align} \label{:41z} 
 E\Big[ \int_0^{T} \sumiN 1_{\SR } (X_u^i ) \Lvert \bbb ( \XXXi )\Rvert ^2 du \Big] < \infty 
&,\quad T , \rR \in \N 
%&\ \text{ for all $ T , \rR \in \N $}
,\\\label{:41a}
E[ 0 \vee (- \log \rvert X_0^i-X_0^j \Rvert ) ; X_0^i , X_0^j \in \SR ] < \infty 
&,\quad i \ne j , \rR \in \N 
.\end{align}
Let $ \XX _t = \sum_{i\in\N } \delta_{\xX _t^i} $ and $ \SSs = \{ \sss \in \sSS \, ;\, \sss (\{ \x \} ) \in \{ 0,1 \} \text{ for all } \x \in \Rd \} $. 
Then 
\begin{align}\label{:41b}&
P ( \XX _t \in \SSs \text{ for all } 0 \le t < \infty ) = 1 
.\end{align}
\end{proposition}
\begin{proof}
Let $ i \ne j \in \N $ be fixed. 
Without loss of generality, we can assume $ X_0^i , X_0^j \in \SR$. 
We divide the case into two parts: $ d \ge 3 $ and $ d=2$. 

Suppose $ d \ge 3 $. Let 
\begin{align}&\notag %		\label{:41d}&
\tauRe = T \wedge 
\inf \{ t \ge 0 ; \Lvert X_{t}^i-X_{t}^j \Rvert \le \e \} \wedge \min_{k=i,j} \inf\{ t \ge 0 ; \ X_t^k \not\in \SR \} 
 .\end{align}
Here, we suppress $ i , j , \rR , T \in \N $ from the notation of $ \tauRe $. 

Let $ \sumij = \sum_{(k,l) = (i,j), (j,i)} $. 
From \eqref{:12p}, \eqref{:12q}, and It\^{o}'s formula, 
\begin{align} \notag &
E[ - \log \Lvert X_{\tauRet }^i-X_{\tauRet }^j \Rvert ] - E[ - \log \Lvert X_0^i-X_0^j \Rvert ] 
\\&\notag = 
 - 
\sumij 
E\Big[ \int_0^{\tauRet } 
\Big( \frac{\Xkl }{ \Lvert \Xkl \rvert ^{2}}, \bbb \XXXk \Big) _{\Rd } du \Big] 
\\& \label{:41e} - 
\frac{d-2}{2} \sumij 
E\Big[\int_0^{\tauRet } 
\Big( \frac{\Xkl }{ \Lvert \Xkl \rvert ^{2}} 
\aaaa (X_u^k) , 
\frac{\Xkl }{ \Lvert \Xkl \rvert ^{2}} \Big) _{\Rd } 
du \Big] 
.\end{align}
Note that, for $ X_0^i , X_0^j \in \SR $, we have 
$ 0 \wedge ( - \log \Lvert X_{\tauRet }^i-X_{\tauRet }^j \Rvert ) \ge - \log 2\rR $. 
Combining this with \eqref{:12o} and \eqref{:41e}, we have 
\begin{align} \notag &
E[ 0 \vee ( - \log \Lvert X_{\tauRet }^i-X_{\tauRet }^j \Rvert ) ] + 
\Dtwo E\Big[\int_0^{\tauRet } \frac{1} { \Lvert \Xij \Rvert ^{2}}du \Big] 
\\\notag & \le 
 - E[ 0 \wedge ( - \log \lvert X_{\tauRet }^i-X_{\tauRet }^j \Rvert ) ] 
+ E[ 0 \vee (- \log \Lvert X_0^i-X_0^j \Rvert ) ] 
\\& \label{:41g} \quad \quad \quad + 
\sumij 
E\Big[ \int_0^{\tauRetkl }
 \Big( \frac{\Xkl }{ \lvert \Xkl \rvert ^{2}}, 
\bbb \XXXk \Big) _{\Rd } du \Big] 
.\end{align}

Let $ \Ct \label{;K5} $ and $ \Ct \label{;K5a}$ be the positive constants defined by 
 \begin{align} \notag &
 \cref{;K5} = E[ 0 \vee (- \log \Lvert X_0^i-X_0^j \Rvert ) ] + \log(2\rR )
 , \\ \label{:41p}&
 \cref{;K5a} = 2 
 E\Big[ \int_0^{T} \sumiN 1_{\SR } (X_u^i ) \lvert \bbb ( \XXXi )\rvert ^2 du \Big] ^{1/2}
.\end{align}
By \eqref{:41a} and \eqref{:41z}, we have $ \cref{;K5} < \infty $ and $ \cref{;K5a} < \infty $. 
Let 
\begin{align} \notag & 
x_{\epsilon}=E[ 0 \vee ( - \log \Lvert X_{\tauRet }^i-X_{\tauRet }^j \Rvert ) ] 
,\\&\label{:41q}
y_{\epsilon} = E\Big[\int_0^{\tauRet } \frac{1} { \Lvert \Xij \rvert ^{2}}du \Big] 
.\end{align}
By \eqref{:41g}--\eqref{:41q} and the Schwarz inequality, we have 
\begin{align}\label{:41i}& 
x_{\epsilon} + \Dtwo y_{\epsilon} \le \cref{;K5} + \sqrt{ y_{\epsilon} } \cref{;K5a}
.\end{align}
By \eqref{:41i} and $ d \ge 3 $, we have $ \limsupz{\epsilon}x_{\epsilon} < \infty $ and $ \limsupz{\epsilon}y_{\epsilon} < \infty $. 
Hence by Fatou's lemma and $ \limsupz{\epsilon}x_{\epsilon} < \infty $, for each $ t \ge 0 $, 
\begin{align} \notag 
E[ 0 \vee ( - \log \Lvert X_{\tauRzt }^i-X_{\tauRzt }^j \Rvert ) ] 
& \le \liminfz{\epsilon} E[ 0 \vee ( - \log \Lvert X_{\tauRet }^i-X_{\tauRet }^j \Rvert ) ] 
\\ \label{:41h}& 
= \liminfz{\epsilon}x_{\epsilon} < \infty 
.\end{align}
Hence, from Fatou's lemma and \eqref{:41h}, we obtain, for each $ i \ne j \in \N $, 
\begin{align}& \notag 
E[ 0 \vee ( - \log \Lvert X_{\tauRz }^i-X_{\tauRz }^j \Rvert ) ; \tauRz > 0 ] 
\\& \notag \le 
E[ \liminfi{t} \big( 0 \vee ( - \log \Lvert X_{\tauRzt }^i-X_{\tauRzt }^j \Rvert ) \big) ; \tauRz > 0 ] 
\\& \le 
 \liminfi{t} E[0 \vee ( - \log \Lvert X_{\tauRzt }^i-X_{\tauRzt }^j \Rvert ) ; \tauRz > 0 ] < \infty 
\label{:41I}
.\end{align}
Let $ \tilde{\tau} _{\rR , T }^{i,j} = \inf\{ 0 \le t \le T ; X_t^i \notin \SR \text{ or } X_t^j \notin \SR \} $. 
By \eqref{:41I}, 
\begin{align*}&
P ( X_t^i = X_t^j \text{ for some } 0 < t < \tilde{\tau} _{\rR , T }^{i,j} ; \tauRz > 0 ) = 0
%\quad \text{ for all $ \rR , T , i \ne j \in \N $}
\end{align*}
for all $ \rR , T , i \ne j \in \N $. This implies \eqref{:41b} for $ d\ge 3 $. 

We next suppose $ d = 2$. 
Applying It\^{o}'s formula to $ \log ( 0 \vee (-\log \lvert \x \rvert ) ) $ instead of $ - \log \lvert \x \rvert $ in \eqref{:41e} yields \eqref{:41b} for $ d=2$ in a similar fashion. 
For the sake of completeness, we present further details.

Let $ \vep $ be such that $ \vep ( \x ) = 0 \vee - \log \lvert \x \rvert $ for $ \x \ne 0 $ and $ \vep (0) = \infty $. 
Applying It\^{o}'s formula to $ \logvep = \log ( 0 \vee - \log \lvert \x \rvert ) $, we have 
\begin{align}\notag &
E[ \logvep ( X_{t \wedge \tauRe }^i-X_{t \wedge \tauRe }^j ) ] - E[ \logvep ( X_{0}^i-X_{0}^j ) ] 
\\&\notag =
 - \sumij E\Big[ \int_0^{\tauRet } 
\frac{ 1}{ \vep (\Xkl ) }
\Big( \frac{\Xkl }{\Lvert \Xkl \Rvert ^2}, 
\bbb \XXXk \Big) _{\Rd } du \Big] 
\\ \notag &%\label{:41j}&
 - \sumij 
E\Big[\int_0^{\tauRet } 
\frac{ 1 }{ \vep ( \Xkl )^2 } 
\Big( \frac{\Xkl }{\Lvert \Xkl \Rvert ^2} 
 \aaaa (X_u^k) , 
\frac{\Xkl }{\Lvert \Xkl \Rvert ^2} \Big) _{\Rd } 
du \Big] 
.\end{align}
From \eqref{:12o} and the display equation right above, we obtain 
\begin{align} \notag &
E[ \logvep ( X_{t \wedge \tauRe }^i-X_{t \wedge \tauRe }^j ) ] + 
\frac{1}{\cref{;12}}
E\Big[\int_0^{\tauRet } 
\frac{ 1 }{ \vep ( \Xij )^2 \Lvert \Xij \Rvert ^2 } 
du \Big] 
\\ \notag & \le 
E[ \logvep ( X_{0}^i-X_{0}^j ) ] 
\\& \label{:41k} 
 - \sumij 
E\Big[ \int_0^{\tauRet } 
\frac{ 1}{ \vep (\Xkl ) }
\Big( \frac{\Xkl }{\Lvert \Xkl \Rvert ^2}, 
\bbb \XXXk \Big) _{\Rd } du \Big] 
.\end{align}

Let $ x_{\epsilon} , y_{\epsilon} \ge 0 $ be such that 
\begin{align} \notag &
x_{\epsilon}= E[ \logvep ( X_{t \wedge \tauRe }^i-X_{t \wedge \tauRe }^j ) ] 
,\\ \label{:41l}& 
 y_{\epsilon} = 
E\Big[\int_0^{\tauRet } 
\frac{ 1}{ \vep ( \Xij )^2 \Lvert \Xij \Rvert ^2 } 
du \Big] 
.\end{align}
Let $ \Ct \label{;K5c} = E[ \vep ( X_{0}^i-X_{0}^j ) ] $. 
By \eqref{:41a}, $ \cref{;K5c} < \infty $. From \eqref{:41k} and \eqref{:41l}, 
\begin{align}\label{:41m}&
x_{\epsilon} + y_{\epsilon} \le \cref{;K5c} + \sqrt{ y_{\epsilon} } \cref{;K5a}
. \end{align}
From \eqref{:41m}, we have $ \limsupz{\epsilon}x_{\epsilon} < \infty $ and $ \limsupz{\epsilon}y_{\epsilon} < \infty $. 
Hence, using Fatou's lemma, 
we obtain that, for each $ \rR \in \N $ and $ t \ge 0 $, 
\begin{align}\notag % \label{:41h}&
E[ 0 \vee ( - \log \rvert X_{\tauRzt }^i-X_{\tauRzt }^j \lvert ) ] 
% \\ \notag &
& \le \liminfz{\epsilon}
E[ 0 \vee ( - \log \rvert X_{\tauRet }^i-X_{\tauRet }^j \lvert ) ] 
\\ \notag &
= \liminfz{\epsilon}x_{\epsilon} < \infty 
.\end{align}
This corresponds to \eqref{:41h} for $ d \ge 3 $. 
The rest of the proof for $ d = 2 $ is the same as the case $ d = 3 $. Thus, we obtain \eqref{:41b} for $ d = 2 $. 
\PFEND

\begin{proposition}	\label{l:42}
Let $ \X = (\xX ^i )_{i\in\N }$ be a weak solution of \eqref{:12q}. Assume 
\begin{align}\label{:42y}& P \circ \X _0^{-1} \ll \mu \circ \lab ^{-1}
%\text{ is absolutely continuous with respect to $ \mu \circ \lab ^{-1}$}
\end{align}
and \eqref{:41z}. Then \eqref{:41b} holds. 
\end{proposition}
\begin{proof}
From \eqref{:41z}, \eqref{:42y}, and Fubini's theorem, 
\begin{align} &\notag 
E\Big[ \int_0^{T} \sumiN 1_{\SR } (X_u^i ) \Lvert \bbb ( \XXXi )\Rvert ^2 du \vert \X _0 = \lab (\sss ) \Big] < \infty 
 \quad \text{ for $ \mu $-a.s.\,$ \sss $} 
.\end{align}
Hence, applying \pref{l:41} to $ \X $ such that $ \X _0 = \lab (\sss )$, we have 
\begin{align} & \notag 
P ( \XX _t \in \SSs \text{ for all } 0 \le t < \infty \,\vert \, \X _0 = \lab (\sss ) ) = 1 \quad \text{ for $ \mu $-a.s.\,$ \sss $} 
.\end{align}
Integrating the left-hand side with respect to $ \mu \circ \lab ^{-1}$ yields \eqref{:41b}. 
\PFEND
%GTP[ ---

Let $\Xm = (X^1,\ldots,X^m, \sum_{j>m} \delta_{X^j}) $ be an $m $-labeled process.
Suppose that $\Xm $ is a weak solution to the SDE
\begin{align}\notag
X_t^i - X_0^i =
\int_0^t \sigma (X_u^i) \, dB_u^i +
\int_0^t \bbb (\XXXi ) \, du
,\quad 1 \le i \le m 
.\end{align}
Note that this SDE is the restriction of the ISDE \eqref{:12q} to the first $m $ components.
%GTP]
Let $ \labm (\sss ) = ( \lab^{1}(\sss ) ,\ldots, \lab^m(\sss ), \sum_{i>m}\delta_{\labi (\sss )} ) $ for a label $ \lab = (\labi )_{i\in\N }$.

\begin{proposition}	\label{l:44}
Let $ 2 \le m < \infty $. Let $ \Xm $ be as above. 
Assume that 
\begin{align}&\notag %	\label{:44z} & 
 E\Big[ \int_0^{T} \sum_{i=1}^m 1_{\SR } (X_u^i )
 \Lvert \bbb ( \XXXi )\Rvert ^2 du \Big] 
< \infty 
,\quad \ T , \rR \in \N 
,\\ &\notag %\label{:44y}& 
P \circ (\X _0^{[m]})^{-1} \ll \mu \circ (\labm ) ^{-1}
.\end{align}
Then $ P ( \XX _t^m \in \SSs \text{ for all } 0 \le t < \infty ) = 1 $, where 
$ \XX _t^m = \sum_{i=1 }^m \delta_{\xX _t^i} $. 
\end{proposition}
\begin{proof}
The proof is the same as \pref{l:42}, so is omitted.
\PFEND

%G[---

\section{Dirichlet forms in finite and infinite volume}\label{s:5}

A symmetric Dirichlet form $ ( \E , \mathscr{D} ) $ on $ \Lnu $ 
is understood in the standard sense \cite{fot.2,c-f} (see \dref{d:A2} for details).

To prove Theorem \ref{l:12}--\ref{l:16}, we employ the Dirichlet form approach
\cite{k-o-t.udf,k-o-t.ifc,o.dfa,o.tp,o.isde,o-t.tail}.
We introduce Dirichlet forms associated with the ISDE \eqref{:12q}
and construct infinite-volume Dirichlet forms as limits of
finite-volume ones.
These finite-volume forms naturally split into two classes,
called the lower and upper Dirichlet forms.

Using the family of lower Dirichlet forms, we prove the closability of the
limiting infinite-volume form.
On the other hand, we show that the upper Dirichlet form is quasi-regular,
which ensures the existence of the associated diffusion. 

Unless stated otherwise, all results in \sref{s:5} are proved under Assumptions \As{A1}--\As{A3}. 

\subsection{Convergence of Dirichlet forms}\label{s:5a}
%G]

 For $ \mu $-a.s.\,$ \yy $, let $ \muRyy = \mu (\cdot \vert \pioLRc (\sss ) = \pioLRc (\yy ) ) $ as defined previously. 
Let $ \muRyy ^{[m]}$ be the $ m $-reduced Campbell measure of $ \muRyy $ (see \eqref{:Camp}). 
Let $ \oLSSR $ be the configuration space over $ \oLSR $. 
Let $ \oLSSRk = \{ \sss \in \oLSSR ; \sss (\oLSR ) = k \} $. 
\begin{lemma} \label{l:51}
For $ \mu $-a.s.\,$ \yy $ and each $ \rR \in \N $, the following hold: 

\noindent \thetag{i} 
For $ m , k \in \zN $ such that $ m + k \ge 1 $, 
$ \muRyy ^{[m]}$ has a bounded and continuous density $ \mRyy ^{[m]} $ with respect to $ \LambdaR ^{[m]} $ on $ \oLSR ^m \ts \oLSSRk $. 

\noindent \thetag{ii} $ \muRyy $ has a one-point correlation function $ \rho _{\Ryy }^1 $. 
\end{lemma}

\begin{proof}
Let $ \wm _{\Ryy }^{m+k} $ be the $ (m+k)$-labeled density of $ \muRyy $. 
From \eqref{:34"} and the definition of the one-reduced Campbell measure, we obtain 
\begin{align} &\notag %\label{:51b}&
\nablaxi \log \wm _{\Ryy }^{m+k} ( \x ^1 ,\ldots , \x ^{m+k} ) 
= \dRyy ( \x ^i , \sum_{ \x ^j \in \oLSR , j\ne i }^{m+k} \delta_{\x ^j } ) 
\end{align}
in the distributional sense on $ \SR ^{m+k} $, with test functions in
$ C_0^{\infty} (\SR ^{m+k} ) $. 

By \eqref{:36c}, the logarithmic derivative $ \dRyy $ admits the representation
\begin{align}& \notag %\label{:51c}
 \dRyy (\xs ) = \beta \Big( - \nablaPhi ( \x ) - \sum_{\sioLSR } \nablaPsi ({\xsi }) + 
\1 \CRyii \x ^{\mathbf{i}} + \Ryl (\x ) \Big)
.\end{align}
By \eqref{:3'b}, $ \1 \CRyii \x ^{\mathbf{i}} + \Ryl (\x ) \in \SIX $. 
Combining these yields \thetag{i}. 

From \eqref{:11)}, $ \int_{\sSS } \sss (\SR ) \mu (d\sss ) < \infty $. 
Then $ \int_{\sSS } \sss (\SR ) \muRyy (d\sss ) < \infty $ for $ \mu $-a.s.\,$ \yy $. 
From this, \thetag{ii} follows by the same argument as in \pref{l:2'} \thetag{ii}. 
\PFEND

 \begin{remark}\label{r:51} 
 Lebl\'{e} \cite[Th.\,1]{leb.DLR} proved the existence of continuous, local density of $ \mu $ in case of $ d = 2 $. 
\end{remark}

For $ f \in \db $ and $\sss \in \sSS $, let 
$ f_{ \rR ,\sss } $ and $\mathbf{x}_{\rR } (\sss )$ be as in \dref{d:fun}. 
Let 
\begin{align} 	\notag &%	&		 \label{:52q}
\DDDaR [f,g] (\sss ) = 
\frac{1}{2} \sum_{ \siSR } 
 \Big( ( \nablasi f_{ \rR ,\sss } ) 
\aaaa , 
\nablasi g_{ \rR ,\sss } \Big)_{\Rd } 
(\mathbf{x}_{\rR } (\sss )) 
.\end{align}
We introduce the $ m $-labeled carr\'{e} du champs on $ \RdmSS $. 
For $ m\ge 1$,
\begin{align} & \label{:52s} 
\DDDaRm [f,g] = \half 
1_{\SRm }
\sum_{i=1}^m \big( ( \nablaxi f ) \aaaa , \nablaxi g \big)_{\Rd }
 + \DDDaR [f,g]
.\end{align}
Let $ \mum $ be the $ m $-reduced Campbell measure of $ \mu $ defined in \eqref{:Camp}. Let 
\begin{align}&\notag 
\ERmum (f,g) = \int_{\RdmSS } \DDDaRm [f,g] d\mum 
,\\& \notag %\label{:52w} 
\dRbmum = \{ f \in \dbm ; \ERmum (f,f) < \infty ,\, f \in \Lmm \} 
,\\& \label{:52v} 
\dRcmum = \{ f \in \dRbmum ;
 \text{ $ f $ is $ \mathscr{B}(\SRm ) \ts \sigma [\piR ] $-measurable}\} 
.\end{align}
%[G
Although $ \ERmum $, $ \dRbmum $, and $ \dRcmum $ depend on $ \aaaa $ and $ \mu $, 
we suppress this dependence in the notation. 
Replacing $ \mum $ with $ \muRyym $, we write $ \ERyym $ and $ \dRyybm $.
%G]
\begin{lemma}	\label{l:52}
 Let $ \rR \in \N $ and $ m \in \{ 0 \} \cup \N $. 
\\\thetag{i} 
$ (\ERyym , \dRyybm )$ is closable on $ L^2 (\muRyym )$ for $ \mu $-a.s.\,$ \yy $. 
\\\thetag{ii} 
$ (\ERmum , \dRbmum )$ is closable on $ \Lmm $. 
\\ \thetag{iii} 
$ (\ERmum , \dRcmum ) $ is closable on $ \Lmm $. 
\end{lemma}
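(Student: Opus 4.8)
The plan is to prove \lref{l:52} by first establishing part \thetag{i}, from which parts \thetag{ii} and \thetag{iii} follow by integration in the tail variable $\yy$ and by restriction to smaller domains, respectively. For part \thetag{i}, the central observation is that, for $\mu$-a.s.\,$\yy$, the reduced Campbell measure $\muRyym$ has a bounded, continuous, and (by \tref{l:51} together with the explicit formula \eqref{:51c} and the locally Lipschitz regularity of $\dRyy$ away from the diagonal) sufficiently smooth density $\mRyym$ with respect to $\LambdaR ^{[m]}$ on each stratum $\oLSR ^m \ts \oLSSRk$. Since the number of particles in $\oLSR$ under $\muRyy$ is $\muRyy$-a.s.\,finite, the form $(\ER ^{\muRyym}, \dRb ^{\muRyym})$ on $L^2(\muRyym)$ decomposes, over the countable partition by total particle number $m+k$ in $\oLSR$, into a countable direct sum of classical finite-dimensional weighted Sobolev-type forms
\[
\mathcal{E}^{m+k}(f,f) = \int_{\oLSR ^{m+k}} \half \sum_{i=1}^{m+k} \Big( \aaaa \PD{f}{x^i}, \PD{f}{x^i} \Big)_{\Rd} \, \wm _{\Ryy }^{m+k}(\mathbf{x})\, d\mathbf{x},
\]
with a bounded, locally Lipschitz, strictly positive density on the open set where the coordinates are distinct. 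I would then invoke the standard closability criterion for such forms (cf.\ \cite{fot.2}, and the versions used in \cite{o.tp, o.isde}): a weighted gradient form $\int \half(\aaaa \nabla f, \nabla f)\, \varrho\, d\mathbf{x}$ on an open set $U \subset (\Rd)^n$ is closable on $L^2(\varrho\, d\mathbf{x})$ provided $\varrho > 0$ and $\varrho \in L^1_{\mathrm{loc}}(U)$ with $1/\varrho \in L^1_{\mathrm{loc}}(U)$ (or, more simply, provided $\log \varrho$ is locally in a Sobolev space, which holds here since $\nabla \log \wm _{\Ryy }^{m+k} = \dRyy$ is locally bounded by \eqref{:51c}). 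One must also check that the process does not see the diagonal $\mathscr{N} = \cup_{i\ne j}\{x^i = x^j\}$: here the density $\wm _{\Ryy }^{m+k}$ vanishes on $\mathscr{N}$ (this was used in the proof of \lref{l:21}), and since $d \ge 2$ the set $\mathscr{N}$ has zero capacity for the reference form, so closability on the full product space follows. Summing the countably many closable pieces preserves closability, and adding the cut-off factor $\CziRdm$ in the $m$ free coordinates is harmless since those coefficients are smooth with compact support; this gives \thetag{i}.

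For part \thetag{ii}, I would use the disintegration $\mum = \int \muRyym\, \mu(d\yy)$ — more precisely, the identity that $\ERmum(f,f) = \int \ER ^{\muRyym}(f,f)\,\mu(d\yy)$ for $f \in \dRbmum$, which holds because $\DDDaRm$ is a local operator insensitive to the configuration outside $\oLSR$ and because the restriction $\piR$ of $f$ to $\oLSR$ together with its tail dependence is exactly what $\dRbmum$ encodes. Closability is stable under such measurable integral superpositions of closable forms sharing a common core: if $f_n \to 0$ in $L^2(\mum)$ and $\ERmum(f_n - f_m, f_n - f_m) \to 0$, then along a subsequence $f_n \to 0$ $\mu(d\yy)$-a.e.\ in $L^2(\muRyym)$ and the corresponding fiberwise $\ER ^{\muRyym}$-Cauchy property is inherited, so by \thetag{i} the fiberwise limit of $\ER ^{\muRyym}(f_n, \cdot)$ vanishes, and by Fatou $\ERmum(f_n, f_n) \to 0$. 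This is the argument used, e.g., in \cite{o.isde, o-t.tail} to pass from conditional to unconditional forms, and I would cite it. Part \thetag{iii} is then immediate: $\dRcmum \subset \dRbmum$ is merely the subspace of $\mathscr{B}(\SRm)\ts\sigma[\piR]$-measurable elements, and a closed (or closable) form restricts to a closable form on any linear subspace of its domain, with the same $L^2$ space.

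The main obstacle, and the step deserving the most care, is the fiberwise closability in \thetag{i}: one must verify that the densities $\wm _{\Ryy }^{m+k}$ are regular enough (the right integrability of $\nabla \log \wm$, i.e.\ of $\dRyy$, near the boundary $\partial \SR$ and, crucially, near the diagonal) for the classical closability criterion to apply, and that the vanishing of the density on the diagonal combined with $d \ge 2$ genuinely suffices to ignore $\mathscr{N}$. The explicit formula \eqref{:51c} — with $\dRyy$ given by $-\nabla\Phi$ plus the singular Coulomb sum plus the bounded corrector terms $\CRyii \x^{\mathbf{i}}$ and $\Ryl \in \SIX$ — is exactly what makes this tractable: away from $\mathscr{N}$ the logarithmic derivative is locally Lipschitz (Theorem \ref{l:11}\thetag{i}), so $\log \wm _{\Ryy }^{m+k}$ is locally $C^{1,1}$ off the diagonal, placing us squarely in the setting where weighted-Sobolev closability is classical. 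I would therefore structure the proof of \thetag{i} around an explicit statement of this criterion, a verification of its hypotheses from \tref{l:51} and \eqref{:51c}, a capacity argument for $\mathscr{N}$ using $d \ge 2$, and a countable-sum step over the strata.
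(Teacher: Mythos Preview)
Your proposal is correct and follows the same three-step architecture as the paper: fiberwise closability via the density regularity of \tref{l:51}, then passage from fibers to $\mum$ by integrating over $\yy$, then restriction to the smaller domain for \thetag{iii}. The differences are in packaging rather than substance. For \thetag{i}, the paper simply observes that $\muRyym$ has a bounded continuous density by \tref{l:51} and cites \cite[Lem.\,3.2]{o.dfa}; your version unpacks this by invoking the explicit formula \eqref{:51c} for $\dRyy$ and a weighted-Sobolev closability criterion, plus a capacity argument for the diagonal. This is more detailed than necessary (the cited lemma already absorbs the diagonal and boundary issues once the density is bounded and continuous on each stratum), but not wrong. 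For \thetag{ii}, the paper takes the closures fiberwise, forms their superposition (closed on $\Lmm$ by \cite[Prop.\,3.1.1]{b-h}), notes that this superposition extends $(\ERmum,\dRbmum)$, and concludes by \lref{l:ext2}; your direct subsequence-plus-Fatou argument is essentially a reproof of the superposition theorem, and works once you apply Fatou to $\ER^{\muRyym}(f_{n_k}-f_{n_l})$ as $l\to\infty$ rather than to $\ER^{\muRyym}(f_{n_k})$ alone. For \thetag{iii} both you and the paper use the same extension/restriction principle (\lref{l:ext2}).
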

%G[
\begin{proof}
Let $ \oLSSRk = \{ \sss \in \sSS ; \sss (\oLSR ) = k \} $ and 
$ \muRyymk = \muRyym (\cdot \cap  \oLSR ^m \ts \oLSSRk )$. 
Let 
\begin{align}\label{:52f}
\ERyymk (f,g)
= \int_{\RdmSS} \DDDaRm [f,g] \, d\muRyymk .
\end{align}
By \lref{l:51} \thetag{i}, $ \muRyymk $ has a bounded and continuous density on $ \oLSR ^m \ts \oLSSRk $ 
for each $ k \in \zN $. 
Hence, $ (\ERyymk , \dRyybm ) $ is closable on $ L^2 (\muRyymk ) $.

Let $ \CnfRk = \{ (\xs ) \in \RdmSS ; \sss (\oLSR ) = k \} $. 
Then $ \RdmSS $ can be decomposed as the disjoint union 
$ \RdmSS = \bigsqcup_{k\in\zN} \CnfRk $. 
Note that 
\begin{align}\label{:52g}&
 \muRyym = \sumi{k}  \muRyymk ,\quad  \muRyymk ( (\CnfRk )^c ) = 0 
.\end{align}

%G[
By \eqref{:52f} and \eqref{:52g}, 
$ ( \ERyymk , \dRyybm ) $ is closable on $ L^2 ( \muRyym ) $ for each $ k $. 
Since $ ( \ERyym , \dRyybm ) $ is the countable sum of such closable forms, 
$ ( \ERyym , \dRyybm ) $ is closable on $ L^2 ( \muRyym ) $. 
For the case $ m=0 $, see \cite[Lem.\,3.2]{o.dfa}. 
%G]

%G[
By \thetag{i}, we obtain the superposition 
$ ( \ERstarm , \uL{\mathscr{D}} _{\Rstar }^{[m]} ) $, 
with respect to $ \yy $ under $ \mu $, of the closures 
$ ( \ERyym , \uLdRyym ) $ 
of 
$ ( \ERyym , \dRyybm ) $ on $ L^2 ( \muRyym ) $. 
That is, $ \uLEDRstarm $ and $ ( \ERstarm , \oL{\mathscr{D}} _{\Rstar }^{[m]} ) $ 
is the closed form on $ \Lmm $ defined by
\begin{align}\notag &
\ERstarm ( f , g ) = \int_{\sSS } \ERyym ( f , g ) \, \mu ( d \yy )
,\\& \notag 
\uL{\mathscr{D}} _{\Rstar }^{[m]} = 
\Big\{
f \in \bigcap_{\text{$ \mu $-a.s.\,$ \yy $}} \uLdRyym 
\, ; \, f \in \Lmm , \ 
\ERstarm ( f , f ) < \infty
\Big\}
.\end{align}

By the superposition theorem for closed forms \cite[Prop.\,3.1.1]{b-h}, 
this superposition yields a closed form on $ \Lmm $, which extends $ ( \ERmum , \dRbmum ) $. 
Hence, \thetag{ii} follows from \lref{l:ext2}.

%G]

Finally, $ (\ERmum , \dRbmum ) $ is clearly an extension of 
$ (\ERmum , \dRcmum ) $. 
Therefore, \thetag{iii} follows from \thetag{ii} by \lref{l:ext2}.
\PFEND 
%G]

\begin{remark}\label{r:52}
In \cite{os.ld}, it was proved that the existence of logarithmic derivative implies the closability of bilinear forms in \lref{l:52}. This result is a general theory applicable to all RPFs $ \nu $ with logarithmic derivative. 
Using this, we can prove \lref{l:52}. We present here direct proof using the explicit representation of the logarithmic derivative for $ \mu $ in \tref{l:11}. 
\end{remark}

Let $ (\ERyym , \uLdRyym )$, $ \EuLdRmum $, and 
$ (\ERmum, \dORmum)$ be the closures of the closable forms appearing in  \lref{l:52} 
\thetag{i}, \thetag{ii}, and \thetag{iii}, respectively. 

These closed forms are symmetric Dirichlet forms (cf.\ \dref{d:A2}). 
In the following lemma, we verify that they are strongly local and quasi-regular. 
See \ssref{s:ext} for the definitions of strong locality and quasi-regularity.

\begin{lemma} \label{l:53}
Let $ \rR \in \N $ and $ m \in \{ 0 \} \cup \N $. 
\\\thetag{i} 
$ (\ERyym , \uLdRyym )$ 
 is a strongly local, quasi-regular Dirichlet form on $ L^2 (\muRyym )$ for $ \mu $-a.s.\,$ \yy $, where we regard $ \muRyym $ as a measure on $ \RdmSS $. 
\\\thetag{ii}
$ \EuLdRmum $ is a strongly local, quasi-regular Dirichlet form on $ \Lmm $. 
\\\thetag{iii} $ (\ERmum ,\dORmum ) $ is a strongly local closed form on $ \Lmm $. 
\end{lemma}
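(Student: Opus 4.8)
The plan is to prove all three parts of \lref{l:53} by reducing to standard facts about superpositions of strongly local, quasi-regular Dirichlet forms, exactly as in the closability argument of \lref{l:52}, but now upgrading closability to strong locality and quasi-regularity. The starting observation is that the finite-volume bilinear forms in question are, on each fibre $\SSRk$ (resp.\ $\SRm \times \SSRk$), nothing but classical gradient Dirichlet forms on a finite-dimensional Euclidean domain with a bounded, continuous, strictly positive density (provided by \tref{l:51}) and the uniformly elliptic coefficient matrix $\aaaa \in C_b^2(\Rd)$ of \eqref{:12o}. Such forms are well known to be strongly local and quasi-regular (indeed regular) on the $L^2$-space of the fibre measure; this is the content of, e.g., \cite[Ch.\,II, Sec.\,2]{ma-rockner} or \cite{fot.2} together with the Dirichlet-form-on-configuration-space framework already invoked in \cite{o.dfa,osada.tp}.

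First I would treat part \thetag{i}. For $\mu$-a.s.\,$\yy$, by \tref{l:51}\thetag{i} the measure $\muRyym$ admits a bounded, continuous density $\mRyym$ with respect to $\LambdaR^{[m]}$ on each stratum $\oLSR^m \times \oLSSRk$, and by the representation \eqref{:51c} of $\dRyy$ this density is locally bounded away from $0$ on the non-collision set, with a locally Lipschitz logarithmic gradient. On a fixed stratum the form $(\ER^{\muRyym}, \uLdR^{\muRyym})$ coincides with the closure of the classical energy form $\half\int (\aaaa\nabla f, \nabla f)\,\mRyym\,d\LambdaR^{[m]}$ over smooth cylinder functions; strong locality is immediate from the fact that $\DDDaRm[f,f](\sss)$ depends only on first-order derivatives and vanishes when $f$ is constant near $\sss$, and quasi-regularity follows because the state space $\oLSR^m \times \oLSSRk$ is, up to the permutation quotient, a finite-dimensional manifold with boundary and the form is regular there (a countable core of smooth functions separating points exists, and an $\E$-nest of compacts is furnished by the compactness of bounded configuration sets $\Ki[\ane]$). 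I would cite \lref{l:52}\thetag{i} for the closability already proved and \cite{o.dfa,fot.2} for the strong-locality/quasi-regularity upgrade on each fibre, then assemble over the at most countably many strata.

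Next, for parts \thetag{ii} and \thetag{iii}, I would invoke the superposition structure already used in the proof of \lref{l:52}\thetag{ii}--\thetag{iii}: by \eqref{:58z} the form $(\ERmum, \dRmum)$ is the superposition of the fibre forms $(\ER^{\muRyym}, \uLdR^{\muRyym})$ over $\yy$ against the tail-type measure decomposing $\mum$, and similarly $(\ERmum, \dORmum)$ is the corresponding superposition with the $\sigma[\piR]$-measurable core. Strong locality is preserved under superposition (it is a fibrewise, local property, see \cite[Prop.\,3.1.1]{b-h} or \cite{osada.tp}), giving strong locality of both forms; this already yields \thetag{iii}. For \thetag{ii}, quasi-regularity of the superposition follows from quasi-regularity of the fibres together with the measurability of the fibre map and the existence of a common countable core of bounded smooth cylinder functions in $\dRcmum \subset \dRmum$, using the general superposition theorem for quasi-regular Dirichlet forms; here the polynomial growth bound \eqref{:11)} on $\int \sss(\SR)\,\mu(d\sss)$ ensures integrability of the fibre capacities, so that an $\E$-nest of compact sets of the form $\Ki[\ar]$ (as in \eqref{:12"}) can be produced globally.

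The main obstacle I expect is the quasi-regularity part of \thetag{ii}: unlike closability and strong locality, which pass to superpositions essentially for free, constructing an $\E_1^{\mum}$-nest of compact sets on the full configuration-type space $\RdmSS$ requires a genuine capacity estimate, uniform enough over the fibres $\yy$ to be integrated. The argument has to combine the compactness criterion for subsets of $\sSS$ (relative compactness $\iff$ containment in some $\Ki[\ane]$), the uniform $L^2$-bounds on logarithmic derivatives from \lref{l:22}, \lref{l:24}, and \lref{l:32}, and the polynomial particle-number growth \eqref{:11)}; making these fit together to show that the complement of $\Ki[\ar]$ has capacity tending to $0$ as $\rr\to\infty$ is the delicate step. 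I would follow the pattern of the corresponding capacity estimates in \cite{o.dfa,osada.tp,o.isde}, where the cut-off functions $\fQ$ of \eqref{:22z} and the explicit logarithmic-derivative bounds are used to control the energy of the truncation; the new input here is only that the quenched bound \lref{l:32} replaces the hyperuniform estimate, which makes the argument uniform in $\yy$ and hence integrable against the tail decomposition.
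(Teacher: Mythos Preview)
Your treatment of \thetag{i} and \thetag{iii} is essentially the paper's: decompose over the particle-number strata $\oLSSRk$ and observe that on each stratum the form is a finite-dimensional gradient form with a bounded continuous density (from \tref{l:51}) and reflecting boundary, hence regular and strongly local; the strata are mutually invariant, so the countable sum inherits both properties. The paper writes this out explicitly for \thetag{i} via the decomposition $\ER^{\muRyym} = \sum_k \E_{\Ryy,k}^{[m]}$ and the invariant sets $\CnfRk$.

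Your route to \thetag{ii}, however, has a genuine gap. You claim that ``by \eqref{:58z} the form $(\ERmum,\dRmum)$ is the superposition of the fibre forms $(\ER^{\muRyym},\uLdR^{\muRyym})$'', but \eqref{:58z} is only the \emph{definition} of the superposition $(\uL{\mathscr{E}}_{\Rstar}^{\mum},\uL{\mathscr{D}}_{\Rstar}^{\mum})$. In \lref{l:52} the paper used only that this superposition is an \emph{extension} of $(\ERmum,\dRbmum)$, which suffices for closability via \lref{l:ext2}. The equality $(\ERmum,\dRmum) = (\uL{\mathscr{E}}_{\Rstar}^{\mum},\uL{\mathscr{D}}_{\Rstar}^{\mum})$ is the content of \lref{l:58}, proved much later in \sref{s:9A} by appealing to pathwise uniqueness of the finite-volume SDE \eqref{:14t}--\eqref{:14u} (itself resting on \tref{l:41} and the explicit logarithmic derivative formula). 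You cannot invoke it here without circularity. Consequently, even if you could prove quasi-regularity of the superposition (which you correctly flag as delicate), you would only have established quasi-regularity of a possibly strictly larger form, which does not descend to $(\ERmum,\dRmum)$.

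The paper sidesteps this entirely: its proof of \thetag{ii} is ``in a similar manner to \thetag{i}'', meaning it again decomposes $(\ERmum,\dRmum)$ over the particle number $k$ in $\SR$, not over the exterior configuration $\yy$. On each stratum $\{\sss(\SR)=k\}$ the energy involves only the finitely many interior particles, the exterior configuration is dynamically frozen, and the stratum is invariant; quasi-regularity follows from the finite-dimensional regularity on each piece together with the compactness criterion $\Ki[\ar]$ for the frozen exterior. This avoids both the unproven identification with the superposition and the capacity-of-superposition estimate you were preparing to carry out.
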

\begin{proof} %G[
Let $ ( \ERyymk , \uLd _{\Ryy,k}^{[m]} ) $ be the closure of the closable form 
$ ( \ERyymk , \dRyybm ) $ on $ L^2 ( \muRyymk ) $, 
introduced in the proof of \tref{l:52}. 
Then $ ( \ERyymk , \uLd _{\Ryy,k}^{[m]} ) $ is a strongly local, quasi-regular Dirichlet form 
on $ L^2 ( \muRyymk ) $ satisfying the reflecting boundary condition. 
The properly associated diffusion is frozen outside $ \CnfRk $, 
and $ \CnfRk $ is its invariant set. 
Moreover, the sets $ \CnfRk $ are mutually disjoint for $ k \in \zN $ and 
$ \RdmSS = \sqcup_{k=0}^{\infty} \CnfRk $. 
%G]

The Dirichlet form $(\ERyym , \uLdRyym )$ coincides with the countable sum
of the closed forms $(\ERyymk , \uLd _{\Ryy,k}^{[m]})$, namely,
\begin{align}\notag
\ERyym 
= \sumi{k} \ERyymk , \qquad
\uLdRyym 
= \Big\{
f \in \bigcap_{k=0}^{\infty} \uLd _{\Ryy,k}^{[m]} \,;\,
\sumi{k}  \ERyymk (f,f) < \infty
\Big\}.
\end{align}
%G[ 

By a gluing argument, the diffusions associated with 
$ ( \ERyymk , \uLd _{\Ryy,k}^{[m]} ) $, $ k \in \zN $, 
can be glued together to construct on $ \RdmSS $ a diffusion properly associated with 
$ ( \ERyym , \uLdRyym ) $. 
Thus the form $ ( \ERyym , \uLdRyym ) $ on $ L^2 ( \muRyym ) $ admits a properly associated diffusion.

%---

Hence, by the general theory of quasi-regular Dirichlet forms, 
$ ( \ERyym , \uLdRyym ) $ 
is strongly local and quasi-regular on $ L^2 ( \muRyym ) $. 
This proves \thetag{i}.

%GTP] %GTP[---

Let $ \CnfRy = \{ ( \xs ) \in \RdmSS ; \piRc ( \sss ) = \piRc ( \yy ) \} $. 
We denote by the same symbol the associated partition of $ \RdmSS $. 
With $ \CnfRk $ replaced by $ \CnfRy $ and the countable sum of 
$ ( \ERyymk , \uLd _{\Ryy,k}^{[m]} ) $ replaced by the superposition of 
$ ( \ERyym , \uLdRyym ) $, 
\thetag{ii} follows in the same way as \thetag{i}.

Since the associated carr\'{e} du champ consists of differentials,
the form $ (\ERmum , \dORmum ) $ is strongly local,
which yields \thetag{iii}.
%GTP}
\PFEND

We examine the infinite-volume Dirichlet forms. 
Clearly, $ \DDDaR [ f , f ] (\sss ) $ is non-decreasing in $ \rR $ for all $ f \in \db $ and $ \sss \in \sSS $. 
Hence, we define 
\begin{align}\label{:52r}&
\DDDa [f , f ] ( \sss )= \limi{ \rR } \DDDaR [ f , f ] (\sss ) \le \infty 
.\end{align}
We set the carr\'{e} du champ $ \DDDa [ f , g ] (\sss ) $ by polarization for 
 $ \sss $ such that $ \DDDa [ f , f ] (\sss ) < \infty $ and $ \DDDa [ g , g ] (\sss ) < \infty $. 
For $ m \in \{ 0 \} \cup \N $ and $ f , g \in \dbm $, let 
\begin{align} \notag &
\DDDam [f,g] = 
\half \sum_{i=1}^m \big(( \nablaxi f ) \aaaa , \nablaxi g \big)_{\Rd } + \DDDa [f,g] 
,\\\notag &
\Emum (f,g) = \int_{\RdmSS } \DDDam [f,g] d\mum 
,\\ &\label{:53q}
 \uLdm = \{f \in \bigcap_{\rR =1}^{\infty} \uLdRmum ; \limiR \ERmum (f,f) < \infty \} 
.\end{align}

%G[

The notion of convergence in the strong resolvent sense
is defined before \lref{l:ext3}.

\begin{lemma} \label{l:54}
For $ m \in \{ 0 \} \cup \N $, the following hold: 
\\
\thetag{i}
$ (\Emum , \uLdm ) $ is a closed form on $ \Lmm $ and is the limit of
$ \EuLdRmum $
in the strong resolvent sense on $ \Lmm $.
\\
\thetag{ii}
$ (\Emum , \cup_{\rR \in \N } \dORmum ) $ is closable on $ \Lmm $.
Its closure $ (\Emum , \dOmum ) $ is the limit of 
$ (\Emum , \dORmum ) $ in the strong resolvent sense on $ \Lmm $. 
\end{lemma}
%G]

%
\begin{proof} 
It is clear that $ \EuLdRmum $ is an extension of $ ( \ERRmum , \dRRmum ) $ 
(see \dref{d:ext} for the notion of extension). 
Hence, by \eqref{:52v}--\eqref{:53q}, $ ( \Emum , \uLdm ) $ is the increasing limit of $ \EuLdRmum $ 
in the sense of \lref{l:ext3}. 
Hence, we obtain \thetag{i} from \lref{l:ext3} \thetag{i}. 

Since $ (\Emum , \uLdm ) $ is an extension of 
$ (\Emum , \cup_{\rR \in \N } \dORmum ) $, 
the first assertion of \thetag{ii} follows from \thetag{i} and \lref{l:ext2}. 

It is clear that $ (\ERRmum , \dORRmum ) $ is an extension of 
$ (\ERmum , \dORmum ) $. 
Hence, $ (\ERmum , \dORmum ) $ is decreasing with limit 
$ (\Emum , \cup_{\rR \in \N } \dORmum ) $. 

By definition, $ (\Emum , \dOmum ) $ is the closure of the maximal closable part of 
$ (\Emum , \cup_{\rR \in \N } \dORmum ) $ on $ \Lmm $. 
Hence \thetag{ii} follows from \lref{l:ext3} \thetag{ii}. 
\PFEND

We refer to $(\Emum , \uLdm )$ and $(\Emum , \dOmum ) $ as the lower and upper Dirichlet forms for $ \mum $, respectively. 

\subsection{Dirichlet forms in infinite volume} \label{s:5z}
We now introduce new bilinear forms in addition to the ones we had before. 

We define 
\begin{align} & \notag 
 \dbmum = \{ f \in \dbm ; \Emum (f,f) < \infty ,\, f \in \Lmm \} 
,\\& \notag %\label{:5z}
 \dcmum = \{ f \in \dcm ; \Emum (f,f) < \infty ,\, f \in \Lmm \} 
.\end{align}
Although $ \dbmum $ and $  \dcmum $ depend on $ \aaaa $ and $ \mu $, 
we suppress this dependence in the notation. 
\begin{lemma} \label{l:55} 
 For each $ m \in \zN $, the following hold: 
\\\thetag{i} 
$(\Emum , \dbmum ) $ is closable on $ \Lmm $.
\\\thetag{ii} 
$(\Emum , \dcmum ) $ is closable on $ \Lmm $. 
\end{lemma}
\PF 
Clearly, $(\Emum , \uLdm )$ is an extension of $(\Emum , \dbmum ) $. 
By \lref{l:54} \thetag{i}, $(\Emum , \uLdm )$ is a closed form on $ \Lmm $. 
Hence, \thetag{i} follows from this and \lref{l:ext2}. 
Note that $(\Emum , \dbmum ) $ is an extension of $(\Emum , \dcmum ) $. 
Hence, \thetag{ii} follows from \thetag{i} and \lref{l:ext2}. 
\PFEND

Let $(\Emum , \oL{\dbmum } ) $ and $(\Emum , \oL{\dcmum } ) $ be the closures of $(\Emum , \dbmum ) $ and $(\Emum , \dcmum ) $ on $ \Lmm $, respectively. 

\begin{lemma} \label{l:56} 
For each $ m \in \zN $, the following hold. 
 \\\thetag{i} 
 $ (\Emum , \oL{\dbmum } ) = (\Emum , \uLdm ) $. 
 \\\thetag{ii} 
 $ (\Emum , \oL{\dcmum }) = (\Emum , \dOmum ) $. 
\end{lemma}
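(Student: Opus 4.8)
The plan is to prove both identities by showing that the two closures in each case have the same form domain, using that they are sandwiched between forms already known to agree. Throughout, I fix $m \in \zN$ and work on $\Lmm$.

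For part \thetag{i}, the key observation is the chain of extensions
\begin{align}\notag
(\Emum , \dcmum ) \ \subset\ (\Emum , \dbmum ) \ \subset\ (\Emum , \uLmum ) .
\end{align}
The inclusion $\dbmum \subset \uLmum$ follows from \eqref{:52r}, \eqref{:52w}, and \eqref{:53d}, exactly as used in the proof of \lref{l:55}. Since $(\Emum , \uLmum )$ is already closed on $\Lmm$ by \lref{l:54}\thetag{i}, it is in particular an extension of the closable form $(\Emum , \dbmum )$; hence by \lref{l:ext2} the closure $(\Emum , \uLmum_{\bullet})$ is a subform of $(\Emum , \uLmum )$, i.e. $\uLmum_{\bullet} \subset \uLmum$ with agreement of the forms. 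The reverse inclusion is the heart of the matter: I must show every $f \in \uLmum$ can be approximated in the $\Emum_1$-norm by elements of $\dbmum$. First I would reduce this, using \eqref{:53q}, to approximating $f$ by functions in $\dRbmum$ for large $\rR$ (since $\ERmum(f,f) \uparrow \Emum(f,f)$ and $f \in \bigcap_\rR \dRmum$), and then recall that $(\ERmum ,\dRmum )$ is by definition the closure of $(\ERmum ,\dRbmum )$ on $\Lmm$. A standard cut-off/truncation argument — multiplying by smooth bump functions $\fQ$ as in \eqref{:22z} to localize, then mollifying — produces $\dbm$-approximants whose $\Emum$-energies converge, giving $f \in \uLmum_{\bullet}$. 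This yields $(\Emum , \uLmum_{\bullet}) = (\Emum , \uLmum )$.

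For part \thetag{ii}, the argument is parallel. One has
\begin{align}\notag
(\Emum , \cup_{\rR \in \N } \dORmum ) \ \subset\ (\Emum , \dcmum ) \ \subset\ (\Emum , \dOmum ),
\end{align}
where the second inclusion holds because $\dcmum \subset \dcm$ and each $f \in \dcmum$ is $\sigma[\piR]$-measurable for some $\rR$, hence lies in $\dORmum \subset \cup_\rR \dORmum$, whose closure is $\dOmum$ by \lref{l:54}\thetag{ii}; and the first inclusion is immediate from $\dcb \subset \dc$ and \eqref{:52x}, \eqref{:5z}. Since $(\Emum , \dOmum )$ is closed and extends the closable $(\Emum , \dcmum )$, \lref{l:ext2} gives $\oLcmum \subset \dOmum$. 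For the reverse, I would observe that $\cup_\rR \dORmum$ is, by its definition via \eqref{:52x}, contained in $\dcmum$: indeed a $\mathscr{B}(\SRm)\ts\sigma[\piR]$-measurable smooth function of bounded energy that is also in $\Lmm$ lies in $\dcm$ after truncation, but to stay within $\dcmum$ one must check boundedness is not required — and it is not, since $\dc$ in \eqref{:10v} does not demand boundedness. Hence $\cup_\rR \dORmum \subset \dcmum$, so $\dOmum \subset \oLcmum$, completing \thetag{ii}.

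The main obstacle I anticipate is the reverse inclusion in \thetag{i}: showing $\uLmum \subset \uLmum_{\bullet}$, i.e. that a function with finite limiting energy $\Emum(f,f) = \limiR \ERmum(f,f)$ and lying in every $\dRmum$ can be $\Emum_1$-approximated by the smaller class $\dbmum$ rather than merely by $\dRbmum$ for each fixed $\rR$. The subtlety is a diagonal argument: one needs approximants whose energies converge \emph{simultaneously} as the localization radius and the mollification parameter are refined, and one must verify no energy escapes to infinity in the configuration variable. This is handled by the increasing-limit structure \eqref{:ext3a} together with the uniform control already available, but it requires care to carry out cleanly; the $L^2$ and gradient bounds assembled in \sref{s:2} (in particular the consistency \eqref{:53d} and the explicit form of $\dmu$) are what make it go through.
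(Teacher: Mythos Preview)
For part \thetag{i} your approach aligns with the paper's: both obtain $\dbmum \subset \uLmum$ from \eqref{:52r}, \eqref{:52w}, \eqref{:53d} and then argue density of $\dbmum$ in $\uLmum$. The paper simply asserts this is ``not difficult''; your localization-and-mollification sketch is the natural way to fill that in, and you correctly flag the diagonal subtlety.

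For part \thetag{ii} there is a genuine gap. Your claimed inclusion $\cup_\rR \dORmum \subset \dcmum$ is false: $\dORmum$ is defined as the \emph{closure} of $(\ERmum, \dRcmum)$ on $\Lmm$, so it contains non-smooth $\ERmum_1$-limits, whereas $\dcmum \subset \dcm = \CziRdm \ot \dc$ consists only of smooth functions. Your description ``a $\mathscr{B}(\SRm)\ts\sigma[\piR]$-measurable smooth function of bounded energy'' matches the pre-closure $\dRcmum$ from \eqref{:52x}, not its closure $\dORmum$; you have conflated the two. Note also that your own route to the second inclusion already uses $\dcmum \subset \cup_\rR \dORmum$, which directly contradicts the first inclusion in your chain. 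The argument is salvageable: one does have $\dRcmum \subset \dcmum$ (locality plus compact support, together with $\Emum(f,f)=\ERmum(f,f)$ on such $f$), and since the norms $\Emum_1$ and $\ERmum_1$ agree on $\sigma[\piR]$-measurable functions with first-variable support in $\SRm$, each $\dORmum$ already lies in the $\Emum_1$-closure of $\dRcmum$, hence in $\oLcmum$; this then yields $\dOmum \subset \oLcmum$. The paper bypasses all of this by invoking \cite[Lem.\,2.5]{k-o-t.udf} directly for the density of $\dcmum$ in $\dOmum$.
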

\PF 
Let $ \dRbmum $ be as in \eqref{:52v}.  
Since $ \dbmum \subset \dRbmum $ for all $ \rR \in \N $ and $ \Emum ( f , f ) < \infty $ for all $ f \in \dbmum $, we obtain $ \dbmum \subset \uLdm $. 
It is not difficult to show that $ \dbmum $ is dense in $ \uLdm $. 
	This yields \thetag{i}. 

From \lref{l:54} \thetag{ii}, $ (\Emum ,\dOmum ) $ is the closure of $ (\Emum , \cup_{\rR \in \N } \dORmum ) $ on $ \Lmm $. 
Note that $ (\Emum , \cup_{\rR \in \N } \dORmum ) $ is an extension of $ (\Emum , \dcmum ) $. 
Hence, $ (\Emum ,\dOmum ) $ is an extension of $ (\Emum , \dcmum ) $. 
From \cite[Lem.\,2.5]{k-o-t.udf}, $ (\Emum , \dcmum ) $ is dense in $(\Emum , \dOmum )$. 
	Hence, we have \thetag{ii}. 
\PFEND

\begin{lemma} \label{l:57}
For each $ m \in \zN $, 
\\\thetag{i} 
$(\Emum , \uLdm ) $ is a strongly local Dirichlet form on $ \Lmm $. 
\\\thetag{ii} 
$(\Emum , \dOmum ) $ is a strongly local, quasi-regular Dirichlet form on $ \Lmm $. 
\end{lemma}
%G[

\PF 
By \lref{l:54} \thetag{i}, $(\Emum , \uLdm ) $ is a closed form on $ \Lmm $. 
It is clearly symmetric, and by \eqref{:53q},
\begin{align}\notag 
\Emum (f,g) 
= \int_{\RdmSS } \DDDam [f,g] \, d\mum 
.\end{align}
Since $ \DDDam $ is the carr\'{e} du champ given by differentials, 
$(\Emum , \uLdm ) $ satisfies \eqref{:A2a} and is strongly local. 
This proves \thetag{i}. 

By \tref{l:QR2}, $ (\Emum , \oL{\dcmum }) $ is a quasi-regular Dirichlet form on $ \Lmm $. 
Moreover, by \lref{l:56} \thetag{ii}, 
$ (\Emum , \oL{\dcmum }) = (\Emum , \dOmum ) $. 
Hence $ (\Emum , \dOmum ) $ is quasi-regular on $ \Lmm $. 
Its strong locality follows in the same way as in \thetag{i}. 
\PFEND

%G]
%G[----
\section{Weak solutions of ISDEs for the lower Dirichlet form}\label{s:6}

The purpose of \sref{s:6} is to construct weak solutions $ \uLX $
to the ISDEs \eqref{:12q}
associated with the lower Dirichlet form
$ ( \Emum , \uLdm ) $ on $ \Lmm $.

In \sref{s:6A}, we construct the $ m $-labeled process $ \uLXRm $,
which is properly associated with the lower Dirichlet form
$ \EuLdRmum $ on $ \Lmm $.
Utilizing $ \uLXRm $, we then construct the fully labeled process
$ \uLXR $
and show that $ \uLXR $
is a weak solution of the ISDEs \eqref{:14t} and \eqref{:14u} (\lref{l:61}): 
\begin{align}\notag
\uLXRi (t) - \uLXRi (0)
&=
\int_0^t \sigma ( \uLXRi (u) ) d B_u^i
+ \int_0^t \bbb \bigl( \uLXRi (u) , \sum_{j \ne i}^{\infty} \delta_{ \uLXRj (u) } \bigr) du
\\ \notag
&\quad\quad \quad \quad \quad \quad 
+ \int_0^t \anR ( \uLXRi (u) ) d L_{\rR }^{i} (u),
\\ \label{:14t}
L_{\rR }^{i} (t)
&=
\int_0^t 1_{\partial \SR } ( \uLXRi (u) ) d L_{\rR }^{i} (u),
\quad i \le \sss (\oLSR )
,\\ \label{:14u}
\uLXRi (t) - \uLXRi (0)
&=
0 
,\quad \sss (\oLSR ) < i < \infty 
.\end{align}

Furthermore, we define the fully labeled process $ \uLX $
as the limit of $ \uLXR $
and show that the $ m $-labeled process
$ \uLX^{[m]} $, derived from $ \uLX $,
is associated with the lower Dirichlet form
$ ( \Emum , \uLdm ) $ on $ \Lmm $
(\tref{l:6A}).
In \sref{s:6B}, we establish that $ \uLX $
indeed solves the ISDEs \eqref{:12q}
(\tref{l:6B}).
%G]

 We impose
 \begin{align}\label{:60a}& 
 \uLXR (0) = \lab (\sss )
 .\end{align}
Here $ \lab = ( \labi )_{ i } $ denotes the labeling map introduced in \eqref{:02x}. 
The initial conditions of these solutions are coupled through the labeling map. 
We assume
\begin{align}\label{:60b}&
 \uLXR (0) \elaw (\varphi d\mu ) \circ \lab ^{-1} 
\end{align}

All results in \sref{s:6} are proved under Assumptions \As{A1}--\As{A3}, \eqref{:14y}, and \eqref{:60b}.

%G[

\subsection{The $ m $-labeled process for the lower Dirichlet form}\label{s:6A}

We briefly recall standard notions from Dirichlet form theory.
A Markov process $ \{ X_t \} $ is said to be associated with
a Dirichlet form $ ( \E , \mathscr{D} ) $ on $ \Lnu $
if $ \{ X_t \} $ is associated with the $ L^2 $-Markovian semigroup
$ T_t $ induced by $ ( \E , \mathscr{D} ) $ on $ \Lnu $,
that is,
$ E_x [ f ( X_t ) ] = T_t f ( x ) $
for $ \nu $-a.s.\,$ x $, all $ t \ge 0 $, and all $ f \in \Lnu $.

We say that $ \{ X_t \} $ is properly associated with $ ( \E , \mathscr{D} ) $ on $ \Lnu $
if, in addition, $ ( \E , \mathscr{D} ) $ is quasi-regular and $ E_x [ f ( X_t ) ] $
is a quasi-continuous $ \nu $-version of $ T_t f ( x ) $.
If $ ( \E , \mathscr{D} ) $ is strongly local,
then the associated Markov process is a diffusion,
that is, a continuous Markov process with the strong Markov property.
See \ssref{s:ext} or \cite{c-f} for strong locality and quasi-continuity.

%G]
Let $ \EuLdRmum $ be the quasi-regular, strongly local Dirichlet form on $ \Lmm $ as in \lref{l:53}. 
Let $ \uLXRm $ be the diffusion properly associated with $ \EuLdRmum $ on $ \Lmm $. 
For $ \sss (\SRover ) \ge m $, we take 
\begin{align}	\notag &%	\label{:61(}&
 \uLXRm (0) = ( \lab ^1 (\sss ) ,\ldots , \lab ^m (\sss ) , \sum_{i > m } \delta_{\labi (\sss )})
.\end{align}
Here $ \lab (\sss ) = (\labi (\sss ))_{i}$ is the label defined as \eqref{:02x}.

Let $ W^{[m]}:= C([0,\infty);(\Rd )^m\ts \sSS )$ $ ( m \ge 1 ) $ and $ W^{[0]}:= C([0,\infty); \sSS )$. 
For $ \w = (w^i) _{i\in\N }$, let $ \w ^{[m]}= (w^1\ldots,w^m, \ww ^{m*}) \in W^{[m]}$, where 
$ \ww ^{m*} (t) = \sum_{j > m } \delta_{w ^j (t)}$. 
Let $ \map{\upathmn }{W^{[n]}}{W^{[m]}}$ for $ m \le n $ such that 
\begin{align} &\notag %		\label{:61e}&
\upathmn (\w ^{[n]}) = (w^1,\ldots,w^m, \sum_{m < i \le n} \delta_{w^i} + \ww ^{n*} ) 
.\end{align}

Let $ \uL{P}_{\rR }^{[m]} $ be the distribution of $ \uLXRm $. 
We can easily show that $ \EuLdRmum $, $ m \in \zN $, are consistent in the sense that 
\begin{align}\label{:61f}&&
\uL{P}_{\rR }^{[m]} = \uL{P}_{\rR }^{[n]} \circ (\upathmn ) ^{-1} 
,\quad 0 \le m \le n < \infty 
.\end{align}
We take the space where $ \uLXRm $ is defined as $ W^{[m]}$. 
From \eqref{:61f}, we can write $ \uLXR = (\uLXRi )_{i\in \N } $. 
Here, $ \uLXRi = w^i $ and $ (w^i)_{i\in \N } \in \CRdN $. 
We set $ \uLXRm = ( \uLXR ^m , \uLXXR ^{m*} ) $, where 
$ \uLXR ^m = (\uLXRi )_{i=1}^m $ and $ \uLXXR ^{m*} (t) = \sum_{i>m}^{\infty} \delta_{\uLXRi (t) }$. 

Let $ ( B ^i )_{i\in \N } $ be a sequence of independent $ d $-dimensional standard Brownian motions. 
Let $ r_t $ be the time-reversal operator on the path space on $ [0,\infty)$ such that 
$ r_t (\omega)(s) = \omega (t-s)$ if $ 0\le s \le t $ and $ r_t (\omega)(s) = \omega (0)$ if $ t \le s $. 
Let $ \mathscr{C}^{[m]} =
\{ ( \mathbf{x} , \sss ) \in \RdmSS \,;\, \x ^k = \si \text{ for some } k , i \} $,
where $ \mathbf{x} = (\x ^k )_{k=1}^m $ and $ \sss = \sum_i \delta_{ \si } $.

\begin{lemma} \label{l:61} 
For all $\rR \in \N $, the following hold: 

\noindent \thetag{i} 
$ \uLXRm $ is a weak solution of \eqref{:14t} and 
$ \uLXXR ^{m*} (t) = \uLXXR ^{m*} (0) $ for all $ t \ge 0 $.

\noindent \thetag{ii} 
$ \uLXR $ is a weak solution of ISDE \eqref{:14t} and \eqref{:14u}. 

\noindent \thetag{iii}
Assume \eqref{:14z}. Then for $ i \in \N $, 
\begin{align}\label{:61g}& 
 \uLXRi (t) -\ \uLXRi (0) = 
\half \Big\{ 
\int_0^t \sigma ( \uLXRi (u) ) dB_u^i - 
\int_0^t \sigma ( \uLXRi (u) ) dB_u^i \circ r_t \Big\} 
.\end{align}
%G[ ---
\thetag{iv} 
The set $ \mathscr{C}^{[m]} $ has zero capacity with respect to $ \EuLdRmum $ on $ \Lmm $. 
\end{lemma}
\begin{proof} 
Without loss of generality, we may assume that $ m $ is the number of particles in $ \oLSR $. 
Indeed, particles outside $ \oLSR $ remain fixed, and the case $ m $ smaller than the number of particles in $ \oLSR $ reduces to the case of equality.

The Fukushima decomposition is the counterpart of It\^{o}'s formula in the theory of Dirichlet forms \cite[Th.\,4.2.6]{c-f}. Because $ \EuLdRmum $ is a quasi-regular Dirichlet form on $ \Lmm $, we can use the Fukushima decomposition. 
We regard $ \xione $, $ i = 1, \ldots, m $, as a function on $ (\Rd )^m \ts \sSS $ in an obvious manner. 
Applying the Fukushima decomposition to $ \xione $, we see that $ \uLXR ^m = (\uLXRi )_{i=1}^m $ satisfies 
\begin{align} & \label{:61G}&
 \uLXRi (t) - \uLXRi (0) = M_t^{[\xione ]} + N_t^{[\xione ]} ,
\quad \text{ $ 1 \le i \le m$}
,\end{align}
where $ M^{[\xione ]}$ is the martingale additive functional of finite energy and 
$ N ^{[\xione ]}$ is the continuous additive functional of zero energy of the Fukushima decomposition for the additive functional $ A_t ^{[\xione ]} = \uLXRi (t) - \uLXRi (0) $. 

Using a straightforward calculation, we obtain that (cf.\,\cite[pp.164-165]{c-f}) 
\begin{align}& \label{:61h}
 M_t^{[\xione ]} = \int_0^t \sigma ( \uLXRi (u) ) dB_u^i
\end{align}
and that the zero energy additive functional $ N ^{[\xione ]}$ satisfies 
\begin{align}\label{:61H}&
N_t^{[\xione ]} = 
 \int_0^t \bbb ( \uLXRi (u) , \sum_{j\ne i}^{\infty} \delta_{ \uLXRj (u) } ) du 
 + \int_0^t \anR (\uLXRi (u) )dL_{\rR }^i (u)
.\end{align}
We can prove \eqref{:61H} similarly to \cite[Example 5.2.2]{fot.2}. In \cite[Example 5.2.2]{fot.2}, it was assumed that $ \aaaa $ is the unit matrix; however, generalizing to the current case is straightforward. 
From \eqref{:61h} and \eqref{:61H}, $ \uLXR ^m $ satisfies \eqref{:14t}. 
Because $ \EuLdRmum $ has no energy outside $ \oLSR $, we deduce $ \uLXXR ^{m*} (t) = \uLXXR ^{m*} (0) $ for all $ t \ge 0 $. Thus, we obtain \thetag{i}. 

We next consider $ \uLXR = (\uLXRi )_{i\in\N }$. From \eqref{:61G}--\eqref{:61H}, $ \uLXRi $ 
satisfy \eqref{:14u} for $ i =1,\ldots, m $. 
For $ i > m $, SDE \eqref{:14u} obviously holds because 
$ \uLXXR ^{m*} (t) = \sum_{i>m}^{\infty} \delta_{\uLXRi (t) }$ is frozen outside $ \oLSR $ under the dynamics of $ \EuLdRmum $. 
This implies \thetag{ii}. 

From the Lyons--Zheng decomposition \cite[Th.\,6.7.2]{c-f}, we write $ \uLXRi $ as the sum of 
the martingale additive functionals: 
\begin{align}\label{:61y}&
 \uLXRi (t) - \uLXRi (0) = 
\half \Big\{ M_t^{[\xione ]} - M_t^{[\xione ]} \circ r_t \Big\} 
.\end{align}
Indeed, from \cite[Th.\,6.7.2]{c-f}, \eqref{:61y} holds $ P_{\mum }$-a.e.\,up to lifetime, where $ P_{\mum }$ is the distribution of $ \uLXRm $ satisfying $ \uLXRm (0) \elaw \mum $. By \eqref{:14z}, the distribution of $ \uLXRm \ll P_{\mum }$. 
Hence, \eqref{:61y} holds. 

From \eqref{:61h} and \eqref{:61y}, $ $we obtain \eqref{:61g}. This implies \thetag{iii}. 

Clearly, $ \mum (\mathscr{C}^{[m]} ) = 0 $. 
Because the $ m $-density function of $ \muRyy $ is bounded on $ \uL{\sS }_{\rR }^m $, $ d \ge 2 $, and $ \EuLdRmum $ has no energy outside $ \oLSR $, we obtain \thetag{iv}. 
\PFEND

\begin{lemma} \label{l:62}
The following hold:
\begin{align}\label{:62i} & 
\sup_{ \rR , i \in \mathbb{N} } E [ \lvert \ \uLXRi (t) - \uLXRi (u) \rvert ^4 ] 
 \le \cref{;76} \lvert t - u \rvert ^ 2 , \text{ } 0 \le t,u < \infty 
,\\ \label{:62j}&
\limi{a} \liminfi{ R }
P ( \max_{1\le i \le m } \maxT \lvert \ \uLXRi (t) \rvert \le a ) = 1 
,\quad m , T \in \mathbb{N} 
%	\quad \text{ for each } m , T \in \mathbb{N} 
,\\&\label{:62k}
\limil \inf_{ R \in \mathbb{N} }
P ( \IRT ( \uLXR ) \le l ) = 1 
,\quad T \in \mathbb{N} 
	%\quad \text{ for each } T \in \mathbb{N} 
.\end{align}
Here, $ \Ct \label{;76} > 0 $ is a constant. 
\end{lemma}
\begin{proof}
From \eqref{:14y} and \eqref{:60b}, we can assume $ \varphi = 1 $ without loss of generality. 
We have 
$ \langle \int_0^t \sigma ( \uLXRi (u) ) dB_u^i \rangle_t = \int_0^t \sigma ^t \sigma (\uLXRi (u) ) du $ and $ \sigma ^t \sigma = \aaaa $. From \eqref{:12o}, $ \aaaa $ is uniformly elliptic and bounded. 
Hence, from \eqref{:61g} and the martingale inequality, we have \eqref{:62i}--\eqref{:62k}. 
 The proof of \eqref{:62k} is not as easy as other claims. 
We can prove \eqref{:62k} in the same fashion as Lemma 8.7 in \cite[pp.\,1212--1214]{o-t.tail} and the detail is omitted here. 
\PFEND

For a fully labeled process $ \uLX = (\uLxX ^i )_{i\in \N }$, let $ \uLX ^{[m]} = (\uLX ^m , \uLXX ^{m*} ) $ be the $ m $-labeled process defined by $ \uLX ^m = (\uLxX ^i )_{i=1}^m $ and $ \uLXX _t^{m*} = \sum_{i>m}^{\infty} \delta_{\uLxX _t^i } $. 
Let $(\Emum , \uLdm )$ be the Dirichlet form as in \lref{l:57}. 
\begin{theorem}	\label{l:6A}
There exists an $ \uLX $ satisfying the following: 

\noindent \thetag{i} 
For each $ m \in \zN $, $ \uLXm $ is the $ \mum $-symmetric, conservative, continuous Markov process associated with $(\Emum , \uLdm ) $ on $ \Lmm $.

\noindent \thetag{ii} 
For each $ m \in \zN $, $ \uLXm $ satisfies 
\begin{align}		\label{:6Aa}&
 \uLXm = \limiR \uLXRm \quad \text{ in law in $ W^{[m]}$} 
.\end{align}

\noindent \thetag{iii} 
 $ \uLX $ satisfies 
\begin{align} \label{:6Ab}&
 \uLX = \limiR \uLXR \quad \text{ in law in $ \CRdN $}
.\end{align}
In particular, each tagged particle $ \uLxX ^{i } $ of $ \uLX = (\uLxX ^i )_{i\in \N }$ does not explode. 
\end{theorem}
\begin{proof} 
Let $ \W ^m = \CRd ^m $ and $ \WRN = \CRdN $ with the product topology. 
From \eqref{:14z}, $ \uLXR (0) \elaw (\varphi d\mu ) \circ \lab ^{-1} $. 
Combining this with \eqref{:62i}, we deduce the tightness of $ \ \uLXRi $ in $ \CRd $ for each $ i \in \N $. 
Hence, we obtain the tightness of $ \uLXR ^m $ and $ \uLXR $ in $ \W ^m $ and $ \WRN $, respectively, because 
$ \W ^m $ and $ \WRN $ are endowed with the product topology. 

 Combining the tightness of $ \uLXR $ in $ \WRN $ with 
\eqref{:62k} and using \lref{l:top3}, we obtain that $ \uLXXR ^{m*} $ is tight in $ \CiSS $. 
Hence, $ \uLXRm = (\uLXR ^m , \uLXXR ^{m*} ) $ is tight in $ W^{[m]} $. 

Let $ \uLX $ be any limit point of $ \uLXR $. 
According to \lref{l:54} \thetag{i}, $ \uLXRm $ converges to $ \uLXm $ in law in $ W^{[m]}$ 
and $ \uLXm $ is a $ \mum $-symmetric and continuous Markov process associated with $(\Emum , \uLdm ) $ on $ \Lmm $. 
In particular, $ \uLXm $ are unique in law and the convergence in \eqref{:6Aa} holds. 
Because $ \uLXRm = (\uLXR ^m , \uLXXR ^{m*} ) $ is tight in $ W^{[m]} $, the limit continuous Markov process $ \uLXm $ in \eqref{:6Aa} is conservative. 

Because \eqref{:6Aa} holds for all $ m \in \N $, the convergence in \eqref{:6Ab} holds and $ \uLX $ is unique in law. 
The second statement of \thetag{iii} follows from the first since $ \uLX $ is a $ \WRN $-valued random variable by \eqref{:6Ab}. 
\PFEND

\subsection{Weak solutions of ISDEs for the lower Dirichlet form} \label{s:6B}

Let $ \uLX $ be as in \tref{l:6A}. \ssref{s:6B} aims to prove that $ \uLX $ is a weak solution of ISDE \eqref{:12q} satisfying \As{SIN}, \As{NBJ}, and \As{AC}$_{\mu }$. 

Let $ \uLXR $ be as in \lref{l:61}. Let 
\begin{align} &\notag 
 \XXXRiu = 
(\XRiu , \sum_{j\ne i}^{\infty} \delta_{\XRju } ) 
,\quad 
\uLXXXi = (\uLXi _u , \sum_{j\ne i}^{\infty} \delta_{\uL{\xX } _u^j} ) 
.\end{align}
Let $ \aaaa \in C_b^2 ( \Rd ) $ and $ \bbb = \half \{ \aaaadmu \} $ 
be as in \eqref{:12o} and \eqref{:12p}.
Let $ \sigma \in C_b^1 ( \Rd ) $ be a matrix-valued function such that $ \sigma ^t \sigma = \aaaa $,
%G[---
\begin{lemma}\label{l:63}
The following convergence holds in law in $ \CRdN $. 
\begin{align}\label{:63a}
\limiR
\pL
\int_0^{\cdot}
\sigma ( \uLXRi ( u ) ) \, d B_u^i
\pR
&=
\pL
\int_0^{\cdot}
\sigma ( \uLxX _u^i ) \, d B_u^i
\pR
,\\\label{:63z}
\limiR
\pL
\int_0^{\cdot}
\bbb ( \XXXRiu ) \, du
\pR
&=
\pL
\int_0^{\cdot}
\bbb ( \uLXXXi ) \, du
\pR
.\end{align}
\end{lemma}
%G]
\PF 
%G[---
Let $ \langle M \rangle_t $ be the quadratic variation process of a continuous martingale $ M_t $.
By \eqref{:6Ab}, $ \sigma ^t \sigma = \aaaa $, and $ \aaaa \in C_b^2 ( \Rd ) $,
we obtain, for all $ 0 \le t < \infty $,
\begin{align}\notag
&
\limiR
\pL
\big\langle
\int_0^{ \cdot }
\sigma ( \uLXRi ( u ) ) \, d B_u^i
\big\rangle_t
\pR
=
\limiR
\pL
\int_0^{ \cdot }
\aaaa ( \uLXRi ( u ) ) \, du
\pR
\\\label{:63g}
&
\quad\quad\quad\quad\quad\quad
=
\pL
\int_0^{ \cdot }
\aaaa ( \uLxX _u^i ) \, du
\pR
=
\pL
\big\langle
\int_0^{ \cdot }
\sigma ( \uLxX _u^i ) \, d B_u^i
\big\rangle_t
\pR
.\end{align}
From \eqref{:63g}, we obtain \eqref{:63a},
because convergence in law in $ \CRd $
of continuous martingales follows from that of their quadratic variations.

From \tref{l:11}, we have
$ \dmu \in \Lloctwo ( \muone ) $.
Then $ \bbbQ := 1_{ \oLSQ } \bbb \in \Lmone $
for any $ \qQ \in \N $.
%G]
%
%G[
Hence for any $ \epsilon > 0 $, there exists $ \bbbQe \in C_b(\RdSS )$ such that 
$ \bbbQe = 1_{\oLSQ } \bbbQe $ and 
\begin{align} \label{:63h} &
\| \bbbQ - \bbbQe \|_{\Lmone } < \epsilon 
.\end{align}
Hence for any $ T , i \in \N $, 
\begin{align} \notag 
 E \big[\int_0^T \big\vert ( \bbbQ - \bbbQe ) ( \XXXRiu )\big\vert du \big] 
%\\ &\notag 
&\le E \big[ \int_0^T 
\sumiN \big\vert ( \bbbQ - \bbbQe ) ( \XXXRiu )\big\vert du \big] 
% \\ &\notag \le \cref{;14} T \muone (\SQ \ts \sSS )^{1/2} \| ( \bbbQ - \bbbQe ) \|_{\Lmone } 
\\&\label{:63i}
\le \cref{;14} T \muone (\SQ \ts \sSS )^{1/2} \epsilon 
.\end{align}
Here we used \eqref{:14y} and \eqref{:60b}, the fact that $ \uLXR ^{[1]} $ is a $ \muone $-symmetric Markov process, and \eqref{:63h}. 
%
%
%G[
Similarly, we have, for any $ T , i \in \N $, 
\begin{align}\label{:63j}&
 E \big[ \int_0^T \big\vert ( \bbbQ - \bbbQe ) ( \uLXXXi ) \big\vert d u \big] 
\le \cref{;14} T \muone (\SQ \ts \sSS )^{1/2} \epsilon 
.\end{align}

Note that $ \bbbQe \in C_b(\RdSS )$. For any $ F \in C_b(\CRd )$, we have 
\begin{align} \label{:63k} &
 F \Big( \int_0^{\cdot } \bbbQe ( w (u) , \ww (u)) d u \Big) \in C_b(\CRdSS ) 
.\end{align}
Hence, from \eqref{:6Ab} and \eqref{:63k}, we obtain, for each $ i \in \N $, 
\begin{align} &\notag %
\limiR E \Big[ F \Big( \int_0^{\cdot } \bbbQe ( \XXXRiu )d u \Big) \Big] = 
E \Big[ F \Big( \int_0^{\cdot } \bbbQe ( \uLXXXi ) d u \Big) \Big] 
.\end{align}
From this, \eqref{:63i}, and \eqref{:63j}, we easily obtain \eqref{:63z}. 
\PFEND

\begin{lemma} \label{l:64} 
For each $ i , T \in \mathbb{N} $, 
\begin{align}\label{:64a}&
\limiR P \Big( \maxT \Big\lvert \int_0^t \anR (\uLXRi (u) ) d L_{\rR }^{i} (u)\Big\rvert > 0 \Big) = 0 
.\end{align}
\end{lemma}

\PF 
From \eqref{:6Ab}, $ \{ \uLxXQi (\cdot) - \uLxXQi (0) \}_{\qQ \in \N }$ is tight in $ \CRd $. 
Hence, 
\begin{align} & \notag 
\limiR P \pL \maxT \lvert \uLXRi ( t ) - \uLXRi (0) \rvert \ge \frac{\rR }{3} \Big ) 
\\& \label{:64f} \le 
\limiR P \pL \supQ \big( \maxT \lvert \uLxXQi ( t ) - \uLxXQi (0) \rvert \big) \ge \frac{\rR }{3} \Big ) = 0
.\end{align}
By \lref{l:63}, $ \{ \int_0^{\cdot} \sigma ( \uLxXQi (u) ) d B_u^i \}_{\qQ \in \N }$ and 
$ \{ \int_0^{\cdot} \bbb \XXXQiu du \}_{\qQ \in \N }$ are tight in $ \CRd $. 
Similarly as \eqref{:64f}, we obtain 
\begin{align}\notag & 
\limiR P \pL \maxT \lvert \int_0^{t} \sigma ( \uLXRi (u) ) d B_u^i du \rvert \ge \frac{\rR }{3} \Big ) = 0 
,\\\label{:64h}&
\limiR P \pL \maxT \lvert \int_0^{t} \bbb ( \XXXRiu )du \rvert \ge \frac{\rR }{3} \Big ) = 0 
.\end{align}
From ISDE \eqref{:14t} and $ \uLXXR ^{\diai } (u) = \sum_{ j\not=i } \delta_{\uLxXRj (u) } $, we have 
\begin{align}\notag 
\int_0^t \anR (\uLXRi (u) )dL_{\rR }^i (u) = & \uLXRi (t) - \uLXRi (0) - \int_0^t \sigma (\uLXRi (u) ) d B_u^i 
\\ \label{:64j} &
- \int_0^t \bbb ( \XXXRiu )du 
.\end{align}
Note that $ L_{\rR }^{i} $ only increases when $ \uLXRi (u)$ is on the boundary $ \partial \SR $ from the second equation of \eqref{:14t}. 
Hence, from this and \eqref{:64f}--\eqref{:64j}, we obtain 
\begin{align} &\notag 
 P \Big( \maxT \Big\lvert \int_0^t \anR (\uLXRi (u) ) d L_{\rR }^{i} (u)\Big\rvert > 0 \Big) 
\le 
 P \Big( \maxT \lvert \uLXRi ( t ) - \uLXRi (0) \rvert \ge \frac{\rR }{3} \Big) 
\\&\notag% \quad \quad \quad \quad \quad \quad \quad \quad \quad \quad 
+ 
 P \Big( \maxT \lvert \int_0^{t} \sigma ( \uLXRi (u) ) d B_u^i du \rvert \ge \frac{\rR }{3} 
 \Big) 
+ P \pL \maxT \lvert \int_0^{t} \bbb ( \XXXRiu )du \rvert \ge \frac{\rR }{3} \Big ) 
.\end{align}
Taking $ \rR \to \infty $ and using \eqref{:64h}, we obtain \eqref{:64a}. 
\PFEND

Let $ \mathbb{X}_{\rR }^i $ and $ \mathbb{X}^i $ be $ C([0,\infty ); (\Rd )^4 )$-valued random variables such that 
\begin{align}\notag 
\mathbb{X}_{\rR }^i &= 
 \Big(\uLXRi (\cdot ) - \uLXRi (0), \int_0^{\cdot} \sigma ( \uLXRi (u ) ) d B_u^i , 
 \int_0^{\cdot} \bbb ( \XXXRiu )du , \int_0^{\cdot } \anR (\uLXRi (u ) ) d L_{\rR }^{i}(u ) \Big) 
,\\ \label{:65x} \mathbb{X}^i &= 
\Big(\uLxX _{\cdot}^i - \uLxX _0^i, 
\int_0^{\cdot} \sigma ( \uLxX _u^i ) d B_u^i , \int_0^{\cdot} \bbb ( \uLXXXi ) du ,\mathbf{0} \Big) 
.\end{align}
\begin{lemma}\label{l:65} 
We obtain 
\begin{align} \label{:65a} 
 ( \mathbb{X}^i )_{i \in \N } =\limiR & ( \mathbb{X}_{\rR }^i )_{i \in \N } 
\quad \text{ in law in $ C([0,\infty ); (\Rd )^4 )^{\N } $}
.\end{align}
\end{lemma}

\begin{proof} 
We can rewrite $ ( \mathbb{X}^i )_{i \in \N }$ as the $ C([0,\infty ); (\Rd )^{\N })^4 $-valued process: 
\begin{align*}& 
 ( \mathbb{X}^i )_{i \in \N }= 
 ( \uLX _{\cdot} - \uLX _0 , \pL \int_0^{\cdot} \sigma ( \uLxX _u^i ) d B_u^i \pR , 
\pL \int_0^{\cdot} \bbb ( \uLXXXi ) du \pR , 
(\mathbf{0})_{i \in \N } )
\end{align*}
We rewrite $ ( \mathbb{X}_{\rR }^i )_{i \in \N } $ similarly. 
The convergence of $ \uLXR (\cdot ) - \uLXR (0) $ to $ \uLX (\cdot ) - \uLX (0) $ follows from \eqref{:6Ab} and \eqref{:14z}. 
The convergence of the other terms follows from Lemmas \ref{l:63} and \ref{l:64}. 
From these, we easily obtain \eqref{:65a}. 
\PFEND

\begin{theorem}\label{l:6B} 
$ \uLX $ is a weak solution of \eqref{:12q} 
satisfying $ \uLX (0) \elaw ( \varphi \mu )\circ \lab ^{-1} $, \As{SIN}, \As{NBJ}, and \As{AC}$_{\mu }$. 
\end{theorem}
\begin{proof} 
Let $ \{ h_k \} $ be an increasing sequence of positive numbers such that $ \lim_k h_k = \infty $. 
Let $ \sS _{h_k } =\{ s \in \Rd ; \lvert s \rvert < h_k \} $. 
For $ \mathbf{w} = (w _i )_{i =1} ^4\in \CRd ^4 $, let 
\begin{align}& \label{:6Bf}
 \tau _k (\mathbf{w})= \inf \{ t > 0 ; \mathbf{w} ( t ) \notin \sS _{h_k }^4 \} 
.\end{align}
Let $ \mathbf{V}_k $ be the set of continuity points of the map $ \mathbf{w} \mapsto \tau _k (\mathbf{w} ) $: 
\begin{align*}&
\mathbf{V}_k = \{ \mathbf{w} \in C([0,\infty ); (\Rd )^4 ) 
; \mathbf{w} \mapsto \tau _k (\mathbf{w} ) \text{ is continuous at }\mathbf{w}\}
.\end{align*}
We can and do take $ \{ h_k \} $ such that, for all $ i , k \in \N $, 
\begin{align}\label{:6Bh}&
P ( \mathbb{X}^i \in \mathbf{V}_k ) = 1 
%\quad \text{ for all $ i , k \in \N $}
.\end{align}

Let $ F _k \in C_0 ((\Rd )^4 )$ such that 
$ F _k (\x , \y , \z , u ) = \xy - \z - u $ if $ (\x , \y , \z , u ) \in \sS _{h_k }^4 $. 
Using \lref{l:65} and \eqref{:6Bh}, we obtain, for each $ i , k \in \N $ and $ t \in [0 , \infty )$,
\begin{align}\label{:6Bi}&
 F _k (\XtaukX ) = \limiR F _k (\XtaukXR ) \ \text{ in law} 
.\end{align}
 
Recall that $ \uLXR = (\uLXRi )_{i=1}^{\infty}$ satisfies SDE \eqref{:14t}. 
By the definition of $ F _k $, \eqref{:65x}, \eqref{:6Bf}, and \eqref{:14t}, we obtain, for each $ \rR , i , k \in \N $ and $ t \in [0 , \infty )$, 
\begin{align}\notag &
F _k (\XtaukXR ) = 
\uLXRi (\taukXR ) - \uLXRi (0) -
 \int_0^{\taukXR } \sigma (\uLXRi (u) ) d B_u^i 
\\& \notag 
\quad -
 \int_0^{\taukXR }
\bbb ( \XXXRiu )du 
- \int_0^{\taukXR } \anR (\uLXRi (u) )dL_{\rR }^i (u) = 0
.\end{align}
From this and \eqref{:6Bi}, we obtain, for each $ i , k \in \N $ and $ t \in [0 , \infty )$, 
\begin{align} \notag & 
 F _k (\XtaukX ) = 0 
,\end{align}
which implies that 
\begin{align} \notag & 
\uLxX _{\taukX }^i - \uLxX _0^i - \int_0^{\taukX } \sigma (\uLxX _u^i ) d B_u^i 
- \int_0^{\taukX } \bbb ( \uLXXXi ) du 
= 0 
.\end{align}
By \eqref{:65x} and \lref{l:65}, the equation above holds as the identity of continuous processes. 
Hence, using $ \limi{k}\tau _k (\mathbb{X}^i ) = \infty $ yields ISDE \eqref{:12q}. 
 
We proceed with \As{SIN}. Let $ \WSsiNE $ be as in \eqref{:13v}. 
We will prove $ \uLX \in \WSsiNE $ a.s., that is, each tagged particle neither explodes nor collides with any other particle. 

From \tref{l:6A} \thetag{iii}, $ \uLxX ^i$ does not explode for each $ i \in \N $. 

To prove the non-collision property of $ \uLX $, we use \pref{l:42}. 
From \tref{l:11}, $ \dmu \in \Lloctwo (\muone ) $. 
From \tref{l:6A}, the one-labeled process $ \uLX ^{[1] } $ given by $ \uLX $ is a $ \muone $-symmetric Markov process. 
Hence, the $ \muone $-symmetry of $ \uLX ^{[1] } $ and \eqref{:14y} and \eqref{:60b} imply that, for all $ \rR , T \in \N $, 
\begin{align} \notag &
 E\Big[ \int_0^{T} \sumiN 1_{\SR } (\uLxX _u^i ) \lvert \bbb ( \uLXXXi ) \rvert ^2 du \Big] 
% \\ \label{:7Af} & 
\le \cref{;14} T \int_{\RdSS } 1_{\SR } (\x ) \lvert \bbb (\xs ) \rvert ^2 d\muone < \infty 
.\end{align}
From this, we obtain \eqref{:41z}. 
Note that \eqref{:42y} follows from \eqref{:14y} and \eqref{:60b}. 

Thus, we have verified the assumptions \eqref{:41z} and \eqref{:42y} of \pref{l:42}. 
Hence, we obtain the non-collision property \eqref{:41b}. 
Thus, we obtain \As{SIN}. 

From \eqref{:61g} and \tref{l:6A}, we have, for each $ i \in \N $, 
\begin{align}\label{:6Bo}& 
 \uLxX _t^{i } - \uLxX _0^{i } = 
\half \Big\{ 
\int_0^t \sigma ( \uLxX _u^i ) dB_u^i - \int_0^t \sigma ( \uLxX _u^i ) dB_u^i \circ r_t \Big\} 
.\end{align}
From \eqref{:12o}, \eqref{:6Bo}, and the martingale inequality, we have \As{NBJ}. 
Indeed, we can prove \As{NBJ} in the same fashion as Lemma 8.7 in \cite[pp.\,1212--1214]{o-t.tail}. 
Assumption \thetag{8.81} in \cite{o-t.tail} follows from \eqref{:11)} in the present paper. 
The detail is omitted here. 

Assumption 
\As{AC}$_{\mu }$ is obvious because $ \uLXX $ is associated with the reversible Dirichlet form 
$ (\Emu , \uL{\mathscr{D}}^{\mu } )$ on $ \Lm $ and $ \uLXX _0 \elaw \varphi \mu $. 
\PFEND

%G[---

\section{Weak solutions of ISDEs for the upper Dirichlet form: Proof of \tref{l:12}}\label{s:7}

The aim of \sref{s:7} is to prove \tref{l:12}.
In \sref{s:7!}, we construct the $ m $-labeled diffusion $ \oLXm $.
In \sref{s:7B}, we then construct the associated fully labeled process
$ \oLX $
and show that $ \oLX $ is a weak solution of \eqref{:12q}
satisfying \As{SIN}, \As{NBJ}, and \As{AC}$_{\mu }$.
These conditions are essential for establishing pathwise uniqueness
and the existence of strong solutions to the ISDE \eqref{:12q}.

Unlike in \sref{s:6},
the upper Dirichlet form $ ( \Emum , \dOmum ) $
is quasi-regular.
Therefore, we construct the $ m $-labeled diffusion $ \oLXm $
directly from the infinite-volume upper Dirichlet form.
This approach differs markedly from the finite-volume approximation
used in \sref{s:6}.

All results in \sref{s:7} are proved under Assumptions \As{A1}--\As{A3}. 

\subsection{The $ m $-labeled process for the upper Dirichlet form}\label{s:7!}

Let $ ( \Emum , \dOmum ) $ be the Dirichlet form on $ \Lmm $ as in \lref{l:57}. 
Let $ \capam $ denote the capacity with respect to $ ( \Emum , \dOmum ) $
on $ \Lmm $.
We write $ \mathrm{Cap}^{\mu } $ for the case $ m = 0 $.
Define $ \map{ \ulabmz }{\RdmSS }{\sSS } $ by
$ \ulabmz ( \mathbf{x} , \sss ) = \ulab ( \mathbf{x} ) + \sss $.

\begin{lemma}\label{l:71}
Let $ A \subset \sSS $.
If $ \mathrm{Cap}^{\mu } (A) = 0 $, 
then, for each $ m \in \N $, 
\begin{align}\notag
\capam \bigl( ( \ulabmz )^{-1} (A) \bigr)
= 0
%\quad \text{for all $ m \in \N $}
.\end{align}
\end{lemma}

\begin{proof}
The proof follows from the argument of \cite[Lem.\,4.1]{o.tp}.
\PFEND

%G]

\begin{theorem}	\label{l:7A}

\noindent \thetag{i} 
There exists a $ \mum $-symmetric, conservative diffusion $ \oLXm $ properly associated with $(\Emum , \dOmum ) $ on $ \Lmm $ for each $ m \in \zN $. 

\noindent \thetag{ii} 
None of the labeled particles $\oLxX^i$, where $i=1,\ldots,m$, in $\oLXm$ explode or collide with other labeled particles: 
\begin{align}\label{:7Aa}&
P (\oLxX ^i \in C([0,\infty);\Rd ), i=1,\ldots,m ) = 1 
,\\\label{:7Ab} & 
P (\oLxXti \ne \oLxXtj \text{ for all } t\in [0,\infty ), 1\le i, j \le m , i\ne j ) = 1 
.\end{align} 
%G[--- 
\noindent \thetag{iii}
For each $ 1 \le i \le m $,
\begin{align}\label{:7Ac}&
\oLxXti - \oLxXzi
=
\int_0^{ t }
\sigma ( \oLxXui ) \, d B_u^i
+
\int_0^{ t }
\bbb ( \oLXXXi ) \, du
,\\\label{:7Ad}&
\oLxXti - \oLxXzi
=
\half
\Big\{
\int_0^{ t }
\sigma ( \oLxXui ) \, d B_u^i
-
\int_0^{ t }
\sigma ( \oLxXui ) \, d B_u^i \circ r_t
\Big\}
.\end{align}
Here $ r_t $ is the time-reversal operator
defined before \lref{l:61}, and
\begin{align}\label{:7Ae}
\oLXXXi
=
\big(
\oLxXui ,
\sum_{ j \ne i }^{ \infty }
\delta_{ \oLxXuj }
\big)
.\end{align}
%G]
\end{theorem}

\begin{proof} %
From \lref{l:57} \thetag{ii}, $(\Emum , \dOmum ) $ is a quasi-regular, strongly local Dirichlet form on $ \Lmm $. 
Hence, we obtain a $ \mum $-symmetric diffusion $ \oLXm $ properly associated with $(\Emum , \dOmum ) $ on $ \Lmm $ by the general theory of Dirichlet forms \cite{c-f}.

 Because $(\Emum , \dOmum ) $ is a quasi-regular Dirichlet form, we use the Fukushima decomposition (cf.\,\cite[Th.\,4.2.6]{c-f}). 
We regard the coordinate function $ x^i $ as a function on $ \RdmSS $ in an obvious manner for $ 1 \le i \le m $. 
Then $ x^i \in \dOmumloc $. 
Although $ x^i \notin \dOmum $, we can apply the Fukushima decomposition to $ x^i $ by localization. 

Consider the additive functional $ A^{[x^i]}(t) := \oLxXti - \oLxXzi $ of $ \oLX ^{[m]}$. 
Let $ \zeta $ be the lifetime of the diffusion $ \oLXm $. 
Applying the Fukushima decomposition to $ x^i $, we have a decomposition of $ \oLX ^{[m]}$ such that, for $ 1 \le i \le m $, 
\begin{align}\label{:7Ak}&
 \oLxX _{\zetat }^i - \oLxXzi = M _{\zetat }^{[x^i]} + N_{\zetat }^{[x^i]} 
,\end{align}
where $M^{[x^i]} $ is the martingale additive functional of finite energy and $ N^{[x^i]}$ is the continuous additive functional of zero energy \cite{fot.2,c-f}. 
By a standard calculation (cf.\,\cite[pp.164-165]{c-f}), we can show that $ M^{[x^i]} $, $ 1 \le i \le m $, are 
martingales such that 
\begin{align}\label{:7Al}&
 M _{\zetat }^{[x^i]} = 
\int_0^{\zetat } \sigma ( \oLxXui ) dB_u^i 
.\end{align}
For any $ g \in \dcbm $, 
\begin{align}\label{:7An}
- \Emum (x^i, g ) &= \int_{(\Rd )^m \ts \sSS } \bbb ( \si , \sum_{j\ne i}^m \delta_{s^j } +
 \sum_{ k >m} \delta_{s^k}) g ( \sss ^{[m]} )
\mum (d\sss ^{[m]})
.\end{align}
Here, we set $ \sss ^{[m]} = (s^1,\ldots, s^m ,\sum_{j>m} \delta_{\sj })$. 
Using \eqref{:7Ae} and \eqref{:7An} together with localization and \cite[Th.\,5.2.4]{fot.2}, we have 
\begin{align}&\label{:7Ap}
 N _{\zetat }^{[x^i]} = 
\int_0^{\zetat } \bbb ( \oLXXXi ) du 
.\end{align}
Hence, from \eqref{:7Ak}, \eqref{:7Al}, and \eqref{:7Ap}, we obtain 
\begin{align}\label{:7Aq}&
 \oLxX _{\zetat }^i - \oLxXzi = 
\int_0^{\zetat } \sigma ( \oLxXui ) dB_u^i 
 + \int_0^{\zetat } \bbb ( \oLXXXi ) du 
.\end{align}
Applying the Lyons--Zheng decomposition \cite[p.284]{c-f} to $ x^i$, we see, by \eqref{:7Aq}, 
\begin{align}\label{:7Ar}&
 \oLxX _{\zetat }^i - \oLxXzi = \half \Big\{ 
\int_0^{\zetat } \sigma ( \oLxXui ) dB_u^i - \int_0^{\zetat } \sigma ( \oLxXui ) dB_u^i \circ r_t \Big\} 
.\end{align}

Let 
$ \oLX ^m = (\oLxX ^i )_{i=1}^m $ and 
$ \oLXX _t^{m*} = \sum_{j>m}^{\infty} \delta_{\oLxXtj } $. 
Then $ \oLXm = (\oLX ^m , \oLXX ^{m*} )$. 
Let 
\begin{align} &\notag %		\label{:7!f}&
\zeta_{\mathrm{tp}} = \sup \{ t \ge 0 ; \sup_{0\le u \le t } \lvert \oLX _u^m \rvert < \infty \} 
,\\ & \notag %				\label{:7!g} &
\zeta_{\mathrm{cnf}} = \sup \{ t \ge 0 ; \sup_{0\le u \le t } \oLXX ^{m*}_u (\oLSR ) < \infty \text{ for all } \rR \in \N \} 
.\end{align}
Then $ \zeta = \zeta_{\mathrm{tp}} \wedge \zeta_{\mathrm{cnf}} $. 
Suppose $ \zeta < \infty $. 
Taking $ t \to \infty $ in \eqref{:7Ar} yields 
 \begin{align} &\notag %
 \oLxX _{\zeta }^i - \oLxXzi = \half \Big\{ 
\int_0^{\zeta } \sigma ( \oLxXui ) dB_u^i - \int_0^{\zeta }
\sigma ( \oLxXui ) dB_u^i \circ r_t \Big\} 
.\end{align}
The right-hand side is finite. Hence, $ \oLxX _{\zeta }^i $ is finite. 
By $ \lim_{t\to\zeta }\oLxX _{\zetat }^i = \oLxX _{\zeta }^i $, $ X^i $ does not explode at finite time $ \zeta $. 
Hence, we obtain $ \zeta = \zeta_{\mathrm{cnf}} $. 

Because $(\Emu , \dOmu ) $ is a quasi-regular Dirichlet form on $ \Lm $, there exists an increasing sequence of compact sets $ \mathsf{K}_{\qqq } $ in $ \sSS $ such that 
\begin{align}\label{:7At}&
\limi{n} \mathrm{Cap}^{\mu } (\mathsf{K}_{\qqq }^c) = 0 
.\end{align}

Let $ \anest =\{ \ak \}_{\qqq \in\mathbb{N}} $ and $ \Ki [\ak ]$ be as in \eqref{:ak} and \eqref{:CUTw}, respectively. 
By the property of compact sets of $ \sSS $ \cite{Kal}, there exists such an $ \{ \ak \}_{\qqq \in\mathbb{N}} $ satisfying 
\begin{align}\label{:7Au}&
\mathsf{K}_{\qqq } \subset \mathsf{K}[\ak ] := \cap_{\rR = 1}^{\infty}\{ \sss\in \sSS ; \sss (\SR ) \le \akR \} 
.\end{align}
Hence, from \eqref{:7At} and \eqref{:7Au}, 
\begin{align}\label{:7Av}&
\mathrm{Cap}^{\mu } ((\cup_{\qqq \in \N } \mathsf{K}[\ak ])^c ) = 0 
.\end{align} 
Using \eqref{:7Av} and \lref{l:71}, we have 
\begin{align} &\notag % 
\capam ( (\ulabmz ) ^{-1} ( \cup_{\qqq \in \N } \mathsf{K}[\ak ])^c ) = 0 
.\end{align} 
Hence, using the general theory of Dirichlet forms \cite{fot.2,c-f}, we obtain 
\begin{align} &\notag %
\oLPm{} ( \oLXX _t^{m* } \in (\ulabmz ) ^{-1} ( \cup_{\qqq \in \N } \mathsf{K}[\ak ])^c
\text{ for some } 0 \le t < \infty ) = 0 
.\end{align}
Here $ \oLPm{} $ is the diffusion measure associated with $ ( \Emum , \dOmum ) $ on $ \Lmm $. 

From this, we deduce $ \zeta_{\mathrm{cnf}} = \infty $, $ \oLPm{} $-a.s. 
We have already proved that $ \zeta = \zeta_{\mathrm{cnf}}$. Therefore, $ \zeta = \infty $, $ \oLPm{} $-a.s. 
Thus $ \oLXm $ is conservative, which implies \thetag{i}. 

We obtain \eqref{:7Aa} from $ \zeta_{\mathrm{tp}}= \zeta = \infty $. 
Since $ \oLXm $ is $ \mum $-symmetric and conservative by \thetag{i}, for all $ \rR , T \in \N $, 
there exists a constant $ \label{;7!} \Ct $ such that 
\begin{align}\notag % \label{:7!k} &
 E\Big[ \int_0^{T} \lvert \bbb ( \oLXXXi ) \rvert ^2 du ; \oLX _0^m \in \SRm \Big] &
\le 
 E\Big[ \int_0^{T} \sum_{i=1}^m \lvert \bbb ( \oLXXXi ) \rvert ^2 du ; \oLX _0^m \in \SRm \Big] 
\\\notag & 
 \le \cref{;7!}
 T \int_{\SR \ts \sSS } \lvert \bbb (\xs ) \rvert ^2 d\muone < \infty 
.\end{align}
From this and \pref{l:44}, we obtain \eqref{:7Ab}. 
This proves \thetag{ii}.

By $ \zeta = \infty $, \thetag{iii} follows from \eqref{:7Aq} and \eqref{:7Ar}. This yields \thetag{iii}. 
\PFEND
%GTP

\begin{remark}\label{r:5B} 
In the proof of \tref{l:7A}, we applied the Lyons--Zheng decomposition 
directly to the limit process $ \oLXm $, since the upper Dirichlet form 
$(\Emum , \dOmum )$ is quasi-regular by \lref{l:57} \thetag{ii}. 

At that point, however, the quasi-regularity of the lower Dirichlet form 
$(\Emum , \uLdm )$ had not yet been established. 
For this reason, in the proof of \tref{l:6A} we instead applied the 
Lyons--Zheng decomposition to the finite-volume Dirichlet forms $ \EuLdRmum $, which are known to be quasi-regular by \lref{l:53}. 
\end{remark}
%GTP]
\subsection{Weak solutions of ISDEs for the upper Dirichlet form and the proof of \tref{l:12}} \label{s:7B}

In \ssref{s:7!}, we have constructed the $ m $-labeled process $ \oLXm $. 
The construction of the fully labeled process $ \oLX $ for these partially labeled processes have not yet done. 
The goal of the subsection is to construct the fully labeled process $ \oLX $ and prove that it satisfies ISDE \eqref{:12q} (\tref{l:7B}) and complete the proof of \tref{l:12}. 

The proof in the present section is different from that of \ssref{s:6B}. 
The upper Dirichlet form Dirichlet form $(\Emum , \dOmum ) $ on $ \Lmm $ is quasi-regular. 
Hence, we will utilize the Dirichlet form theory and the general theorems developed in \cite{o.tp,o.isde}.

For a label $ \lab = (\labi )_{i\in\N }$, let $ \lab ^{[0]} (\sss ) = \sss $ and 
$ \labm (\sss ) = ( (\labi (\sss ) )_{i=1}^m , \sum_{j>m}\delta_{\labj (\sss )} ) $. 
Let $ \map{\lpath }{\WSsiNE }{\CRdN }$ be as in \lref{l:top2}. We define 
\begin{align}&\notag % \label{:77!}&
\lpath ^{[m]} ( \ww ) = (\lpath ^1 ( \ww ) , \ldots, \lpath ^m ( \ww ) , \sum_{j>m}^{\infty} \delta_{\lpath ^j ( \ww ) } )
.\end{align}
Here, for $ \mathbf{w} = (w^i)$, we define the measure-valued path $ \ww $ by $ \ww _t = \sum_i \delta_{w_t^i}$. 
From \lref{l:top2}, $ \lpath (\ww )_0 = \lab (\ww _0 )$. Hence, $ \lpath ^{[m]} (\ww )_0 = \labm (\ww _0 )$. 

Let $ \oLPm{\labm (\sss ) } $ be the diffusion measure of $ \oLXm $ staring at $ \labm (\sss ) $ 
properly associated with $(\Emum , \dOmum ) $ on $ \Lmm $. 
By \lref{l:57}, $(\Emum , \dOmum ) $ is a quasi-regular, strongly local symmetric Dirichlet form on $ \Lmm $. 

\begin{lemma}[{\cite[Th.\,2.4]{o.tp}}] \label{l:77} 
%	Assume \As{A1}--\As{A3}. 
For $ \mu $-a.s.\,$ \sss $, 
\begin{align}\label{:77a}&
\oLPm{\labm (\sss ) } = \oL{P} ^{\mu }_{\sss } \circ (\lpath ^{[m]} )^{-1 }
,\quad m \in \zN 
.\end{align}
\end{lemma}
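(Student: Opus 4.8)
The plan is to obtain \lref{l:77} as a direct application of the general correspondence theorem \cite[Th.\,2.4]{o.tp}, which, for a configuration-valued diffusion associated with a suitable quasi-regular Dirichlet form, identifies its $\lpath^{[m]}$-image with the diffusion associated with the corresponding $m$-labeled Dirichlet form. Thus the work lies entirely in verifying the hypotheses of that theorem in the present situation. First I would record the structural facts already available: by \lref{l:57}, $(\Emum,\dOmum)$ is a strongly local, quasi-regular Dirichlet form on $\Lmm$ for every $m\in\zN$, with $(\Emu,\dOmu)$ its unlabeled ($m=0$) companion; by \tref{l:2'} and \tref{l:51}, $\mu$ (and each $\muRyy$) possesses bounded, continuous $k$-density functions on every $\oLSRk$ together with a one-point correlation function; and the family $\{(\Emum,\dOmum)\}_{m\in\zN}$ is consistent under the projections $\upathmn$, which is immediate from the definitions \eqref{:52s}, \eqref{:52v}, \eqref{:12z} of $\DDDam$, $\Emum$, $\mum$, and from \lref{l:56} \thetag{ii}. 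In particular $\ulabmz(\oLXm)$ has the law of the unlabeled diffusion $\oL{P}^{\mu}$.

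Next I would check the remaining, analytically substantive hypothesis, namely that the unlabeled diffusion $\oLXX=\oLX^{[0]}$ lives in $\WSsiNE$ for $\mu$-a.e.\ starting configuration, so that the labeling map $\lpath$ of \lref{l:top2} is defined along almost every path and $\lpath^{[m]}(\oLXX)$ makes sense. This splits into non-collision and non-explosion. For non-collision, the boundedness of the $k$-density functions from \tref{l:51} together with $d\ge2$ forces the multiple-point set to have zero $\mathrm{Cap}^{\mu}$-capacity — the finite-volume version of this is \lref{l:6!} \thetag{iv}, and the consequence for the labeled particles is \eqref{:61-} in \tref{l:6B} \thetag{ii} — so the diffusion a.s.\ avoids non-simple configurations and $\oLXX_t\in\SSsi$ for all $t$. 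For non-explosion of each tagged particle, I would use the Lyons--Zheng-type representation \eqref{:6Be} from \tref{l:6B} \thetag{iii}: its right-hand side is finite at every finite time, so no coordinate blows up, and since this holds for every $m$ it holds for every coordinate. Together these give $\oLXX\in\WSsiNE$ a.s., and \cite[Th.\,2.4]{o.tp} then yields \eqref{:77a} for $\mu$-a.s.\ $\sss$ and each $m\in\N$.

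The step I expect to be the main obstacle is precisely the passage from the explicit analytic information about $\mu$ — bounded correlation functions and the explicit logarithmic derivative of \tref{l:11} — to the form-theoretic input required by \cite[Th.\,2.4]{o.tp}, and in particular the capacity-zero statement for the collision set, which is the deepest ingredient; the consistency bookkeeping and the non-explosion argument are comparatively routine, being direct consequences of the definitions of the $m$-labeled objects and of the Lyons--Zheng decomposition already established in \tref{l:6B}.
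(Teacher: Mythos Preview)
Your proposal is correct and follows essentially the same approach as the paper: both verify the hypotheses of \cite[Th.\,2.4]{o.tp} by invoking \lref{l:57} for quasi-regularity and \tref{l:6B} for the non-collision and non-explosion of the labeled particles. The paper's proof is slightly terser, and its accompanying remark makes explicit that the original assumptions of \cite[Th.\,2.4]{o.tp} (locally bounded $m$-point correlation functions and $\mathrm{Cap}^{\mu}(\SSs^c)=0$) are not verified directly but are replaced by the quasi-regularity from \lref{l:57} and the non-collision of labeled particles from \tref{l:6B} \thetag{ii}; the latter is obtained there via the SDE estimate \tref{l:44} rather than through a capacity argument, which is a small difference from your sketch but leads to the same conclusion.
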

\PF 
\lref{l:77} follows from the argument of \cite[Th.\,2.4]{o.tp}. 
We briefly explain below the difference between \lref{l:77} and \cite[Th.\,2.4]{o.tp}.

In \cite[Th.\,2.4]{o.tp}, the following were assumed: \\
\thetag{i} $(\E ^{[k]} , \dc ^{[k]}) $ is closable on $ L^{2}(\mu ^{[k]}) $ for $ k = 0 $ and $ m $. 
\\\thetag{ii} The $ m $-point correlation function of $ \mu $ is locally bounded for each $ m \in \N $.
\\\thetag{iii} $ \mathrm{Cap} (\SSs ^c ) = 0 $. 
\\\thetag{iv} each tagged particle of $ \oLXX $ does not explode. 

In \lref{l:77}, \thetag{i} follows from \lref{l:55}. 

In \cite[Th.\,2.4]{o.tp}, \thetag{ii} was used only for proving the quasi-regularity of $(\Emum , \dOmum ) $. 
 Thus, we can replace \thetag{ii} by \lref{l:57}.

In \cite[Th.\,2.4]{o.tp}, \thetag{iii} was used to prove 
the non-collision of the tagged particles $ \{ \oLxX^i \}_{ i \in \N } $ of $ \oLXm $ 
via \cite[Lem.\,4.1]{o.tp}. Hence, \tref{l:7A} \thetag{ii} applies.

\thetag{iv} also follows from \tref{l:7A} \thetag{ii}. 

%GTP]

Thus, \lref{l:77} can be proved by following the proof of \cite[Th.\,2.4]{o.tp}. 
\PFEND 

For $ \w = (w ^i)\in \CRdN $, let $ \upathm (\w ) = (w^1,\ldots,w^m, \sum_{i>m} \delta_{w ^i})$. 
\begin{theorem} \label{l:7B} 

\noindent \thetag{i} 
There exists a family of probability measures $ \oLP{\mathbf{x} }^{\N }$ on 
$ \CRdN $ such that, for $ \mu $-a.s.\,$ \sss $, 
\begin{align}\label{:78f}&\quad \quad \quad 
\oLPm{\labm (\sss ) } = \oLP{\lab(\sss ) }^{\N } \circ ( \upathm )^{-1} 
,\quad \m \in \zN 
.\end{align}

\noindent \thetag{ii} 
Let $ \oLX = \w $. Then $ \oLX $ under $ \oL{P}_{ \varphi \mu \circ \lab ^{-1} }^{\N }$ is a weak solution of \eqref{:12q} with initial distribution $ \varphi \mu \circ \lab ^{-1} $ that satisfies \As{SIN}, \As{NBJ}, and \As{AC}$_{\mu }$. 
\end{theorem}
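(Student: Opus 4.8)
The plan is to build the fully labeled process $\oLX$ by gluing together the consistent family $\{\oLXm\}_{m\in\zN}$ obtained in \tref{l:6B}, and then to show that the resulting path-valued object solves \eqref{:12q}. First I would verify the consistency relation $\oLPmmm{\ulabmn (\labm (\sss ))} = \oLPm{\labm (\sss )} \circ (\upathmn )^{-1}$ for $m \le n$, which follows by the same Dirichlet-form argument that gives \eqref{:77a}: both sides are the distribution of the diffusion associated with the upper Dirichlet form, projected down to the $m$-labeled configuration. Using \lref{l:77}, each $\oLPm{\labm (\sss )}$ equals $\oLP{\sss }^{\mu } \circ (\lpath ^{[m]})^{-1}$ for $\mu$-a.s.\,$\sss$, where $\oLP{\sss }^{\mu }$ is the distribution of the unlabeled diffusion $\oLXX$ started at $\sss$. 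Since $\oLXX \in \WSsiNE$ a.s.\ (by the non-collision and non-explosion of the labeled particles, \tref{l:6B} \thetag{ii}, together with \As{SIN}-type arguments as in \ssref{s:7A}), the path-labeling map $\lpath $ of \lref{l:top2} is well defined $\oLP{\sss }^{\mu }$-a.s., and I would set
\begin{align}\notag
\oLP{\lab (\sss )}^{\N } = \oLP{\sss }^{\mu } \circ \lpath ^{-1}
.\end{align}
Then \eqref{:78f} is immediate from $\lpath ^{[m]} = \upathm \circ \lpath $ and \lref{l:77}. This gives \thetag{i}.

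For \thetag{ii}, I would take $\oLX = \w $ as the canonical coordinate process under $\oL{P}_{\varphi \mu \circ \lab ^{-1}}^{\N } = \int \oLP{\lab (\sss )}^{\N } \varphi (\sss )\mu (d\sss )$, and check the three constraints and the ISDE. \As{SIN} follows because, for $\mu$-a.s.\,$\sss$, the process is supported on $\WSsiNE$ by the construction via $\lpath $ and by \eqref{:61+}--\eqref{:61-} of \tref{l:6B}, combined with a tagged-particle non-collision argument using \tref{l:42}: the one-labeled marginal $\oLX^{[1]}$ is $\muone$-symmetric and conservative by \tref{l:6B}, $\dmu \in \Lloctwo (\muone )$ by \tref{l:11}, and $0 \le \varphi \le \cref{;14}$, so \eqref{:41z} and \eqref{:42y} hold and \tref{l:42} gives the non-collision. \As{AC}$_{\mu }$ is clear since $\oLXX = \upathz (\oLX )$ is the $\mu$-symmetric diffusion associated with $(\Emu , \dOmu )$ started at $\varphi \mu$. \As{NBJ} I would obtain from the Lyons--Zheng decomposition \eqref{:61c} applied to each labeled coordinate and the martingale inequality together with the moment bound coming from \eqref{:11)}, exactly as in \cite[Lem.\,8.7]{o-t.tail}; this is the step I expect to require the most care, since it needs the bound \thetag{8.81} of \cite{o-t.tail}, which here follows from the polynomial growth hypothesis \eqref{:11)}.

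Finally, to see that $\oLX$ solves \eqref{:12q}: for each $m$ and each $1 \le i \le m$, \tref{l:6B} \thetag{iii} gives \eqref{:61b}, i.e.
\begin{align}\notag
\oLxX ^i (t) - \oLxX ^i (0) = \int_0^t \sigma (\oLxX ^i (u))dB_u^i + \int_0^t \bbb (\oLxX ^i (u), \sum_{j\ne i}^{\infty}\delta_{\oLxX ^j (u)})du
,\end{align}
under $\oLPm{\labm (\sss )}$, hence under $\oL{P}^{\N }_{\lab (\sss )}$ via \eqref{:78f}; since $m$ is arbitrary this holds for all $i \in \N $, which is precisely \eqref{:12q}. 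One point that needs attention is that the Brownian motions $B^i$ appearing in \eqref{:61b} a priori depend on $m$; the consistency \eqref{:78f} together with the fact that $M^{[x^i]}_t = \int_0^t \sigma (\oLxX ^i (u))dB_u^i$ is, up to the fixed diffusion coefficient, the martingale part of the Fukushima decomposition of the coordinate $x^i$ — an object intrinsic to the process and not to the choice of $m$ — forces the $B^i$ to agree across $m$ (they are determined by $\oLX$ and an independent collection of driving Brownians), so one genuinely obtains a single $\RdN $-valued Brownian motion $\B = (B^i)_{i\in\N }$ and a single weak solution. With \As{SIN}, \As{NBJ}, \As{AC}$_{\mu }$, and the ISDE all verified, and the $m$-labeled marginals being $\mum$-symmetric conservative diffusions by \tref{l:6B} \thetag{i}, the proof of \tref{l:12} is complete.
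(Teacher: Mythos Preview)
Your proposal is correct and follows essentially the same route as the paper. The only difference is in \thetag{i}: the paper obtains $\oLP{\lab(\sss)}^{\N}$ from the consistency \eqref{:77a} by invoking the Kolmogorov extension theorem, whereas you construct it directly as $\oLP{\sss}^{\mu}\circ\lpath^{-1}$ using the identity $\lpath^{[m]}=\upathm\circ\lpath$; both rest on \lref{l:77} and are equivalent once one knows $\oLXX\in\WSsiNE$ a.s., which (as you note) follows by letting $m\to\infty$ in \tref{l:6B}\,\thetag{ii}. Your treatment of \thetag{ii} --- the ISDE via \eqref{:61b} and consistency, \As{SIN} from non-collision/non-explosion, \As{NBJ} from the Lyons--Zheng decomposition \eqref{:61c} as in \cite[Lem.\,8.7]{o-t.tail}, and \As{AC}$_{\mu}$ from $\mu$-symmetry --- matches the paper's proof, and your remark that the Brownian motions $B^i$ are consistent across $m$ because they are determined by the intrinsic martingale additive functional is a point the paper leaves implicit.
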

\begin{proof}
From \eqref{:77a}, the family of distributions $\oLPm{\labm (\sss ) }$ is consistent. 
Hence, using the Kolmogorov construction theorem, we obtain \thetag{i}. 

%GTP[
Let $ \oLXm $ be as in \tref{l:7A}. Then its first $ m $ components form a weak solution of \eqref{:12q}. 
By the consistency \eqref{:78f}, $ \oLX $ is a weak solution of \eqref{:12q}.

From \eqref{:7Ac}, for $ \oLXm $, none of the $ m $-labeled particles explode or collide with the other labeled particles for each $ m \in \N $. 
Combining this with the consistency \eqref{:78f}, each tagged particle $ \oLxX ^i $ of $ \oLX = ( \oLxX ^i )_{ i \in \N } $ neither explodes nor collides with the others. 
We have thus obtained \As{SIN}.

We obtain \As{NBJ} from the martingale decomposition \eqref{:7Ad}.
The proof is analogous to that of \tref{l:6B} and is omitted. 
%GTP]

Finally, \As{AC}$_{\mu }$ is obvious because the unlabeled dynamics $ \oLXX = \upathz (\oLX ) $ are associated with the Dirichlet form on $ \Lm $, where $ \oLXX _t = \sum_{i\in\N } \delta_{\oLxXti }$. 
\PFEND

%G[---

\noindent {\bf Proof of \tref{l:12}. }
\noindent
From \tref{l:7A}, $ \oLX ^{ [ m ] } $ is a $ \mum $-symmetric,
conservative diffusion for each $ m \in \zN $.
From \tref{l:7B}, there exists a fully labeled process $ \oLX $
associated with $ \oLX ^{ [ m ] } $,
and $ \oLX $ is a weak solution of \eqref{:12q}
with initial distribution
$ \varphi \, \mu \circ \lab ^{ -1 } $.
Here we may take any function $ \varphi $
satisfying \eqref{:14y}.
Using Fubini's theorem,
we obtain $ \oLX $ such that
$ \oLX $ is a weak solution of \eqref{:12q}
starting at $ \lab ( \sss ) $
for $ \mu $-a.s.\,$ \sss $.
This completes the proof.
\qed
%G]
\section{Unique strong solutions of ISDEs: Proof of Theorem \ref{l:13}} \label{s:8}

To prove \tref{l:13}, we utilize a general result developed in \cite{o-t.tail,k-o-t.ifc}. 

A key assumption for the uniqueness of solutions is the tail triviality of the associated RPFs.
Starting from the tail decomposition of $ \mu $, we introduce the tail decomposition of Dirichlet forms in \sref{s:81}.
Using this, we complete the proof of \tref{l:13} in \sref{s:82}. 

Unless stated otherwise, all results in \sref{s:5} are proved under Assumptions \As{A1}--\As{A3}. 

\subsection{Tail decomposition of Dirichlet forms and weak solution} \label{s:81} 
 
Let $ \mua $, $ \aaa \in \SSz $, be the tail decomposition of $  \mu $ given in \dref{d:15}. 
Then $ \mu = \int \mua d\mu (\aaa )$ and $ \mu (\sSS \backslash \SSz ) = 0 $. 
For all $ \rR \in \N $, $ \mu $-a.s.\,$ \aaa $, and $ \mua $-a.s.\,$ \yy $, let 
\begin{align} & \notag %\label{:81z}
 \muaRy = \mua (\cdot \vert \pioLRc (\sss ) = \pioLRc (\yy )) 
.\end{align}
Let  $ \muaRym $ be the $ m $-reduced Campbell measure of  $ \muaRy $. 
Let $ \oLSSR $ be the configuration space over $ \oLSR $ and $ \oLSSRk = \{ \sss \in \oLSSR ; \sss (\oLSR ) = k \} $. 
\begin{lemma} \label{l:81} 
For $ \mu $-a.s.\,$ \aaa $, $ \mua $-a.s.\,$ \yy $, and each $ \rR \in \mathbb{N}$, the following hold:  

\noindent 
\thetag{i} 
Let $ m \in \zN $. Then $ \muaRy ^{[m]}$ has a density $ \maRy ^{[m]} $ with respect to $ \LambdaR ^{[m]} $ such that $ \maRy ^{[m]} $ is bounded and continuous on $ \oLSR ^m \ts \oLSSRk $ for each $ k \in \zN $. 

\noindent \thetag{ii} $ \muaRy $ has a one-point correlation function $ \rho _{\aRy }^1 $. 
\end{lemma}

\begin{proof}
Let $ \dRyy $ and $ \dlog _{\aRy } $ be the logarithmic derivatives of $ \muRyy $ and $ \mu _{\aRy } $, respectively. 
From \pref{l:38}, $ \dRyy ( \xs ) = \dlog _{\aRy } ( \xs ) $ for $ \muaone $-a.e.\,$ (\xs )$ for $ \mu $-a.s.\,$ \aaa $. 
Hence, from \tref{l:3!}\,\thetag{i}, 
\begin{align} \notag &%\label{:81f}&
 \dlog _{\aRy } (\xs ) = \beta \Big( - \nablaPhi ( \x ) - \sum_{\sioLSR } \nablaPsi ({\xsi }) + 
\1 \CRyii \x ^{\mathbf{i}} + \Ryl (\x ) \Big)
.\end{align}
In \lref{l:51}, the existence of a bounded continuous density 
$ \mRyy ^{[m]} $ of $ \muRyy ^{[m]} $ was derived from the explicit representation 
\eqref{:36c} of $ \dRyy $. 
Using the above representation of $ \dlog _{\aRy } $, 
assertion \thetag{i} follows by the same argument. 

Assertion \thetag{ii} can be proved analogously to \lref{l:51} \thetag{ii}. 
\PFEND

Lemmas \ref{l:82}--\ref{l:83} below correspond to Lemmas \ref{l:52}--\ref{l:57}, respectively. 
The proof of Lemma \ref{s:8}.$ k $ is similar to that of Lemma \ref{s:5}.$ k $ for $ 1 \le k \le 6 $; therefore, we omit those proofs. 

%G[

Let $ \muam $ be the $ m $-reduced Campbell measure of $ \mua $. 
By replacing $ \mum $ with $ \muam $ and $ \muaRym $ in \eqref{:52v}, we define 
$ (\EaRm , \daRbm ) $ and $ (\EaRyym , \daRyybm ) $, respectively. 
%G]
\begin{lemma}	\label{l:82}
For $ \mu $-a.s.\,$ \aaa $, $ \rR \in \N $, and $ m \in \zN $, the following hold: 
\\\thetag{i} 
$ (\EaRyym , \daRyybm )$ 
is closable on $ L^2 ( \muaRym )$ for $ \mua $-a.s.\,$ \yy $. 
\\\thetag{ii} 
$ ( \EaRm , \dRbmuma )$ 
 is closable on $ \Lmuam $. 
\\ \thetag{iii} 
$ ( \EaRm , \dRcmuma ) $ is closable on $ \Lmuam $. 
\end{lemma}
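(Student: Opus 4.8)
The plan is to prove \lref{l:82} by following exactly the strategy of \lref{l:52}, now that the necessary density and correlation-function facts for the conditioned measures $\muaRy$ have been established in \tref{l:81}. The statement is a reduced-Campbell analogue, for each fixed tail component $\mua$, of the three closability assertions already proved for $\mu$; the key observation is that $\mua$ inherits from $\mu$ all the structural properties used in \sref{s:5}, because the logarithmic derivative of $\mua$ agrees $\mua$-a.e.\ with that of $\mu$ (this is \tref{l:38}, and it is restated inside the proof of \tref{l:81}).

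First I would prove \thetag{i}. By \tref{l:81}\thetag{i}, for $\mu$-a.s.\ $\aaa$ and $\mua$-a.s.\ $\yy$ the $m$-reduced Campbell measure $\muaRym$ has a bounded, continuous density $\maRy^{[m]}$ with respect to $\LambdaR^{[m]}$ on $\oLSR^m\ts\oLSSRk$ for every $k\in\zN$. The closability of a finite-volume Dirichlet form with such a density is exactly the content of \cite[Lem.\,3.2]{o.dfa} in the case $m=0$; the generalization to $m\ge1$ is routine, as was already noted in the proof of \lref{l:52}\thetag{i}. This gives \thetag{i}.

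Next I would derive \thetag{ii} from \thetag{i} by the superposition argument. By \thetag{i}, for $\mu$-a.s.\ $\aaa$ the family $\{(\ER^{\muaRym},\dRb^{\muaRym})\}_{\yy}$ consists of closable forms on the fibers $L^2(\muaRym)$, and their superposition over the disintegration $\mua=\int\muaRy\,\mua(d\yy)$ (in the sense of \eqref{:58z}) is a closed form on $\Lmuam$ by the general superposition theorem for closed forms \cite[Prop.\,3.1.1]{b-h}. Since this superposition is an extension of $(\ERmuma,\dRbmuma)$ on $\Lmuam$, closability of the latter follows from \lref{l:ext2}. Finally \thetag{iii} is immediate from \thetag{ii} and \lref{l:ext2}, since $(\ERmuma,\dRbmuma)$ is an extension of $(\ERmuma,\dRcmuma)$.

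The proof is essentially mechanical once \tref{l:81} is in hand; the only real content lies in that theorem, whose proof in turn rests on the identification $\dlog_{\aaa,\rR,\yy}=\dRyy$ $\mua$-a.e.\ together with the explicit formula \eqref{:81f}. Thus the main obstacle is not in \lref{l:82} itself but in ensuring that the disintegration of $\mua$ into regular conditional probabilities $\muaRy$ is compatible with the disintegration of $\mu$ into $\muRyy$ in a way that makes the superposition well defined $\mu$-a.s.\ in $\aaa$; this is precisely where the version statements of \tref{l:38} and \eqref{:38x} are used, and where one must be careful about null sets. Since \lref{l:52}--\lref{l:57} already carried out the corresponding arguments for $\mu$, and \tref{l:81} provides the $\aaa$-dependent inputs verbatim in parallel with \tref{l:51}, I would state \lref{l:82}--\lref{l:83} and simply indicate that their proofs repeat those of \lref{l:52}--\lref{l:57} with $\mu$ replaced by $\mua$, omitting the routine details.
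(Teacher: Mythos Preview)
Your proposal is correct and takes essentially the same approach as the paper: the paper states explicitly that \lref{l:82}--\lref{l:83} correspond to \lref{l:52}--\lref{l:57} and that the proofs are similar and therefore omitted. Your reconstruction of the argument via \tref{l:81}, \cite[Lem.\,3.2]{o.dfa}, the superposition theorem \cite[Prop.\,3.1.1]{b-h}, and \lref{l:ext2} mirrors the proof of \lref{l:52} exactly as intended.
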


Let 
$ (\EaRyym , \uLdaRyym )$ be the closure of 
$ (\EaRyym , \daRyybm )$ on $ L^2 (\muaRym )$. 
Let 
$ \EuLdRmuam $ and 
$ (\EaRm ,\dORmuma ) $ be 
the closures of $ (\EaRm ,\dRbmuma )$ and $ (\EaRm , \dRcmuma ) $ on $ \Lmuam $, respectively. 
\begin{lemma} \label{l:8!}
For $ \mu $-a.s.\,$ \aaa $, $ \rR \in \N $, and $ m \in \zN $, the following hold: 
\\\thetag{i} 
$ (\EaRyym , \uLdaRyym ) $ 
 is a strongly local, quasi-regular Dirichlet form on 
 $ L^2 (\muaRym )$ for $ \mu $-a.s.\,$ \yy $. 
\\\thetag{ii}
$ \EuLdRmuam $ 
is a strongly local, quasi-regular Dirichlet form on $ \Lmuam $. 
\\\thetag{iii} $ (\EaRm ,\dORmuma ) $ is a strongly local closed form on $ \Lmuam $. 
\end{lemma}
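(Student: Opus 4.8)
The plan is to mirror exactly the structure of the proof of \lref{l:53}, since \lref{l:8!} is the $\mua$-conditional analogue of it. Throughout we assume \As{A1}--\As{A3}. The starting point is \lref{l:82}, which gives the closability of the relevant bilinear forms (the $\mua$-analogue of \lref{l:52}), so that the closures $(\ER ^{\muaRym }, \uLdR ^{\muaRym } )$, $(\ERmuam ,\dRmuam ) $, and $(\ERmuam ,\dORmuma ) $ are well-defined symmetric Dirichlet forms. The essential analytic input available to us is \tref{l:81}: for $\mu$-a.s.\,$\aaa$ and $\mua$-a.s.\,$\yy$, the $m$-reduced Campbell measure $\muaRym$ has a bounded continuous density $\maRy ^{[m]}$ with respect to $\LambdaR ^{[m]}$ on $\oLSR ^m \ts \oLSSRk$ for each $k\in\zN$, and $\muaRy$ has a one-point correlation function. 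This is precisely the hypothesis that drove the proof of \lref{l:53}.

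First I would prove \thetag{i}. Following the proof of \lref{l:53}\,\thetag{i}, I decompose $(\ER ^{\muaRym } , \uLdR ^{\muaRym } )$ as a countable sum over $k\in\zN$ of the closed forms $(\E _{\aaa , \Ryy , k }^{[m]} , \uL{\dom } _{\aaa , \Ryy , k }^{[m]} )$ built from the carré du champ $\DDDaRmk$, exactly as in \eqref{:53f}, each carrying the reflecting boundary condition on $\partial\SR$. Each summand is a strongly local, quasi-regular Dirichlet form on $L^2(\muaRym)$: quasi-regularity follows because the state space decomposes into the mutually disjoint, dynamically invariant pieces $\CnfRk$ on which only finitely many ($=k+m$) particles move, and on each such piece the measure has a bounded continuous density by \tref{l:81}; strong locality is automatic since the carré du champ consists of differentials. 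The countable sum of strongly local, quasi-regular Dirichlet forms on the mutually disjoint invariant pieces is again strongly local and quasi-regular, giving \thetag{i}.

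Next, \thetag{ii}: this is the $\mua$-version of \lref{l:53}\,\thetag{ii}, and I would prove it in the same way --- by the superposition argument. By \thetag{i}, applied with $\yy$ ranging over $\mua$-a.e.\ configuration, we obtain a family of strongly local quasi-regular Dirichlet forms on the fibers $L^2(\muaRym)$; the superposition of these closed forms over the disintegration of $\muam$ into $\{\muaRym\}$ (cf.\,\eqref{:58z} and \cite[Prop.\,3.1.1]{b-h}) is a closed form on $\Lmuam$ which extends $(\ERmuam ,\dRbmuma )$; by uniqueness of the closure and \lref{l:ext2} together with \lref{l:82}\,\thetag{ii} this superposition coincides with $(\ERmuam ,\dRmuam )$, and strong locality and quasi-regularity pass through the superposition. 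Finally \thetag{iii} is immediate: $(\ERmuam ,\dORmuma )$ has a carré du champ consisting of differentials, hence is strongly local.

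The main obstacle, as in \lref{l:53}, is verifying quasi-regularity of the conditioned forms, and more subtly, ensuring that all the fiberwise statements of \tref{l:81} and of part \thetag{i} hold simultaneously for $\mua$-a.e.\ $\yy$ and $\mu$-a.e.\ $\aaa$ --- i.e.\ that the null sets can be chosen uniformly so that the superposition over the tail-conditional disintegration is legitimate. This is handled by the usual measurable-selection/disintegration bookkeeping: \tref{l:81} already delivers its conclusion for $\mu$-a.s.\,$\aaa$ and $\mua$-a.s.\,$\yy$, and since there are only countably many $k$ and $m$ values and countably many radii $\rR$, a single exceptional null set suffices. Because these are routine measure-theoretic manipulations identical in spirit to those already carried out in \sref{s:5}, and because \lref{l:82}--\lref{l:8!} are explicitly stated to parallel \lref{l:52}--\lref{l:57}, we omit the detailed proof.
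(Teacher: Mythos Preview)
Your proposal is correct and follows essentially the same approach as the paper: the paper explicitly states that \lref{l:82}--\lref{l:83} correspond to \lref{l:52}--\lref{l:57} and that the proofs are omitted because they are similar, so your strategy of mirroring the proof of \lref{l:53} with \tref{l:81} in place of \tref{l:51} and \lref{l:82} in place of \lref{l:52} is exactly what the paper intends.
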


Let $ \EuLdmuam $ be the increasing limit of $ \EuLdRmuam $, $ \rR \in \N $. 
\begin{lemma}	\label{l:8"}
For $ \mu $-a.s.\,$ \aaa $ and $ m \in \zN $, the following hold: 

\noindent \thetag{i} 
The form $ \EuLdmuam $ is a closed form on $ \Lmuam $ and is the strong resolvent
limit of $ \EuLdRmuam $ on $ \Lmuam $.

\noindent \thetag{ii} $ ( \Emuam , \cup_{\rR \in \N } \dORmuma ) $ is closable on $ \Lmuam $. 
The closure $ ( \Emuam , \dOmuma ) $ is the limit of $ ( \Emuam ,\dORmuma ) $ 
in the strong resolvent sense on $ \Lmuam $. 
\end{lemma}

Let $ \dbmuma = \{ f \in \dbm ; \Emuam (f,f) < \infty , f \in \Lmuam \} $. 
We define $ \dcmuma $ by replacing $ \db $ with $ \dc $. 
\begin{lemma} \label{l:8'} 
For $ \mu $-a.s.\,$ \aaa $ and $ m \in \zN $, the following hold: 
\\\thetag{i} 
$( \Emuam , \dbmuma ) $ is closable on $ \Lmuam $. 
\\\thetag{ii} 
$( \Emuam , \dcmuma ) $ is closable on $ \Lmuam $. 
\end{lemma}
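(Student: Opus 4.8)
The plan is to follow \emph{verbatim} the strategy used for \lref{l:55}: reduce each closability statement to the abstract fact (\lref{l:ext2}) that a non-negative symmetric bilinear form admitting a closed extension is itself closable, and supply the closed extension from the lower Dirichlet form $(\Emuma,\uLmuma)$ produced in \lref{l:8"}. The a.s.\ qualifier in $\aaa$ is inherited from \lref{l:81} and \lref{l:82}--\lref{l:8"}, so it suffices to argue for a fixed $\aaa$ for which those statements hold, and for $\mua$-a.s.\ $\yy$.

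For part \thetag{i}, I would first check that $(\Emuma,\uLmuma)$ is an extension of $(\Emuma,\dbmuma)$. Since $\DDDaR[f,f](\sss)$ is non-decreasing in $\rR$ (being a sum of non-negative carr\'{e} du champ terms, cf.\ the discussion preceding \eqref{:52r}), the same monotonicity holds for $\DDDaRm[f,f]$, whence $\ERmuam(f,f)\le\Emuma(f,f)$ for every $\rR$. Thus any $f\in\dbmuma$ lies in $\dRbmuma\subset\dRmuam$ for all $\rR$ and satisfies $\limiR\ERmuam(f,f)=\Emuma(f,f)<\infty$, so $f\in\uLmuma$; moreover the two forms agree on $\dbmuma$ by monotone convergence. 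By \lref{l:8"}\thetag{i}, $(\Emuma,\uLmuma)$ is a closed form on $\Lmuam$, so \lref{l:ext2} yields the closability of $(\Emuma,\dbmuma)$.

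For part \thetag{ii}, note $\dcmuma=\dbmuma\cap\dcm\subset\dbmuma$ and the two forms coincide on $\dcmuma$. Hence $(\Emuma,\dbmuma)$---shown closable in \thetag{i}, with closure denoted $(\Emuma,\uLmuma_{\bullet})$---is a closed extension of $(\Emuma,\dcmuma)$, and \lref{l:ext2} again gives the closability of $(\Emuma,\dcmuma)$.

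There is no genuine obstacle in this argument; it is purely formal once the structural inputs are in place. The only point requiring care is that $(\ERmuam,\dRbmuma)$ and the infinite-volume forms are well defined and possess the needed regularity for $\mu$-a.s.\ $\aaa$; this rests on \lref{l:81}, which in turn follows from the explicit representation \eqref{:81f} of the logarithmic derivative $\dlog_{\aaa,\rR,\yy}$ of $\mu_{\aaa,\rR,\yy}$---precisely the step where the tail decomposition of $\mu$ enters. Granting \lref{l:81}--\lref{l:8"}, the proof above is literally the proof of \lref{l:55} with $\mum$ replaced by $\muam$, which is why the detailed write-up is omitted.
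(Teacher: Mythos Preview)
Your proposal is correct and follows exactly the approach the paper intends: the paper explicitly states that the proof of \lref{l:8'} is identical to that of \lref{l:55} with $\mum$ replaced by $\muam$ and is therefore omitted, and your argument reproduces precisely that proof, invoking \lref{l:8"}\thetag{i} for the closed extension and \lref{l:ext2} for the closability transfer. The only cosmetic difference is that in \thetag{ii} you pass through the closure $(\Emuma,\uLmuma_{\bullet})$ rather than citing the closable form $(\Emuma,\dbmuma)$ directly, but \lref{l:ext2} handles both variants.
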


Let $( \Emuam , \uLdmuamb ) $ and $( \Emuam , \oLdcmuma ) $ be the closure of 
$( \Emuam , \dbmuma ) $ and $( \Emuam , \dcmuma ) $ on $ \Lmuam $, respectively. 
\begin{lemma}	\label{l:84}
For $ \mu $-a.s.\,$ \aaa $ and each $ m \in \zN $, the following hold: \\
\thetag{i} $ ( \Emuam , \uLdmuamb ) = ( \Emuam ,\uLdmuam ) $. \\
\thetag{ii} $ ( \Emuam , \oLdcmuma ) = ( \Emuam ,\oLdmuam ) $. 
\end{lemma}
\begin{lemma} \label{l:83}
For $ \mu $-a.s.\,$ \aaa $ and $ m \in \zN $, the following hold: 
\\\thetag{i} 
$\EuLdmuam $ is a strongly local Dirichlet form on $ \Lmuam $. 
\\\thetag{ii}
$( \Emuam , \dOmuma ) $ is a strongly local, quasi-regular Dirichlet form on $ \Lmuam $. 
\end{lemma}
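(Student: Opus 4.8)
The plan is to deduce Lemma~\ref{l:83} from the earlier structural results exactly as Lemma~\ref{l:57} was deduced from \lref{l:54}--\lref{l:56}, since \lref{l:82}--\lref{l:84} are the verbatim $ \mua $-analogues of \lref{l:52}--\lref{l:56}. The key point is that the tail decomposition (\dref{d:15}) and \tref{l:81} guarantee that $ \mua $ has the same regularity properties---bounded continuous densities on $ \oLSR ^m \ts \oLSSRk $, a one-point correlation function, and hence a logarithmic derivative with the explicit representation \eqref{:81f}---that $ \mu $ itself enjoys, so every step in the proof of \lref{l:57} applies with $ \mu $ replaced by $ \mua $ for $ \mu $-a.s.\,$ \aaa $.

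First I would prove \thetag{i}. By \lref{l:8"}\thetag{i}, $ (\Emuma , \uLmuma ) $ is a closed form on $ \Lmuam $, and it is symmetric because the carr\'{e} du champ $ \DDDam $ is symmetric: $ \DDDam [f,g] = \DDDam [g,f] $ for all $ f , g \in \uLmuma $. From the definition of $ \Emuma $ analogous to \eqref{:53q}, namely
\begin{align*}&
\Emuma (f,g) = \int_{\RdmSS } \DDDam [f,g] d\muam
,\end{align*}
and the fact that $ \DDDam $ consists only of differentials, the form satisfies the normal contraction property \eqref{:A2a} together with strong locality (see \dref{d:A2} and \ssref{s:ext}). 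This gives \thetag{i}.

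For \thetag{ii}, I would argue as in the proof of \lref{l:57}\thetag{ii}: by \tref{l:QR3}, the form $ (\Emuma , \oL{\dom }^{\muam } _{\circ }) $ is a quasi-regular Dirichlet form on $ \Lmuam $; by \lref{l:84}\thetag{ii}, $ (\Emuma , \oL{\dom }^{\muam } _{\circ }) = (\Emuma , \dOmuma ) $, so $ (\Emuma , \dOmuma ) $ is quasi-regular on $ \Lmuam $. Its strong locality again follows from \eqref{:53q}-type representation and the definition of $ \DDDa $ in \eqref{:52s}. The hypotheses of \tref{l:QR3} to check---the closability of $(\Emuma , \dcmuma )$ and the density of $(\Emuma , \dcmuma )$ in $(\Emuma , \dOmuma )$---are supplied by \lref{l:8'}\thetag{ii} and \lref{l:84}\thetag{ii}, while the needed density/regularity input on correlation functions of $ \mua $ comes from \tref{l:81}.

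I expect the only genuine subtlety to be a bookkeeping one: all statements hold for $ \mu $-a.s.\,$ \aaa $, and one must ensure that the relevant null sets (from \lref{l:82}--\lref{l:84} and \tref{l:81}) can be chosen uniformly in $ \rR $ and $ m $, which is immediate since there are only countably many pairs $ ( \rR , m ) $. With that observation, the proof is a direct transcription. I would therefore write:

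\begin{proof}
Throughout we assume \As{A1}--\As{A3}, and we fix $ \aaa \in \SSz $ outside the $ \mu $-null set on which \lref{l:82}--\lref{l:84} and \tref{l:81} may fail; since these exceptional sets are countably many (indexed by $ \rR , m \in \N $), their union is still $ \mu $-null.

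\thetag{i}
From \lref{l:8"}\thetag{i}, $ (\Emuma , \uLmuma ) $ is a closed form on $ \Lmuam $.
Since $ \DDDam [f,g] = \DDDam [g,f] $ for all $ f , g \in \uLmuma $, the form $ (\Emuma , \uLmuma ) $ is symmetric.
By the analogue of \eqref{:53q},
\begin{align*}&
\Emuma (f,g) = \int_{\RdmSS } \DDDam [f,g] d\muam
,\end{align*}
and because $ \DDDam $ is the carr\'{e} du champ built from differentials, $ (\Emuma , \uLmuma ) $ satisfies \eqref{:A2a} and is strongly local. This proves \thetag{i}.

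\thetag{ii}
By \tref{l:QR3}, $ (\Emuma , \oL{\dom }^{\muam } _{\circ }) $ is a quasi-regular Dirichlet form on $ \Lmuam $; the inputs needed for \tref{l:QR3}---closability of $ (\Emuma , \dcmuma ) $ and density of correlation functions---are provided by \lref{l:8'}\thetag{ii} and \tref{l:81}.
From \lref{l:84}\thetag{ii}, $ (\Emuma , \oL{\dom }^{\muam } _{\circ }) = (\Emuma , \dOmuma ) $, whence $ (\Emuma , \dOmuma ) $ is quasi-regular on $ \Lmuam $.
Its strong locality follows from the representation of $ \Emuma $ above and the definition of $ \DDDa $ in \eqref{:52s}. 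This proves \thetag{ii}.
\end{proof}
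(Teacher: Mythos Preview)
Your proposal is correct and follows essentially the same approach as the paper, which explicitly states that the proof of Lemma~\ref{l:83} is similar to that of Lemma~\ref{l:57} and omits it. One minor imprecision: the hypothesis \thetag{A.2} of \tref{l:QR3} for $\mua$ is the first-moment bound $\sum_k k\,\mua(\SSRk)<\infty$, which follows directly from \eqref{:11)} and the tail decomposition $\mu=\int\mua\,\mu(d\aaa)$ rather than from \tref{l:81}, but this does not affect the argument.
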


Similarly as $ \dRyy $ for $ \muRyyone $ in \lref{l:34}, we obtain  $ \dlog _{\aRy }$ for $ \muaRy ^{[1]} $. 
In \lref{l:8=}, we prove that $ \dlog _{\aRy }$ coincides with $ \dmu $ under $ \muaRy ^{[1]}$. 
\begin{lemma} \label{l:8=}
For $ \mu $-a.s.\,$ \aaa $, all $ \rR \in \N $, and $ \mua $-a.s.\,$ \yy $, $ \dlog _{\aRy } $ satisfies %the following: 
\begin{align}\label{:8=b}& 
\dlog _{\aRy } (\xs ) = \dmu (\xs + \pioLRc (\yy ) ) \quad \text{ for $ \muaRy ^{[1]}$-a.s.\,$ (\xs )$}
.\end{align}
\end{lemma}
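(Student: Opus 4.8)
The claim \eqref{:8=b} identifies the logarithmic derivative $\dlog_{\aaa,\rR,\yy}$ of the doubly-conditioned measure $\muaRy = \mua(\cdot\mid \pioLRc(\sss)=\pioLRc(\yy))$ with the restriction of the global logarithmic derivative $\dmu$ after inserting the exterior configuration $\pioLRc(\yy)$. The natural route is to chain together three identifications already available in the excerpt. First, since $\mu=\int\mua\,\mu(d\aaa)$ is the tail decomposition of \dref{d:15} and $\muRyy=\mu(\cdot\mid\pioLRc(\sss)=\pioLRc(\yy))$, I would note that for $\mu$-a.s.\ $\aaa$ and $\mua$-a.s.\ $\yy$ the measure $\muaRy$ is, up to a $\mathscr T(\sSS)$-null modification, nothing but $\muRyy$ conditioned further to the tail class $[\aaa]$; because $\dmubu$ was constructed in \eqref{:38z}--\eqref{:38x} to be constant on each tail equivalence class, \tref{l:38}(ii) gives $\dmuyy(\xs)=\dmubu(\xs)$ for $\mu$-a.e.\ $\yy$ and $\muyyone$-a.e.\ $(\xs)$, which already pins the candidate value.

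Concretely, the key steps, in order, are: (1) invoke \tref{l:34} applied to $\mua$ in place of $\mu$ (the hypotheses \As{A1}--\As{A3} are inherited by $\mua$ for $\mu$-a.s.\ $\aaa$, as used throughout \sref{s:8}), to get that $\dlog_{\aaa,\rR,\yy}$ is the logarithmic derivative of $\muaRy$, characterized by the integration-by-parts identity \eqref{:34"} with $\mu\rightsquigarrow\mua$; (2) use \tref{l:81}(i), which provides the bounded continuous density $\maRy^{[1]}$ of $\muaRy^{[1]}$ on $\oLSR\ts\oLSSRk$, so that $\dlog_{\aaa,\rR,\yy}(\xs)=\nablax\log\maRy^{[1]}(\xs)$ holds $\muaRy^{[1]}$-a.e.\ and not merely distributionally; (3) recall from the proof of \tref{l:81} that $\dRyy(\xs)=\dlog_{\aaa,\rR,\yy}(\xs)$ for $\mua$-a.e.\ $(\xs)$, since \tref{l:38} gives $\dmuyy=\dmubu=\dlog_{\aaa,\rR,\yy}$ after restricting to $[\aaa]$; (4) combine with the explicit formula \tref{l:3!}(i), which expresses $\dRyy(\xs)$ through $\nabla\Phi$, the local two-body sum $\sum_{\sioLSR}\nablaPsi(\xsi)$, the constants $\CRyii$, and the residual $\Ryl$; and (5) identify this with $\dmu(\xs+\pioLRc(\yy))$ by reading off \eqref{:38e} in \tref{l:38}(iii): the right-hand side of \eqref{:38e} evaluated at the configuration $\sss+\pioLRc(\yy)$ reproduces exactly the formula of \tref{l:3!}(i), because $\CRi$ and $\Rsl$ depend only on $\pi_{\oLSRc}(\sss)$ via \eqref{:11B} and \eqref{:35!}, and $\pi_{\oLSRc}(\sss+\pioLRc(\yy))=\pioLRc(\yy)$ while $\pi_{\oLSR}(\sss+\pioLRc(\yy))=\pi_{\oLSR}(\sss)$.

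The main obstacle is the bookkeeping in step (3)--(5): one must be careful that the a.e.\ statements are with respect to the correct reference measures and that the identification of $\dmu(\cdot+\pioLRc(\yy))$ as a well-defined $\muaRy^{[1]}$-version requires knowing that $\muaRy^{[1]}$ charges only configurations $\sss$ with $\pioLRc(\sss)=\pioLRc(\yy)$ and $x\ne\si$, which follows from \tref{l:81}(i) together with the non-collision built into the support of $\LambdaR^{[m]}$. I would conclude by remarking that the locally Lipschitz $\muone$-version of $\dmu$ produced in \tref{l:11}(i), combined with the locally bounded continuous density of $\muaRy^{[1]}$ from \tref{l:81}, guarantees that the pointwise identity \eqref{:8=b} holds on the full support of $\muaRy^{[1]}$ with $x\ne\si$, as required for the subsequent ISDE analysis in \sref{s:82}. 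Since \lref{l:82}--\lref{l:83} mirror \lref{l:52}--\lref{l:57}, no further structural input is needed, and the proof is a direct consequence of \tref{l:34}, \tref{l:38}, \tref{l:3!}, and \tref{l:81}.
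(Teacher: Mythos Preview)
Your argument is correct and self-contained: you identify $\dlog_{\aaa,\rR,\yy}$ with $\dRyy$ via the observation (also used in the proof of \tref{l:81}) that $\muaRy=\muRyy$ for $\mua$-a.s.\ $\yy$ once $\yy\in[\aaa]$, and then match the explicit formula \eqref{:36c} for $\dRyy$ against \eqref{:38e} for $\dmu$ after shifting the configuration by $\pioLRc(\yy)$, using \eqref{:11B} and \eqref{:35!} to see that the exterior-dependent terms $\CRi$ and $\Rsl$ line up. This is a legitimate route and has the advantage of making the identity completely explicit at the level of the representation \eqref{:87f} that is used downstream in the proof of \Ass{B2}.

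The paper's own proof is shorter and more conceptual: it does not touch the explicit formulae at all. Instead it uses the disintegration $\muone(\cdot)=\int\!\!\int \muaRy^{[1]}(\cdot)\,\mua(d\yy)\,\mu(d\aaa)$ of the one-Campbell measure, applies the defining integration-by-parts identity for $\dlog_{\aaa,\rR,\yy}$ against a test function $h\in\GRone$, passes through $\muone$ by Fubini, applies the defining identity for $\dmu$, and disintegrates back. Equality of the two integrals for all such $h$ then gives \eqref{:8=b}. This argument uses only the \emph{characterization} of logarithmic derivatives via integration by parts, so it would work even without the explicit representation of \tref{l:3!} and \tref{l:38}; by contrast, your route depends on having those formulae in hand but yields a pointwise identification on the full continuous support, not merely an a.e.\ statement obtained by duality.
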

\begin{proof}
By $ \muaRy ^{[1]} \mua (d\yy ) \mu (d\aaa ) = \muone $, we obtain, for $ h \in \GRone $,
\begin{align}\notag & %	\label{:81c}&
\int h (\xs ) \dlog _{\aRy } (\xs ) d\muaRy ^{[1]} (\xs ) 
\mua (d\yy ) \mu (d\aaa ) 
\\\notag &
= - \int \nablax h (\xs ) d\muaRy ^{[1]}(\xs ) \mua (d\yy ) \mu (d\aaa ) 
\quad \text{ by definition}
\\\notag &
= - \int \nablax h (\xs ) d\muone (\xs ) 
\quad \text{ by Fubini's theorem}
\\\notag &
= \int h (\xs ) \dmu (\xs ) d\muone (\xs ) 
\quad \text{ by definition}
 \\&\notag 
 = \int h (\xs ) \dmu (\xs ) d\muaRy ^{[1]} (\xs ) \mua (d\yy ) \mu (d\aaa ) 
\quad \text{ by Fubini's theorem}
.\end{align}
This implies \eqref{:8=b}. 
\PFEND

Let $ \EuLdRmuam $ be as in \lref{l:8!}. 
Let $ \uLX _{\aR }^{[m]} $ be the diffusion properly associated with $ \EuLdRmuam $ on $ \Lmuam $.% 
\begin{proposition}	\label{l:8A}
In addition to \As{A1}--\As{A3}, we assume \eqref{:14z}. 
Then for $ \mu $-a.s.\,$ \aaa $, there exists a fully labeled process $ \uLX _{\aaa } $ satisfying the following: 

\noindent \thetag{i} 
For each $ m \in \zN $, the associated $ m $-labeled process $ \uLXm _{\aaa }$ is the $ \muam $-symmetric and conservative continuous Markov properly associated with 
$ \EuLdmuam $ on $ \Lmuam $. 

\noindent \thetag{ii} 
For each $ m \in \zN $, the $ m $-labeled process $ \uLXm _{\aaa }$ and 
$ \uLX _{\aaa } $ satisfy 
\begin{align} \notag %	
\uLX _{\aaa } ^{[m]} &=\limiR \uLX _{\aR } ^{[m]} \quad \text{ in law in 
$ C([0,\infty);(\Rd )^m\ts \sSS )$}
,\\ \notag %	
 \uLX _{\aaa } &= \limiR \uLX _{\aR } \quad \text{ in law in $ \CRdN $}
.\end{align}
In particular, each tagged particle $ \uLxX _{\aaa }^i $ of $ \uLX _{\aaa }$ does not explode. 
\end{proposition}
\begin{proof}
The proof is the same as that of \tref{l:6A} and is omitted. 
\PFEND
\begin{proposition}	\label{l:8B}
For $ \mu $-a.s.\,$ \aaa $, the following hold. 

\noindent \thetag{i} 
There exists a $ \muam $-symmetric, conservative diffusion $ \oL{\X }_{\aaa }^{[m]} $ properly associated with $( \Emuam , \dOmuma ) $ on $ \Lmuam $ for each $ m \in \zN $. 

\noindent \thetag{ii} 
None of the labeled particles $\oLxX _{\aaa }^i$, $ 1 \le i \le m $, in $ \oL{\X }_{\aaa }^{[m]} $ explode or collide with other labeled particles. 

\noindent 
\thetag{iii} 
We write 
$ \oL{\X }_{\aaa }^{[m]} = (\oLxX _{\aaa }^1 ,\ldots, \oLxX _{\aaa }^m , \sum_{j>m }^{\infty} \delta_{\oLxX _{\aaa }^j} ) $. 
For each $ 1 \le i \le m $, 
\begin{align} &\notag % \label{:85b}&
 \oLxX _{\aaa }^i (t) - \oLxX _{\aaa }^i (0) = 
\int_0^t \sigma ( \oLxX _{\aaa }^i (u) ) dB_u^i 
 + \int_0^t \bbb ( \oLxX _{\aaa }^i (u) , \sum_{j\ne i}^{\infty} \delta_{\oLxX _{\aaa }^j (u)} ) du 
,\\& \notag 
 \oLxX _{\aaa }^i (t) - \oLxX _{\aaa }^i (0) =\half \Big\{ 
\int_0^t \sigma ( \oLxX _{\aaa } ^i (u) ) dB_u^i - \int_0^t \sigma ( \oLxX _{\aaa }^i (u) ) dB_u^i \circ r_t \Big\} 
,\end{align}
where $ r_t $ is the time reversal operator defined before \lref{l:61}. 
\end{proposition}
\begin{proof}
The proof is similar to that of \tref{l:7A}. Hence, it is omitted. 
\PFEND
Let $ \oLX _{\aaa } $ be the fully labeled process constructed in the same fashion as $ \oLX $ in \tref{l:7B}. 
Then $ \oL{\X }_{\aaa }^{[m]} $ in \tref{l:8B} is the $ m $-labeled process of $ \oLX _{\aaa } $. 
\begin{proposition}\label{l:86} 
For $ \mu $-a.s.\,$ \aaa $, 
$ \uLX _{\aaa }$ and $ \oLX _{\aaa } $ are weak solutions of \eqref{:12q} with initial distribution $ \varphi \mua \circ \lab ^{-1}$ satisfying \As{SIN}, \As{NBJ}, and \As{AC}$_{\mua }$. 
\end{proposition}
\begin{proof}
The proof of \pref{l:86} is identical to the combination of those for 
Theorems \ref{l:6B} and \ref{l:7B}. 
Hence, it is omitted. 
\PFEND
\subsection{Unique strong solution: Proof of \tref{l:13}} \label{s:82} 
From \pref{l:86}, we obtain weak solutions $ \uLX _{\aaa }$ and $ \oLX _{\aaa } $ of \eqref{:12q} with initial distribution $ \varphi \mua \circ \lab ^{-1}$, which satisfy \As{SIN}, \As{NBJ}, and \As{AC}$_{\mua }$. 

Our goal is to prove that $ \uLX _{\aaa }^{[m]} $ and $ \oLX _{\aaa }^{[m]} $
satisfy \As{IFC}. To this end, we apply \tref{l:IFC1}. More precisely, we
verify assumptions \Ass{B1} and \Ass{B2} of \tref{l:IFC1} for
$ \uLX _{\aaa } $ and $ \oLX _{\aaa } $. These assumptions concern the set
$ \Ha = \bigcup_{\mmm \in \NNNthree } \Ha _{\mmm } $ introduced in \eqref{:CUTu}--\eqref{:CUTc}. 

See \ssref{s:IFC2} for details on \Ass{B1} and \Ass{B2}. 
Once these assumptions are verified, \As{IFC} follows from \tref{l:IFC1}. 

Recall that \Ass{B1} is a condition imposed on a general RPF $ \nu $. 
\begin{lemma}\label{l:8X}
For $ \mu $-a.s.\ $ \aaa $, the processes $ \uLX _{\aaa } $ and $ \oLX _{\aaa } $ satisfy \Ass{B1} with $ \nu $ replaced by $ \mua $. 
\end{lemma}
\begin{proof} %G[
Let $ \Ki [\ak ] $ be the compact set defined in \eqref{:CUTw}. 
We can easily find a sequence $ \anest = \{ a_{\qqq } \}_{\qqq \in \mathbb{N} } $
as in \eqref{:ak} satisfying \eqref{:CUTy} and
\begin{align}\label{:8Xf}&
\sum_{\qqq = 1 }^{\infty} \qqq ^2 \mua ( \Ki [\ak ]^c ) < \infty
.\end{align}
Then \eqref{:8Xf} and \eqref{:CUTy} yield $ \mua ( \sSS \backslash \bigcup_{\qqq } \Kakk ) = 0 $. 
This proves \Ass{B1}\,\thetag{i}.

We take a $ \muaone $-version of $ \dmua $ as in \tref{l:11} and \pref{l:38}. 
Here $ \yy $ in \pref{l:38} is replaced by $ \aaa $. 
Then, for $ \mua $-a.s.\,$ \sss $, $ \dmua (\xs )$ is locally Lipschitz in $ \x $ on $ \Rd _{\neq }(\sss ) $. 
Let $ \SSneoneaa $ be as in \eqref{:38u}. Let 
\begin{align}\label{:8Xe}&
 \SSsde = \ulabzone (\SSneoneaa )
,\end{align}
where $ \ulabzone (x,\sss ) = \delta_x + \sss $. 
Then \Ass{B1}\,\thetag{ii} is satisfied. 
%GTP[---

In what follows, we denote both $ \uLX _{\aaa } $ and $ \oLX _{\aaa } $ by
the same symbol $ \X $, and denote by $ \XX $ the unlabeled dynamics
associated with $ \X $.
%GTP]

Let $ \SSs = \{ \sss \in \sSS ; \sss (\{ \x \} ) \in \{ 0,1 \} \text{ for all } \x \in \Rd \} $. 

Note that $ \Ki [\ak ] $ is increasing concerning $ \qqq $ from \eqref{:CUTy}. 
 Let 
\begin{align} \notag &%\label{:}&
 \kappa _{\qqq } = \inf \{ t > 0 ; \XX _t \notin \Ki [\ak ] \} 
.\end{align}
According to \eqref{:CUTt}--\eqref{:CUTc}, condition \As{B1} \thetag{iii} follows from
\begin{align}\label{:8Xp}
\pP ( \XX _t \in \SSs \text{ for all } 0 \le t < \infty ) = 1 ,
\\\label{:8Xq}
P \bigl( \lim_{\qqq \to \infty} \kappa _{\qqq } = \infty \bigr) = 1 
.\end{align}
We have already proved \eqref{:8Xp} in \pref{l:41}. Thus, it only remains to prove \eqref{:8Xq}. 
To verify \eqref{:8Xq}, we use \lref{l:IFC)}. 

We check all assumptions of \lref{l:IFC)} for $ \mua $: 
\Ass{UB}, \Ass{MF}, \Ass{BX}, $ \{\mathbf{S}_{\mua }\}$, and \eqref{:IFC3g}. See \ssref{s:IFC2}. 

\Ass{UB} follows from \eqref{:12o}. 

 \Ass{MF} is clear since $ \XX $ is associated with a Dirichlet form on $ \Lma $. 

$ \{\mathbf{S}_{\mua }\}$ is also clear because the $ m $-labeled process $ \Xm $ given by $ \X $ is associated with a symmetric Dirichlet form on $ \Lmuam $. 

Let $ \nu = \varphi \mua $ and $ \qQ _{\nu } $ be the probability such that $ \XB $ is defined on $ (\Omega , \mathcal{F}, \qQ _{\nu } , \{ \mathcal{F}_t \} )$. 
\Ass{BX} makes the following statement: for $ \qQ _{\nu } $ -a.s., 
\begin{align}\label{:8Xd}&
\sigma [\mathbf{B}_s ; s\le t] \subset \sigma [\X _s ;s\le t ] , \quad t \ge 0 
.\end{align}
 Let $ \mathbf{M} = (M^i) _{i=1}^{\infty}$ such that $ M_t^i = \int_0^t \sigma (X_u^i) dB_u^i $. 
Because $ \aaaa = \sigma ^t \sigma $, $ \sigma \in C_{b}^1 (\Rd )$, and $ \aaaa $ is uniformly elliptic, we can represent $ \mathbf{B}=(B^i)_{i=1}^{\infty}$ as a function of $ \mathbf{M}$. Thus, $ \mathbf{B} $ is a $ \sigma [ \mathbf{M}_s;s\le t ]$-adapted process. Because $ \XB $ is a solution of ISDE \eqref{:12q}, 
\begin{align}\label{:8Xe}&
M_t^i = 
\int_0^t \sigma (X_u^i) dB_u^i = X_t^i - X_0^i - 
\int_0^t \bbb ( X_u^i , \sum_{j\ne i}^{\infty} \delta_{X_u^j}) du 
.\end{align}
From \eqref{:8Xe}, $ \mathbf{M}$ is a $ \sigma [\X _s ;s\le t ]$-adapted process. 
Hence, we obtain \eqref{:8Xd}.

By \lref{l:IFC5} and \eqref{:8Xf}, we may use \eqref{:IFC3g}. 
Thus all assumptions of \lref{l:IFC)} are satisfied, and hence \eqref{:8Xq} follows. 
This proves \Ass{B1}\,\thetag{iiii}. 
\PFEND

%G[---

Let $ \Han $ be as in \eqref{:CUTc}. 
We choose $ \mathbf{a} $ in the definition of $ \Han $ as in \lref{l:8X} so that $ \Han $ satisfies \Ass{B1}. 

\begin{lemma} \label{l:8Y} 
For $ \mu $-a.s.\ $ \aaa $, $ \Han $ satisfies \Ass{B2}. 
\end{lemma}
%G]
\begin{proof}
Let $ (\sigma , \bbb ) $ be the coefficient of ISDE \eqref{:12q}. 
We define $ \sigma ^m $ and $ \bbb ^m $ by 
\begin{align} \label{:87d}&
\sigma ^m (\mathbf{x}, \sss ) = (\sigma (\x ^i ))_{i=1}^m , \quad 
 \bbb ^m (\mathbf{x}, \sss ) = \Big( \bbb ( \x ^i, \sum_{j\ne i}^m \delta _{ \xj } + \sss ) \Big)_{i=1}^m
\end{align}
for $ \mathbf{x} = (x^1,\ldots,x^m)$ and $ \sss \in \sSS $. 

%GTP[
We verify that $ \sigma^m $ and $ \bb^m $ in \eqref{:87d}
satisfy \eqref{:B2a} and \eqref{:B2b}.
%GTP]

By assumption, $ \sigma \in C_b^1(\Rd ) $ and that $ \sigma $ is independent of $ \sss $. 
Hence, $ \sigma ^m $ is Lipschitz continuous, which yields \eqref{:B2a}. 

By \eqref{:35c}, we have $ \Rsl \in \SIX $. From \eqref{:11b}, we obtain 
\begin{align} &\notag %\label{:87f}&
\dmu (\xs ) = \beta \Big( - \nablaPhi ( \x ) - \sum_{\sioLSR } \2 + 
\1 \CRi \x ^{\mathbf{i}}+ \Rsl (\x ) \Big)
.\end{align}
Hence, $ \dmu $ is Lipschitz continuous on $\Han $ from \eqref{:CUTu} and \eqref{:CUTc}. 
Then from $ \aaaa \in C_b^2 (\Rd ) $, $ \bbb = \half\{\aaaadmu \} $ is Lipschitz continuous on $ \Han $. 
Hence, by \eqref{:87d}, $ \bbb ^m $ is Lipschitz continuous on $ \Han $, which yields \eqref{:B2b}. 
\PFEND

\begin{proposition} \label{l:87} 
For $ \mu $-a.s.\,$ \aaa $, $ \uLX _{\aaa }$ and $ \oLX _{\aaa } $ satisfy \As{IFC}. 
\end{proposition}
\begin{proof}
By \lref{l:8X}, $ \uLX _{\aaa }$ and $ \oLX _{\aaa } $ satisfy \Ass{B1}. 
By \lref{l:8Y}, $ \Han $ satisfies \Ass{B2}. 
Hence, \As{IFC} follows from \tref{l:IFC1}. 
\PFEND

\begin{theorem}\label{l:8Z}
For $ \mu $-a.s.\ $ \aaa $, the following hold:

\smallskip
\noindent \thetag{i}
For $ \varphi \mua \circ \lab ^{-1} $-a.s.\ $ \mathbf{s} $, 
the processes $ \uLX _{\aaa } $ and $ \oLX _{\aaa } $ are the unique strong solutions of \eqref{:12q} 
starting at $ \mathbf{s} $ under the constraints 
\As{SIN}, \As{NBJ}, \As{AC}$_{\mua }$, \As{MF}, and \As{IFC}. 
Moreover, the labeled dynamics $ \uLX _{\aaa } $ and $ \oLX _{\aaa } $ are indistinguishable.

\smallskip
\noindent \thetag{ii}
The $ m $-labeled processes 
$ \uLX _{\aaa }^{[m]} $ and $ \oLX _{\aaa }^{[m]} $, 
induced by $ \uLX _{\aaa } $ and $ \oLX _{\aaa } $, respectively, 
are conservative, $ \muam $-symmetric diffusions properly associated 
with the same strongly local, quasi-regular Dirichlet form on $ \Lmuam $:
\begin{align}\label{:87h}&
( \Emuam , \uLdmuam ) = ( \Emuam , \oLdmuam )
.\end{align}
Here $ ( \Emuam , \uLdmuam ) $ and $ ( \Emuam , \oLdmuam ) $ are given by \lref{l:8"}.
\end{theorem}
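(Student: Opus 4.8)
\textbf{Proof plan for Theorem \ref{l:8Z}.}
The plan is to invoke the abstract machinery of \cite{o-t.tail,k-o-t.ifc} (summarized here as \tref{l:IFC1} and the IFC theory) now that all its hypotheses have been verified for both $\uLX_{\aaa}$ and $\oLX_{\aaa}$. From \tref{l:86} we already know that $\uLX_{\aaa}$ and $\oLX_{\aaa}$ are weak solutions of \eqref{:12q} with initial law $\varphi\mua\circ\lab^{-1}$ satisfying \As{SIN}, \As{NBJ}, and \As{AC}$_{\mua}$; from \tref{l:87} both satisfy \As{IFC}; and \As{MF} holds because each is associated with a symmetric Dirichlet form on $\Lmuam$ (this is the content of \Ass{MF} checked in the proof of \lref{l:8X}). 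The first step is therefore to cite the pathwise-uniqueness theorem of \cite{k-o-t.ifc} (i.e.\ \tref{l:IFC1} together with its consequences), which asserts that under \As{SIN}, \As{NBJ}, \As{AC}$_{\mua}$, \As{MF}, and \As{IFC}, weak solutions of \eqref{:12q} with a fixed initial distribution enjoy pathwise uniqueness. Combining pathwise uniqueness of weak solutions with the existence of the two weak solutions $\uLX_{\aaa}$ and $\oLX_{\aaa}$ (which start from the same initial law by \eqref{:14X}) and invoking the Yamada--Watanabe-type theorem in the present setting (see \ssref{s:IFC} and \cite{o-t.tail,k-o-t.ifc}) upgrades them to strong solutions and forces $\uLX_{\aaa}$ and $\oLX_{\aaa}$ to be indistinguishable. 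This gives part \thetag{i}.

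For part \thetag{ii}, the first observation is that $\uLX_{\aaa}^{[m]}$ and $\oLX_{\aaa}^{[m]}$ are, by construction (\tref{l:8A} and \tref{l:8B}), the $m$-labeled processes of $\uLX_{\aaa}$ and $\oLX_{\aaa}$, and they are properly associated with the lower form $(\Emuma,\uL{\dom}^{\muam})$ and the upper form $(\Emuma,\oL{\dom}^{\muam})$ on $\Lmuam$, respectively (these are the forms of \lref{l:8"}, strongly local and quasi-regular by \lref{l:83}). Since $\uLX_{\aaa}$ and $\oLX_{\aaa}$ are indistinguishable from \thetag{i}, so are $\uLX_{\aaa}^{[m]}$ and $\oLX_{\aaa}^{[m]}$ for every $m\in\zN$; hence the two diffusions generate the same $L^2(\muam)$-semigroup, and therefore the same Dirichlet form. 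Because both forms extend the common core $(\Emuma,\dcmuma)$ (see \lref{l:8'}, \lref{l:84}) and are closed on $\Lmuam$, the coincidence of semigroups yields $\uL{\dom}^{\muam}=\oL{\dom}^{\muam}$ and the equality of quadratic forms, i.e.\ \eqref{:87h}. Conservativeness and $\mua$-symmetry of the $m$-labeled processes are already recorded in \tref{l:8A}\thetag{i} and \tref{l:8B}\thetag{i}.

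There is one point that needs care rather than being purely formal: strictly speaking $\uLX_{\aaa}$ and $\oLX_{\aaa}$ are constructed with the tilted initial law $\varphi\mua\circ\lab^{-1}$ (for an arbitrary $\varphi$ as in \eqref{:14v}), whereas the statement of \tref{l:13} and hence of this theorem concerns starting points $\lab(\sss)$ for $\mua$-a.s.\ $\sss$. The resolution, exactly as in the proof of \tref{l:12}, is to let $\varphi$ range over all admissible densities and apply Fubini's theorem: indistinguishability for every choice of $\varphi$, together with the fact that $\{\varphi\mua : \varphi\ \text{as in}\ \eqref{:14v}\}$ separates points of $\mua$, forces the regular conditional laws given $\{\X_0=\lab(\sss)\}$ to coincide for $\mua$-a.s.\ $\sss$. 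Passing from the law-level statement to $\mu$-a.s.\ $\aaa$ is then immediate from $\mu=\int\mua\,\mu(d\aaa)$.

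\textbf{Main obstacle.} The substantive work is entirely upstream of this theorem---it lies in the verification of \As{IFC} (\tref{l:87}, which rests on the delicate \Ass{B1}/\Ass{B2} arguments of \lref{l:8X}--\lref{l:8Y}) and in checking that \tref{l:IFC1} applies with the tail-decomposed measures $\mua$. Given those, the present proof is a bookkeeping exercise. The one place where I expect to spend genuine (though routine) effort is making the identification of the lower and upper Dirichlet forms airtight: one must argue that two diffusions with the same finite-dimensional distributions, associated with two a priori different closed forms that share a common dense core, must give the same form---this uses that a strongly local quasi-regular Dirichlet form is determined by its associated diffusion (\cite{c-f}) together with \lref{l:84} and the density of $\dcmuma$ in both $\uL{\dom}^{\muam}$ and $\oL{\dom}^{\muam}$.
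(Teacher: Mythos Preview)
Your overall strategy is the same as the paper's, but there is one genuine gap: you never invoke the tail triviality of $\mua$. The theorem you need is not \tref{l:IFC1} (which only produces \As{IFC} from \Ass{B1} and \Ass{B2}), but \tref{l:US1}, and \tref{l:US1} has the hypothesis ``$\mu$ is tail trivial'' sitting in front of \As{SIN}, \As{NBJ}, \As{AC}$_{\mu}$, and \As{IFC}. Indeed, the entire purpose of passing to the tail decomposition $\mu=\int\mua\,\mu(d\aaa)$ in \sref{s:81} is precisely to manufacture tail-trivial reference measures so that \tref{l:US1} applies; without that hypothesis the five constraints you list do not by themselves yield uniqueness. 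The paper's proof makes this explicit: ``By construction, $\mua$ is tail trivial,'' and then applies \tref{l:US1} to $\mua$.

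A smaller point for part \thetag{ii}: you write that both forms are ``strongly local and quasi-regular by \lref{l:83},'' but \lref{l:83}\thetag{i} gives only strong locality for the lower form $(\Emuma,\uL{\dom}^{\muam})$; quasi-regularity of the lower form is not known a priori. The paper obtains it only \emph{after} establishing \eqref{:87h}, by transferring it from the upper form (which is quasi-regular by \lref{l:83}\thetag{ii}). Your argument for \eqref{:87h} via indistinguishability of the $m$-labeled processes and hence equality of semigroups is correct and is exactly what the paper does; the extra worry about common cores is unnecessary, since a symmetric Markovian semigroup on $L^2$ determines its Dirichlet form uniquely.
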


\begin{proof}
Applying \tref{l:US1} to $ \mua $, we shall prove \tref{l:8Z}. Hence, we verify the assumptions of \tref{l:US1} to $ \mua $. 
Note that $ \varphi \mu $ has a tail decomposition such that $ (\varphi \mu )_{\aaa } = \varphi \mua $ for $ \mu $-a.s.\,$ \aaa $. 
By construction, $ \mua $ is tail trivial. 

Applying \pref{l:86} to $ \mua $, we have weak solutions $ \uLX _{\aaa } $ and $ \oLX _{\aaa } $ to \eqref{:12q} with initial distribution $ \varphi \mua \circ \lab ^{-1}$ that satisfy \As{SIN}, \As{NBJ}, and \As{AC}$_{\mua }$. 
From \pref{l:87}, $ \uLX _{\aaa } $ and $ \oLX _{\aaa } $ satisfy \As{IFC}. 
Thus, all assumptions of \tref{l:US1} are fulfilled. 
Hence, according to \tref{l:US1}, $ \uLX _{\aaa } $ and $ \oLX _{\aaa } $ are unique strong solutions of \eqref{:12q} under the constraints \As{SIN}, \As{NBJ}, \As{AC}$_{\mua }$, \As{MF}, and \As{IFC}. 
In particular, $ \uLX _{\aaa } $ and $ \oLX _{\aaa } $ are indistinguishable. 
This implies \thetag{i}. 

Let $ \uLXm _{\aaa } $ and $ \oLXm _{\aaa } $ be the $ m $-labeled processes of $ \uLX _{\aaa } $ and $ \oLX _{\aaa } $, respectively. 
Because $ \uLX _{\aaa } $ and $ \oLX _{\aaa } $ are indistinguishable, we obtain that $ \uLXm _{\aaa } $ and $ \oLXm _{\aaa } $ are indistinguishable. 

From Propositions \ref{l:8A} and \ref{l:8B}, $ \uLXm _{\aaa } $ and $ \oLXm _{\aaa } $ are associated with the $ m $-labeled Dirichlet forms $ ( \Emuam , \uLdmuam ) $ and $ ( \Emuam , \oLdmuam ) $ on $ \Lmuam $, respectively. 
Because $ \uLXm _{\aaa } $ and $ \oLXm _{\aaa } $ are indistinguishable, we obtain \eqref{:87h}. 

From \tref{l:8B}, $ ( \Emuam , \oLdmuam ) $ is a strongly local, conservative, and quasi-regular Dirichlet form on $ \Lmuam $. Hence, from \eqref{:87h}, $ ( \Emuam , \uLdmuam ) $ is also a strongly local, conservative, and quasi-regular Dirichlet form on $ \Lmuam $. This implies \thetag{ii}. 
\PFEND

%G[
\begin{remark}\label{r:8Z}
The method of using the uniqueness of strong solutions of ISDEs
to prove the uniqueness of extensions of Dirichlet forms
for interacting Brownian motions
is due to Tanemura \cite{tane.2}.
This approach was subsequently adopted in \cite{k-o-t.udf}.
\end{remark}
%G]
%G[
\noindent {\bf Proof of \tref{l:13}. } 
Let $ \uLX _{\aaa } $ and $ \oLX _{\aaa } $ be as in \tref{l:8Z}. 
By \tref{l:8Z}, they are indistinguishable. 
Hence, we write $ \X := \uLX _{\aaa } = \oLX _{\aaa } $. 

By \tref{l:8Z}, for $ \mu $-a.s.\ $ \mathsf{a} $ and for $ \varphi \mua \circ \lab ^{-1} $-a.s.\ $ \mathbf{s} $, 
\eqref{:12q} admits a unique strong solution $ \X $ starting at $ \mathbf{s} $ under the constraints 
\As{SIN}, \As{NBJ}, \As{AC}$_{\mua }$, \As{MF}, and \As{IFC}. 
Since $ \varphi $ ranges over all functions satisfying \eqref{:14y}, this remains valid for 
$ \mathbf{s} = \lab ( \sss ) $ for $ \mua $-a.s.\ $ \sss $. 

For each $ m \in \zN $, the $ m $-labeled process $ \Xm $ induced by $ \X $ is properly associated with the strongly local, quasi-regular Dirichlet form in \eqref{:87h} on $ \Lmuam $. 
Hence, $ \Xm $ is a $ \muam $-symmetric diffusion. 
By \pref{l:8A}, $ \Xm $ is conservative. 
This completes the proof of \tref{l:13}. 
\qed
%G]

\section{The diffusion on $ \RdN $: Proof of \tref{l:1Y}}\label{s:9}

The aim of this section is to prove the uniqueness of extensions of Dirichlet forms 
and to construct the diffusion on $ \RdN $.
We first establish the uniqueness of Dirichlet forms and, using this result, show that the fully labeled process
$ \X = ( \xX ^i )_{ i \in \N } $ constructed in \tref{l:13} is a conservative diffusion on $ \RdN $.
This completes the proof of \tref{l:1Y}. 

Unless stated otherwise, all results in \sref{s:9} are proved under Assumptions \As{A1}--\As{A3}.

\subsection{Uniqueness of extensions of Dirichlet forms}\label{s:9A}
We now analyze the Dirichlet forms directly associated with solutions of the ISDE \eqref{:12q} 
to prove the uniqueness of extensions of Dirichlet forms asserted in \tref{l:9A}.
%G]

Let $ \mua $ be as in \dref{d:15}. Let $ \muam $ be the $ m $-Campbell measure of $  \mua $. Recall that 
\begin{align} &  \label{:58y}
\muam = \int_{\sSS } \muaRym \mu (d \yy )
. \end{align}%G[
%G[
Let $ ( \EaRyym , \uLdaRyym ) $ be as in \lref{l:8!}. 
Let $ \uLEDaRstarm $ be the superposition of $ ( \EaRyym , \uLdaRyym ) $ 
with respect to $ \yy $ under the measure $ \mua $:
%G]
\begin{align} \notag %\label{:1Xs}
& \EaRstarm ( f , g ) = \int_{\sSS } \EaRyym ( f , g ) \mua (d\yy ) , 
\\\notag & 
\uLDaRstarm = \{ f \in \bigcap_{\mua\text{-a.s.\,}\yy \in [\aaa ] } \uLdaRyym ; 
\EaRstarm ( f , f ) < \infty , f \in L^2 (\muam ) \} 
,\end{align}
where $  [\aaa ]  $ is the set in \dref{d:15}. We recall that $ \mua ( [\aaa ]  ) = 1 $. 
By the general theorem on the superposition of closed forms \cite{b-h}, 
$ \uLEDaRstarm $ is a closed form on $ L^2 (\muam ) $. Let $ \EuLdRmuam $ be as in \lref{l:8!}.

\begin{lemma} \label{l:93}
Let $ m \in \{ 0 \} \cup \N $. For $ \mu $-a.s.\,$ \aaa $ and all $ \rR \in \N $, 
\begin{align} & \label{:93a} 
\EuLdRmuam = \uLEDaRstarm 
.\end{align}
\end{lemma}
\begin{proof}
Let $ \uLXaRm $ and $ \uLX _{\aRy }^{[m]} $ be the $ m $-labeled diffusions associated with the Dirichlet forms 
$ ( \EaRm , \uLdaRm ) $ on $ \Lmuam $ and $ ( \EaRyym , \uLdaRyym ) $ on $ L^2 ( \muaRym ) $, respectively. 
Let $ \uLXaR $ and $ \uLX _{\aRy } $ be the fully labeled processes associated with the diffusions 
$ \uLXaRm $ and $ \uLX _{\aRy }^{[m]} $, respectively. 
By the argument preceding \lref{l:61}, such processes exist and are unique. 

By \lref{l:61}, both fully labeled processes $ \uLXaR $ and $ \uLX _{\aRy } $ 
are weak solutions of the pair of SDEs \eqref{:14t} and \eqref{:14u}. 
From the representation of the logarithmic derivative $ \dRyy (\xs ) $ given by \eqref{:2!z}, the pair of SDEs \eqref{:14t} and \eqref{:14u} has a unique strong solution.

Indeed, the coefficients given by the logarithmic derivative are smooth outside 
$ \mathscr{C}^{[1]} = \{ ( \xs ) \in \RdSS \,;\, \x = \si \text{ for some } i \} $, where $ \sss = \sum_i \delta _{\si } $, and each particle is subject to the reflecting boundary condition on $ \partial \SR $. 

Hence, \eqref{:14t} admits a unique strong solution until the solution hits $ \mathscr{C}^{[1]} $, which never occurs by \lref{l:61} \thetag{iv}. 
Here, \lref{l:61} \thetag{iv} is clearly valid for $ ( \E _{\aRy }^{[1]} , \uLd _{\aRy }^{[1]} ) $ on 
$ L^2 ( \muaRy ^{[1]} ) $ as well, similarly to $ ( \E _{\aR }^{[1]} , \uLd _{\aR }^{[1]} ) $ on 
$ L^2(\mua ^{[1]}) $. 
Thus, \eqref{:14t} admits a unique strong solution. 

Equation \eqref{:14u} means that all particles outside $ \oLSR $ are frozen. 
Hence, \eqref{:14u} clearly admits a trivial unique strong solution. 

%G[

Let $ \CnfRy $ be as in the proof of \lref{l:53}. 
Then $ \CnfRy $ is an invariant set of the diffusion $ \uLX _{\aRy }^{[m]} $ properly associated with 
$ ( \EaRyym , \uLdaRyym )  $ on $ L^2 ( \muaRym ) $. 

Recall that $ \uLEDaRstarm $ is the superposition of $ ( \EaRyym , \uLdaRyym )  $ 
with respect to $ \yy $ under $ \mua $, and that $ \CnfRy $ yields a partition of $ \RdmSS $. 
Hence the family of diffusions $ \uLX _{\aRy }^{[m]} $ associated with 
$ ( \EaRyym , \uLdaRyym )  $ on $ L^2 ( \muaRym ) $ 
gives rise to a diffusion associated with $ \uLEDaRstarm $ on $ \Lmuam $. 
Here we used \eqref{:58y}.  
%G]

As observed at the beginning of the proof, 
$ \uLX _{\aRy }^{[m]} $ is the unique solution of \eqref{:14t} and \eqref{:14u}. 
Hence, for each $ \yy \in \sSS $, the diffusion obtained from 
$ \uLEDaRstarm $ is this unique solution.
%G]

Recall that $ \uLXaR $ is a weak solution of SDEs \eqref{:14t} and \eqref{:14u}. 

Hence, the diffusion $ \uLX _{\aRy }^{[m]} $, $ \yy \in \sSS $, obtained from $ \uLEDaRstarm $ and the diffusion $ \uLXaRm $ associated with $ ( \EaRyym , \uLdaRyym ) $ coincide. 
This implies the identity of the Dirichlet forms in \eqref{:93a}. 
\PFEND

We see that $ \uLEDaRstarm $, $ \rR \in \N $, is increasing. 
Let $ \uLEDastarm $ be the increasing limit of $ \uLEDaRstarm $. 
By \lref{l:ext3} \thetag{i}, $ \uLEDastarm $ is a closed form on $ L^2(\muam )$. 
Let $ \EuLdmuam $ be as  in \lref{l:8"}. 
\begin{lemma}	\label{l:94}
For each $ m \in \zN $ and $ \mu $-a.s.\,$ \aaa $, 
\begin{align} \label{:94a} 
 ( \Emuam , \uLdmuam ) &= \uLEDastarm 
.\end{align}
\end{lemma}
\begin{proof}%G[
Equations \eqref{:94a} follows from \eqref{:93a}. 
%G]
\PFEND

Recall that $ \uLEDaRstarm $ is a closed form on $ L^2(\mu _{\aRy }^{[m]} ) $ and that 
$ \dcbm \subset  \uLdaRyym $. 
Then $ (\E _{\aRy }^{[m]} , \dcbm )$ is closable on $ L^2(\mu _{\aRy }^{[m]} ) $. 
Let $ ( \E _{\aRy }^{[m]} , \oLd _{\aRy }^{[m]}  ) $ be its closure. 
Let $ ( \E _{\aR \star }^{[m]} , \oLd _{\aR \star }^{[m]}  ) $ be the superposition of 
$ ( \E _{\aRy }^{[m]} , \oLd _{\aRy }^{[m]}  ) $ with respect to $ \yy $ under $  \mu $. 

Let $ ( \ER ^{[m]} , \oLd _{\star , \rR }^{[m]} ) $ and 
$ (\ER , \oLd _{\star , \rR \star }^{[m]})$  be the superpositions of 
$ \EoLdRmuam $ and $ \oLEDaRstarm $, respectively, with respect to $ \aaa $ under $ \mu $. 
\begin{lemma} \label{l:95}
Let $ m \in \{ 0 \} \cup \N $ and $ \rR \in \N $. Then the following hold:
\begin{align} & \label{:95a} 
\EoLdRmuam = \oLEDaRstarm 
,\\\label{:95b} & 
( \ER ^{[m]} , \oLd _{\star , \rR }^{[m]} ) 
= ( \ER ^{[m]} , \oLd _{\star , \rR \star }^{[m]} ) 
.\end{align}
\end{lemma}

\begin{proof}
Since the carr\'{e} du champ of $  \ER ^{[m]}$ vanishes outside $ \oLSRc $, we obtain \eqref{:95a}. 
Equation \eqref{:95b} follows immediately from \eqref{:95a}.
\PFEND

Let $ ( \Emuam , \dOmuma ) $ be the closed form given in \lref{l:8"}. 
Let $ ( \Emuam , \dom _{\aaa , \mathrm{upr} \star }^{[m]} ) $ be the decreasing limit of $ \oLEDaRstarm $. 
\begin{lemma} \label{l:96}
Let $ m \in \{ 0 \} \cup \N $ and $ \rR \in \N $. Then the following hold:
\begin{align}\label{:96a}&
( \Emuam , \dOmuma ) = ( \Emuam , \dom _{\aaa , \mathrm{upr} \star }^{[m]} ) 
.\end{align}
\end{lemma}

\begin{proof}
 \eqref{:96a} follows immediately from \lref{l:8"} and \eqref{:95b}.
\PFEND

Let $ ( \Emum , \dOmumstar ) $, 
$ ( \Emum , \dom _{\star , \mathrm{upr} \star }^{[m]} ) $, 
$ ( \Estarm , \dom _{\star , \mathrm{lwr}}^{[m]} ) $, and 
$ ( \E ^{[m]} , \dom _{\star , \mathrm{lwr} \star }^{[m]} ) $ 
be the superpositions of 
$ ( \Emuam , \dOmuma ) $, $ ( \Emuam , \dom _{\aaa , \mathrm{upr} \star }^{[m]} ) $, 
$ \EuLdmuam $, and $ \uLEDastarm $, respectively, 
with respect to $ \aaa $ under $ \mu $. 
\begin{align}\notag
\Estarm ( f , g ) 
& = \int _{\sSS } \E _{\aaa }^{[m]} ( f , g ) \, \mu ( d\aaa ) , 
\\\notag
\dOmumstar 
& = \Bigl\{ f \in \bigcap _{\mu \text{-a.s.\,}\aaa } \dom _{\aaa , \mathrm{upr}}^{[m]} \, ; \,
\Estarm ( f , f ) < \infty , \, f \in L^2 ( \mum ) \Bigr\} 
.\end{align}
%G]
%
Replacing $ ( \Emuam , \dOmuma ) $ with $ ( \Emuam , \dom _{\aaa , \mathrm{upr}* }^{[m]})$, 
$ \EuLdmuam $, and $ \uLEDastarm $, we obtain the superpositions 
$ ( \Estarm , \dom _{\star , \mathrm{upr}\star }^{[m]} ) $, 
$ ( \Estarm , \dom _{\star , \mathrm{lwr}}^{[m]} ) $ and 
$ ( \E ^{[m]} , \dom _{\star , \mathrm{lwr} \star }^{[m]} ) $, respectively.

The following theorem shows that these Dirichlet forms coincide and are strongly local and quasi-regular. 
\begin{theorem}[Uniqueness of extensions of Dirichlet forms]	\label{l:9A} 

\noindent \thetag{i} 
Let $ \SSz $ be the set in \dref{d:15}. For $ \mu $-a.s.\,$ \aaa \in \SSz $, 
\begin{align} \label{:9Aa} 
 ( \Emuam , \dOmuma ) = ( \Emuam , \dom _{\aaa , \mathrm{upr} \star }^{[m]} ) 		
=( \Emuam ,\uLdmuam ) 
= \uLEDastarm 
,\\ \label{:9Ab} 
 (\Emum , \dOmumstar  )  = ( \Emum , \dom _{\star , \mathrm{upr} \star }^{[m]} ) 
=  (\Emum , \dom _{\star , \mathrm{lwr} \star }^{[m]}  ) 
= (\Emum , \dom _{\star , \mathrm{lwr} \star }^{[m]} ) 
.\end{align} 

\noindent \thetag{ii} 
All Dirichlet forms in \eqref{:9Aa} and \eqref{:9Ab}  are strongly local and quasi-regular on $ \Lmuam $ and $ \Lmm $, respectively.  
\end{theorem}

\begin{proof} % [Proof of \tref{l:9A}] 
From \eqref{:96a}, \eqref{:87h}  and \eqref{:94a}, we obtain \eqref{:9Aa}. 
 Taking the superposition of \eqref{:9Aa} with respect to $ \aaa $ under $ \mu $ yields \eqref{:9Ab}. 
This proves \thetag{i}. 

By \lref{l:83} \thetag{ii}, $( \Emuam , \dOmuma ) $ is strongly local and quasi-regular on $ \Lmuam $. 
Hence, all Dirichlet forms in \eqref{:9Aa} are strongly local and quasi-regular. 

%GTP[ 
Let $ [ \aaa ] = \{ \bb \in \SSz ; \mua = \mub \} $ be as in \dref{d:15}. 
We have $ \mua ( [ \aaa ] ) = 1 $, and $ [ \aaa ] \cap [ \bb ] = \emptyset $ if $ [ \aaa ] \ne [ \bb ] $.
Note that $ [\aaa ]$ gives a partition of $ \SSz $ and $ \mu (\SSz ) = 1 $ by construction. 
%GTP]

Let $ \mathrm{Cap}_{\aaa } ^{[m]}$ be the capacity given by $( \Emuam , \dOmuma ) $ on 
$ L^2 (\Rdm \ts [\aaa ]  ,\mua ) $. 
There exists an increasing sequence of compact sets $ \mathsf{K}_{\aaa }^n $ satisfying 
\begin{align}& \label{:9Af} 
 \mathsf{K}_{\aaa }^n \subset [\aaa ] , \quad 
\limi{n} \mathrm{Cap}_{\aaa }^{[m]} (\{\Rdm \ts \mathsf{K}_{\aaa }^n \}^c) = 0 
.\end{align}
Indeed, \eqref{:9Af} follows from \As{Q1} in \dref{d:QR}. 
Here, we take, for each $ n \in \N $, 
\begin{align}\label{:9Ag}&
\mathsf{K}_{\aaa }^n = \mathsf{K}_{\bb }^n 
\quad \text{ if and only if }
[\aaa ] = [\bb ] 
.\end{align}

Let $ \mathsf{P} _{\aaa }^{[m]} $ be the diffusion properly associated with 
the Dirichlet form $ ( \Emuam , \dOmuma ) $ on $ \Lmuam $, 
which is strongly local and quasi-regular by \lref{l:83} \thetag{ii}. 
From \eqref{:9Af}, the state space of $ \mathsf{P}_{\aaa }^{[m]}$ can be taken to be disjoint for $ [\aaa ] \ne [\bb ]$. 
Thus, the collection of diffusions $ \mathsf{P}_{\aaa }^{[m]}$, $ \aaa \in \SSz $, gives the diffusion properly associated with $ (\Emum , \dOmumstar  ) $ on $ \Lmm $. 

Hence, from \cite[Th.\,1.5.3]{c-f}, $ (\Emum , \dOmumstar  ) $ on $ \Lmm $ is quasi-regular. 
The strong locality of $ (\Emum , \dOmumstar  ) $ is clear by construction. 
Thus, all Dirichlet forms in \eqref{:9Ab} are strongly local and quasi-regular on $ \Lmm $. 
This completes the proof of \thetag{ii}. 
\PFEND

\tref{l:9A} plays a crucial role in the proof of Theorems \ref{l:1Y}--\ref{l:16}. 
We present further applications of \tref{l:9A} in the following remark.
\begin{remark}\label{r:Xb} 
%G[
\noindent \thetag{i} 
The sub-diffusivity of tagged particles in the Ginibre interacting Brownian motion
($ d = \beta = 2 $) follows from \eqref{:9Aa} \cite{o.Gin}, reflecting the long-range nature
of the two-dimensional Coulomb potential. 
In contrast, tagged particles are always diffusive for Ruelle-class potentials with
a convex hardcore in dimensions $ d \ge 2 $ \cite{o.p}.

%G] 
%G[
\noindent \thetag{ii} 
Suzuki proved that the unlabeled Markov processes associated with a Dirichlet form are ergodic if the reference measure $ \mu $ is number rigid and the Dirichlet form is locally ergodic \cite{suzu.erg}. 
This theorem can be applied to the Ginibre RPF. 
In \cite{suzu.erg}, Suzuki considered the Dirichlet form that corresponds to $ (\E ^{[m]} , \dom _{\star , \mathrm{lwr} \star }^{[m]} ) $. 
By \tref{l:9A}, the continuous Markov process considered in \cite{suzu.erg} is an ergodic diffusion. 

\noindent \thetag{iii} Suzuki \cite{suzu.curv} studied curvature bounds for Dyson's Brownian motion, an infinite-particle system on $ \R $ with the two-dimensional Coulomb interaction at inverse temperature $ \beta = 2 $, that is, with a Riesz potential. He analyzed Dirichlet forms of the type $ (\E ^{[m]} , \dom _{\star , \mathrm{lwr} \star }^{[m]} ) $. Since our argument for \tref{l:9A} may be valid for Riesz potentials, his results can apply to Dyson's model studied in \cite{o.isde,o-t.tail} by the uniqueness of Dirichlet forms. 
%G]
\end{remark}
%---
\begin{remark}\label{r:93} 
Kawamoto proved the identity \eqref{:93z} for Dyson's model \cite{kawa.dyson}:
\begin{align} \label{:93z}&
( \Emum , \dOmum ) = ( \Emum , \dom _{\star , \mathrm{upr} }^{[m]} ) 
.\end{align}
This identity, together with the uniqueness theorem for solutions of ISDEs in \cite{o-t.tail}, 
implies that the tail $ \sigma $-field of $ \mu $ is invariant under the dynamics of the associated diffusion. 
Recently, Kawamoto has announced an extension of this identity to a broader class of RPFs 
(see \cite{kawa.tail}). 
It is therefore plausible that the same identity also holds for the Coulomb RPFs.
\end{remark}

\subsection{Proof of \tref{l:1Y}} \label{s:9B}
Utilizing the uniqueness results of solutions of ISDEs (\tref{l:8Z}) and Dirichlet forms (\tref{l:9A}), we establish \tref{l:1Y} in this subsection. The goal is to prove that the fully labeled process $ \X $ in \tref{l:13} is an $ \RdN $-valued diffusion. 

Let $ \uLX _{\aaa } $ and $ \oLX _{\aaa } $ be as in \tref{l:8Z}. 
In the proof of \tref{l:13}, we constructed $ \X $ as $ \X := \uLX _{\aaa } = \oLX _{\aaa } $ by proving $ \uLX _{\aaa } $ and $ \oLX _{\aaa }$ are indistinguishable. 
Let $ \Xm $ be the $ m $-labeled process of $ \X $, see \eqref{:12u}. 
%G[

In \tref{l:13}, we have already proved that $ \Xm $ is a $ \muam $-symmetric diffusion 
for $ \mu $-a.s.\,$ \aaa $. 
However, this does not necessarily imply that the family $ \Xm $ indexed by $ \aaa \in \sSS $ 
is a $ \mum $-symmetric diffusion. 

Indeed, let $ \widetilde{\mathbf{S}} _{\aaa }^{[m]} $ be an invariant set of the 
$ \muam $-symmetric diffusion $ \Xm $. 
Then 
$ \muam ( \RdmSS \backslash \widetilde{\mathbf{S}} _{\aaa }^{[m]} ) = 0 $. 
However, it does not necessarily hold that
\begin{align}\label{:9Bu}
\widetilde{\mathbf{S}} _{\aaa }^{[m]} \cap \widetilde{\mathbf{S}} _{\bb }^{[m]} = \emptyset 
\quad \text{whenever } [ \aaa ] \ne [ \bb ] 
.\end{align}
where $ [ \aaa ] $ is the equivalence class introduced in \eqref{:13q}.

%G]

%G[---

If \eqref{:9Bu} holds, then the family $ \Xm $ indexed by $ \aaa $ is a diffusion 
with state space $ \cup _{\aaa } \widetilde{\mathbf{S}} _{\aaa }^{[m]} $, 
because each $ \widetilde{\mathbf{S}} _{\aaa }^{[m]} $ is an invariant set 
of the conservative diffusion $ \Xm $. 
In the following theorem, we prove \eqref{:9Bu} for $ \mathbf{K} _{\aaa }^{[m]} $, 
defined in the proof, and show that the $ m $-labeled process $ \Xm $ 
is a diffusion with state space $ \RdmSS $.

%G]
% 
\begin{proposition} \label{l:9B} 
\noindent \thetag{i} For $ \mu $-a.s.\,$ \aaa $, 
$ \Xm $ is a $ \muam $-symmetric, conservative diffusion properly associated with the Dirichlet form on $ \Lmuam $ in \eqref{:9Aa} for each $ m \in \zN $. 

\noindent \thetag{ii}
$ \Xm $ is a $ \mum $-symmetric, conservative diffusion properly associated with the Dirichlet form on $ \Lmm $ in \eqref{:9Ab} for each $ m \in \zN $. 
\end{proposition}
\begin{proof}
By \tref{l:8Z} \thetag{ii}, $ \Xm $ is a $ \muam $-symmetric diffusion 
properly associated with $ ( \Emuam ,\uLdmuam ) $ on $ L^2 (\muam ) $. 
Hence by \eqref{:9Aa}, we obtain \thetag{i}.  

Let $ \mathsf{K}_{\aaa }^n $ be the compact set given in the proof of \tref{l:9A}. Let $ [\aaa ]$ be as in \eqref{:13q}. 
$ \mathsf{K}_{\aaa }^n $ is increasing in $ n $ for each $ \aaa $ and that $ \mathsf{K}_{\aaa }^n \subset [\aaa ] $ and 
\begin{align}\label{:9Bg}&
\Rdm \ts \mathsf{K}_{\aaa }^n \subset \Rdm \ts [\aaa ] 
,\quad n \in \N 
.\end{align}

Let $ \mathbf{K}_{\aaa }^{[m]} = \Rdm \ts \cup_{n=1}^{\infty} \mathsf{K}_{\aaa }^n $. 
By \eqref{:9Ag}, $ \mathbf{K}_{\aaa }^{[m]} = \mathbf{K}_{\bb }^{[m]} $ if and only if $ [\aaa ] = [\bb ] $. 
By \eqref{:9Bg}, $ \mathbf{K}_{\aaa }^{[m]} \cap \mathbf{K}_{\bb }^{[m]} = \emptyset $ if $ [\aaa ] \ne [\bb ]$. 

By the proof of \tref{l:9A}, $ \mathbf{K}_{\aaa }^{[m]} $ is the state space of the diffusion properly associated with $ ( \Emuam , \dOmuma ) $ on $ L^2 ( \Rdm \ts [\aaa ] ,\muam ) $. 
By \tref{l:8Z} \thetag{ii}, $ \uLX _{\aaa }^{[m]} $ is properly associated with $ ( \Emuam , \oLdmuam ) $. 
Thus, $ \mathbf{K}_{\aaa }^{[m]} $ is an invariant set of the diffusion $ \uLX _{\aaa }^{[m]}$.

Since $ \uLXm $ coincides with $ \uLX _{\aaa }^{[m]}$ on $ \mathbf{K}_{\aaa }^{[m]} $, $ \uLXm $ is a $ \mum $-symmetric, conservative diffusion properly associated with the superposition $ (\Emum , \dOmumstar  ) $ of $ ( \Emuam , \oLdmuam ) $ %
 on $ \Lmm $ with the state space $ \sqcup _{[\aaa ]} \mathbf{K}_{\aaa }^{[m]} $. 
Here $ \{ \mathbf{K}_{\aaa }^{[m]} \} _{ [\aaa ]}$ gives a partition of the state space into $ \mathbf{K}_{\aaa }^{[m]} $. Each $ \mathbf{K}_{\aaa }^{[m]} $ is an invariant sets of $ \uLXm $. % 
We have thus proved \thetag{ii}.
\PFEND
\noindent {\bf Proof of \tref{l:1Y}. }
\noindent 
Redefining $ \SSz $ in \pref{l:9B} if necessary, we may assume that 
its conclusion holds for all $ \aaa \in \SSz $, with $ \mu ( \SSz ) = 1 $. 

Let $ \X $ be the fully labeled process in \tref{l:13}. 
Let $ \XX = \X ^{[0]}$ be the zero-labeled process given by $ \X $. 
Recall that $ \mu = \mu ^{[0]} $. 
From \pref{l:9B} \thetag{ii}, $ \XX $ is a $ \mu $-reversible diffusion with state space $ \sSS _{\star \star } $ properly associated with the Dirichlet form 
$ (\E ^{\mu } , \dom _{\star , \mathrm{lwr} \star }^{\mu } )$ on $ \Lm $. 
$ \sSS _{\star \star } $ is given by 
\begin{align} &\notag % \label{:93!}&
 \sSS _{\star \star } = \cup_{\aaa \in \SSz } \cup_{n=1}^{\infty} \mathsf{K}_{\aaa }^n = \cup_{\aaa \in \SSz } \mathbf{K}_{\aaa }^{[0]}
,\end{align}
where $ \SSz $ is as in \dref{d:15} and $ \mathsf{K}_{\aaa }^n $ is given by \eqref{:9Af} and 
$ \cup_{n=1}^{\infty} \mathsf{K}_{\aaa }^n = \mathbf{K}_{\aaa }^{[0]}$. 

 From \tref{l:9A} \thetag{ii}, $ (\E ^{\mu } , \dom _{\star , \mathrm{lwr} \star }^{\mu } )$ on $ \Lm $ is strongly local and quasi-regular. 
The state space $ \sSS _{\star \star } $ satisfies $ \mathrm{Cap} (\sSS _{\star \star }^c ) = 0 $, 
 where $ \mathrm{Cap} $ is the capacity associated with $ (\E ^{[m]} , \dom _{\star , \mathrm{lwr} \star }^{[m]} ) $ on $ \Lm $. 
This follows from that $ \mathbf{K}_{\aaa }^{[0]} $ is an invariant set of the diffusion $ \uLXX _{\aaa } = \uLX _{\aaa }^{[0]}$ for each $ \aaa \in \SSz $. 
Because $ \XX = \uLXX _{\aaa }$, $ \mathbf{K}_{\aaa }^{[0]} $ is the invariant set of the diffusion $ \XX $ for each $ \aaa \in \SSz $.

We can take a quasi-continuous version of the diffusion measure $ P_{\xx }$ of $\XX $ such that $ P_{\xx }$ is defined for all $ \xx \in \sSS _{\star \star } $. Note that $ P_{\xx } ( \XX \in C([0,\infty ); \sSS _{\star \star } ) ) = 1 $ because $ \sSS _{\star \star } $ is the state space of the conservative diffusion $ \XX $.

Let $ \map{\lpath }{\WSsiNE }{\CRdN }$ be as in \lref{l:top2}. 
Let $ \upath $ be the map defined by $ \upath (\w ) = \ww $, where $ \ww (t) = \sum_i \delta_{w^i (t)}$ for $ \w = (w^i)_i $. 
$ \upath \circ \lpath $ is the identity map. Hence, $ \lpath $ is injective. We note that $ \X = \lpath (\XX ) $. 

The value $ \lpath (\ww ) (t) \in \RdN $ is determined by $ \lab (\ww (0) ) $ and $ \ww (u ) $, $ 0\le u \le t $. 
Hence, $ \lpath (\ww ) (t) $ is $ \mathscr{F}_t = \sigma [ \lab (\ww (0) )] \vee \sigma [ \ww (u), 0\le u \le t ]$-measurable. 
Thus, the time-shifted path $ \lpath (\ww ) (\cdot + a ) $ is determined by $ \lpath (\ww ) (a) $. 

Let $ \mathbf{S}_{\star \star } \subset \RdN $ be the set defined by 
\begin{align}\label{:9Bi}&
\mathbf{S}_{\star \star } = 
\{ \lpath (\ww )(t) ; \ww \in C([0,\infty ); \sSS _{\star \star } ) , 0 \le t < \infty \} 
.\end{align}
We note that $ \ulab (\mathbf{S}_{\star \star }) = \sSS _{\star \star }$.

Let $ P_{\mathbf{x}} = P (\X \in \cdot \vert \X _0 = \mathbf{x})$ be the distribution of $ \X $ starting at $ \mathbf{x} \in \ulab ^{-1} (\sSS _{\star \star })$. As before, let $ \mathsf{P} _{\xx } $ be the distribution of $ \XX $ starting at $ \xx $. 
We have 
\begin{align}\label{:9Bk}&
P_{\mathbf{x}} = \mathsf{P} _{\xx }\circ \lpath ^{-1} , \quad 
\mathbf{x} = \lpath (\XX _0 ) = \lab (\xx ), \quad \ulab (\mathbf{x}) = \xx 
.\end{align}
Let $ \sigma $ be an arbitrary $ \{\mathscr{F}_t \} $-stopping time such that $ \sigma < \infty $ a.s. 
Let $ \tau = \sigma \circ \lpath $ and $ \mathscr{G}_t = \lpath ^{-1} (\mathscr{F}_t ) $. 
We have $ \mathscr{G}_{\tau } = \lpath ^{-1} (\mathscr{F}_{\sigma }) $. 
 $ \lpath $ is injective and $ \XX $ is a diffusion with state space $ \sSS _{\star \star } $. 
For each $ \mathbf{x} \in \mathbf{S}_{\star \star } $ such that $ \mathbf{x} = \X _{ \sigma (\X ) }$, 
\begin{align}\notag &
P_{\mathbf{x}} ( \X _{t + \sigma (\X ) } \in \cdot 
\vert \mathscr{F}_{\sigma } ) \vert _{\mathbf{x} = \X _{ \sigma (\X ) }} 
\\ \notag &= 
\mathsf{P}_{\ulab (\mathbf{x}) } ( \lpath (\XX _{t + \sigma (\lpath (\XX ) )}) \in \cdot 
\vert \lpath ^{-1} (\mathscr{F}_\sigma ) ) 
 \vert _{\ulab (\mathbf{x} ) = \XX _{ \sigma (\lpath (\XX ) ) } }
\\&\notag = 
\mathsf{P}_{\xx } ( \lpath (\XX _{t + \tau (\XX ) }) \in \cdot \vert \mathscr{G}_{\tau } ) 
\vert _{\xx = \XX _{ \tau (\XX ) }}
\\&\notag = 
\mathsf{P}_{\xx } ( \lpath (\XX _{t }) \in \cdot ) 
\vert _{\xx = \XX _{ \tau (\XX ) }}
\\ &\notag %	\label{:93f} = 
= P_{\mathbf{x}} ( \X _{t } \in \cdot \vert \mathscr{F}_{\sigma } ) \vert _{\mathbf{x} = \X _{ \sigma (\X ) }} 
.\end{align}
Here, we used \eqref{:9Bk} for the second line, the strong Markov property of $ \XX $ for the forth line, and the injectivity of $ \lab $ and $ \lpath $ for the last line. Because $ \XX $ is $ \mu $-reversible and satisfies \As{SIN}, $ \XX $ has infinite lifetime. Hence, so is $ \X = \lpath (\XX )$. 
Thus, we conclude that $ \X $ is a conservative diffusion with state space $ \mathbf{S}_{\star \star }$ such that 
$ \mu (\ulab (\mathbf{S}_{\star \star })) = 1 $. 
\qed

\section{Finite-volume approximation: Proof of \tref{l:16}} \label{s:X}

All results in \sref{s:X} are proved under Assumptions \As{A1}--\As{A3}  and \eqref{:14y}--\eqref{:14z}. 

\subsection{Finite-domain approximation}
We now impose reflecting boundary conditions on $ \partial \SR $ 
in \eqref{:14g} and fix the particles outside the region 
$ \oLSR = \{ s \in \Rd ; |s| \le \rR \} $. 
We suppose $ \SR \subset \ON $. Then the dynamics are described as follows:
\begin{align}\notag
\uLxXRNi (t) - \uLXRNi (0) 
&= \int_0^t \sigma ( \uLXRNi (u) ) \, d B_u^i 
 + \int_0^t 
\bbbN \Big( \uLXRNi (u) , \sum_{j\ne i}^{\nN } \delta_{ \uLXRNj (u) } \Big) du 
\\ \notag
&\quad + \int_0^t \anR ( \uLXRNi (u) ) \, dL_{\rR }^{\Ni } (u),
\\ \label{:14p}
L_{\rR }^{\Ni } (t) &= \int_0^t 1_{\partial \SR } ( \uLXRNi (u) ) \, d L_{\rR }^{\Ni } (u),
\quad i \le \sss ( \oLSR ) 
,\\ \label{:14r}
\uLXRNi (t) - \uLXRNi (0) 
&= 0 
,\quad \sss ( \oLSR ) < i 
.\end{align}
Here $ \anR $ denotes the inward unit normal vector on $ \partial \SR $, 
and $ L_{\rR }^{\Ni } $ is the local time on the boundary $ \partial \SR $. 

%G] %G[---
Let $ \{ \Nn \}_{ n = 1 }^{ \infty } $ be the sequence in \As{A2} such that $ \muNn $ converges to $ \mu $. 
Taking the limit $ \Nn \to \infty $ in the SDE \eqref{:14p}--\eqref{:14r}, we obtain the ISDE 
\eqref{:14t}--\eqref{:14u}. 
  
%G]
%GTP[--
\begin{remark}\label{r:14} 
\thetag{i} 
The solutions of the above SDE remains fixed outside the region $ \oLSR $. 
This follows from \eqref{:02x} and \eqref{:14r}. 

\noindent \thetag{ii}
Let $\mathbf{s}=(s^i)_{i=1}^{\nN}$ satisfy $s^i\neq s^j$ for all $i\neq j$.
Then the SDE \eqref{:14p}--\eqref{:14r} with $\uLXRN(0)=\mathbf{s}$ admits a unique strong solution $\uLXRN$.
Indeed, the coefficients are smooth outside the collision set
$\mathscr{N}=\bigcup_{i\neq j}\{s^i=s^j\}$, and $\uLXRN$ does not hit
$\mathscr{N}$ since $d\ge2$; see \pref{l:41}.
\end{remark}
%GTP]

\begin{lemma} \label{l:X1} 

\noindent \thetag{i} The sequence $ \uLXRN $ $( \nN , \rR \in \N )$ is tight in $ \CRdN $.

\noindent \thetag{ii} 
The sequence $ \X ^{\nN , [m]} $ $ (\nN \in \N )$ is tight in $ \W (\RdmSS )$ for $ m \in \zN $. 
\end{lemma}

\begin{proof}
We suppose $ \nN \ge \sss (\oLSR ) $. 
We only consider the case $ i \le \sss (\oLSR ) $ because $ \uLxXRNi (t) $ is frozen for $ i > \sss (\oLSR ) $. 

The unlabeled dynamics $ \uLXX _{\rR }^{\nN } $ are $ \muRss ^{\nN } $-reversible. 
Suppose that $ \varphi = \mathrm{const}$. 
Then from the Lyons-Zheng decomposition \cite[p.284]{c-f}, 
\begin{align} &\notag %\label{:X1g}& 
\uLxXRNi (t) - \uLxXRNi (0) = 
\half \Big\{ \int_0^t \sigma ( \uLxXRNi (v) ) dB_v^i - \int_0^t \sigma ( \uLxXRNi (v) ) dB_v^i \circ r_t \Big\} 
.\end{align}
From this, \eqref{:12o}, \eqref{:14y}--\eqref{:14z}, and the martingale inequality, we obtain 
\begin{align} \notag %\label{:X2k}&
E_{\mu }[\lvert \uLxXRNi (t) - \uLxXRNi (u) \rvert ^4] \le \cref{;X2k} \lvert t - u \rvert ^2 
,\end{align}
where $ \Ct \label{;X2k} $ is a constant independent of $ \nN , \rR \in \N $. 
By the Lyons-Zheng decomposition, $ \XN $ is tight in $ \CRdN $. This implies \thetag{i}. 

Moreover, by the Lyons-Zheng decomposition, any limit point satisfies \As{NBJ}. 
From this and \lref{l:top3}, we obtain \thetag{ii}. 
\PFEND
% G[

Recall that, for $ \yy \in \sSS $, $ \yyNn = (\lab^1(\yy ),\ldots,\lab^{\Nn }(\yy ))$, where $ \lab = (\labi )_i $ is the label as in \eqref{:02x}. Thus, $ \yyNn $ is a function of $ \yy $. 
\begin{lemma}\label{l:X2} 
For $ \mu $-a.s.\,$ \yy $, the following hold:  

\noindent
\thetag{i} 
The Dirichlet form $ (\E _{\aRyNn }^{\Nnm }, \uLd _{\aRyNn }^{\Nnm })$ 
on $ L^2 (\mu _{\aRyNn }^{\Nnm }) $ converges to the Dirichlet form
$ ( \E _{\Ry } ^{[m]} , \uLd _{\Ry }^{[m]} ) $ on $ L^2 (\muRyy ^{[m]} ) $
in the strong resolvent sense 
after identifying $ L^2 (\mu _{\aRyNn }^{\Nnm }) $ and $ L^2 (\muRyy ^{[m]} ) $ 
with their images in $ L^2 ( \LambdaR ^{[m]} ) $
under the canonical isometric embeddings induced by the Radon--Nikodym derivatives.

\smallskip\noindent
\thetag{ii} 
The Dirichlet form $ (\E _{\aRyNn }^{\Nnm }, \oLd _{\aRyNn }^{\Nnm })$ 
on $ L^2 (\mu _{\aRyNn }^{\Nnm }) $ converges to the Dirichlet form
$ ( \E _{\Ry } ^{[m]} , \oLd _{\Ry }^{[m]} ) $ on $ L^2 (\muRyy ^{[m]} ) $
in the strong resolvent sense on $ L^2 ( \LambdaR ^{[m]} ) $ under the identification in \thetag{i}. 
\end{lemma}
%G]
%G[
\begin{proof}
Let $ \wm _{\aRyNn }^{ \Nn , k } $ and $ \wm _{\Ry }^k $ 
be the $ k $-density functions of $ \mu _{\aRyNn }^{ \Nn } $ and $ \mu _{\Ry } $, respectively, on 
$ \oLSRk $. By \eqref{:1!c}, $ \wm _{\aRyNn }^{ \Nn , k } $ converges to $ \wm _{\Ry }^k $ 
in $ C ( \oLSRk )$. 

Let
$ p _{\aRyNn }^{ \Nn , k } ( t , \mathbf{x} , \mathbf{y} ) 
\wm _{\aRyNn }^{ \Nn , k } ( \mathbf{y} ) \, d\mathbf{y} $
and
$ p _{\Ry }^{ k } ( t , \mathbf{x} , \mathbf{y} )
\m _{\Ry }^{ k } ( \mathbf{y} ) \, d\mathbf{y} $
be the transition probabilities of $ \uLXRNn $ and $ \uLXR $ on $ \oLSRk $, respectively. 
For each $  \rR \in \N $ and $ k \in \N $, these transition densities exist by the classical theory of heat kernels in divergence form. 
We set
\begin{align}\notag
\sS _{\ne }
& = \bigl\{
\mathbf{x} = ( x^i )_{ i = 1 }^{ k } \in \oLSRk \, ;\,
x^i \ne x^j \text{ for } i \ne j
\bigr\},
\quad 
\mathscr{O}
 = ( 0 , \infty ) \times \sS _{\ne }^{ 2 } .
\end{align}

By the classical theory of heat kernels in divergence form, 
$ p _{\aRyNn }^{ \Nn , k } $ and $ p _{\Ry }^{ k } $
are locally H\"older continuous on $ \mathscr{O} $, and $ p _{\aRyNn }^{ \Nn , k } $ converges to 
$ p _{\Ry }^{ k } $ uniformly on each compact subset of $ \mathscr{O} $. 
Moreover, $ \wm _{\aRyNn }^{ \Nn , k } $ converges to $ \wm _{\Ry }^k $ in $ C ( \oLSRk )$, as noted at the beginning of the proof. 
Consequently, the corresponding $ \alpha $-resolvent densities converge, and \thetag{i} follows. 

The Dirichlet form in \thetag{ii} is associated with a different boundary condition.
Nevertheless, its heat kernel converges locally uniformly in the same manner as that in \thetag{i}.
This implies the assertion of \thetag{ii}. 
\PFEND

Let $ \uLEDRNnmSTAR $ and $ \oLEDRNnmSTAR $ be the superpositions, with respect to $ \yy $ under $ \mu $, of 
$ ( \E _{\Ry }^{\Nnm } , \uLd _{\Ry }^{\Nnm } ) $ and 
$ ( \E _{\Ry }^{\Nnm } , \oLd _{\Ry }^{\Nnm } ) $, respectively.
\begin{lemma}\label{l:X3}	

\noindent
\thetag{i} 
The Dirichlet form 
$ \uLEDRNnmSTAR $ on $ L^2 ( \muR ^{\Nnm } ) $ converges to the Dirichlet form
$ ( \ER ^{[m]} , \uLd _{\Rstar }^{[m]} ) $ on $ L^2 ( \muR ^{[m]} ) $
in the strong resolvent sense on $ L^2 ( \LambdaR ^{[m]} ) $
after identifying $ L^2 ( \muR ^{\Nnm } ) $ and $ L^2 ( \muR ^{[m]} ) $
with their images in $ L^2 ( \LambdaR ^{[m]} ) $
under the canonical isometric embeddings induced by the Radon--Nikodym derivatives. 

\smallskip \noindent 
\thetag{ii}
The Dirichlet form $ \EDaRNnmSTAR $ on $ L^2 ( \muR ^{\Nnm } ) $ 
converges to the Dirichlet form $ ( \ER^{[m]} , \dORmumSTAR ) $ 
on $ L^2 ( \muR ^{[m]} ) $ in the strong resolvent sense on 
$ L^2 ( \LambdaR ^{[m]} ) $ under the identification in \thetag{i}.
\end{lemma}

\begin{proof} 
By the definition of superposition together with Fubini's theorem and Jensen's inequality, 
\lref{l:X3} follows from \lref{l:X2}. 
\PFEND
%G]

%G[
We define the particles with indices greater than $ N_n $ to remain fixed for all time. 
With this convention, the dynamics of the first $ N_n $ particles can be regarded as an infinite-particle system.
Under this convention, all solutions start from the same initial condition $ \lab ( \sss ) $ almost surely. 
\begin{theorem}[Finite-domain approximation]	\label{l:14}
Assume \As{A1}--\As{A3} and \eqref{:14y}--\eqref{:14z}. Then 
\begin{align}\label{:14a}
\limin \uLXRNn &= \uLXR \quad \text{ in law in } \CRdN 
,\\\label{:14b}
\limiR \uLXR &= \X \quad \text{ in law in } \CRdN 
.\end{align}
\end{theorem}
\begin{proof}
Since $ \uLXR $ is a solution of SDE \eqref{:14t}--\eqref{:14u}, it follows that $ \uLXR ^{[m]} $ is associated with $ ( \ER ^{[m]} , \uLd _{\Rstar  }^{[m]} ) $ on $ L^2 ( \muR ^{[m]} ) $. 
Hence, by \lref{l:X1} \thetag{i} and \lref{l:X3} \thetag{i}, we obtain \eqref{:14a}. 

By \tref{l:6A} \thetag{iii}, we obtain \eqref{:14b}, which completes the proof. 
\PFEND

\subsection{Finite-particle approximation: Proof of \tref{l:16} }

\medskip 
\noindent {\bf Proof of \tref{l:16}. } 
Let $ \XN $ be the solution of SDE \eqref{:14g} with \eqref{:14y}--\eqref{:14z}. 
For each $ m \in \zN $ with $  m \le\nN $, let 
\begin{align}&\notag 
\E ^{\Nm } (f,g) = \int_{\RdmSS } \DDDam [f,g] d \mu ^{ \Nm } 
.\end{align}
Then $ (\E ^{\Nm }, \dcbm )$ is closable on $ L^2(\mu ^{\Nm }) $. 
Let $ ( \E ^{\Nm } , \dom ^{\Nm } ) $ be its closure. 
The diffusion $ \X ^{\Nm } $ is properly associated with $ ( \E ^{\Nm } , \dom ^{\Nm } ) $ on $ L^2(\muNm ) $. 
By the Lyons--Zheng decomposition and \eqref{:14y}--\eqref{:14z},
$ \XN $ and $ \X ^{\Nm } $, $ N \in \N $, are tight in $ \CRdN $ and $\W ( \RdmSS ) $, respectively.

Hence, there exist subsequences of $ \XNn $ and $ \X ^{\Nnm } $, 
still denoted by the same symbols, and limits $ \widetilde{\X } $ and 
$ \widetilde{\X }^{[m]} $ such that $ \widetilde{\X }^{[m]} $ is the 
$ m $-labeled process of $ \widetilde{\X } $. Furthermore, for any nonnegative 
$ f , g \in \dcbm $ and $ \alpha > 0 $, 
\begin{align}\label{:X4s}&
\limi{\n } \int_{\RdmSS } f R^{\alpha , \Nn , [m]} g \, d (\mum )^2 = 
 \int_{\RdmSS } f \widetilde{R}^{\alpha , [m]} g \, d (\mum )^2 
.\end{align}
Here $  R^{\alpha , \Nn , [m]} $ denotes the $ \alpha $-resolvent of $ ( \E ^{\Nm } , \dom ^{\Nm } ) $ 
on $ L^2(\muNm ) $ and 
\begin{align*}&
\widetilde{R}^{\alpha , [m]} g (\mathbf{x},\sss )=
E_{(\mathbf{x},\sss )}[ \int_0^{\infty} \mathrm{e}^{-\alpha t}g (\widetilde{\X }_t^{[m]}) dt ]
.\end{align*}

At this stage, it is not known whether $ \widetilde{\X }^{[m]} $ is a Markov process. 
We now prove that this is indeed the case.
%G] 

Let $ \uLEDRNnmSTAR $ and $ \EDaRNnmSTAR $ be as in \lref{l:X3}. 

For symmetric bilinear forms $ A $ and $ B $, 
$ A \succ B $ means that $ A $ is an extension of $ B $ in the sense of \eqref{:ext1a}. 
Then it is clear that
\begin{align*}
\uLEDRNnmSTAR \succ ( \E ^{\Nnm } , \dom ^{\Nnm } ) 
\succ \EDaRNnmSTAR 
.\end{align*}
%G]
% 
Hence,
\begin{align}\label{:X4u}
\uLEDRNnmSTAR \le ( \E ^{\Nnm } , \dom ^{\Nnm } ) 
\le \EDaRNnmSTAR 
.\end{align}

By \lref{l:X3} \thetag{i}, $ \uLEDRNnmSTAR $ converges to 
$ (\ER ^{[m]} , \uLd _{\Rstar  }^{[m]} )$ in the strong resolvent sense as $ \n \to \infty $. 
By \lref{l:ext3} \thetag{i}, $ (\ER ^{[m]} , \uLd _{\Rstar  }^{[m]} )$ converges to the increasing limit $ ( \Emum , \dom \SLm  ) $ in the strong resolvent sense as $ \rR \to \infty $. 
Thus, the left closed form in \eqref{:X4u} converges to $ ( \Emum , \dom \SLm  ) $ 
in the strong resolvent sense in the two-step limits. 

Let $ f \in \oLd \aRNnm $. 
Then $ f $ is $ \mathscr{B}((\SR )^m) \ts \sigma [ \piR ] $-measurable. 
Moreover, $ \muR ^{ \Nnm } $ is absolutely continuous with respect to $ \muR ^{ [m] } $. 
Hence, we regard $ \EDaRNnmSTAR $ as a closed form on $ L^2 (\muR ^{ [m] }) $.

By \lref{l:X3} \thetag{ii}, $ \EDaRNnmSTAR $ converges to $ ( \ER ^{[m]} , \dORmumSTAR ) $ 
in the strong resolvent sense. Moreover, by \lref{l:ext3} \thetag{ii}, $ ( \ER ^{[m]} , \dORmumSTAR ) $ converges to the decreasing limit $ ( \Emum , \USm  ) $ in the strong resolvent sense on $ \Lmm $. 
Thus, the right closed form in \eqref{:X4u} converges to $ ( \Emum , \USm  ) $  
in the strong resolvent sense in the two-step limits. 

Hence, any limit point of
$ (\E ^{\Nnm } , \dom ^{\Nnm } ) $ lies between $ ( \Emum , \dom \SLm  ) $ and
$ ( \Emum , \USm  ) $  in the sense that, for any such nonnegative $ f ,g \in \dcb $, 
the quantity in the right-hand side of \eqref{:X4s} is between 
$ \int_{\sSS } f \uL{R}_{\star , \infty }^{\alpha , [m] } g d\mum $ 
and 
$ \int_{\sSS } f \oL{R}_{\star , \infty }^{\alpha , [m] } g d\mum $. 
Here $\uL{R}_{\star , \infty }^{\alpha , [m] } $ and $ \oL{R}_{\star , \infty }^{\alpha , [m] } $ are the 
$ \alpha $-resolvent of $ ( \Emum , \dom \SLm  ) $ and $ ( \Emum , \USm  ) $ , respectively. 

%G[
By \tref{l:9A}, these two Dirichlet forms coincide. We write their common form as 
$ ( \Emum , \dom _{\star }^{[m]} ) $, 
and denote its $ \alpha $-resolvent on $ \Lmm $ by $ R_{\star }^{\alpha , [m] } $.
%G] 
Therefore, $ (\E ^{\Nnm } , \dom ^{\Nnm } ) $ converges to 
$ ( \Emum , \dom _{\star } ^{[m]} ) $ in the following sense. 
For any nonnegative $ f ,g \in \dcb $, 
\begin{align*}&
\limi{\n } 
\int_{\RdmSS } f R^{\alpha , \Nn , [m]} g \, d (\mum )^2 = 
\int_{\RdmSS } f R_{\star }^{\alpha , [m] } g \, d\mum 
.\end{align*}
%G[
Hence $ \widetilde{R}^{\alpha , [m]} = R_{\star }^{\alpha , [m]} $. 
This implies that $ \widetilde{\X }^{[m]} $ is a Markov process with resolvents $ \widetilde{R}^{\alpha , [m]} $, $ \alpha > 0 $, and that $ \widetilde{\X }^{[m]} = \X ^{[m]} $ in law. 
Hence
\begin{align}&
\limi{\n } \X ^{\Nnm } = \X ^{[m]} \quad \text{in finite-dimensional distributions}
.\end{align}
This, together with tightness in $ \CRdN $, completes the proof.
\qed
%G]

\begin{remark}\label{r:X1} 
The main difficulty in proving \tref{l:16} is that the state space of the $ N $-particle system depends on $ N $. 
In \cite{k-o.du}, using a general theorem on the convergence of Dirichlet forms in distinct state spaces \cite{k-s}, a result similar to \tref{l:16} was proved. 
In that work, the $ N $-particle dynamics is considered in domains $ S_{R_N } $ with reflecting boundary conditions such that $ R_N \to \infty $. 
However, this does not yield convergence of the particle systems in the whole space and is therefore unsatisfactory. 

The key idea in the proof of \tref{l:16} is the identification of a relation connecting the $ N $-particle Dirichlet form with finite-domain Dirichlet forms.
\end{remark}

\section{Appendices} \label{s:A} 
\subsection{Topology of the unlabeled and labeled path spaces} \label{s:top}

Let $ \SSsi = \{ \sss \in \sSS \, ;\, \sss (\Rd ) = \infty , \sss (\{ s \} ) \in \{ 0 , 1 \} \text{ for all } s \in \Rd \} $. 
Let $ \WSsi $ be the set of all $ \SSsi $-valued continuous paths defined on $ [0,\infty)$. 
%G[
\begin{remark}
The preprint \cite{o.gin} is a longer version of the published paper 
\cite{o.Gin}, containing additional topological and labeling results
that were omitted from the journal version.
\end{remark}
%G[
\begin{lemma}[{\cite[Lem.\,7.1]{o.gin}}]\label{l:top1}
Let $I_i$ be of the form $[0,b_i)$ or $(a_i,b_i)$ with $0 \le a_i < b_i \le \infty$.
Each $\ww \in \WSsi$ admits a representation
\begin{align}\notag
\ww_t = \sum_{i=1}^{\infty} \delta_{w^i(t)},
\quad w^i \in C(I_i;\Rd)
.\end{align}
The collection $\{(w^i,I_i)\}_{i\in\mathbb N}$
is uniquely determined up to relabeling.
\end{lemma}
%
%G]
For $\w=(w^i)_{i=1}^{\infty}$, define $\upath(\w)=\mathsf{w}$ by
$\mathsf{w}_t=\sum_{i=1}^{\infty}\delta_{w^i(t)}$.

\begin{lemma}[{\cite[Lem.\,7.2]{o.gin}}]\label{l:top2}
For any label $\lab$, there exists a unique map
$\lpath:\WSsiNE\to\CRdN$ such that
$\lab(\ww_0)=\lpath(\ww)_0$ and $\upath\circ\lpath(\ww)=\ww$.
\end{lemma}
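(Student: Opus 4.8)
The plan is to build $\lpath$ by carrying the time-zero label along the unique trajectory decomposition of a path in $\WSsiNE$, and then to verify that this map is the only one compatible with the two required identities.

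\textbf{Construction.} First I would fix $\ww \in \WSsiNE$. By definition \eqref{:13v} there is at least one labeling $\w=(w^i)_{i\in\N}\in\CRdN$, i.e.\ with each $w^i$ defined on all of $[0,\infty)$, such that $\ww_t=\sum_i\delta_{w^i(t)}$; combined with the uniqueness-up-to-labeling in \lref{l:top1}, this shows that the canonical trajectory set of $\ww$ consists of global paths $\{w^i\}_{i\in\N}\subset\CRd$, unique up to permutation of the index set. Since $\ww_t\in\SSsi$ is simple for every $t$, the paths $w^i$ are pairwise distinct; in particular the points $\{w^i(0)\}_{i\in\N}$ are pairwise distinct and constitute exactly the support of the simple measure $\ww_0$. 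Because $\ww_0(\Rd)=\infty$, the label yields $\lab(\ww_0)=(\lab^j(\ww_0))_{j\in\N}\in\RdN$, and from $\ulab\circ\lab=\mathrm{id}$ the sequence $(\lab^j(\ww_0))_j$ enumerates that same support without repetition. Comparing the two repetition-free enumerations of one point set, there is a unique bijection $\pi=\pi_{\ww}\colon\N\to\N$ with $w^{\pi(j)}(0)=\lab^j(\ww_0)$ for all $j$ (injectivity from distinctness of the $w^i(0)$, surjectivity because $\lab$ exhausts the support). I then set $\lpath(\ww):=(w^{\pi(j)})_{j\in\N}\in\CRdN$. By construction $\lpath(\ww)_0=(w^{\pi(j)}(0))_j=\lab(\ww_0)$, and since $\pi$ is a bijection, $\upath(\lpath(\ww))_t=\sum_j\delta_{w^{\pi(j)}(t)}=\sum_i\delta_{w^i(t)}=\ww_t$, so both required identities hold.

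\textbf{Uniqueness.} Suppose $\lpath'$ also satisfies $\lab(\ww_0)=\lpath'(\ww)_0$ and $\upath\circ\lpath'(\ww)=\ww$ for every $\ww\in\WSsiNE$. Fix $\ww$ and write $\lpath'(\ww)=(v^j)_j$ with $v^j\in\CRd$. The second identity forces $\sum_j\delta_{v^j(t)}=\ww_t$ for all $t$, so by the uniqueness-up-to-permutation in \lref{l:top1} we have $v^j=w^{\sigma(j)}$ for some bijection $\sigma$ of $\N$. The first identity then gives $w^{\sigma(j)}(0)=v^j(0)=\lab^j(\ww_0)=w^{\pi(j)}(0)$, and distinctness of the $w^i(0)$ yields $\sigma=\pi$, hence $\lpath'(\ww)=\lpath(\ww)$. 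Since $\ww$ was arbitrary, $\lpath'=\lpath$.

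\textbf{Adaptedness, and the main point.} Although the statement asserts only existence and uniqueness, for the later use in \sref{s:9} I would additionally record that $\lpath(\ww)(t)$ depends on $\ww$ only through $\lab(\ww_0)$ and the restriction $(\ww_u)_{0\le u\le t}$: the segments $(w^i(u))_{0\le u\le t}$ are determined by $(\ww_u)_{0\le u\le t}$ up to permutation (apply \lref{l:top1} to the restricted path), and the index $\pi(j)$ is pinned down once $\lab(\ww_0)$ is known. The only genuinely delicate ingredient in all of this is the trajectory-continuation fact underlying \lref{l:top1} — that a continuous path of simple configurations cannot split or merge particles, so that globally defined, permutation-unique trajectories exist — and this is exactly \lref{l:top1}, hence already available; everything else is elementary bookkeeping with the permutation $\pi$. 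I therefore do not anticipate a substantial obstacle beyond invoking \lref{l:top1} correctly and being careful that it is membership in $\WSsiNE$, not merely in $\WSsi$, that forces all trajectories to live on the full half-line $[0,\infty)$.
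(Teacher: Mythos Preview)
The paper does not give its own proof of this lemma: it is quoted verbatim as \cite[Lem.\,7.2]{o.gin} without argument. Your proof is correct and is the natural one --- using the trajectory decomposition of \lref{l:top1}, the fact that membership in $\WSsiNE$ forces every $I_i=[0,\infty)$, and the simplicity of $\ww_0$ to pin down the unique permutation matching $\lab(\ww_0)$ --- so there is nothing to compare.
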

%G]
%G[

Set $\|u \|_T=\sup_{0\le t\le T}|u(t) |$.
For $\mathbf u=(u^i)_{i\in\mathbb N}$ and $\mathbf v=(v^i)_{i\in\mathbb N}$, define 
\begin{align*}&
\varrho_{\mathrm{path}}(u,v)
= \sum_{T=1}^{\infty} \frac{1}{2^T}( 1 \wedge \| u - v \|_T ),
\quad
\varrho_{\mathrm{lpath}} ( \mathbf u , \mathbf v )
= \sum_{i=1}^{\infty} \frac{1}{2^i} 
\bigl( 1 \wedge \varrho_{\mathrm{path}} ( u^i , v^i ) \bigr)
.\end{align*}
Then $ ( \CRdN , \varrho_{\mathrm{lpath}} ) $ is a complete separable metric space.

Let $f_R\in C_0(\Rd)$ satisfy $0\le f_R\le1$, $f_R=1$ on $\SR$, and
$f_{R+1}=0$ on $\SRR^c$. For $\sss=\sum_i\delta_{\si}$, set
$f_R\sss=\sum_i f_R(\si)\delta_{\si}$.
Let $\varrho_{\mathrm{Prh}}$ denote the Prohorov metric on $\sSS$
\cite[p.~11]{Kal}.
The vague topology on $\sSS$ is induced by the complete separable metric 
 \begin{align}\notag
 \varrho_{\mathrm{vague}}(\sss,\sss')
 =\sum_{R=1}^{\infty} \frac{1}{2^R} 
 \bigl(1\wedge \varrho_{\mathrm{Prh}}(f_R\sss,f_R\sss')\bigr)
 .\end{align}
Then $\CiSS$ is a complete separable metric space under the metric 
\begin{align}\notag
\varrho_{\mathrm{upath}}(\ww,\ww')
=\sum_{T=1}^{\infty}\frac{1}{2^T}
\bigl(1\wedge \sup_{0\le t\le T}\varrho_{\mathrm{vague}}(\ww_t,\ww_t')\bigr)
.\end{align}

%G[
\noindent 
Let $\IRT$ be defined in \eqref{:13u}, and set
\begin{align}\notag
\mathcal{NBJ}
=\bigl\{\w\in\CRdN;\ \IRT(\w)<\infty\ \text{for all } R,T\in\mathbb N\bigr\}
.\end{align}
%G]

\begin{lemma}[{\cite[Lem.\,7.3]{o.gin}}] \label{l:top3} 
\thetag{i} $ \upath (\w ) \in \CiSS $ for $ \w \in \mathcal{NBJ} $. 
\\\thetag{ii} 
$ \map{\upath }{\mathcal{NBJ}}{C([0,\infty);\sSS )}$ is continuous. 
\end{lemma}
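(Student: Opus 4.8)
\textbf{Proof proposal for Lemma~\ref{l:top3}.}

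The plan is to separate the two claims and to treat the continuity statement \thetag{ii} as the substantive one, with \thetag{i} following as a by-product. First I would fix $\w \in \mathcal{NBJ}$ and recall what membership in $\mathcal{NBJ}$ buys: for each $\rR, T \in \mathbb{N}$ the index $\IRT(\w) = \sup\{ i ; \min_{t\in[0,T]} |w^i(t)| \le \rR \}$ is finite, so only finitely many coordinate paths $w^i$ ever enter the closed ball $\oLSR$ during $[0,T]$. Consequently, on the time window $[0,T]$ the measure-valued path $\ww_t = \sum_i \delta_{w^i(t)}$ restricted to $\SR$ is, for each fixed $t$, a sum of at most $\IRT(\w)$ Dirac masses, and the coordinates contributing are among the first $\IRT(\w)$ in the labeling (using the non-decreasing absolute-value convention from \eqref{:12x} is not needed here, only that the relevant index set is finite and stable). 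This gives a uniform-in-$t$ bound on $\ww_t(\SR)$ over $[0,T]$, hence $\upath(\w)_t = \ww_t$ is a well-defined element of $\sSS$ for every $t$, and $t \mapsto \ww_t$ is vaguely continuous because each of the finitely many relevant $w^i$ is continuous and no mass can escape to or enter from infinity within $[0,T]$. Letting $T \to \infty$ yields $\upath(\w) \in \CiSS$, which is \thetag{i}.

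For \thetag{ii} I would argue sequentially: take $\w^{(n)} \to \w$ in $\CRdN$, i.e. in the metric $\varrho_{\mathrm{lpath}}$, and show $\upath(\w^{(n)}) \to \upath(\w)$ in $\varrho_{\mathrm{upath}}$. Since $\varrho_{\mathrm{upath}}$ is a $2^{-T}$-weighted sum over $T$ of $\sup_{0\le t\le T}\varrho_{\mathrm{vague}}(\ww^{(n)}_t, \ww_t)$, it suffices to fix $T$ and prove this supremum tends to $0$. Fix also $\rR$ and a cut-off $f_{\rR}$ as in the definition of $\varrho_{\mathrm{vague}}$. The key point is that $\varrho_{\mathrm{lpath}}$-convergence implies $\|w^{(n),i} - w^i\|_T \to 0$ for each fixed $i$. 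Because $\w \in \mathcal{NBJ}$, let $M = \IRT[\rR+1](\w) < \infty$: for $n$ large, the coordinates $w^{(n),i}$ with $i \le M$ are uniformly close on $[0,T]$ to $w^i$, and the coordinates $w^{(n),i}$ with $i > M$ stay outside $\SR$ on $[0,T]$ once $n$ is large enough (otherwise one could extract a coordinate entering $\oLSR$, contradicting the finiteness of $\IRT[\rR+1](\w)$ together with the convergence; here one uses that $w^i$ for $i>M$ has $\min_{[0,T]}|w^i| > \rR+1$ and uniform closeness forces $\min_{[0,T]}|w^{(n),i}| > \rR$). Hence, restricted to the window $[0,T]$ and the ball $\SR$, both $\ww^{(n)}_t$ and $\ww_t$ are finite sums of at most $M$ Dirac masses at points that converge uniformly in $t$. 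A standard estimate then shows $\sup_{0\le t\le T}\varrho_{\mathrm{Proho}}(f_{\rR}\ww^{(n)}_t, f_{\rR}\ww_t) \to 0$: matching the $i$-th Dirac of one to the $i$-th of the other for $i \le M$ and discarding the rest, the Prohorov distance is controlled by $\max_{i\le M}\|w^{(n),i}-w^i\|_T$ plus an error from particles near $\partial\SR$, which is absorbed by choosing the cut-off $f_{\rR}$ continuous and letting $\rR$ be large in the outer $2^{-\rR}$-sum. Summing the geometric series in $\rR$ and then in $T$ completes the argument.

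The main obstacle is the boundary effect: a coordinate path $w^{(n),i}$ may hover near $\partial\SR$, so that it contributes to $f_{\rR}\ww^{(n)}_t$ with a weight $f_{\rR}(w^{(n),i}_t)$ that differs from $f_{\rR}(w^i_t)$ by an amount not controlled purely by uniform closeness of positions when the cut-off has a steep slope, and more seriously a coordinate with $i > M$ but $\min_{[0,T]}|w^i|$ only slightly above $\rR$ could in principle be pushed inside $\SR$ by a small perturbation. Both issues are handled by the standard device of choosing, for each fixed $T$, the radius $\rR$ large (so its tail weight $2^{-\rR}$ is small) and then exploiting that $\IRT[\rR'](\w) < \infty$ for the slightly larger radius $\rR' = \rR+1$, which provides a genuine gap between the "inside" coordinates and the "outside" coordinates over $[0,T]$; uniform convergence then cannot move an outside coordinate across that gap. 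I expect the write-up to be a careful but routine $\varepsilon$-management argument once this gap structure is isolated, so the essential content is really the observation that $\mathcal{NBJ}$ enforces, locally in space and time, a finite and stable particle count.
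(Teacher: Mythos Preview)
The paper does not prove this lemma; it is quoted from \cite[Lem.\,7.3]{o.gin}, so there is no in-paper argument to compare against. Your argument for \thetag{i} is correct. For \thetag{ii}, however, there is a genuine gap at the step where you assert that ``the coordinates $w^{(n),i}$ with $i > M$ stay outside $\SR$ on $[0,T]$ once $n$ is large enough.'' Convergence in $\varrho_{\mathrm{lpath}}$ is product-topology convergence: it gives $\lVert w^{(n),i} - w^i\rVert_T \to 0$ for each \emph{fixed} $i$, but nothing uniform in $i$, so your ``uniform closeness'' is unavailable. There is no contradiction in having, for each $n$, some index $i_n > M$ with $i_n \to \infty$ for which $w^{(n),i_n}$ enters $\SR$.

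Indeed, \thetag{ii} as literally stated is false. In $d=1$, take $\w$ with constant components $w^i \equiv i$ and let $\w^{(n)}$ agree with $\w$ except $w^{(n),n} \equiv 0$. Then $\w, \w^{(n)} \in \mathcal{NBJ}$ and $\varrho_{\mathrm{lpath}}(\w^{(n)},\w) = 2^{-n} \to 0$, but $\upath(\w^{(n)})_t$ carries a unit mass at the origin for every $n$ while $\upath(\w)_t$ does not, so $\upath(\w^{(n)})$ does not converge to $\upath(\w)$ in $\CiSS$. What the application in \tref{l:61} actually uses is \thetag{ii} \emph{together with} the uniform control \eqref{:61k}; the correct hypothesis for your argument is that, in addition to $\w^{(n)} \to \w$ in $\CRdN$, one has $\sup_n \IRT (\w^{(n)}) < \infty$ for each $\rR, T$. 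Under that extra assumption your outline works: take $M' = \sup_n \IRT (\w^{(n)}) < \infty$, so that for all $n$ and all $i > M'$ the path $w^{(n),i}$ avoids $\oLSR$ on $[0,T]$ by definition of $\IRT$, and the Prohorov estimate then proceeds exactly as you described.
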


%G[
\subsection{Closability and extensions of Dirichlet forms, 
and strongly local, quasi-regular Dirichlet Forms}\label{s:ext}
%G]

In this subsection, we recall notions related to Dirichlet forms
from \cite{c-f,fot.2}.

Let $ \sS $ be a Polish space, that is, a topological space homeomorphic to a separable and complete metric space. Let $ \nu $ be a Radon measure on $ \sS $. 
We set $ \Lnu = L^2(\sS , \nu )$.

For non-negative symmetric bilinear forms $ (\E _1, \mathscr{D}_1 ) $ and $ (\E _2, \mathscr{D}_2 ) $, 
we say $ (\E _2, \mathscr{D}_2 ) $ is an extension of $ (\E _1, \mathscr{D}_1 ) $ if 
\begin{align}\label{:ext1a}&
 \mathscr{D}_1 \subset \mathscr{D}_2 , \quad \E _1 (f,f) = \E _2 (f,f) \quad \text{ for all }f \in \mathscr{D}_1 
.\end{align}

A non-negative, closed form $ (\E , \mathscr{D} ) $ on $ \Lnu $ is a densely defined bilinear form defined on $ \mathscr{D} $ that is complete under the inner product $ \E (f,g)+ (f,g) _{\Lnu }$. 

\begin{definition}[Closable and closure {\cite{fot.2}}]\label{d:ext} \thetag{i}
A non-negative symmetric bilinear form $ (\E , \mathscr{D}_0 ) $ densely defined on $ \Lnu $ is called closable on $ \Lnu $ if, for any $ \E $-Cauchy sequence $ f_n\in \mathscr{D}_0$ such that $ \lim \| f_n \| _{\Lnu } = 0 $ as $ n \to \infty $, 
it holds that $ \limi{n}\E (f_n , f_n ) = 0$. 

\noindent \thetag{ii} 
If $ (\E , \mathscr{D}_0) $ is closable on $ \Lnu $, then there exists 
a closed extension of $ (\E , \mathscr{D}_0) $. 
The smallest closed extension $ (\E , \mathscr{D} ) $ is called the closure of $ (\E , \mathscr{D}_0) $. 
\end{definition}
\begin{lemma} \label{l:ext2}
Let $ (\E _1, \mathscr{D}_1 ) $ and $ (\E _2, \mathscr{D}_2 ) $ be non-negative symmetric bilinear forms on $ \Lnu $. 
Let $ (\E _2, \mathscr{D}_2 ) $ be an extension of $ (\E _1, \mathscr{D}_1 ) $. Let $ (\E _2, \mathscr{D}_2 ) $ be closable on $ \Lnu $. It holds that $ (\E _1, \mathscr{D}_1 ) $ is closable on $ \Lnu $. 
\end{lemma}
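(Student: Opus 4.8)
The plan is to unwind the definitions of closability and extension and observe that the required implication is essentially immediate once one is careful about which domain the Cauchy sequences live in. Recall that $(\E_2,\mathscr{D}_2)$ being an extension of $(\E_1,\mathscr{D}_1)$ means $\mathscr{D}_1\subset\mathscr{D}_2$ and $\E_1(f,f)=\E_2(f,f)$ for all $f\in\mathscr{D}_1$ (see \eqref{:ext1a}). So the first step is: take any sequence $f_n\in\mathscr{D}_1$ which is $\E_1$-Cauchy and satisfies $\|f_n\|_{\Lnu}\to 0$, and aim to show $\E_1(f_n,f_n)\to 0$.

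Next I would transfer this sequence into the setting of $(\E_2,\mathscr{D}_2)$. Since $\mathscr{D}_1\subset\mathscr{D}_2$, each $f_n$ belongs to $\mathscr{D}_2$. Moreover, for $f,g\in\mathscr{D}_1$ we have by polarization (both forms are symmetric and bilinear) that $\E_1(f,g)=\E_2(f,g)$, hence $\E_2(f_n-f_m,f_n-f_m)=\E_1(f_n-f_m,f_n-f_m)$ for all $n,m$. Therefore $\{f_n\}$ is also an $\E_2$-Cauchy sequence, and it still satisfies $\|f_n\|_{\Lnu}\to 0$. Applying the closability of $(\E_2,\mathscr{D}_2)$ on $\Lnu$ to this sequence yields $\E_2(f_n,f_n)\to 0$. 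Since $f_n\in\mathscr{D}_1$, the identity $\E_1(f_n,f_n)=\E_2(f_n,f_n)$ gives $\E_1(f_n,f_n)\to 0$, which is exactly the closability criterion of \dref{d:ext} \thetag{i} for $(\E_1,\mathscr{D}_1)$.

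There is no real obstacle here; the only point to check with a little care is the polarization step, i.e.\ that agreement of the quadratic forms $\E_1(f,f)=\E_2(f,f)$ on $\mathscr{D}_1$ implies agreement of the bilinear forms $\E_1(f,g)=\E_2(f,g)$ on $\mathscr{D}_1$. This is standard: for symmetric bilinear forms, $\E_i(f,g)=\tfrac14\bigl(\E_i(f+g,f+g)-\E_i(f-g,f-g)\bigr)$, and since $f\pm g\in\mathscr{D}_1$ whenever $f,g\in\mathscr{D}_1$, the two right-hand sides coincide. With this in hand, the proof is a two-line deduction, and I would write it out essentially verbatim as above, without further technical elaboration.
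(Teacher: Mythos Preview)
Your proof is correct and follows essentially the same route as the paper's: transfer an $\E_1$-Cauchy sequence in $\mathscr{D}_1$ to $\mathscr{D}_2$ via the extension property \eqref{:ext1a}, apply closability of $(\E_2,\mathscr{D}_2)$, and read off $\E_1(f_n,f_n)\to 0$. Your polarization remark is harmless but unnecessary here, since $f_n-f_m\in\mathscr{D}_1$ already gives $\E_1(f_n-f_m,f_n-f_m)=\E_2(f_n-f_m,f_n-f_m)$ directly from the quadratic-form identity in \eqref{:ext1a}.
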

\begin{proof}
If $ \{ f_n \} $ is a Cauchy sequence of $ (\E _1, \mathscr{D}_1 ) $, then $ \{ f_n \} $ is a Cauchy sequence of $ (\E _2, \mathscr{D}_2 ) $ by \eqref{:ext1a}. 
Then $ \limi{n} \E _2 (f_n , f_n ) = 0 $ because of the closability of $ (\E _2, \mathscr{D}_2 ) $ on $ \Lnu $. 
Hence, $ \limi{n} \E _1 (f_n , f_n ) = 0 $ from \eqref{:ext1a}. This completes the proof. 
\PFEND
%G[ ---
A sequence of closed, nonnegative bilinear forms
$ (\E ^n, \mathscr{D}^n) $ on $ \Lnu $
is said to converge to $ (\E , \mathscr{D}) $ on $ \Lnu $
in the strong resolvent sense
if the associated $\alpha$-resolvents $ G_{\alpha}^n $
converge strongly in $ \Lnu $ to the $\alpha$-resolvent $ G_{\alpha} $
of $ (\E , \mathscr{D}) $ for each $ \alpha > 0 $.
This is equivalent to the same convergence holding for some $ \alpha > 0 $
(cf.\,\cite{r-simon}).

In this case, the $ \Lnu $-semigroups $ T_t^n $
associated with $ (\E ^n, \mathscr{D}^n) $
converge strongly in $ \Lnu $ to the $ \Lnu $-semigroup $ T_t $
associated with $ (\E , \mathscr{D}) $
for each $ t \ge 0 $.

%G]

For non-negative symmetric bilinear forms $ (\E ^1,\mathscr{D}^1 )$ and $ (\E ^2,\mathscr{D}^2 )$, we write $ (\E ^1,\mathscr{D}^1 ) \le (\E ^2,\mathscr{D}^2 )$ if 
$ \mathscr{D}^1 \supset \mathscr{D}^2 $ and 
$ \E ^1 (f,f) \le \E ^2 (f,f) $ for all $ f \in \mathscr{D}^2 $. 
We write $ (\E ^1,\mathscr{D}^1 ) \ge (\E ^2,\mathscr{D}^2 )$ if 
$ (\E ^2,\mathscr{D}^2 ) \le (\E ^1,\mathscr{D}^1 )$. 
A sequence of symmetric non-negative bilinear forms $ \{ (\E ^n, \mathscr{D}^n) \} $ on $\Lnu $ is said to be increasing if $(\E ^n, \mathscr{D}^n) \le (\E ^{n+1}, \mathscr{D}^{n+1}) $ for all $ n $ and decreasing if $(\E ^n, \mathscr{D}^n) \ge (\E ^{n+1}, \mathscr{D}^{n+1}) $ for all $ n $. 

\begin{lemma}[{\cite[p.373, Th.\,S.16]{r-simon}, \cite{simon}}] \label{l:ext3} 
\thetag{i} 
Let $ \{ (\E ^n, \mathscr{D}^n) \}_{n\in\N } $ be an increasing sequence of non-negative symmetric closed bilinear forms on $\Lnu $. 
Let $ (\E ^{\infty}, \mathscr{D}^{\infty} )$ be the symmetric bilinear form defined by 
\begin{align}	\notag &%	\label{:ext3a}&
\E ^{\infty } (f,f) = \limi{n} \E ^n (f,f) , 
\quad 
\mathscr{D}^{\infty} =
 \{ f \in \bigcap_{n \in\N } \mathscr{D}^n \, ;\, \limi{n} \E ^n (f,f) < \infty \} 
.\end{align}
It holds that $ (\E ^{\infty}, \mathscr{D}^{\infty} )$ is a closed symmetric bilinear form on $ \Lnu $. 
$ \{ (\E ^n, \mathscr{D}^n) \}_{n\in\N } $ converges to $ (\E ^{\infty}, \mathscr{D}^{\infty} )$ in the strong resolvent sense on $ \Lnu $. 

\noindent \thetag{ii} 
Let $ \{ (\E ^n, \mathscr{D}^n) \}_{n\in\N } $ be a decreasing sequence of non-negative symmetric closed bilinear forms on $\Lnu $. 
Let $ (\E ^{\infty}, \mathscr{D}^{\infty} )$ be the symmetric bilinear form defined by 
\begin{align} &\notag % 	\label{:ext3b}&
\E ^{\infty } (f,f) = \limi{n} \E ^n (f,f) , 
\quad 
\mathscr{D}^{\infty} = \bigcup_{n \in\N } \mathscr{D}^n 
.\end{align}
It holds that 
$ \{ (\E ^n, \mathscr{D}^n) \}_{n\in\N } $ converges to $ (\E _{\mathrm{max}}^{\infty}, \mathscr{D}_{\mathrm{max}}^{\infty} )$ in the strong resolvent sense on $ \Lnu $. 
Here, $ (\E _{\mathrm{max}}^{\infty}, \mathscr{D}_{\mathrm{max}}^{\infty} )$ is the maximal closable part of $ (\E _{\mathrm{max}}^{\infty}, \mathscr{D}_{\mathrm{max}}^{\infty} )$ on $ \Lnu $. 
\end{lemma}

\begin{definition}[Dirichlet form {{\cite{fot.2}}}]\label{d:A2}
We say a closed non-negative symmetric bilinear form $ (\E , \mathscr{D} ) $ densely defined on $ \Lnu $ is 
a symmetric Dirichlet form if any $ u \in \dom $ satisfies 
\begin{align}& \label{:A2a}
v := \min \{ 1 , \max\{ 0 , u \} \} \in \dom \text{ and }
\E (v,v) \le \E (u,u)
.\end{align}
\end{definition}
For a symmetric Dirichlet form, there exists an associated symmetric Markovian $ L^2$-semi-group \cite{fot.2}. We now recall the concept of capacity \cite{fot.2,c-f}. 
\begin{definition}[Capacity {{\cite{c-f,fot.2}}}]\label{d:A3}
Let $ \mathcal{O} $ be the family of all open subsets of $ \sS $. 
For $ \aAA \in \mathcal{O} $, let 
$ \mathcal{L}_{\aAA ,1}=\{ f \in \mathscr{D} ; f \ge 1 \ \nu \text{-a.e.\,on }\aAA \} $. 
We set $ \mathcal{O}_0=\{ \aAA \in \mathcal{O} ; \mathcal{L}_{\aAA ,1} \ne \emptyset \} $. 
Let $ \E _1 = \E + (\cdot,*)_{\Lnu }$. 
For an open set $ \aAA \in \mathcal{O} $, we set 
\begin{align*}&
\capa (\aAA ) = 
\begin{cases}
\inf \{ \E _1(f,f); f \in \mathcal{L}_{\aAA ,1}\}, & \aAA \in \mathcal{O}_0 
\\
\infty , & \aAA \notin \mathcal{O}_0 
.\end{cases}
\end{align*}
For any set $ \bBB \subset \sS $, we set 
$ \capa (\bBB ) = \inf \{ \capa (\aAA ) ; \aAA \in \mathcal{O},\, \aAA \supset \bBB \} 
$. 
\end{definition}

Let 
 $ \mathscr{D}(\fFF _{k}) = \{ f \in \mathscr{D}; f = 0 \text{ $ \nu $-a.e.\,on } \fFF _{k}^c \} $ and 
$ \E _1 = \E + (\cdot , \cdot )_{\Lnu } $. 
An increasing sequence of closed sets $ \{ \fFF_{k} \} $ is called an $ \E $-nest if $ \cup _{k \ge 1 }\mathscr{D}(\fFF _{k}) $ is $ \E _1 $-dense. 
A function $ f $ is called $ \E $-quasi-continuous if for any $ \epsilon >0 $, there exists an open set $ \oOO $ with $ \capa (\oOO ) < \epsilon $ such that $ f \vert _{\sS \backslash \oOO }$ is finite and continuous. 
We call a subset $ \mathfrak{N} \subset \sS $ an $ \E $-polar set if 
there exists an $ \E $-nest $ \{ \fFF_{k} \} $ such that 
$ \mathfrak{N} \subset \cap_k ( \sS \backslash \fFF_{k} )$. 

\begin{definition}\label{d:QR}
A Dirichlet form $ (\E , \mathscr{D} ) $ on $ \Lnu $ is quasi-regular if: \\ 
\As{Q1} there exists an $ \E $-nest $ \{ \fFF_{k} , k \ge 1 \} $ consisting of compact sets; 

\noindent 
\As{Q2} 
there exists an $\E _1 $-dense subset of $\mathscr{D}$
whose elements admit $\E$-quasi-continuous $\nu $-versions;
\\\As{Q3} 
there exists $ \{ f_k , k \ge 1 \} \subset \mathscr{D} $ having $ \E $-quasi-continuous $ \nu $-versions $ \{ \tilde{f}_k , k \ge 1 \} \subset \mathscr{D} $ and an $ \E $-polar set $ \mathfrak{N} \subset \sS $ such that 
$ \{ \tilde{f}_k , k \ge 1 \} \subset \mathscr{D} $ separates the points on $ \sS \backslash \mathfrak{N} $. 
\end{definition}

A Dirichlet form $ (\E , \mathscr{D} ) $ on $ \Lnu $ is called strongly local if $ \E ( f , g ) = 0 $ for any $ f , g \in \mathscr{D} $ \ such that $ f $ is constant on a neighborhood of the support of $ g $ \cite{c-f}. 
A diffusion process is a family of Markov processes with continuous sample path and has the strong Markov property. We say a diffusion is conservative if it has an infinite lifetime. 
If the Dirichlet form is strongly local and quasi-regular and the state space is homeomorphic to a complete separable metric space, then there exists a properly associate diffusion exists \cite{c-f}. 
We refer to \cite{c-f} for the concept of properly associated. 

In general, a Dirichlet form is not necessarily symmetric. 
In the present paper, a Dirichlet form means a symmetric Dirichlet form.

\subsection{Representation of functions on $ \sSS $ and correlation functions} \label{s:fun}

\begin{definition}\label{d:corfun} 
A symmetric, locally integrable function $ \rho_\nu^m $ on $ \Rdm $
is called the \emph{$ m $-point correlation function}
of an RPF $ \nu $ (with respect to Lebesgue measure) if
\begin{align}\notag &%
\int_{ A_1^{ k_1 } \times \cdots \times A_n^{ k_n } }
\rho_\nu^m ( \x ^1 , \ldots , \x ^m ) \, d\x ^1 \cdots d\x ^m
=
\int_{ \sSS }
\prod_{ i = 1 }^{ n }
\frac{ \sss ( A_i ) ! }{ ( \sss ( A_i ) - k_i ) ! }
\, \nu ( d\sss )
,\end{align}
for any disjoint bounded measurable sets
$ A_1 , \ldots , A_n $
and nonnegative integers
$ k_1 + \cdots + k_n = m $.
Here, $ \sss ( A ) $ denotes the number of particles in $ A $
when $ \sss = \sum_i \delta_{ \si } $,
and the fraction is interpreted as zero
if $ \sss ( A_i ) - k_i < 0 $.
\end{definition}

%GTP[
Let $\anest = \{ \ak \}_{\qqq \in \mathbb{N}}$ be a family of sequences
$\ak = \{ \akR \}_{\rR \in \mathbb{N}}$ of natural numbers.
Assume that, for all $\qqq, \rR \in \mathbb{N}$,
\begin{align}\label{:ak}
 \akR < \akRR, \quad
 \akR < \akk(\rR), \quad
 \ak^{+}(\rR) < \akk(\rR),
\end{align}
where $\ak^{+}(\rR) = 1 + \akRR$. 
%GTP]] 
%GTP[
For $\ak = \{ \akR \}_{\rR \in \mathbb{N}}$, let 
\begin{align}\label{:CUTw} 	%\label{:52l}&
\Ki [\ak ] &= \bigcap_{\rR = 1}^{\infty} \{ \sss \in \sSS ; \sss (\SR ) \le \akR \} 
.\end{align}
Then $ \Ki [\ak ]$ is a compact set in $ \sSS $ for each $ \qqq \in \mathbb{N}$ such that 
\begin{align} & \label{:CUTy} 
 \Ki [\ak ] \subset \Kakk \subset \Ki [\akk ] 
.\end{align}
%GTP]

Let $ \nu $ be an RPF and $ m \in \N $. Let $ \nu _{\qqq } = \nu (\cdot \cap \Ki [\ak ] ) $. 
Suppose that $ \nu $ has density functions. 
Then by \eqref{:CUTw}, $ \nu _{\qqq } $ has the $ m $-point correlation function and the $ m $-reduced Campbell measure $ \num _{\qqq } $ similarly as \eqref{:12y}. 
We define $ \num $ as the increasing limit of $ \num _{\qqq } $: 
\begin{align}	\label{:Camp} & \quad 
\num ( A ) = \limi{\qqq } \num _{\qqq } ( A ) 
,\quad A \in \mathscr{B}(\RdmSS ) 
%	\quad \text{ for } A \in \mathscr{B}(\RdmSS ) 
.\end{align}
Note that $ \num $ is independent of the particular choice of compact sets $ \Ki [\ak ]$.

Let $ \| h \| := \| h \|_{L^1(\SQmSS , \num )} +\|\nablax h \|_{L^1(\SQmSS , \num )} $. 
We note that, in the definition of $ \| \cdot \| $, the differential 
is taken only with respect to the $ x $-variable, 
and not with respect to $ \sss \in \sSS $. 
This point is very different from the energy norms 
used in Theorems \ref{l:QR1} and \ref{l:QR2}. 
\begin{lemma} \label{l:39} 
The set $ \dcbm $ is dense in $ \dbbm $ with respect to $ \| \cdot \|$, $ \qQ \in \N $. 
\end{lemma}
\begin{proof}
Recall that $ \dcbm = \CziRdm \ot \dcb $ and $ \dbbm = \CziRdm \ot \dbb $. 

From the proof of Lemma 2.4 in \cite{o.dfa}, we easily see that both $ \dcb $ and $ \dbb $ are dense in $ L^2( \nu ) $. Hence, we can prove \lref{l:39} from this. 
\PFEND
%--

For $ \rR \in \N $ and $ m \in \{0\} \cup \N $, set 
$ \SSRm = \{ \sss \in \sSS ; \sss (\SR ) = m \} $. 
For $ A \in \mathscr{B}(\sSS )$, define $\pi_A(\sss ) = \sss(\cdot \cap A)$. 
Let $ \SRm $ denote the $m$-fold product of $\SR$. 
\begin{definition} \label{d:fun} \thetag{i} 
For $ \sss \in \sSS $, a coordinate of $\sss $ on $ \SR $ is a tuple 
$\mathbf{x}_{\rR }(\sss ) = (x_{\rR }^i(\sss ))_{i} 
\in \sqcup_{m=1}^{\infty} \SRm $ such that $ 
\piR (\sss ) = \sum_{i} \delta_{x_{\rR }^i(\sss )}$. 

\noindent \thetag{ii} The restriction $ \mathbf{x}_{\rR }^m (\sss ) $ of $ \mathbf{x}_{\rR }(\sss ) $ on $ \SRm $ is called the $ \SRm $-coordinate. 

\noindent \thetag{iii}
Let $ f:\sSS \to \mathbb{R} $ and $ \rR,m \in \mathbb{N} $. 
We say that $ f_{\Rs }^m $ is the $ \SRm $-representation of $ f $ if 
$ f_{\Rs }^m(\mathbf{x}) $ is a function on $ \SRm $ such that
\begin{align}\notag %\label{:10p}
f_{\rR,\cdot}^m : \sSS \times \SRm \to \mathbb{R}, 
\qquad (\sss,\mathbf{x}) \mapsto f_{\Rs }^m(\mathbf{x}),
\end{align}
and the following conditions hold:
\begin{align}
%\label{:10q}
\notag & f_{\Rs }^m(\mathbf{x}) \ \text{is symmetric in $\mathbf{x}$ for each $ \sss \in \SSRm $}
,\\
%\label{:10r}
\notag 
& f_{\rR,\sss(1)}^m(\mathbf{x}) = f_{\rR,\sss(2)}^m(\mathbf{x}) 
\ \text{if } \piRc (\sss(1)) = \piRc (\sss(2))
,\\
%\label{:10s}
\notag 
& f_{\Rs }^m(\mathbf{x}_{\rR } (\sss )) = f(\sss ) 
\ \text{for } \sss \in \SSRm,\\
%\label{:10t}
\notag & f_{\Rs }^m(\mathbf{x}) = 0 
\ \text{for } \sss \notin \SSRm.
\end{align}
We define $ f _{\Rs }$ on $ \sqcup_{m\in\N } \SRm $ by $ f _{\Rs } (\mathbf{x}) = f_{\Rs }^m(\mathbf{x}) $ for $ \mathbf{x} \in \SRm $. 
\end{definition}

Note that $f_{\Rs }^m$ is uniquely determined, and for every $\sss \in \sSS $,
\[
f(\sss ) = f_{\Rs }(\mathbf{x}_{\rR } (\sss )).
\]
By convention, $\mathbf{x}_{\rR }(\sss ) = \emptyset $ for $\sss \in \SSRz $, and $f_{\Rs } $ is constant on $\SSRz $. 

For a bounded set $A$, one defines $\mathbf{x}_A^m(\sss )$, $ f_{A}^m (\mathbf{x}) $ and $f_{A,\sss} (\mathbf{x}) $ analogously by replacing $\SR$ with $A$.
\subsection{Quasi-regularity of $ m $-labeled Dirichlet forms} \label{s:QR}
Let the coefficient $ \aaaa $ satisfy \eqref{:12o} and $ \DDDam $ be as \eqref{:52s}. 
Let 
\begin{align} \notag &
\Enum (f,g) = \int_{\RdmSS } \DDDam [f,g] d\num 
,\\ \notag & 
 \dcnum = \{ f \in \dcm ; \Enum (f,f) < \infty ,\, f \in \Lnum \} 
.\end{align}
Although $ \Enum $, $ \dRbmum $, and $ \dcnum $ depend on $ \aaaa $ and $ \mu $, 
we suppress this dependence in the notation. 
 We make assumptions: 
\smallskip

\noindent 
\thetag{A.1} 
$ ( \Enum , \dcnum ) $ is closable on $ \Lnum $. 

\smallskip 
\noindent \thetag{A.2} 
$ \int_{\sSS } \sss (\SR ) \nu (d\sss ) < \infty $ for each $ \rR \in \N $. 

\noindent \thetag{A.3} 
$ \nu $ has a bounded $ k $-density function on $ \oLSRk $ for each $ \rR , k \in \N $. 

%GTP[ 

Let $ \EDnum $ be the closure of $ ( \Enum , \dcnum ) $ on $ \Lnum $. 
We write $ \EDnu $ for $ m = 0 $ since $ \nu ^{[0]} = \nu $. 

\begin{theorem}[{\cite[Th.\,1]{o.dfa}}] \label{l:QR1}
Assume \thetag{A.1} for $ m = 0 $, \thetag{A.2}, and \thetag{A.3}. 
Then the Dirichlet form $ \EDnu $ on $ \Lnu $ is quasi-regular.
\end{theorem}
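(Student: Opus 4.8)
\textbf{Proof plan for \tref{l:QR1}.}
The plan is to verify the three conditions \As{QR1}--\As{QR3} of \dref{d:QR} for $ \ED $ on $ \Lmu $, following the Ma--R\"{o}ckner framework adapted to configuration spaces as in \cite{o.dfa}. I would first dispose of \As{QR2} and \As{QR3} using cylinder functions. Fix a countable family $ \{ \varphi _j \}\subset \CziRd $ rich enough that a Radon measure on $ \Rd $ is determined by the numbers $ \int \varphi _j\, d\sss $, and let $ \mathscr{F}_{\mathrm{cyl}} $ be the linear span of $ \sss \mapsto F \bigl( \int \varphi _{j_1}d\sss ,\ldots, \int \varphi _{j_k}d\sss \bigr) $ with $ k\in\N $ and $ F\in C_b^{\infty}(\R ^k) $. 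These functions are bounded, smooth and local, and have finite $ \Emu $-energy by \thetag{A.2} (the carr\'{e} du champ $ \DDDa $ involves only $ \nabla \varphi _j $, and $ \int \sss (\SR )\mu (d\sss )<\infty $), so $ \mathscr{F}_{\mathrm{cyl}}\subset \dcmu $; they are vague-continuous, hence $ \Emu $-quasi-continuous, and separate the points of $ \sSS $, so \As{QR3} holds with empty $ \Emu $-polar set. For \As{QR2} I would show $ \mathscr{F}_{\mathrm{cyl}} $ is dense in $ \oL{\dom }_{\circ }^{\mu } $ in the $ \Emu +(\cdot ,\cdot )_{\Lmu } $ norm by approximating bounded local smooth functions by cylinder functions (routine, as $ \DDDa $ sees only first derivatives in the particle coordinates), and use \thetag{A.3} to ensure that the set of configurations with a particle on some sphere $ \partial \SR $ is both $ \mu $-null and $ \Emu $-polar, so that local smooth functions, restricted to the compact nest built below, have continuous, hence $ \Emu $-quasi-continuous, $ \mu $-versions.

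The heart of the matter is \As{QR1}: the construction of an $ \Emu $-nest of compact subsets of $ \sSS $. Since every relatively compact subset of $ \sSS $ lies inside some $ \Ki [\ane ] $ (see around \eqref{:12!}), it suffices, for each $ \epsilon >0 $, to produce a closed set $ \fFF _{\epsilon } $ with $ \fFF _{\epsilon }\subset \Ki [\ane _{\epsilon }] $ for some $ \ane _{\epsilon } $ and $ \capamu (\fFF _{\epsilon }^{c})<\epsilon $, and then pass to an increasing enumeration. To this end, for each $ \rR \in\N $ I would fix $ \chi _{\rR }\in \CziRd $ with $ 1_{\oLSR }\le \chi _{\rR }\le 1_{\SRR } $, and fix $ \psi \in C^{\infty}([0,\infty)) $ with $ \psi \equiv 0 $ on $ [0,\tfrac12] $, $ \psi \equiv 1 $ on $ [1,\infty) $, $ 0\le \psi \le 1 $, $ |\psi '|\le 3 $, and set $ e_{\rR ,a}(\sss )=\psi \bigl( a^{-1}\int \chi _{\rR }\, d\sss \bigr) $ for $ a>0 $. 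Then $ e_{\rR ,a}\in \dcmu $ (it is bounded, smooth, and local, being $ \sigma [\pi _{\SRR }] $-measurable), $ e_{\rR ,a}\equiv 1 $ on the open set $ O_{\rR ,a}:=\{ \int \chi _{\rR }\, d\sss >a\} $, and $ e_{\rR ,a}\equiv 0 $ unless $ \sss (\sS _{\rR +2})>a/2 $. From the definition of $ \DDDa $ and \eqref{:12o}, only $ \nabla \chi _{\rR } $ enters and
\begin{align}\notag
\DDDa [e_{\rR ,a},e_{\rR ,a}](\sss )\le 9\, a^{-2}\, \cref{;12}\, \| \nabla \chi _{\rR }\| _{\infty }^{2}\, \sss (\sS _{\rR +2})\, 1_{\{ \sss (\sS _{\rR +2})>a/2\} }.
\end{align}
Since $ \int _{\sSS }\sss (\sS _{\rR +2})\mu (d\sss )<\infty $ by \thetag{A.2}, dominated convergence gives $ \Emu (e_{\rR ,a},e_{\rR ,a})+\| e_{\rR ,a}\| _{\Lmu }^{2}\to 0 $ as $ a\to\infty $ for each fixed $ \rR $. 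Choosing $ a_{\epsilon }(\rR ) $ with this quantity less than $ \epsilon 2^{-\rR } $, and noting $ e_{\rR ,a_{\epsilon }(\rR )}\ge 1 $ $ \mu $-a.e.\ on $ O_{\rR ,a_{\epsilon }(\rR )} $, I obtain $ \capamu (O_{\rR ,a_{\epsilon }(\rR )})<\epsilon 2^{-\rR } $, hence $ \capamu \bigl( \bigcup _{\rR }O_{\rR ,a_{\epsilon }(\rR )}\bigr) <\epsilon $ by countable subadditivity. Its complement $ \fFF _{\epsilon }:=\bigcap _{\rR }\{ \int \chi _{\rR }\, d\sss \le a_{\epsilon }(\rR )\} $ is closed and, because $ \sss (\SR )\le \int \chi _{\rR }\, d\sss $, contained in $ \Ki [\ane _{\epsilon }] $ with $ \ane _{\epsilon }(\rR ):=\max _{\rR '\le \rR }a_{\epsilon }(\rR ') $, hence compact. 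Taking $ \epsilon = 1/n $ and replacing $ \fFF _{1/n} $ by $ \fFF _1\cup \cdots \cup \fFF _{1/n} $ yields an increasing sequence of compacts whose complements have capacity tending to $ 0 $, i.e.\ the required $ \Emu $-nest; here \thetag{A.1} is what makes $ \ED $ and $ \capamu $ well defined.

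The hard part will be precisely \As{QR1}: because $ \sSS $ is not locally compact, a compact nest is not automatic, and the argument must exploit the specific integrability hypothesis \thetag{A.2} together with the purely differential form of $ \DDDa $ to drive the energies $ \Emu (e_{\rR ,a},e_{\rR ,a}) $ to zero as $ a\to\infty $; if \thetag{A.2} fails, this step collapses. The minor technical points — that $ e_{\rR ,a} $ and the cylinder functions genuinely belong to $ \oL{\dom }_{\circ }^{\mu } $ and not merely to $ \dbmu $ (handled by locality together with \thetag{A.1}), and the energy-compatibility of the cylinder approximation used in \As{QR2} — are routine. Once \As{QR1}--\As{QR3} are established, $ \ED $ on $ \Lmu $ is quasi-regular by \dref{d:QR}, which completes the proof.
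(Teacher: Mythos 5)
The paper's own ``proof'' is a bare citation to \cite[Th.\,1]{o.dfa} plus the remark that passing from the identity matrix to a uniformly elliptic bounded $\aaaa$ is routine, so you are in effect reconstructing the argument of that reference. Your treatment of \As{QR1} is essentially the argument used there: build cut-off functions $e_{\rR,a}\in\dcbmu$ supported on $\{\int\chi_{\rR}\,d\sss>a/2\}$, bound the carr\'e du champ by $a^{-2}\,\sss(\SRR)\,1_{\{\sss(\SRR)>a/2\}}$ up to constants from \eqref{:12o}, let $a\to\infty$ using \thetag{A.2} and dominated convergence to drive $\E_1$-norms to zero, and then use countable subadditivity to produce a compact $\E$-nest of the form $\bigcap_{\rR}\{\int\chi_{\rR}\,d\sss\le a_{\epsilon}(\rR)\}\subset\Ki[\ane_{\epsilon}]$. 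Apart from cosmetic slack ($\sS_{\rR+2}$ vs.\ $\SRR$) and the unaddressed verification that $e_{\rR,a}\in\dcmu$ (which needs \thetag{A.1} and the finite-energy check you sketch), this is correct and is the same route as \cite{o.dfa}. Your \As{QR3} via a countable point-separating family of cylinder functions is fine.

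The gap is in \As{QR2}, and it is not a small one. You assert that approximating bounded local smooth functions by cylinder functions in the $\E_1$-norm is ``routine.'' It is not: $\dcb$ contains functions such as $\sum_{i<j}g(\lvert s^i-s^j\rvert)\psi(s^i)\psi(s^j)$ (pair functionals) that are \emph{not} of the form $F(\int\varphi_{1}\,d\sss,\ldots,\int\varphi_{k}\,d\sss)$, and proving they can be approximated in $\E_1$ by such cylinder functions requires a real argument; this density step, together with the density of $\oL{\dom}_\circ^{\mu}$ in $\Lm$, is precisely what \cite{o.dfa} handles with the mollifier $\mathcal{J}_{\rR,\epsilon}$ defined in \eqref{:QRa}, and hypothesis \thetag{A.3} (locally bounded densities) enters \emph{there}, via the convergence \eqref{:QRz} — not in the way you invoke it. Your alternative use of \thetag{A.3}, namely that the set of configurations with a particle on $\partial\SR$ is $\Emu$-polar, is doubtful: under a $\mu$-reversible diffusion the labeled particles cross spheres with positive probability, so $\{\sss:\sss(\partial\SR)\ge1\}$ is not polar in general (it is merely $\mu$-null). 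You would do better to argue, as the paper does in \tref{l:QR2}, that mollified or cylinder functions in $\oL{\dom}_\circ^{\mu}$ are genuinely continuous on $\sSS$ (so quasi-continuity is automatic) and then carry out the $\E_1$-density by the $\mathcal{J}_{\rR,\epsilon}$ mollification, invoking \thetag{A.3} only for \eqref{:QRz}. As written, your plan either relies on an unproved density claim or on a false polarity claim, so the \As{QR2} step does not close.
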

\begin{proof}
The theorem was proved in \cite[Th.\,1]{o.dfa} under the assumption that the diffusion matrix $ \aaaa $ is the identity. 
In the present setting, $ \aaaa $ is uniformly elliptic and bounded by \eqref{:12o}, 
and hence the same argument applies verbatim.
\PFEND
%GTP]
\begin{lemma} \label{l:QR5}
Assume \thetag{A.2}. Then $\dcnu $ is dense in $ \Lnu $.
\end{lemma}
\PF %G[----
The assertion was proved in the proof of Lemma~2.5 in \cite{k-o-t.udf}. 
 See the last line of the proof of Lemma~2.5 in \cite{k-o-t.udf}. 
%GTP]
\PFEND

In \tref{l:QR2}, we significantly relax the assumptions introduced in \tref{l:QR1}. 
Specifically, if $(\Emum, \dcnum)$ is found to be closable on $\Lnum $, then the existence of a density function is no longer necessary. 

%GTP[

\begin{theorem}\label{l:QR2}
Assume \thetag{A.1} and \thetag{A.2}. 
Then $ \EDm $ is a quasi-regular Dirichlet form on $ \Lnum $. 
\end{theorem}

\PF %---
Assumptions \As{Q1} and \As{Q3} follow from \thetag{A.1} and \thetag{A.2}
by exactly the same arguments as those used in the proofs of
\cite[Th.\,1]{o.dfa} and \cite[Lem.\,2.3]{o.tp}.
Thus, it only remains to prove \As{Q2}.

For $ m = 0 $, \thetag{A.3} is used only in the proof of
\cite[Lem.\,2.4]{o.dfa} to establish \lref{l:QR5}. 
Hence, \As{Q2} holds for $ m = 0 $.

For $ m \in \N $, the result follows from \cite[Lem.\,2.3]{o.tp} 
by reducing to the case $ m = 0 $,
and we therefore omit the proof.
\PFEND
%GTP]
\subsection{Weak solution and the IFC condition to ISDEs} \label{s:IFC} 
 In \ssref{s:IFC}, we quote the IFC condition from \cite{o-t.tail}. 

Let $ \SSsi = \{ \sss \in \sSS \, ;\, \sss (\Rd ) = \infty , \sss (\{ s \} ) \in \{ 0 , 1 \} \text{ for all } s \in \Rd \} 
$. 
Let $ \SSsde $ be a Borel set such that $ \SSsde \subset \SSsi $. 
Let $\map{\ulabzone }{\RdSS }{\sSS }$ such that $ \ulabzone (x,\sss ) = \delta_x + \sss $. 
Let 
$ \SSSsde = \ulab ^{-1} (\SSsde ) $ and $ \SSSsdeone =( \ulabzone ) ^{-1} (\SSsde ) $.

Let 
$ \map{\sigma }{\SSSsdeone }{\mathbb{R}^{d^2}}$ and $ \map{\bb }{\SSSsdeone }{\Rd }$ be Borel measurable functions. 
We consider the ISDE of $ \X =(X^i)_{i\in\mathbb{N}}$ with state space $ \SSSsde $ such that 
\begin{align}\label{:IFCa}&%&\notag 
dX_t^i = \sigma (X_t^i,\XX _t^{\diai }) dB_t^i + 
\bb (X_t^i,\XX _t^{\diai }) dt \ (i\in\mathbb{N}),\quad 
\X _t \in \SSSsde 
.\end{align}
Here, $ \XX _t^{\diai } = \sum_{ j\not=i } \delta_{X_t^j} $ and 
$ \mathbf{B}=(B^i)_{i\in\mathbb{N}}$ is an $\RdN $-Brownian motion. 

Let $ \mathcal{L}^{p} $ be the set of all measurable $ \{ \mathscr{F}_t \}_{t \ge 0 } $-adapted 
processes $ \alpha $ such that 
$ E[ \int_0^T \vert \alpha (t,\omega) \vert ^p dt ] < \infty $ for all $ T $. 
\begin{definition}[weak solution]\label{d:IFC} 
By a weak solution of ISDE \eqref{:IFCa}, we mean an $ \RdN \ts \RdN $-valued stochastic process $ \XB $ defined on a probability space $ (\Omega , \mathscr{F}, P )$ 
with a reference family $ \{ \mathscr{F}_t \}_{t \ge 0 } $ such that

\noindent 
\thetag{i} $ \X =(X^i)_{i=1}^{\infty} $ is an $ \{ \mathscr{F}_t \}_{t \ge 0 }$-adapted, $ \SSSsde $-valued continuous process. 
\\
\thetag{ii} $ \mathbf{B} = (B^i)_{i=1}^{\infty}$ is an $ \RdN $-valued 
\FtB 
with $ \mathbf{B}_0 = \mathbf{0}$, 
\\
\thetag{iii} 
the family of measurable $ \{ \mathscr{F}_t \}_{t \ge 0 } $-adapted 
processes $ \Phi $ and $ \Psi $ defined by 
\begin{align}\notag & 
\Phi ^i(t,\omega ) = \sigma 
(X_t^i(\omega ),\XX _t^{\diai }(\omega )) ,\quad 
\Psi ^i(t,\omega ) = \bb (X_t^i(\omega ),\XX _t^{\diai }(\omega ))
\end{align}
belong to $ \mathcal{L}^{2} $ and $ \mathcal{L}^1 $, respectively. 
Here, we can and do take a predictable version of $ \Phi ^i $ and $ \Psi ^i $ (see pp 45-46 in \cite{IW}). 
\\
\thetag{iv} with probability one, the process $ \XB $ satisfies for all $ t $ 
\begin{align}\notag & 
X_t^i - X_0^i = 
\int_0^t \sigma (X_u^i,\XX _u^{\diai }) dB_u^i 
 + 
\int_0^t 
\bb (X_u^i,\XX _u^{\diai }) du \quad (i\in\mathbb{N}) 
.\end{align}
\end{definition}

Let $ \X = (X^i)_{i\in\mathbb{N}}$ be a weak solution to \eqref{:IFCa} starting at $ \mathbf{s} = \lab (\sss )$. Here, $\lab $ is the label given in \eqref{:02x}. 
Let $ \XX $ such that $ \XX _t = \sum_i \delta_{\xX _t^i } $. 

Define $ \map{ \sigmaXms } {[0,\infty) \ts \Rdm }{\mathbb{R}^{d^2}}$ and 
 $ \map{ \bbbXms }{[0,\infty) \ts \Rdm }{\Rd }$ such that, for $ (u,\mathbf{v}) \in \Rdm $ and 
$ \mathsf{v} = \sum_{i=1}^{m-1} \delta_{v_i} \in \sSS $, where 
$\mathbf{v}=(v_i)_{i=1}^{m-1} \in (\Rd )^{m-1} $, 
\begin{align} &\notag %
 \sigmaXms ( t, (u, \mathbf{v})) = 
 {\sigma } (u , \mathsf{v} + \XX _t^{m*}) ,\quad 
\bbbXms ( t, (u, \mathbf{v})) = {\bb } (u , \mathsf{v} + \XX _t^{m*})
.\end{align}
Here, $\XX _t^{m*} = \sum_{i=m+1}^{\infty} \delta_{X_t^i }$. 
For $ \mathbf{s}=(\si )_{i\in\mathbb{N}} $, we set $ \sss _m^* = \sum_{i=m+1}^{\infty} \delta_{\si }$. 
Recall that $ \X _0 = \mathbf{s} = \lab (\sss )$. 
We have $ \XX _0^{m*} = \sss _m^* $ by construction. 

The coefficients $ \sigmaXms $ and $ \bbbXms $ depend on 
both $ \XX ^{m*} $ and the label $ \lab $. 
In particular, $ \XX $ gives a part of $ \sigmaXms $ and $ \bbbXms $. 
Let 
\begin{align} \notag &
\SSSsdemtw = 
\{ \mathbf{s}^m = (s^i)_{i=1}^m \in \Rdm ; 
 \ulab (\mathbf{s}^m) + \ww _t^{m*} \in \SSsde 
 \} 
,\end{align}
where $ \ww _t^{m*}=\sum_{i=m+1}^{\infty}\delta_{w_t^i}$ for 
$ \ww _t = \sum_{i=1}^{\infty} \delta_{w_t^i}$. 
By definition, $ \SSSsdemtw $ is a time-dependent domain in $ \Rdm $ given by $ \ww _t^{m*}$.

We consider the SDE to $ \mathbf{Y}^m=(Y^{m,i})_{i=1}^m $ with random environment $ \XX $ defined on $ \OFpsF $ such that 
\begin{align} &\notag 
dY_t^{m,i} = 
\sigmaXms (t, (Y_t^{m,i},\mathbf{Y}_t^{m,\diai })) dB_t^i + 
 \bbbXms (t, (Y_t^{m,i},\mathbf{Y}_t^{m,\diai })) dt
,\\ &\notag 
 \mathbf{Y}_t^m \in \SSSsdemt \quad \text{ for all } t 
,\\\label{:IFCh}&
 \mathbf{Y}_0^{m} = \mathbf{s}^m ,
\quad \text{ where $ \mathbf{s}^m=(s^i)_{i=1}^m $ for $ \mathbf{s}=(\si )\in\SN $}
.\end{align}
Here, $ \mathbf{Y}^{m,\diai } = (Y^{m,j})_{j\not=i}^m $ and 
$ \mathsf{Y}_t^{m,\diai }=\sum_{j\not=i}^m \delta_{Y_t^{m,j}}$.

For $ \X = (X^i)_{i\in\mathbb{N}}$, let $ \X ^{m*} = (0,\ldots,0,X^{m+1}, X^{m+2},...) $. 
The first $ m $ components of $ \X ^{m*} $ are constant paths $ 0 $. A triplet $ (\mathbf{Y}^m,\mathbf{B}^m ,\X ^{m*} )$ of continuous processes on $ \OFpsF $ satisfying \eqref{:IFCh} is called a weak solution. 

Let 
$ \W = \CRd $ and $\WRdzm = \{ \w \in \WRdm ; \w (0) =\mathbf{0} \}$.

Let $ \Btm = \sigma [\w (s) ; 0\le s \le t , \w \in \WRdm ]$. 

Let $ \Bt (\WWdm ) = \sigma [(\mathbf{v}(s) ,\w (s) ) ; 0\le s \le t ] $. 

We set $ \Ehatmt = \overline{\Bt (\WWdm ) }$, and $ \Ehatm = 
\overline{\mathscr{B} (\WWdm ) }$, where $ \oL{\cdot}$ denotes the completion with respect to 
$ \PPPm = \Ps \circ (\lBlhatm )^{-1} $.

\begin{definition}
\label{d:IFC2} 
\thetag{i} 
 $\mathbf{Y} ^{m}$ is called a strong solution of \eqref{:IFCh} for $ \XB $ \uPs\ if 
$ (\mathbf{Y}^{m},\mathbf{B}^m,\X ^{m*})$ satisfies \eqref{:IFCh} and 
there exists a $ \Ehatm $-measurable function $ \map{\Fms }{\WWdm }{ \WRdm } $ 
such that $ \Fms $ is $ \Ehatmt / \Btm $-measurable for each $ t $, and $ \Fms $ satisfies %
$ \mathbf{Y}^m = \Fms (\lBlhatm )$ $ \Ps $-a.s.. 

\noindent \thetag{ii} 
The SDE \eqref{:IFCh} is said to have a unique strong solution for $ \XB $ \uPs\ 
if there exists a function $ \Fms $ satisfying the conditions in \dref{d:IFC2} \thetag{i} and, 
for any weak solution $ (\hat{\mathbf{Y}}^m ,\mathbf{B}^m,\X ^{m*})$ of \eqref{:IFCh} \uPs, 
$ \hat{\mathbf{Y}}^m = \Fms (\lBlhatm ) $ for $ \Ps $-a.s. 
\end{definition}
By construction, the function $ \Fms $ is unique for $ \PPPm $-a.s.

We introduce the IFC condition of $ \XB $ defined on $ \OFPF $: 
\smallskip

\noindent \As{\iFc} \ 
The SDE \eqref{:IFCh} has a unique strong solution $ \Fms (\mathbf{B}^m,\X ^{m*})$ 
for $ \XB $ under $ \Ps $ for $ P\circ \X _0 ^{-1}$-a.s. $ \mathbf{s}$ 
for all $ m \in \mathbb{N}$, where $ \Ps = P (\cdot \vert \X _0 = \mathbf{s})$.

\subsection{A unique strong solution of ISDEs} \label{s:US}

\begin{definition}[uniqueness in law]\label{d:US1}
Uniqueness in law of weak solutions for \eqref{:IFCa} with initial distribution $ \nu $ holds if 
the laws of the processes $ \X $ and $ \X '$ in $ \WRN $ coincide for any 
 weak solutions $ \X $ and $ \X '$ with initial distribution $ \nu $. 
 \end{definition}

\begin{definition}[pathwise uniqueness]\label{d:US2}
Pathwise uniqueness of weak solutions of 
\eqref{:IFCa} holds if whenever $ \X $ and $ \X '$ are two weak solutions 
defined on the same probability space $ (\Omega , \mathscr{F} ,P )$ with the same reference family $ \{ \mathscr{F}_t \}_{t \ge 0 }$ and the same $ \RdN $-valued \FtB 
$ \mathbf{B} $ such that $ \X _0=\X _0' $ a.s., 
then 
\begin{align}\notag 
P (\text{$ \X _t=\X _t'$ for all $ t \ge 0 $}) = 1 
.\end{align} 
\end{definition}

Let $ \PBr $ be the distribution of an $ \RdN $-valued Brownian motion 
$ \mathbf{B} $ with $ \mathbf{B}_0 = \mathbf{0}$. 
Let $ \Bt (\PBr )$ and $ \mathscr{B}(\PBr ) $ be the completion of 
$ \sigma [ \w _s ; 0\le s \le t ,\, \w \in \WRNz ] $ and $ \mathscr{B}(\WRNz ) $
with respect to $ \PBr $, respectively. 
Let $ \Bt ^{\infty} = \sigma [ \w (s) ; 0\le s \le t , \w \in \WRN ] $. 
\begin{definition}[a strong solution starting at $ \mathbf{s}$] \label{d:US4} 
A weak solution $ \X $ of \eqref{:IFCa} 
with an $ \RdN $-valued $ \mathscr{F}_t $-Brownian motion 
$\mathbf{B}$ is called a strong solution starting at $ \mathbf{s}$ defined on $ \OFPF $ 
if $ \X _0=\mathbf{s}$ a.s.\, and if there exists a function 
$ \map{\Fs }{\WRNz }{\WRN }$ such that 
$ \Fs $ is $ \mathscr{B}(\PBr ) /\mathscr{B}(\WRN )$-measurable, and that 
$ \Fs $ is $ \Bt (\PBr ) /\Bt ^{\infty}$-measurable for each $ t $, and 
that $ \Fs $ satisfies 
\begin{align}\notag & 
\X = \Fs (\mathbf{B}) \quad \text{ a.s.}
\end{align}
\end{definition}

%G[

\begin{definition}[a unique strong solution starting at $ \mathbf{s} $]\label{d:US5}
The ISDE \eqref{:IFCa} admits a unique strong solution starting at $ \mathbf{s} $
if there exists a $ \mathscr{B}(\PBr )/\mathscr{B}(\WRN ) $-measurable mapping
$ \map{\Fs }{\WRNz }{\WRN } $ such that the following hold: 

\smallskip
\noindent
\thetag{i}
For any weak solution $ (\hat{\X }, \hat{\mathbf{B}} ) $ of \eqref{:IFCa}
starting at $ \mathbf{s} $, it holds that
\begin{align}\notag
\hat{\X } = \Fs (\hat{\mathbf{B}}) \quad \text{a.s.}
\end{align}

\noindent
\thetag{ii}
For any $ \RdN $-valued $ \{ \mathscr{F}_t \} $-Brownian motion $ \mathbf{B} $
defined on $ \OFPF $ with $ \mathbf{B}_0 = \mathbf{0} $,
the continuous process $ \Fs (\mathbf{B}) $
is a strong solution of \eqref{:IFCa} starting at $ \mathbf{s} $.
\end{definition}
%G]%G[
\begin{definition}[a unique strong solution under constraints]\label{d:US6}
Let \As{$ \bullet $} be a condition.
The ISDE \eqref{:IFCa} admits a unique strong solution starting at $ \mathbf{s} $ 
under the constraints \As{$ \bullet $} if there exists a
$ \mathscr{B}(\PBr )/\mathscr{B}(\WRN ) $-measurable mapping
$ \map{\Fs }{\WRNz }{\WRN } $ such that the following hold: 

\smallskip
\noindent
\thetag{i}
For any weak solution $ (\hat{\X }, \hat{\mathbf{B}} ) $ of \eqref{:IFCa}
starting at $ \mathbf{s} $ and satisfying \As{$ \bullet $},
it holds that
\begin{align}\notag
\hat{\X } = \Fs (\hat{\mathbf{B}}) \quad \text{a.s.}
\end{align}

\noindent
\thetag{ii}
For any $ \RdN $-valued $ \{ \mathscr{F}_t \} $-Brownian motion $ \mathbf{B} $
defined on $ \OFPF $ with $ \mathbf{B}_0 = \mathbf{0} $,
the continuous process $ \Fs (\mathbf{B}) $
is a strong solution of \eqref{:IFCa} starting at $ \mathbf{s} $
and satisfying \As{$ \bullet $}.
\end{definition}
%G]
Let $ \WSsi $ be as in \lref{l:top1}. Let 
\begin{align} &\notag 
 \WSsiNE = \{\ww = \{ ( w ^i , I_i ) \}_{i \in \mathbb{N}} \in \WSsi ; I_i = [0,\infty ) \text{ for all $ i \in \mathbb{N}$}\} 
,\\& \notag 
\IRT (\w ) = 
\sup\{ i \in \mathbb{N}; \min_{t\in[0,T]} \lvert w^i(t)\rvert \le \rR \}, 
\quad \text{where $ \w = (w^i) \in \WRN $}
.\end{align}
Let $ \XX $ be the unlabeled path of $ \X = (X^i)$ given by $ \XX _t = \sum_{i} \delta _{X_t^i}$. 

\smallskip 
\noindent 
\As{SIN} \,$ P ( \XX \in \WSsiNE ) = 1$. \\
\As{NBJ} $ P (\IRT (\X ) < \infty ) = 1 $ for each $ R , T \in \mathbb{N}$. \\
\As{AC}$_{\nu }$ $ P \circ \XX _t^{-1}\ll \nu $ for all $ 0 < t < \infty $. 

Recall the condition \As{IFC} introduced in \ssref{s:IFC}.

For a strong solution $ \Fs $, we impose the following condition: 

\noindent 
\As{MF} $ P (\Fs (\mathbf{B}) \in A )$ is $ \oL{\mathscr{B} (\RdN ) }$-measurable in $ \mathbf{s}$ for any $ A \in \mathscr{B} ( \WRN ) $, where the completion is taken with respect to $ {P \circ \X _0 ^{-1}}$.

\begin{theorem}[{\cite[Th.\,3.1, p.1157]{o-t.tail}}] \label{l:US1}
Assume that $ \nu $ is tail trivial. 
Assume that \eqref{:IFCa} has a weak solution $ \XB $ under $ P $ satisfying \As{SIN}, \As{NBJ}, \As{AC}$_{\nu }$, and \As{IFC}. 
It holds that \eqref{:IFCa} has a family of unique strong solutions $ \{ F_{\mathbf{s}} \} $ starting at $ \mathbf{s}$ for $ P\circ \X _0^{-1}$-a.s.\,$ \mathbf{s}$ under the constraints  \As{SIN}, \As{NBJ}, \As{AC}$_{\nu }$, \As{IFC}, and \As{MF}. 
\end{theorem}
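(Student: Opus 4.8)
The statement is the main result of \cite[Th.\,3.1, p.\,1157]{o-t.tail}; I outline the structure of its proof, which is an infinite-dimensional Yamada--Watanabe argument in which pathwise uniqueness is not assumed but is extracted from the finite-dimensional consistency condition \As{IFC} together with the tail triviality of $\mu$. Weak existence is given by hypothesis, so the content is to produce, for $P\circ\X_0^{-1}$-a.e.\ $\mathbf{s}$, a single $\mathscr{B}(\PBr)/\mathscr{B}(\WRN)$-measurable functional $\Fs$ such that every weak solution starting at $\mathbf{s}$ and satisfying the listed constraints equals $\Fs(\mathbf{B})$.

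\textbf{Step 1: finite-dimensional reduction.} For a given weak solution $\XB$, the first $m$ coordinates $\X^m=(X^1,\dots,X^m)$ solve the finite-dimensional SDE \eqref{:IFCh} in the random environment $\XX^{m*}=\sum_{i>m}\delta_{X^i}$. Condition \As{SIN} forces $\XX$ to be a $\WSsiNE$-valued path, so the labelling $\lab$ and the splitting into $(\X^m,\XX^{m*})$ are well defined and stable in time and $\X_t$ stays in the correct state space; condition \As{NBJ} guarantees that for each $\rR,T$ only finitely many exterior particles ever enter $\SR$ during $[0,T]$, so on each compact time--space window the environment acts through finitely many coordinates, which is exactly what makes \As{IFC} applicable. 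By \As{IFC} the SDE \eqref{:IFCh} has a unique strong solution $\Fms$ in the sense of \dref{d:IFC2}, whence $\X^m=\Fms(\mathbf{B}^m,\X^{m*})$ $\Ps$-a.s., and the prescribed adaptedness of $\Fms$ is built in.

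\textbf{Step 2: transporting tail triviality and bootstrapping.} Take two weak solutions $\X,\X'$ on the same filtered space driven by the same $\mathbf{B}$ with $\X_0=\X'_0=\mathbf{s}$, both satisfying \As{SIN}, \As{NBJ}, \As{AC}$_{\mu }$, \As{IFC}. From $\X^m=\Fms(\mathbf{B}^m,\X^{m*})$ and its analogue for $\X'$, the interior of the configuration agrees once the exterior does; but the exterior is itself part of the unknown, so the loop cannot be closed directly. It is closed by showing that the tail $\sigma$-field $\mathscr{T}_\infty:=\bigcap_m\sigma[\XX^{m*}]$ --- the information carried by the dynamics at spatial infinity --- is $P$-trivial: \As{AC}$_{\mu }$ gives $P\circ\XX_t^{-1}\ll\mu$ for every $t$, $\mu$ is tail trivial by hypothesis, and the Markov property together with \As{NBJ} propagates triviality of the spatial tail of $\mu$ to triviality of $\mathscr{T}_\infty$ at the path level. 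Since each $X^i$ is $\sigma[\mathbf{B}]\vee\mathscr{T}_\infty$-measurable up to $P$-null sets via Step 1, triviality of $\mathscr{T}_\infty$ upgrades this to $\sigma[\mathbf{B}]$-measurability; applied to the coupling this yields $\X=\X'$ a.s.\ (pathwise uniqueness), and applied to a single solution it produces the functional $\Fs$ with $\X=\Fs(\mathbf{B})$. The measurability and adaptedness bookkeeping required by \dref{d:US4}--\dref{d:US6}, including \As{MF}, is obtained by passing to the limit $m\to\infty$ in the $\Fms$, which already carry the right $\Bt(\PBr)/\Bt^{\infty}$-measurability; this completes the argument.

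\textbf{Main obstacle.} The technical heart is Step 2, specifically the assertion that $\mathscr{T}_\infty$ is $P$-trivial: one must rule out any information surviving at spatial infinity along the dynamics, and this is precisely where tail triviality of $\mu$, the absolute continuity \As{AC}$_{\mu }$, and the non-collision/non-explosion controls \As{SIN}, \As{NBJ} must all be used at once. The remaining steps --- the finite-dimensional reduction and the measure-theoretic assembly of $\Fs$ --- are essentially soft, and all of this is carried out in detail in \cite{o-t.tail}.
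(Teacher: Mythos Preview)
The paper does not give its own proof of this statement: Theorem~\ref{l:US1} is stated in the Appendix and is quoted verbatim from \cite[Th.\,3.1, p.\,1157]{o-t.tail}, with no argument supplied here. Your outline is a fair high-level sketch of the strategy in \cite{o-t.tail}---the finite-dimensional reduction via \As{IFC}, the transport of tail triviality from $\mu$ to the path-level tail $\sigma$-field using \As{AC}$_{\mu}$, and the resulting Yamada--Watanabe-type conclusion---so there is nothing to compare against in the present paper, and your summary is consistent with the cited source.
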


\begin{theorem}[{\cite[Cor.\,3.2, p.1158]{o-t.tail}}] \label{l:US2}
Under the same assumptions as \tref{l:US1}, the following hold: 
\\\thetag{i}
The uniqueness in law of weak solutions of \eqref{:IFCa} with initial distribution $ \nu $ holds under the constraints  \As{SIN}, \As{NBJ}, \As{AC}$_{\nu }$, and \As{IFC}. 
\\\thetag{ii}
The pathwise uniqueness of weak solutions of \eqref{:IFCa} with initial distribution $ \nu $ holds under the constraints  \As{SIN}, \As{NBJ}, \As{AC}$_{\nu }$, and \As{IFC}. 
\end{theorem}

\subsection{Cut-off compact sets and a sufficient condition for \As{IFC}} \label{s:IFC2}
In \ssref{s:IFC2}, we present a sufficient condition for the IFC condition.

We localize the coefficients of \eqref{:IFCh} to prove the IFC condition. 
To do this, we introduce a sequence of compact subsets $ \Han $ in $ \RdmSS $. 
Let $ \SSsi $ be the set consisting of infinite configurations with no multiple points. 

Let $ \mathbf{x}=(\xone ,\ldots, \xm ) \in \Rdm $, $ \ulab (\mathbf{x}) = \sum_{i=1}^m \delta_{\xI }$, 
and $\sss =\sum_i \delta_{\si }$. We set 
\begin{align}\label{:CUTt} & 
\SSsi ^{[m]} = \{ (\mathbf{x},\sss )\in \RdmSS \, ;\, \ulab (\mathbf{x}) + \sss \in \SSsi \} 
.\end{align}
We set $ \RRr = \{ \x \in \Rd ; |\x | \le \rr \}^m $. Let $ j,k,l=1,\ldots,m $. Let 
\begin{align} \notag &
\RRprs = \big\{ \mathbf{x} \in \RRr \, ;\, \min_{j\not=k } |\xj -\xk | \ge 2^{-\pp } ,\ 
\inf_{l,i} |\x ^l -\si | \ge 2^{-\pp } \big\} 
,\\& \label{:CUTu}
\RRprCs = \big\{ \mathbf{x} \in \sS _{\rr }^m \, ;\, 
\inf_{j\not=k } |\xj -\xk | > 2^{-\pp } , \inf_{l,i} |\x ^l -\si | > 2^{-\pp } \big\} 
.\end{align}
Then $ \RRprCs $ is an open set and $ \RRprs $ is its closure in $ \Rdm $.

Let $ \anest =\{ \ak \}_{\qqq \in\mathbb{N}} $ and $ \Kakk $ be as in \eqref{:ak}--\eqref{:CUTy}. 
For $ \mmm = (\pqr ) $, let 
\begin{align} \notag 
 \Han =\Ha _{\pqr } & = 
\big\{ (\mathbf{x},\sss ) \in \SSsi ^{[m]} \, ;\, \ \mathbf{x} \in \RRprs ,\ \sss \in \Kakk \big\} 
,\\ \label{:CUTc}
\Ha  & = \bigcup_{\pqr   \in \NNNone } \Ha _{\pqr } 
.\end{align}

Although $ \Han = \Ha _{\pqr } $ depends on $ m \in \N $, we suppress the
dependence on $ m $ in the notation. Clearly, the set $ \Ha _{\pqr } $
is increasing in each parameter $ \pqr \in \N $. Hence, by monotonicity,
the limit of the following exit times $ \varsigma _{\pqr } $ as
$ \pqr \to \infty $, taken successively in $ \pp $, $ \qqq $, and $ \rr $,
is well defined.

\medskip

Let $ \SSsde $ be the set defined before \eqref{:IFCa}. 
By definition, $ \SSsde $ is the set such that the coefficient of \eqref{:IFCa} is defined on 
$ \SSsde ^{[1]} = (\ulabzone )^{-1} (\SSsde )$. 
Here we extend the domain of $ \ulab $ to $ \RdmSS $ such that $ \ulabmz (\mathbf{x},\sss ) = \ulab (\mathbf{x}) + \sss $. 

Let $ \XB $ be a weak solution of \eqref{:IFCa} defined on $ \OFPF $. 
Let $ \Xm $ be the associated $ m $-labeled process. 
Let $ \nu $ be the RPF in \As{AC}$_{ \nu }$. 

\smallskip 
\noindent 
\Ass{B1} There exists a sequence $ \anest = \{ a_{\qqq } \}_{\qqq \in \mathbb{N} } $
of sequences such that the following conditions hold.

\noindent \thetag{i} 
$\nu ( \sSS \backslash \bigcup_{\qqq } \Kakk ) = 0 $.

\noindent 
\thetag{ii} 
 The inclusion $ \ulabmz (\Ha ) \subset \SSsde $ holds. 

\noindent \thetag{iii} 
$ \Xm =(\X ^m,\XX ^{m*}) $  does not exit from $ \Ha = \bigcup_{ (\pqr ) \in \NNNthree }\Ha _{\pqr } $: 
\begin{align}& \notag %\label{:CUTh} 
\pP ( \limi{\rr }\limi{\qqq }\limi{\pp } \varsigma _{\pqr }(\Xm ) = \infty ) = 1 
.\end{align}
Here $ \varsigma _{\pqr } (\UV ) = \inf \{ t > 0 ; \UV _t \not\in \Ha _{\pqr } \} $ is the exit time from $ \Ha _{\pqr } $. 

\medskip 

Let $ \sigma $ and $ \bb $ be the coefficients in \eqref{:IFCa}. 
We set
\begin{align}\label{:IFC1d}&
 \sigma ^m (\mathbf{x},\sss ) = (\sigma ( x^i , \xx ^{\diai } + \sss ) )_{i=1}^m
,\quad 
 \bb ^m (\mathbf{x},\sss ) = (\bb ( x^i , \xx ^{\diai } + \sss ) )_{i=1}^m
,\end{align}
where 
$ \mathbf{x}=(x^i)_{i=1}^m \in \Rdm $ and $ \xx ^{\diai } = \sum_{j\ne i } \delta_{x^j}$. 

For $ \mmm = (\pqr ) \in \NNNthree $ and $ (\mathbf{x} ,\sss ) , (\mathbf{y} ,\sss ) \in \RRprCs $, we set 
$ (\mathbf{x} ,\sss ) \sim_{\mmm } (\mathbf{y} ,\sss ) $ 
if $ \mathbf{x} $ and $ \mathbf{y} $ are in the same connected component of 
$ \RRprCs $ and $ \sss \in \HmnPc $. 
Here, $ \Pi _2 $ is a projection $ \map{\Pi _2 }{\RdmSS }{\sSS }$ given by $ \Pi _2 (\mathbf{x},\sss ) = \sss $.

With these preparation, we then make the assumptions: 

\smallskip 

\noindent 
\Ass{B2} 	
For each $ \mmm = (\pqr ) \in \NNNthree $, there exists a constant $ \FFn $ such that 
\begin{align}&\label{:B2a} %\tag{\ref{:B2a}}
| \sigma ^m (\mathbf{x} , \sss ) - \sigma ^m (\mathbf{y},\sss ) | \le \FFn |\mathbf{x} - \mathbf{y} |
,\\ & \label{:B2b} %\tag{\ref{:B2b}} 
| \bbb ^m (\mathbf{x} , \sss ) - \bbb ^m (\mathbf{y},\sss ) | \le \FFn |\mathbf{x} - \mathbf{y} |
\end{align}
 for all $ (\mathbf{x} , \sss ) , (\mathbf{y} , \sss ) \in \Han $ such that 
$ (\mathbf{x} ,\sss ) \sim_{\mmm } ( \mathbf{y} , \sss ) $, where $ \Han $ is the set defined in \eqref{:CUTc} and $ \sim_{\mmm } $ is the equivalence relation defined after \eqref{:IFC1d}. 

\begin{theorem}\label{l:IFC1}
Suppose that \eqref{:IFCa} has a weak solution $ \XB $ satisfying $ \{ \mathbf{B1} \} $ and that $ \{ \mathbf{B2} \} $ holds. 
Then $ \XB $ satisfies \As{IFC}. 
\end{theorem}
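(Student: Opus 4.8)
The plan is to deduce \As{IFC} from the localized SDEs \eqref{:IFCh} by showing that, on each cut-off set $\HanC$, the coefficients are Lipschitz continuous and hence the classical Itô--Watanabe theory applies, and then to patch these local strong solutions together using $\{\mathbf{B1}\}$. Concretely, fix $m\in\N$ and a starting point $\mathbf{s}$ with $\X_0=\mathbf{s}=\lab(\sss)$. The coefficients $\sigmaXms(t,\cdot)$ and $\bbbXms(t,\cdot)$ of \eqref{:IFCh} are built from $\sigma^m$, $\bb^m$ and the random environment $\XX^{m*}$, so from $\{\mathbf{B2}\}$\thetag{ii} they are, restricted to the connected component of $\RRprCs$ in which $\mathbf{Y}^m$ currently lives and with $\Pi_2$-configuration in $\HmnPc$, Lipschitz in the space variable with a deterministic constant $\FFn$. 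On such a region one has the standard pathwise uniqueness and strong existence for finite-dimensional SDEs with random coefficients (e.g.\ the arguments of \cite{IW}), giving a measurable solution map $\Fms$ defined up to the exit time $\varsigma_{\mmm}(\mathbf{Y}^m)$ from $\HanC$.

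The second step is to run this over the increasing sequence of cut-off sets. By \eqref{:CUTc} and \eqref{:CUTe}, $\Ha=\bigcup_{\mmm\in\NNN^3}\HanC$ and $\HanC\subset\Hann^{\circ}$, so the local solution maps are consistent: on $\HanC$ the solution obtained from level $\mmm+1$ restricts to the one from level $\mmm$, since the coefficients agree there. Using the consistency of the cut-off functions $\chin$ and $\{\mathbf{B1}\}$, namely $\pP(\llnn\varsigma_{\mmm}(\Xm)=\infty)=1$ with $\llnn=\limi{\rr}\limi{\qqq}\limi{\pp}$ from \eqref{:CUTg}, the weak solution $\Xm$ never leaves $\Ha$, and in fact, letting $\mmm\to\infty$ in the prescribed order, $\varsigma_{\mmm}(\Xm)\to\infty$. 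Hence the patched map $\Fms$ is defined for all $t\in[0,\infty)$, is $\Ehatmt/\Btm(\WRdm)$-measurable for each $t$ because each local piece is, and satisfies $\mathbf{Y}^m=\Fms(\lBlhatm)$ $\Ps$-a.s. For uniqueness: any weak solution $(\hat{\mathbf{Y}}^m,\mathbf{B}^m,\X^{m*})$ of \eqref{:IFCh} necessarily has $\hat{\mathbf{Y}}^m$ confined to $\Ha$ as well (this is exactly where one invokes $\{\mathbf{B1}\}$ together with $\{\mathbf{B2}\}$\thetag{i}, $\ulabmz(\Ha)\subset\SSsde$, so that the solution stays in the admissible state space), and on each $\HanC$ it agrees with $\Fms(\lBlhatm)$ by the local pathwise uniqueness; letting the cut-off expand gives $\hat{\mathbf{Y}}^m=\Fms(\lBlhatm)$ $\Ps$-a.s.

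The main obstacle is the \emph{gluing across the cut-offs}: one must show that the sequence of local exit times $\varsigma_{\mmm}$ genuinely increases to $+\infty$ along the iterated limit $\llnn$, and that the stopped solutions from successive levels are compatible $\Ps$-a.s.\ on the overlaps, so that the limiting process does not explode, does not collide (leaving $\RRprs$ through the diagonal), and does not suffer an infinite influx of exterior particles into the bounded region (leaving $\Kakk$). The first two are controlled by $\{\mathbf{B1}\}$ directly; the compatibility is a routine consequence of uniqueness on each $\HanC$ once one checks that the $\mathscr{F}_t$-measurability and predictability requirements of Definition~\ref{d:IFC2} are preserved under the patching. A secondary technical point is verifying that $\Fms$, assembled from countably many local maps indexed by connected components of $\RRprCs$ and by the discrete data $\qqq,\rr,\pp$, is jointly measurable in the right $\sigma$-fields; this follows from the explicit form of $\chin$ in \lref{l:CUT1} and a standard measurable-selection argument. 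Once these are in place, \As{\iFc} holds for all $m\in\N$ and for $P\circ\X_0^{-1}$-a.s.\ $\mathbf{s}$, which is precisely \As{IFC}.
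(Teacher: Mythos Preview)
Your outline is essentially correct and follows the standard localize-and-patch argument that underlies \cite[Th.\,3.2]{k-o-t.ifc}; the paper's own proof simply cites that reference and notes that \Ass{B1} and \Ass{B2} imply its hypotheses, so you have in fact unpacked what the paper defers.

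One logical point deserves tightening: you write that \Ass{B1} together with \Ass{B2}\thetag{i} confines an \emph{arbitrary} weak solution $\hat{\mathbf{Y}}^m$ to $\Ha$, but \Ass{B1} is stated only for the particular process $\Xm$ built from $\X$. The correct order is the one you sketch at the very end: first use local pathwise uniqueness on $\HanC$ to identify $\hat{\mathbf{Y}}^m$ with $\X^m$ (hence with $\Fms(\lBlhatm)$) up to the minimum of their two exit times from $\HanC$; since the processes agree up to that stopping time, the two exit times coincide; then \Ass{B1}, applied to $\Xm$, forces this common exit time to tend to $\infty$ along $\llnn$. The confinement of $\hat{\mathbf{Y}}^m$ is thus a \emph{consequence} of the local uniqueness plus \Ass{B1}, not an independent input.
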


\begin{proof}
\tref{l:IFC1} is essentially proved in \cite[Prop.\,11.1]{o-t.tail}.
Since the formulation there is slightly different, we explain the
correspondence between the assumptions.

(\textbf{A1})--(\textbf{A4}) in \cite{o-t.tail} are basic assumptions on
the dynamics, which have already been verified in the present paper in a
form adapted to the present setting.
(\textbf{B1}) in \cite{o-t.tail} corresponds to
$ \nu ( \sSS \backslash \bigcup_{\qqq } \Kakk ) = 0 $, which follows from \Ass{B1}\,\thetag{i}.
(\textbf{B2}) in \cite{o-t.tail} follows from \Ass{B1} and \Ass{B2}. 

In \cite[Prop.\,11.1]{o-t.tail}, it is further assumed that the
coefficient $ \sigma^m $ of the ISDE is constant. This assumption can,
however, be removed. Indeed, it suffices to use
\cite[Lem.\,11.2]{o-t.tail}\thetag{1} in place of
\cite[Lem.\,11.2]{o-t.tail}\thetag{3} in the proof, which can be verified directly. 
\PFEND

The next step is to provide a sufficient condition for \Ass{B1}. 

Let $ a = \sigma {}^t \sigma $, where $ \sigma $ is as in $ \eqref{:IFCa} $. 

\noindent 
\Ass{UB} 
$ a = (a _{kl } (x,\sss ) ) _{k,l=1}^d $ is uniformly elliptic with upper bound $ \Ct \label{;34}$: 
\begin{align} 	\notag &%	\label{:UB}&
\sum _{k,l=1}^d a _{kl } (x,\sss ) \xi _{k} \xi _{l} \le \cref{;34} |\xi |^2 ,\quad \xi \in \Rd , (x,\sss ) \in \SSsde ^{[1]}
.\end{align}

Let $ \la $ be an RPF such that $ \la (\SSsde )=1 $. 
Let  $ \la ^{[m]}$ be the $ m $-reduced Campbell measure of $ \la $ for $ m \in \N $ and $ \la ^{[0]} = \la $. 

Let $ \map{\lab }{\SSsi }{\SN }$ be a label. 
Let $\{ \QQs \}$ be a family of probability measures on $ \OFF $ such that $ \XB $ defined on $ \OFQFs $ 
 is a weak solution of \eqref{:IFCa} starting at $ \s = \lab (\sss )$ for $ \la $-a.s.\,$ \sss $. 

We assume $\{ \QQs \}$ is a measurable family in the following sense.

\smallskip 
\noindent 
\Ass{MF} 
$ \QQs (A)$ is 
$ \overline{\mathscr{B}(\sSS )}^{\la } $-measurable in $ \sss $ for each $ A \in \mathcal{F} $. 
\smallskip 

Assume \Ass{MF} and let $ \QQla = \int _{\sSS } \QQs d\la $. 
Then $ \XB $ under $ \QQla $ is a solution of \eqref{:IFCa} with the initial distribution $ \la \circ \lab ^{-1}$. 
\smallskip 

\noindent 
\Ass{BX} 
$ \sigma [\mathbf{B}_s ; s\le t] \subset \sigma [\X _s ;s\le t ]$ for all $ t $ under $ \QQla $. 

\smallskip

Let $ \QQxsM $ be the distribution of $ \Xm = (\X ^m, \XX ^{m*}) $ under 
$ \qQ _{\ulab (\mathbf{x}) + \sss } $. We define 
$ \QQlaM = \int _{\RdmSS } \QQxsM d\la ^{[m]} $ for $ m \in \N $ and $ \QQla ^{[0]} = \QQla \circ \XX ^{-1}$. 

\smallskip 

\noindent 
\Ass{S$_{\la }$} 
For each $ m \in \zN $, $ \Xm $ under $ \OFQFm $ 
gives a symmetric, Markovian semi-group $ T_t^{[m]} $ on $ L^2( \la ^{[m]})$ defined by 
\begin{align} 	\notag &%	\label{:IFC3c}&
T_t^{[m]} f ( \mathbf{x},\sss ) = \int _{C([0,\infty); \RdmSS ) }f (\w _t^{[m]}) d \QQxsM 
.\end{align}
Furthermore, $ \la ^{[m]}$ is an invariant measure of $ T_t^{[m]} $. 

\ms

We label $ \sss =\sum_i \delta_{\si }$ in such a way that $ |\si | \le |\sii | $ for all $ i $. 
Let $ \ak = \{ \ak (\rr ) \}_{\rr \in\mathbb{N}} $ be the increasing sequences in \eqref{:ak}. 

For $ \qQ \in \N \cup \{ \infty \} $, define 
\begin{align} &\notag %\label{:CUTm}&
\dkQ (\sss ) = 
 \Big\{ \sum_{\rR =1}^{\qQ } \sum_{ \sioLSR , i > \akR }(\rR - |\si |)^2 \Big\}^{1/2} 
. \end{align}
Let $ \theta \in C^{\infty}(\R )$ such that $ 0 \le \theta (t) \le 1 $ for all $ t \in \R $, 
$ \theta (t) = 0 $ for $ t \le \epsilon $, and 
$ \theta (t) = 1 $ for $ t \ge 1 - \epsilon $ for a sufficiently small $ \epsilon > 0 $. 
Assume that $ 0 \le \theta ' (t) \le \sqrt{2} $ for all $ t \in \R $. 
Let 
\begin{align}\label{:IFC3f}&
\chiwtI (\sss ) = \limi{\qQ }\sum_{ q =1}^{\infty} \theta \circ \dkQ (\sss ) 
.\end{align}
The limit in \eqref{:IFC3f} exists since $ \theta \circ \dkQ \ge 0 $ is non-decreasing in $ \qQ $. 

\begin{lemma}[{\cite[Lem.\,5.4]{k-o-t.ifc}}] \label{l:IFC5} 
Let $ \Ki [\ak ] $ be as in \eqref{:CUTw}. 
 Assume  \eqref{:CUTy} and 
$ \sum_{ \qqq =1}^{\infty} \qqq ^2 \la ( \Ki [\ak ]^c ) < \infty $. 
Then 
\begin{align}\label{:IFC3g}&
 \int_{\sSS } \lvert \chiwtI \rvert ^2 d\la < \infty 
.\end{align}
\end{lemma}
The following lemma is a key step in verifying \Ass{B1}. 
\begin{lemma}[{\cite[Prop.\,6.3]{k-o-t.ifc}}]\label{l:IFC)}
Assume $\{ \mathbf{UB}\} , \{ \mathbf{MF}\} , \{ \mathbf{BX}\} , \{ \mathbf{S}_{\la }\} $, and \eqref{:IFC3g}.
Let $ \XX = \upath (\X )$ be the unlabeled dynamics under $ \QQla $. 
Then 
\begin{align*}&
P ( \limi{\qqq } \kappa_{\qqq } = \infty ) = 1
.\end{align*}
Here $\kappa_{\qqq } = \inf \{ t > 0 \,;\, \XX_t \notin \Ki[\ak ] \} $. 
\end{lemma}
\begin{proof}
The proof follows from \cite[Prop.\,6.2]{k-o-t.ifc}, where an additional assumption \Ass{D} was imposed. 
This assumption was not used in \cite[Prop.\,6.3]{k-o-t.ifc}. (It was used in \cite[Prop.\,6.2]{k-o-t.ifc}). 
\PFEND

\end{document}